\DeclareFontShape{OT1}{cmr}{bx}{sc}{<-> cmbcsc10}{}
\newcommand{\R}{ {\mathbb{R}}}
\newcommand{\pp}{ {\partial} }
\newcommand{\cuad}{{\sqcap\kern-.68em\sqcup}}
\newcommand{\be}{\begin{equation}}
\newcommand{\ee}{\end{equation}}
\theoremstyle{plain}
\newtheorem{theorem}{Theorem}[section]
\newtheorem{lemma}[theorem]{Lemma}
\newtheorem{prop}[theorem]{Proposition}
\newtheorem{remark}{Remark}[theorem]
\newcommand{\bremark}{\begin{remark} \em}
\newcommand{\eremark}{\end{remark} }
\numberwithin{equation}{section}
\title[Multi-bubble solutions for the $H$-system]{Multi-bubble solutions for the Dirichlet problem of the $H$-system with higher degree}
\author[X. Fang]{Xiang Fang}
\address{\noindent School of Mathematics, Tianjin University, Tianjin 300072, P. R. China}
\email{fangx@tju.edu.cn}
\author[J. Wei]{Juncheng Wei}
\address{\noindent Department of Mathematics, Chinese University of Hong Kong, Shatin, NT, Hong Kong}
\email{wei@math.cuhk.edu.hk}
\author[Y. Zheng]{Youquan Zheng}
\address{\noindent School of Mathematics, Tianjin University, Tianjin 300072, P. R. China}
\email{zhengyq@tju.edu.cn}
\author[Y. Zhou]{Yifu Zhou}
\address{\noindent
School of Mathematics and Statistics, Wuhan University, Wuhan 430072, China}
\email{yifuzhou@whu.edu.cn}
\begin{document}
\begin{abstract}
We consider a Dirichlet problem of the $H$-system
\begin{equation*}
\begin{cases}
\Delta v =  2v_x\wedge v_y ~& \text{ in }\mathcal{D},\\
v=\varepsilon \tilde g ~& \text{ on }\partial{\mathcal{D}},
\end{cases}
\end{equation*}
where $\mathcal D\subset \R^2$ is the unit disk, $v:\mathcal D\to \R^3$, and $\tilde g:\pp \mathcal D\to \R^3$ is a given smooth map. As $\varepsilon\to 0^+$, we construct multi-bubble solutions concentrating at distinct points, taking around each point the profile of degree 2 $H$-bubble. This gives a partial answer to a conjecture due to Brezis-Coron \cite{BrezisCoron} and Chanillo-Malchiodi \cite{chanillomalchiodi2005cagasymptotic} concerning the  limiting configuration in the case of higher degrees. This seems to be the first construction in employing higher-degree harmonic maps as the primary configurations.

\end{abstract}
\maketitle

{
  \hypersetup{linkcolor=}
  \tableofcontents
}

\section{Introduction}
Let $\Omega$ be a smooth bounded domain in $\mathbb{R}^2$, and maps $v:\Omega\rightarrow \mathbb{R}^3$, $\tilde{g} :\partial {\Omega} \rightarrow \mathbb{R}^3$ with $\tilde g$ being smooth. We consider the Dirichlet boundary problem
\begin{equation}\label{e:main}
\begin{cases}
\Delta v =  2H(v)v_x\wedge v_y ~ &\text{ in }\Omega,\\
v=\tilde g ~ &\text{ on }\partial{\Omega},
\end{cases}
\end{equation}
where $H(v)$ is a given scalar function, $v_x$, $v_y$ denote the partial derivatives of $v$, and $\wedge$ denotes the cross-product in $\mathbb{R}^3$. The system \eqref{e:main} arises from a classical geometric variational problem of finding parametric surfaces in $\R^3$ with prescribed mean curvature $H(v)$ in the conformal representation. The existence and optimal estimates for the Dirichlet problem have been intensively investigated in many seminal works, such as Heinz \cite{Heinz1954}, Hildebrandt \cite{Hildebrandt1969,Hildebrandt1970}, Gulliver-Spruck \cite{Spruck1971,Spruck1972}, Steffen \cite{Steffen1976-1,Steffen1976-2} and Wente \cite{Wente1969}. See also Struwe \cite{StruweActa} and Duzaar-Steffen \cite{DuzaarSteffen} for existence results of the associated Plateau problem.

\medskip

Via Pohozaev-type identity, it was proved by Wente \cite{Wente1975} and Chanillo-Malchiodi \cite{chanillomalchiodi2005cagasymptotic} that there exists no non-trivial solution on simply connected domain when  $\tilde g = 0$. When $H(v) = 1$ and $\|\tilde g\|_{L^\infty} < 1$, Hildebrandt constructed a solution with minimal energy in \cite{Hildebrandt1970},  while Brezis-Coron \cite{BrezisCoroncpam}, Steffen \cite{SteffenARMA}, and Struwe \cite{Struwe1988} proved the existence of large energy solutions. A non-existence result was shown by Heinz \cite{HeinzARMA} for $\|\tilde g\|_{L^\infty}>1$, and thus the condition $\|\tilde g\|_{L^\infty} < 1$ is sharp.

\medskip

For general $H(v)$, the existence of solutions was studied in a series of papers by Caldiroli-Musina \cite{CaldiroliMusinaCCM,Caldiroli2004,CaldiroliMusinaRMI,MusinaJAM2004,caldiroli2011bubbles}, and their method is based on the variational perturbative method, which was introduced by Ambrosetti and Badiale in \cite{AmbrosettiBadiale}. Concentration phenomenon, such as bubbling and multi-bubble solutions were constructed in Caldiroli-Musina
\cite{caldirolimusinaduke2004} and Chanillo-Malchiodi \cite{chanillomalchiodi2005cagasymptotic}. Under assumptions on general $H(v)$, interior and boundary regularity were investigated by Bethuel \cite{bethuel1992resultat}, Strzelecki  \cite{strzelecki2003new}, Musina \cite{MusinaAALLMA}, M\"uller-Schikorra \cite{muller2009boundary} and their references. Furthermore, the asymptotic behavior of the solutions to \eqref{e:main} was studied, for example, in Sasahara \cite{Yasuhiro1995}, Isobe \cite{Takeshi2000,Takeshi2001a,Takeshi2001b}, and Caldiroli-Musina \cite{CaldiroliMusina2006arma,CaldiroliMusinaJFA2007} and the references therein, where some important quantities were computed and found. In \cite{Riviere-2008}, Rivi\`ere found a remarkable structure for two-dimensional conformally invariant nonlinear elliptic PDEs, including harmonic map equation and the prescribed mean curvature equation. Various regularity results were proved based on this structure; see Schikorra-Strzelecki \cite{Schikorra-Strzelecki} for the regularity theory for higher dimensional $H$-systems as well as $n$-harmonic maps into compact Riemannian manifolds. Geometric objects such as the CMC surfaces in related settings have been investigated, for instance, in \cite{Fall2012,Laurain2012}. We refer the interested readers to the comprehensive monographs of Struwe \cite{Struwe1988,Struwebook}, Duzaar-Steffen \cite{DuzaarSteffen1}, Steffen \cite{Steffen1}, Bethuel-Caldiroli-Guida \cite{Bethuel1} and the references therein for more geometrical motivations and general formulations.

\medskip

In the seminal work \cite{BrezisCoron}, Brezis and Coron
considered \eqref{e:main} with $H\equiv1$ and $\Omega = \mathcal{D}$ being the unit disk and proved that the sequence $u_n$, solving \eqref{e:main} with vanishing boundary data $\tilde g_n$ in $H^{1/2}(\partial\mathcal{D})\cap L^\infty(\partial\mathcal{D})$, converges to a finite and connected union of unit spheres. No further precise information about the limiting configuration was obtained, so they asked whether every configuration of spheres can be obtained as a limit of solutions $u_n$ for a suitable sequence of vanishing boundary data $\tilde g_n$. This question was later answered partially by Chanillo and Malchiodi \cite{chanillomalchiodi2005cagasymptotic} in the case of degree $1$, where they investigated the singularly perturbed problem
\begin{equation}\label{e:main2}
\begin{cases}
\Delta v =  2v_x\wedge v_y ~& \text{ in }\mathcal{D},\\
v=\varepsilon \tilde g ~& \text{ on }\partial{\mathcal{D}},
\end{cases}
\end{equation}
as $\varepsilon \to 0^+$. Above problem can be formulated variationally as follows. Let $g$ be the harmonic extension of $\tilde g$, i.e.
\begin{equation*}
\begin{cases}
\Delta g =  0 ~& \text{ in }\mathcal{D},\\
g=\tilde g ~& \text{ on }\partial{\mathcal{D}}.
\end{cases}
\end{equation*}
Let $v=u+\varepsilon  g$ be a solution of (\ref{e:main2}), then $u$ solves
\begin{equation*}
\begin{cases}
\Delta u =   2(u_x+\varepsilon  g_x)\wedge (u_y+\varepsilon  g_y )~& \text{ in }\mathcal{D},\\
u=0 ~& \text{ on }\partial{\mathcal{D}}.
\end{cases}
\end{equation*}
Its associated Euler functional $I_{\varepsilon}:H_0^1(\mathcal{D}, \mathbb{R}^3)\rightarrow \mathbb{R}$ is given by
\begin{equation}\label{Euler-functional}
I_{\varepsilon}(u)=\frac{1}{2}\int_{\mathcal{D}}|\nabla u|^2+\frac{2}{3}\int_{\mathcal{D}} u\cdot (u_x\wedge u_y)+\varepsilon \int_{\mathcal{D}} u\cdot(u_x\wedge g_y+g_x\wedge u_y )+2\varepsilon^2 \int_{\mathcal{D}} u\cdot (g_x\wedge g_y).
\end{equation}
By analyzing $I_{\varepsilon}(u)$, Chanillo and Malchiodi \cite{chanillomalchiodi2005cagasymptotic} established an asymptotic Morse theory and constructed solutions in the limit with multi-bubble configuration. More precisely, given any configuration of unit spheres, {\it all} passing through the origin in $\R^3$, there exists a sequence of boundary data $\tilde g_n$ such that the solutions $u_n$, with degree $1$ bubbling profile,  converge to the configuration in the Hausdorff sense, and their construction thus gives a partial answer to the question raised by Brezis-Coron.

\medskip

A natural question is the possible scenario in the case of higher degree. In this paper, we are interested in the bubbling phenomena of \eqref{e:main2} as $\varepsilon\to 0^+$ in the degree 2 case. Our aim is to construct bubbling solutions with multiple degree $2$ profiles concentrating at different points in $\mathcal D$. Recall that the fundamental solution, known as $H$-bubble, of the equation
\begin{equation}\label{intro-5}
\Delta u =  2u_x\wedge u_y ~\text{ in }~ \mathbb{R}^2
\end{equation}
is the stereographic projection $\pi:\mathbb{R}^2\rightarrow \mathbb{S}^2 \hookrightarrow \mathbb{R}^3$
\begin{equation*}
\pi(z)=\pi(x,y)=\frac{1}{1+x^2+y^2}(2x,~2y,~x^2+y^2-1),\qquad z=(x,y)\in \mathbb{R}^2.
\end{equation*}
Indeed, for the $H$-system in $\R^2$ with constant mean curvature $H=1$, finite-energy solutions were classified in the classical paper \cite{BrezisCoron} by Brezis-Coron as
\begin{equation*}
u(z) = \pi\left(\frac{P(z)}{Q(z)}\right)+C, \quad z = (x, y) \equiv x+iy,
\end{equation*}
where $P$, $Q$ are polynomials of $z$, and $C$ is a constant vector in $\mathbb{R}^3$.
From this result, we know that a typical class of solutions to \eqref{intro-5}
 are $\mathcal W^{(m)}(z) = \pi\left(z^m\right)$ for $m\in \mathbb{Z}$, where $m$ is the degree/homotopy class of the map, defined as
 $$
 {\rm deg}(u)=\frac1{4\pi}\int_{\R^2} u\cdot (u_y \wedge u_x).
 $$
 The degree 2 $H$-bubble $\mathcal W:=\pi(z^2): \mathbb{R}^2\to\mathbb{S}^2$ can be then written as
\begin{equation}\label{e:degree2Hbubble}
z = (x,y)\to \left(
\frac{2(x-y)(x+y)}{\left(x^2+y^2\right)^2+1},
\frac{4xy}{\left(x^2+y^2\right)^2+1},
\frac{\left(x^2+y^2\right)^2-1}{\left(x^2+y^2\right)^2+1}\right)
.
\end{equation}

\medskip

The degree 2 bubble defined in (\ref{e:degree2Hbubble}) of equation (\ref{intro-5}) has more invariance under rigid motions than the degree $1$ case. In fact, it is invariant under the following transformations. If $u(z)$ is a degree 2 solution of (\ref{intro-5}) and identifying $z$ as a complex number, then all of these maps
\begin{align*}
&\mbox{ (1) } u\left(\frac{z}{\mu}\right) ~\mbox{ for all }~ \mu >0, \quad \mbox{ (2) } u(z+\xi) ~\mbox{ for }~\xi\in\mathbb{C},\quad \mbox{ (3) } u\left(\frac{z-a|z|^2}{1-2a\cdot z + |a|^2|z|^2}\right) ~\mbox{ for }~a\in \mathbb{C},\\
&\mbox{ (4) } u(z^2+p) ~\mbox{ for }~p\in \mathbb{C}, \quad \mbox{ (5) } \mathcal Q u ~\mbox{ for a rotation}~ \mathcal Q\in SO(3) , \quad \mbox{ (6) }  u+C ~\mbox{ for any constant vector}~ C
\end{align*}
are still degree 2 solutions. That is to say, the function
\begin{equation}\label{approximateformfullform}
\mathcal Q \delta_{\mu,\xi,a,p}(z)+C=\mathcal Q \pi \left(\frac{\left(\frac{z-\xi}{\mu}-a\left|\frac{z-\xi}{\mu}\right|^2\right)^2}{\left(1-2a\cdot \frac{z-\xi}{\mu}+|a|^2\left|\frac{z-\xi}{\mu}\right|^2\right)^2}+p\right)+C
\end{equation}
is a $H$-bubble with degree 2, forming $13$-parameter family of solutions. Here, $z=(x,y)$, $\mu>0$, $\xi=(\xi_1,\xi_2)$, $a=(a_1,a_2)$, $p=(p_1,p_2)$. The functions $\mathcal Q  \delta_{\mu,\xi,a,p}$ are mountain-pass critical points of the functional
\begin{equation}\label{functional}
\bar{I}(u)=\frac{1}{2}\int_{\mathbb{R}^2}|\nabla u|^2+\frac{2}{3}\int_{\mathbb{R}^2} u\cdot (u_x\wedge u_y),\quad u\in \mathcal{X},
\end{equation}
where $\mathcal{X}$ denotes the functional space
\begin{equation*}
\mathcal{X}=\left\{ u\in L_{{\rm loc}}^2(\mathbb{R}^2,\mathbb{R}^3):\|u\|_{\mathcal{X}}^2=\int_{\mathbb{R}^2}|\nabla u|^2+\int_{\mathbb{R}^2}\frac{|u|^2}{(1+|z|^2)^2}<+\infty\right\}.
\end{equation*}
The space $\mathcal{X}$ coincides with $H^1(\mathbb{S}^2,\mathbb{R}^3)$ after inverse stereographic projection. When $a=0$, $p=0$, the function $\delta_{\mu,\xi,a,p}$ has the explicit form
\begin{equation}\label{A18-1}
\delta_{\mu,\xi,0,0}(z)=\pi\left(\left(\frac{z-\xi}{\mu}\right)^2\right)=\left(\frac{2\left[\left(\frac{x-\xi_1}{\mu}\right)^2-\left(\frac{y-\xi_2}{\mu}\right)^2\right]}{\left(\left|\frac{z-\xi}{\mu}\right|^2\right)^2+1},\frac{4\frac{(x-\xi_1)(y-\xi_2)}{\mu^2}}{\left(\left|\frac{z-\xi}{\mu}\right|^2\right)^2+1},\frac{\left(\left|\frac{z-\xi}{\mu}\right|^2\right)^2-1}{\left(\left|\frac{z-\xi}{\mu}\right|^2\right)^2+1}\right).
\end{equation}
\medskip
In the following, we simply denote $\delta_{\mu,\xi,0,0}$ as $\delta_{\mu,\xi}$.
We now consider the linearized operator around the $H$-bubble with degree $2$. The rotation matrix can be represented by $\mathcal Q_\alpha$, $\mathcal Q_\beta$ and $\mathcal Q_\gamma$ as
\begin{equation*}\mathcal Q_\alpha=\begin{pmatrix}
\cos \alpha & -\sin \alpha & 0\\
\sin \alpha & \cos \alpha & 0\\
 0 & 0 & 1
\end{pmatrix},\quad
\mathcal Q_\beta=\begin{pmatrix}
1&0&0\\
0&\cos \beta & -\sin \beta \\
0&\sin \beta & \cos \beta
\end{pmatrix}, \quad
\mathcal Q_\gamma=\begin{pmatrix}
\cos\gamma & 0 & \sin\gamma\\
0&1&0\\
-\sin\gamma&0&\cos\gamma
\end{pmatrix},
\end{equation*}
where $\alpha$,$\beta$, $\gamma$ are the angle of rotation around the coordinate $z$-axis, $x$-axis, $y$-axis in $\R^3$, respectively, and $\mathcal Q =\mathcal Q_\gamma \mathcal Q_\beta \mathcal Q_\alpha\in SO(3)$.
Then modulo constant vectors in $\R^3$,
\begin{equation}\label{explicitform}
\begin{aligned}
& U_{A}(x,y)= \mathcal Q_\gamma \mathcal Q_\beta \mathcal Q_\alpha \pi\left(\begin{pmatrix}
\frac{\left(\frac{x-\xi_1}{\mu}-a_1\left(\frac{(x-\xi_1)^2}{\mu^2}+\frac{(y-\xi_2)^2}{\mu^2}\right)\right)^2-
\left(\frac{y-\xi_2}{\mu}-a_2\left(\frac{(x-\xi_1)^2}{\mu^2}+\frac{(y-\xi_2)^2}{\mu^2}\right)\right)^2}
{\left(1-2\left(\frac{a_1(x-\xi_1)}{\mu}+\frac{a_2(y-\xi_2)}{\mu}\right)+\left(a_1^2+a_2^2\right)\left(\frac{(x-\xi_1)^2}
{\mu^2}+\frac{(y-\xi_2)^2}{\mu^2}\right)\right)^2}\\
\frac{2\left(\frac{x-\xi_1}{\mu}-a_1\left(\frac{(x-\xi_1)^2}{\mu^2}+\frac{(y-\xi_2)^2}{\mu^2}\right)\right)\left(\frac{y-\xi_2}{\mu}-a_2\left(\frac{(x-\xi_1)^2}
{\mu^2}+\frac{(y-\xi_2)^2}{\mu^2}\right)\right)}{\left(1-2\left(\frac{a_1(x-\xi_1)}{\mu}+\frac{a_2(y-\xi_2)}{\mu}\right)+\left(a_1^2+a_2^2\right)\left(\frac{(x-\xi_1)^2}
{\mu^2}+\frac{(y-\xi_2)^2}{\mu^2}\right)\right)^2}
\end{pmatrix}+\begin{pmatrix}p_1\\ p_2\end{pmatrix}\right)
\end{aligned}
\end{equation}
gives a ten-parameter family of solutions with degree $2$ to the $H$-system (\ref{intro-5}) in $\R^2$. Here we denote the tuple of real numbers by
$$A := (\mu, ~\alpha, ~\xi_1, ~\xi_2,~ p_1,~ p_2, ~a_1,~ a_2,~\beta,~\gamma).$$
The first variations with respect to the parameters $\mu$, $\alpha$, $\xi_1$, $\xi_2$, $p_1$, $p_2$, $a_1$, $a_2$, $\beta$, $\gamma$ are given respectively by
\begin{align}
\notag
Z_{0,1}(x,y) =&
\begin{pmatrix}
     \frac{4(x-y)(x+y)\left(x^2+y^2-1\right)\left(x^2+y^2+1\right)}{\left(\left(x^2+y^2\right)^2+1\right)^2}\\
     \frac{8x y\left(\left(x^2+y^2\right)^2-1\right)}{\left(\left(x^2+y^2\right)^2+1\right)^2}\\
     -\frac{8\left(x^2+y^2\right)^2}{\left(\left(x^2+y^2\right)^2+1\right)^2}
\end{pmatrix},\qquad
Z_{0,2}(x,y) =
\begin{pmatrix}
-\frac{4xy}{\left(x^2+y^2\right)^2+1}\\
\frac{2(x-y)(x+y)}{\left(x^2+y^2\right)^2+1}\\
0
\end{pmatrix},\\
\notag
Z_{1,1}(x,y) =&
\begin{pmatrix}
     \frac{4x\left(x^4-2x^2y^2-3y^4-1\right)}{\left(\left(x^2+y^2\right)^2+1\right)^2}\\
     -\frac{4y\left(-3x^4-2x^2y^2+y^4+1\right)}{\left(\left(x^2+y^2\right)^2+1\right)^2}\\
     -\frac{8x\left(x^2+y^2\right)}{\left(\left(x^2+y^2\right)^2+1\right)^2}
\end{pmatrix},\quad\qquad\quad
Z_{1,2}(x,y) = \begin{pmatrix}
     \frac{4y\left(3x^4+2x^2y^2-y^4+1\right)}{\left(\left(x^2+y^2\right)^2+1\right)^2}\\
     -\frac{4x\left(x^4-2x^2y^2-3y^4+1\right)}{\left(\left(x^2+y^2\right)^2+1\right)^2}\\
     -\frac{8y\left(x^2+y^2\right)}{\left(\left(x^2+y^2\right)^2+1\right)^2}
\end{pmatrix},
\\
\notag
Z_{2,1}(x,y) = &\begin{pmatrix}
     \frac{-2(x^4-6x^2y^2+y^4-1)}{\left(\left(x^2+y^2\right)^2+1\right)^2}\\
     \frac{-8xy(x-y)(x+y)}{\left(\left(x^2+y^2\right)^2+1\right)^2}\\
     \frac{4(x-y)(x+y)}{\left(\left(x^2+y^2\right)^2+1\right)^2}
\end{pmatrix},\quad\qquad\qquad
Z_{2,2}(x,y) =
\begin{pmatrix}
     \frac{-8xy(x-y)(x+y)}{\left(\left(x^2+y^2\right)^2+1\right)^2}\\
     \frac{2(x^4-6x^2y^2+y^4+1)}{\left(\left(x^2+y^2\right)^2+1\right)^2}\\
     \frac{8xy}{\left(\left(x^2+y^2\right)^2+1\right)^2}
\end{pmatrix},
\\
\notag
Z_{-1,1}(x,y) =&
\begin{pmatrix}
   \frac{-4x\left(3\left(x^4+1\right)y^2+3 x^2y^4+x^2\left(x^4-1\right)+y^6\right)}{\left(\left(x^2+y^2\right)^2+1\right)^2}\\
   \frac{-4y\left(x^6+3x^4y^2+3x^2\left(y^4-1\right)+y^6+y^2\right)}{\left(\left(x^2+y^2\right)^2+1\right)^2}\\
   \frac{8x\left(x^2+y^2\right)^2}{\left(\left(x^2+y^2\right)^2+1\right)^2}
\end{pmatrix},
\\
\notag
Z_{-1,2}(x,y) =&
\begin{pmatrix}
    \frac{4y\left(x^6+3x^4y^2+3x^2\left(y^4+1\right)+y^6-y^2\right)}{\left(\left(x^2+y^2\right)^2+1\right)^2}\\
    \frac{-4x\left(x^6+3x^4y^2+3x^2y^4+x^2+y^6-3y^2\right)}{\left(\left(x^2+y^2\right)^2+1\right)^2}\\
    \frac{8y\left(x^2+y^2\right)^2}{\left(\left(x^2+y^2\right)^2+1\right)^2}
\end{pmatrix},
\\
\label{A15-Z}
Z_{-2,1}(x,y) =& \begin{pmatrix}
0\\
-\frac{\left(x^2+y^2\right)^2-1}{\left(x^2+y^2\right)^2+1}\\
\frac{4xy}{\left(x^2+y^2\right)^2+1}
\end{pmatrix},\qquad\qquad\qquad\qquad
Z_{-2,2}(x,y) =
\begin{pmatrix}
-\frac{\left(x^2+y^2\right)^2-1}{\left(x^2+y^2\right)^2+1}\\
0\\
\frac{2(x-y)(x+y)}{\left(x^2+y^2\right)^2+1}
\end{pmatrix}.
\end{align}
In other words, $Z_{k,j}$, $k=0,\pm1,\pm2$, $j=1,2$ satisfy the linearized equation of \eqref{intro-5} around $\mathcal W$ defined by
\begin{eqnarray}\label{e:linearized1}
L_{\mathcal W}[v]:= \Delta v -2 \mathcal W_x\wedge v_y - 2 v_x\wedge \mathcal W_y = 0
\end{eqnarray}
for $v\in \mathcal{X}$. Here the first subscript $k$ stands for the Fourier mode using complex notation, and the second subscript $j$ is to distinguish two independent kernel functions in each mode. The explicit expressions of the infinitesimal generators \eqref{A15-Z} of rigid motions will be rather important in the analysis later as they will be projected onto the disk $\mathcal D$ due to the presence of boundary.

\medskip

In the perturbative process, a key ingredient is the $L^\infty$-nondegeneracy of $L_{\mathcal W}$, which is one of the crucial elements in the construction of Chanillo-Malchiodi \cite{chanillomalchiodi2005cagasymptotic} in the case of degree $\pm1$. By isoperimetric inequality, Isobe \cite{Takeshi2001a} also proved earlier a non-degeneracy property for the energy functional \eqref{functional}. The non-degeneracy for higher degree $H$-bubble $\pi(z^m)$ with $|m|\geq2$ was conjectured to be true by Chanillo and Malchiodi and was confirmed only recently by Sire and the last three authors in \cite{SireWeiZhengZhou-H-system}. Degenerate bubbles of a general form were shown recently to exist and were characterized by Guerra-Lamy-Zemas in \cite{guerra2024existence}.
See also Lenzmann-Schikorra \cite{LenzmanSchikorra}, Sire-Wei-Zheng \cite{SireWeiZheng}, and Chen-Liu-Wei \cite{chenliuwei} for related non-degeneracy properties of half-harmonic maps and higher degree harmonic maps.  From \cite{SireWeiZhengZhou-H-system} with degree $m\in\mathbb Z$, there are $4|m|+5$ bounded kernel functions to the linearization around $H$-bubble $\mathcal W^{(m)}(z) =\pi(z^m)$, and in particular, the linearized problem \eqref{e:linearized1} only admits bounded solutions as linear combination of $Z_{k,j}$, $k=0,\pm1,\pm2$, $j=1,2$ in \eqref{A15-Z}, and
\begin{equation}\label{kernels-WW}
\begin{aligned}
\begin{pmatrix}
\frac{2(x^2-y^2)((x^2+y^2)^2-1)}{((x^2+y^2)^2+1)^2}\\
\frac{4xy((x^2+y^2)^2-1)}{((x^2+y^2)^2+1)^2}\\
\frac{((x^2+y^2)^2-1)^2}{((x^2+y^2)^2+1)^2}
\end{pmatrix},\quad
\begin{pmatrix}
\frac{2(x^2-y^2)^2}{((x^2+y^2)^2+1)^2}\\
\frac{4xy(x^2-y^2)}{((x^2+y^2)^2+1)^2}\\
\frac{(x^2-y^2)((x^2+y^2)^2-1)}{((x^2+y^2)^2+1)^2}
\end{pmatrix},\quad
\begin{pmatrix}
\frac{4xy(x^2-y^2)}{((x^2+y^2)^2+1)^2}\\
\frac{8x^2y^2}{((x^2+y^2)^2+1)^2}\\
\frac{2xy((x^2+y^2)^2-1)}{((x^2+y^2)^2+1)^2}
\end{pmatrix}.
\end{aligned}
\end{equation}
Here we note that the nondegeneracy also holds in the $D^{1,2}$ sense, as can be readily verified in the precise expressions given in \cite[Section 3]{SireWeiZhengZhou-H-system}, where $$D^{1,2}: =\left\{ u\in L_{{\rm loc}}^2(\mathbb{R}^2,\mathbb{R}^3):\|u\|_{D^{1,2}}^2=\int_{\mathbb{R}^2}|\nabla u|^2<+\infty\right\}.$$

\medskip

To analyze the Dirichlet problem \eqref{e:main2}, we consider the projected bubble $$P\delta_{\mu, \xi, a, p}=\delta_{\mu, \xi, a, p}-\varphi_{\mu, \xi, a, p},$$
where $\varphi_{\mu, \xi, a, p}$ is defined in (\ref{2expansion-near-boundary}). $P\delta_{\mu, \xi, a, p}$ is the element of $H_0^1(\mathcal{D},\mathbb{R}^3)$ closest to $\delta_{\mu, \xi, a, p}$ in the Dirichlet norm. We may write
\begin{equation}\label{form-of-solution}
u=\sum_{i=1}^kP\mathcal Q_i\delta_{\mu_i,\xi^{(i)}, a_i, p_i}+w,
\end{equation}
where $k\in\mathbb N_+$, $\mathcal Q_i\in SO(3)$, $\mu_i>0$, $\xi^{(i)} = (\xi^{(i)}_1, \xi^{(i)}_2)\in \mathbb{R}^2$, $a_i = (a_{i, 1}, a_{i, 2})\in \mathbb{R}^2$, $p_i = (p_{i, 1}, p_{i, 2})\in \mathbb{R}^2$ for all $i$, and $w$ is a perturbation term. Here we remark that the extra three kernel functions in \eqref{kernels-WW} correspond to translation invariance in $\R^3$, and these are not included in the ansatz \eqref{form-of-solution} due to the presence of boundary.

\medskip

Our main theorem gives another partial answer to the question raised by Brezis and Coron \cite{BrezisCoron} in the case of degree 2.

\begin{theorem}\label{thm}
Let $\mathcal{S} = \{S_1\cup S_2\cup\cdots \cup S_k\}$ be any configuration of unit spheres, each passing through the origin of $\mathbb{R}^3$. Then there exist a sequence of real numbers $\varepsilon_n\to 0^+$, functions $\tilde g_n: \pp\mathcal D\to \mathbb{R}^3$ and a sequence of $u_n$ with form (\ref{form-of-solution}) solving the H-system (\ref{e:main2}) on the unit disk $\mathcal D$ with boundary datum $\varepsilon_n\tilde g_n$ such that the image of $u_n$ converges to $\mathcal{S}$ in the Hausdorff sense.
\end{theorem}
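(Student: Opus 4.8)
The plan is a Lyapunov--Schmidt reduction built on the ansatz \eqref{form-of-solution}, in which the prescribed configuration $\mathcal S=\{S_1,\dots,S_k\}$ fixes the leading rotations $\mathcal Q_i$ while the boundary datum $\tilde g$ is kept as a tuning device, following the strategy of Chanillo--Malchiodi \cite{chanillomalchiodi2005cagasymptotic} but now built on the ten-parameter degree-$2$ family \eqref{explicitform}. Writing $v=u+\varepsilon g$ with $g$ the harmonic extension of $\tilde g$, the problem becomes $S_\varepsilon(u):=\Delta u-2(u_x+\varepsilon g_x)\wedge(u_y+\varepsilon g_y)=0$ for $u\in H_0^1(\mathcal D,\mathbb{R}^3)$, with associated Euler functional $I_\varepsilon$ given in \eqref{Euler-functional}. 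Fix $k$, and set $V_A:=\sum_{i=1}^kP\mathcal Q_i\delta_{\mu_i,\xi^{(i)},a_i,p_i}$ with $A=(A_1,\dots,A_k)$, $A_i=(\mu_i,\alpha_i,\xi^{(i)}_1,\xi^{(i)}_2,p_{i,1},p_{i,2},a_{i,1},a_{i,2},\beta_i,\gamma_i)$; one looks for $u=V_A+w$ with $A$ in a compact set $\Lambda_\varepsilon$ on which $\mu_i\sim\varepsilon$, the points $\xi^{(i)}$ stay in a fixed compact subset of $\mathcal D$ and remain mutually separated, $a_i,p_i$ are small, and $(\beta_i,\gamma_i)$ lie near the values for which $-\mathcal Q_i(0,0,1)$ is the center of $S_i$. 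This last requirement is the mechanism by which the configuration is encoded: since $\delta_{\mu_i,\xi^{(i)}}\to(0,0,1)$ uniformly on $\partial\mathcal D$ as $\mu_i\to0$, the correction $\varphi_{\mu_i,\xi^{(i)},a_i,p_i}$ is asymptotically the constant $\mathcal Q_i(0,0,1)$, so $P\mathcal Q_i\delta_{A_i}\approx\mathcal Q_i\big(\delta_{A_i}-(0,0,1)\big)$, whose image is the unit sphere through the origin with center $-\mathcal Q_i(0,0,1)$; thus the image of the $i$-th bubble tends to $S_i$.

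\textbf{Error and linear theory --- the main obstacle.} In a weighted norm $\|\cdot\|_{**}$ adapted to the bubbles one first shows $\|S_\varepsilon(V_A)\|_{**}\le C\rho(\varepsilon)$ with $\rho(\varepsilon)\to0$, uniformly on $\Lambda_\varepsilon$; the contributions are the projection defects (controlled by the Robin-type function of $\mathcal D$ and a positive power of $\mu_i$), the inter-bubble interactions (a negative power of the separations times positive powers of $\mu_i,\mu_j$), and the $\varepsilon g$-terms (of order $\varepsilon$ times a power of $\mu_i$, and $\varepsilon^2$). The heart of the construction, and the step I expect to be the main obstacle, is the linear theory for $L_A[\phi]:=\Delta\phi-2(V_{A,x}+\varepsilon g_x)\wedge\phi_y-2\phi_x\wedge(V_{A,y}+\varepsilon g_y)$ on $H_0^1(\mathcal D,\mathbb{R}^3)$: by a blow-up/contradiction argument, a would-be degenerating sequence, rescaled about some $\xi^{(i)}$, converges to a bounded $D^{1,2}$ solution of \eqref{e:linearized1}, hence --- by the nondegeneracy of \cite{SireWeiZhengZhou-H-system}, valid in the $D^{1,2}$ sense as noted after \eqref{kernels-WW} --- to a linear combination of the ten generators $Z_{k,j}$ and the three functions $\Psi_1,\Psi_2,\Psi_3$ in \eqref{kernels-WW}. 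Against the $Z_{k,j}$ one protects by the orthogonality $\langle\phi,P\mathcal Q_iZ_{k,j}^{(i)}\rangle=0$, matched to the ten parameters of $A_i$; the delicate point is that the $\Psi_l$, which encode the $\mathbb{R}^3$-translation invariance, decay at exactly the same rate $|z|^{-2}$ as the scaling mode $Z_{0,1}$, so no choice of weight can exclude them --- they persist as genuine approximate kernels of the \emph{Dirichlet} problem even though they are absent from the ansatz. One must therefore impose orthogonality to (the projections of) all thirteen modes and establish the uniform estimate $\|\phi\|_{*}\le C\|L_A[\phi]\|_{**}$ together with solvability of $L_A[\phi]=h$ modulo the span of the $P\mathcal Q_iZ_{k,j}^{(i)}$ and $P\mathcal Q_i\Psi_l^{(i)}$; the weights must moreover faithfully resolve the slowly decaying $Z_{\pm1,j}$ and the conformal modes $Z_{\pm2,j}$ carried by the $a,p$ and $\beta,\gamma$ parameters.

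\textbf{Gluing and the reduced problem.} Because $\phi_x\wedge\phi_y$ is a Jacobian-type null form, the Wente / Coifman--Lions--Meyer--Semmes estimate $\|\Delta^{-1}(\phi_x\wedge\phi_y)\|_{L^\infty}+\|\nabla\Delta^{-1}(\phi_x\wedge\phi_y)\|_{L^2}\le C\|\nabla\phi\|_{L^2}^2$ makes $\phi\mapsto L_A^{-1}\big(-S_\varepsilon(V_A)+2\phi_x\wedge\phi_y\big)$ a contraction on $\{\|\phi\|_{*}\le C\rho(\varepsilon)\}$, producing $\phi=\phi(A,\varepsilon)$ with $\|\phi(A,\varepsilon)\|_{*}\le C\rho(\varepsilon)$ and $C^1$ dependence on $A$. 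It then remains to choose $\tilde g$ so that all Lagrange multipliers vanish: the $10k$ coefficients against the $P\mathcal Q_iZ_{k,j}^{(i)}$ and the $3k$ coefficients against the $P\mathcal Q_i\Psi_l^{(i)}$. Each of these is, to leading order, a sum of a purely geometric term (from the Robin function and the bubble interactions) and of $\varepsilon$ times an explicit linear functional of the $\le2$-jet of $g$ at $\xi^{(i)}$, paired against the generators \eqref{A15-Z} or the $\Psi_l$ --- the translation multipliers being steered by the value $g(\xi^{(i)})$ (three numbers matching the three translation modes) and the remaining ones by the first and second derivatives of $g$ there. Since harmonic functions on $\mathcal D$ with prescribed $2$-jets at finitely many interior points exist in abundance, $\tilde g$ can be chosen so that all $13k$ functionals vanish, nondegenerately, at a configuration $A^\ast(\varepsilon)\in\Lambda_\varepsilon$ with $\mu_i^\ast\sim\varepsilon$, $a_i^\ast=0$ and $(\beta_i^\ast,\gamma_i^\ast)$ matching $\mathcal S$; by the implicit function theorem (or a topological degree argument) there is then, along a sequence $\varepsilon_n\to0$, a genuine critical point $A^{(n)}$, hence a solution $u_n=V_{A^{(n)}}+\phi(A^{(n)},\varepsilon_n)$ of \eqref{e:main2} with boundary datum $\varepsilon_n\tilde g$.

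\textbf{Convergence of the image.} Off the points $\{\xi^{(i)}\}$ one has $u_n\to0$; blowing up at scale $\mu_i^{(n)}$ about $\xi^{(i)}$ gives convergence to a degree-$2$ $H$-bubble of the form $\mathcal Q_i\big(\delta_{1,0,0,p_i^\ast}-(0,0,1)\big)$, whose image equals $S_i$ as a set (the map covering $S_i$ twice); and since $\|\phi(A^{(n)},\varepsilon_n)\|_{*}\to0$, the perturbation contributes nothing to the image in the limit. Because each $S_i$ contains the origin and the transition annuli about $\xi^{(i)}$ stay close to $S_i$ (the base point of each bubble being pinned at $0$ by the projection), the sets $u_n(\mathcal D)$ converge to $\{0\}\cup\bigcup_{i=1}^k S_i=\mathcal S$ in the Hausdorff sense, which is the assertion of the theorem.
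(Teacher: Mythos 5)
Your overall architecture (Lyapunov--Schmidt reduction on the ten-parameter degree-$2$ family, nondegeneracy from \cite{SireWeiZhengZhou-H-system}, rotations $\mathcal R_j(0,0,-1)=v_j$ encoding the configuration, boundary datum as tuning device) is the same as the paper's, and your identification of the linear theory and of the extra kernel functions \eqref{kernels-WW} as the sensitive points is correct. But there is a genuine gap at the heart of the reduced problem, namely in how the conformal parameters $a_i,p_i$ are solved. You place the concentration points $\xi^{(i)}$ in a \emph{fixed compact subset} of $\mathcal D$ and expect $a_i,p_i$ to come out small after prescribing the $2$-jet of $g$ at these points. The reduced equations do not allow this. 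Schematically (Proposition \ref{prop 4.1}), the multiplier in the $p$-direction balances $p\,\mu^8\,\partial_x^4h_1^{(2,1)}(\xi,\xi)\sim p\,\mu^8 d^{-8}$ against $a\,\mu^5\,\partial_xh_1^{(2,1)}(\xi,\xi)\sim a\,\mu^5 d^{-5}$ and against the mismatch $\mu^6\partial_x^4h_1^{(1)}-\tfrac{\varepsilon}{2}\mu^4\partial_x^4g_1$, where $d=1-|\xi|^2$; the multiplier in the $a$-direction is driven by the bubble--bubble interaction $\mu_i\mu_j^2|\sigma|^{-3}$ against $a\,\mu^2d^{-2}$. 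With $d$ of order one and $\mu\sim\sqrt\varepsilon$ this forces $a\sim\sqrt\varepsilon$ and $p\sim a/\mu^3\sim\varepsilon^{-1}\to\infty$, which destroys the smallness of $\mathcal L_{\mathcal A}=\sum_l(a_lZ_{-1,l}+p_lZ_{2,l})$ and with it the whole expansion. Prescribing only the $2$-jet of $g$ cannot fix this, because the inhomogeneous term in the $p$-equation involves \emph{fourth} derivatives of the Robin function, so $g$ would have to match the Robin function to fourth order and beyond. The paper's essential mechanism --- absent from your proposal --- is to push the points to the boundary at the rate $d=O(\mu^{2/3-\alpha})$ (see \eqref{asymp-d}) and to take $\tilde g$ to be the exact degree-$2$ harmonic extension $\tilde g_\rho$ of Lemma \ref{lemma7.1} centered at $\omega=(\rho,0)$ with $\rho\to1$, so that $\varepsilon g$ cancels $\varphi_1$ to all relevant orders ($\varphi_1-\varepsilon g=O((\mu^2-\varepsilon)d^{-2})$), the bubble--boundary interaction dominates the bubble--bubble interaction, and the parameters close up with $a_i\sim\sqrt\varepsilon\,d^2\to0$, $p_i\sim d^5/\varepsilon\to0$ as in \eqref{asymp-ap}. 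Without this near-boundary concentration your reduced system has no admissible solution.

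A secondary, less damaging divergence: you impose orthogonality to all thirteen bounded kernel modes and then must kill $3k$ additional translation multipliers by steering $g(\xi^{(i)})$. The paper avoids this entirely: in $H_0^1(\mathcal D)$ the quadratic form $I_\varepsilon''(z_{\mathcal A})$ is already coercive on the orthogonal complement of the $10k$ tangent directions together with the bubbles themselves (Proposition \ref{prop 3.1} and Lemma \ref{lemma-3.2}), because the extra kernel of $\bar I''$ consists of constants modulo $T\bar Z$ and constants are excluded by the Dirichlet condition. Your extra $3k$ constraints are therefore unnecessary, and making the boundary datum depend on the (unknown) concentration points would complicate the fixed-point scheme. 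Your blow-up linear theory with a Wente-type contraction is a legitimate alternative to the paper's variational reduction, and your description of the Hausdorff convergence of the image is consistent with the paper's; but as written the proposal does not close because of the $p_i\to\infty$ obstruction above.
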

The limiting configuration of the spheres, {\it e.g.} $k=5$, is sketched in Figure \ref{figure1}.
\begin{figure}[H]
  \centering
  \includegraphics[width=0.45\textwidth]{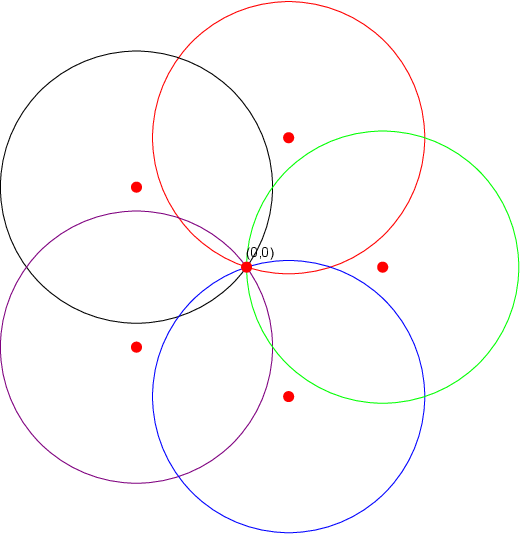}\\
  \caption{The configuration of 5 spheres (for clearity, we use circles instead), each passing through the origin. As $\varepsilon\to 0^+$, the image of the solution map covers each sphere twice.}\label{figure1}
\end{figure}

\medskip

The proof is based on the finite-dimensional reduction method together with the nondegeneracy property of the degree $2$ $H$-bubble. See also Caldiroli-Musina \cite{caldirolimusinaduke2004} and Felli \cite{felli2005note} for a related problem with perturbation in the mean curvature. Compared to the degree $1$ case in \cite{chanillomalchiodi2005cagasymptotic}, one of the main difficulties in the construction of Theorem \ref{thm} is the dealing of the extra bounded kernel functions in the linearization. The presence of these extra invariances results in the requirement of adjusting corresponding parameters in such a way that a critical point of the energy functional can be found. In particular, elaborate analysis is devoted to finding the extra four parameters $a_1$, $a_2$, $p_1$ and $p_2$ in \eqref{explicitform} for each of the $k$ bubbles in the degree 2 case. These parameters interact in the energy expansion through different combinations of kernel functions and their projections, and careful and delicate analysis is needed to make the interactions between different parameters precise. In fact, due to the Dirichlet boundary, each term in the approximation has to be projected. These projections enter the energy expansion as different Robin functions, whose explicit expressions depend on the bubble \eqref{A18-1} and kernel functions \eqref{A15-Z}. We derive in Lemma \ref{evalues-at-xi-2} and rely crucially on the explicit Robin functions to find the critical points of the reduced energy. In this process, several cancellation properties in the expansion are rather important.

\medskip

One of the key steps in the proof is the choice of the boundary data. As a building block, we find a non-trivial degree 2 analogue of the harmonic extension compared to \cite{Yasuhiro1995,chanillomalchiodi2005cagasymptotic}; see Lemma \ref{lemma7.1a}, in which its explicit form plays a rather important role in canceling out several large terms, measured in $\varepsilon$, in the energy expansion.
In the process of gluing $k$ single bubbles, up to rotations, we choose $g$ as a linear combination of the harmonic extension for the boundary condition
$$\tilde g_{\rho}(x,y) = 2\left(\frac{(x-\rho)^2-y^2}{\left((x-\rho)^2 + y^2\right)^2}, \frac{2\left(x-\rho\right) y}{\left((x-\rho)^2 + y^2\right)^2}, \frac{-\varepsilon}{\left((x-\rho)^2 + y^2\right)^2}\right)
$$
for $(x,y)\in \partial \mathcal{D}$, $\rho\in (0,1)$ is a parameter to be chosen. The motivation of such choice is to balance the leading part of the projection for $\delta_{\mu_i,\xi^{(i)},0,0}$, assuming that the parameters $a_i$, $p_i$ are suitably small. An important observation is that when the concentration points are sufficiently close to the boundary ($\rho$ is close to 1), the interaction between the boundary datum and the bubbles will be stronger than the interaction among different bubbles. This near-boundary concentration is pictured in Figure \ref{figure2}, where the smallness in the distance $d=1-\rho$ also serves a crucial role when solving for the parameters $a_i$ and $p_i$.
By choosing carefully $d$,
we capture the desired asymptotics of $a_i$ and $p_i$ measured in terms of $\varepsilon$; see \eqref{asymp-d}, \eqref{asymp-ap} and more detailed discussions in Section \ref{preliminary}.

\medskip

The proof in this paper suggests that a similar (and much more complicated) construction might work for even higher degrees, $|m|\geq 3$. The core part reflects in the dealing of more parameters as degree increases.
On the other hand, the construction of a more general configuration that not all the unit spheres pass through the origin in $\R^3$ seems to be more involved, where the ansatz of concentration pattern needs to be modified significantly. This will be the main object of a future work.

\medskip

The rest of this paper is devoted to the proof of Theorem \ref{thm}. In Section \ref{preliminary}, we introduce some preliminary facts, including the properties of the Robin function with degree 2 boundary condition and some basic integral and pointwise estimates for the degree 2 $H$-bubble. In Section \ref{finite-dimensional-reduction}, we reduce the problem into finding critical points of a function of the parameters using the finite-dimensional reduction method. To this aim, in Section \ref{expansion-for-one-bubble}, we compute the expansion of energy functional $I_{\varepsilon}\left(z_{\mathcal{A}}\right)$ for one bubble. In Section \ref{expansion-multi-bubbles}, we expand the energy functional in the case of multiple bubbles. We postpone part of the lengthy but necessary computations in Appendices \ref{Computations of mixed terms} and \ref{Appendix-F}. In Section \ref{proof-of-the-main-theorem}, we give the proof of Theorem \ref{thm}. In Appendix \ref{Appendix-G}, we provide a general method for computing the Robin function based on the Poisson integral formula and proving Lemma \ref{evalues-at-xi-2}. This general approach might be of independent interest and useful in further studies of higher degree harmonic maps and $H$-bubbles.

\medskip

\noindent {\bf Notations.} Here we list some notations used throughout this paper.
\begin{enumerate}
\item We write $z = (x, y)$ and identify the vector $z$ as a complex number.
\item We use the symbol ``$\lesssim$'' to denote the inequality ``$\leq C$'' for a positive constant $C$ that is independent of $\varepsilon$, while the constant $C$ may vary from line to line.
\item Write $\mathcal{A}_i=\left(\mu_i,\xi^{(i)}_1,\xi^{(i)}_2,\mathcal Q_i,a_{i,1},a_{i,2},p_{i,1},p_{i,2}\right)$, and $\mathcal{A}=\left(\mathcal{A}_1^{\mathrm{T}},\cdots,\mathcal{A}_k^{\mathrm{T}}\right)$.
\item $\xi^{(i)} = \left(\xi^{(i)}_1, \xi^{(i)}_2\right)$, $a_i = (a_{i, 1}, a_{i, 2})$, $p_i = (p_{i, 1}, p_{i, 2})$.
\item $Pf = f-f_0$ denotes the projection of $f$ onto $\mathcal D$, {\it i.e.}, $f_0$ is the harmonic function that satisfies $f_0 = f$ on $\partial \mathcal{D}$.
\end{enumerate}

\bigskip

\section{Preliminary facts}\label{preliminary}

\medskip

In this section, we introduce some preliminary facts. Let $\bar{I}:\mathcal{X}\rightarrow \mathbb{R}$ be defined in (\ref{functional}). As $\varepsilon\to 0^+$, the functional $I_{\varepsilon}$ is a perturbation of $\bar{I}$ and we will find a critical point of $I_{\varepsilon}$ by the finite-dimensional reduction method. The critical points of the functional $\bar{I}$ will play a critical role in our construction, namely, the solutions of
\begin{equation}\label{pre-1}
\Delta u =  2u_x\wedge u_y \quad \text{ in } \mathbb{R}^2,\quad u\in \mathcal{X}.
\end{equation}
Observe that $\pi(z^m)$ is a solution of (\ref{pre-1}), which we call degree $m$ $H$-bubble.

For $m=2$, (\ref{e:degree2Hbubble}) is indeed a solution of (\ref{pre-1}). By invariance, the function given in (\ref{approximateformfullform}) is also  a solution of (\ref{pre-1}). Define
\begin{equation*}
\bar{Z}=\left\{\mathcal Q\delta_{\mu,\xi,a,p}(z)=\mathcal Q\pi\left(\frac{\left(\frac{z-\xi}{\mu}-a\left|\frac{z-\xi}{\mu}\right|^2\right)^2}{\left(1-a\cdot \frac{z-\xi}{\mu}+|a|^2\left|\frac{z-\xi}{\mu}\right|^2 \right)^2}+p\right)\Bigg|\ \mu>0,\ \xi,\ a,\ p\in \mathbb{R}^2,\ \mathcal Q\in SO(3)\right \}.
\end{equation*}

We now list some useful expressions. For $\delta_{\mu, \xi, a, p}$, we have the following expansion,
$$
\begin{aligned}
\delta_{\mu, \xi, a, p}:=\delta_{\mu,\xi}+\mathcal{L}_{\mathcal{A}}+\mathcal{R}_{\mathcal{A}},
\end{aligned}
$$
where $\mathcal{L}_{\mathcal{A}}:=\sum_{l=1}^2\left(a_lZ_{-1,l}+p_l Z_{2,l}\right)$ is the linear part, $\mathcal{R}_{\mathcal{A}}$ denotes the higher order terms. These higher order terms include, for example, the following expressions
$$a_ja_kZ^{-1,-1}_{j,k}:=a_ja_k\frac{\pp^2 \delta_{\mu, \xi, a, p} }{\pp a_j\pp a_k}\Bigg|_{a=0,p=0},\ a_jp_kZ^{-1,2}_{j,k}:=a_jp_k\frac{\pp^2\delta_{\mu, \xi, a, p} }{\pp a_j\pp p_k}\Bigg|_{a=0,p=0}, \ p_jp_kZ^{2,2}_{j,k}:=p_jp_k\frac{\pp^2\delta_{\mu, \xi, a, p} }{\pp p_j\pp p_k}\Bigg|_{a=0,p=0}.$$

From (\ref{A18-1}), (\ref{explicitform}) and (\ref{A15-Z}), if $|z-\xi|\geq c>0$, for $\mu$ small and $a_i = o(\mu)$, $p_i = o(\mu)$, $i = 1, 2$, the following expansion holds in the case $m=2$,
\begin{equation}\label{expansion-near-boundary}
\begin{aligned}
\delta_{\mu,\xi}(z)& \sim\left(\frac{2\mu^2\left[(x-\xi_1)^2-(y-\xi_2)^2\right]}{\left|z-\xi\right|^4},\frac{4\mu^2(x-\xi_1)(y-\xi_2)}{\left|z-\xi\right|^4},1-\frac{2\mu^4}{\left|z-\xi\right|^4}\right)+ O(\mu^5),
\\\mathcal{L}_{\mathcal{A}}(z)&\sim  a_1\left(\frac{-4\mu(x-\xi_1)}{\left|z-\xi\right|^2},\frac{-4\mu(y-\xi_2)}{\left|z-\xi\right|^2},\frac{8\mu^3(x-\xi_1)}{\left|z-\xi\right|^4}\right)+ a_1O(\mu^5)\\
&\quad + a_2\left(\frac{4\mu(y-\xi_2)}{\left|z-\xi\right|^2},\frac{-4\mu(x-\xi_1)}{\left|z-\xi\right|^2},\frac{8\mu^3(y-\xi_2)}{\left|z-\xi\right|^4}\right)+ a_2O(\mu^5)\\
&\quad + p_1\left(\frac{-2\mu^4\left[(x-\xi_1)^4-6(x-\xi_1)^2(y-\xi_2)^2+(y-\xi_2)^4\right]}{\left|z-\xi\right|^8},\right.\\
&\quad\quad\quad\quad\
\left.\frac{-8\mu^4(x-\xi_1)(y-\xi_2)\left[(x-\xi_1)^2-(y-\xi_2)^2\right]}{\left|z-\xi\right|^8},\frac{4\mu^6\left[(x-\xi_1)^2-(y-\xi_2)^2\right]}{\left|z-\xi\right|^8}\right)+ p_1O(\mu^7)\\
&\quad + p_2\left(\frac{-8\mu^4(x-\xi_1)(y-\xi_2)\left[(x-\xi_1)^2-(y-\xi_2)^2\right]}{\left|z-\xi\right|^8},\right.\\
&\quad\quad\quad\quad\
\left.\frac{2\mu^4\left[(x-\xi_1)^4-6(x-\xi_1)^2(y-\xi_2)^2+(y-\xi_2)^4\right]}{\left|z-\xi\right|^8},\frac{8\mu^6(x-\xi_1)(y-\xi_2)}{\left|z-\xi\right|^8}\right)+ p_2O(\mu^7).
\end{aligned}
\end{equation}
The terms in $\mathcal{R}_{\mathcal{A}}(z)$ can also be expanded in a similar manner, but we omit the details here.

Let $\varphi_{\mu, \xi, a, p}:\mathcal{D}\rightarrow \mathbb{R}^3$ be the unique solution of the following problem
\begin{equation}\label{pre-3}
\begin{cases}
\Delta \varphi_{\mu, \xi, a, p}=0  ~& \text{in} \ \mathcal{D},\\
\varphi_{\mu, \xi, a, p}=\delta_{\mu, \xi, a, p}  ~& \text{on} \  \partial \mathcal{D},
\end{cases}
\end{equation}
and set $P\delta_{\mu, \xi, a, p}=\delta_{\mu, \xi, a, p}-\varphi_{\mu, \xi, a, p}$. The regular part $\varphi_{\mu, \xi, a, p}$ can be expressed as
$$\varphi_{\mu, \xi, a, p} := \varphi_1 + a_1\varphi_{-1,1} + a_2\varphi_{-1,2} +p_1 \varphi_{2,1} + p_2\varphi_{2,2}+\varphi_{\mathcal{R}_{\mathcal{A}}},$$
here $\varphi_{\mathcal{R}_{\mathcal{A}}}$ denotes the regular part associated with $\mathcal{R}_{\mathcal{A}}$. Using (\ref{expansion-near-boundary}), we obtain the following expansion
\begin{equation}\label{2expansion-near-boundary}
\begin{aligned}
\varphi_1(z,\xi)=&\left(2\mu^2h_1^{(1)}(z,\xi)+O(\mu^6),2\mu^2h_2^{(1)}(z,\xi)+O(\mu^6),1-2\mu^4h_3^{(1)}(z,\xi)+O(\mu^8)\right),
\\ \varphi_{-1,1}(z,\xi)=&~\left(2\mu h_1^{(-1,1)}(z,\xi)+O(\mu^5), 2\mu h_2^{(-1,1)}(z,\xi)+O(\mu^5), -2\mu^3h_3^{(-1,1)}(z,\xi)+O(\mu^7)\right),
\\\varphi_{-1,2}(z,\xi)=&~\left(2\mu h_1^{(-1,2)}(z,\xi)+O(\mu^5), 2\mu h_2^{(-1,2)}(z,\xi)+O(\mu^5),-2\mu^3h_3^{(-1,2)}(z,\xi)+O(\mu^7)\right),
\\
\varphi_{2,1}(z,\xi)=&~\left(2\mu^4 h_1^{(2,1)}(z,\xi)+O(\mu^8),2\mu^4 h_2^{(2,1)}(z,\xi)+O(\mu^8),-2\mu^6h_3^{(2,1)}(z,\xi)+O(\mu^{10})\right),
\\
\varphi_{2,2}(z,\xi)=&~\left(2\mu^4 h_1^{(2,2)}(z,\xi)+O(\mu^8),2\mu^4 h_2^{(2,2)}(z,\xi)+O(\mu^8),-2\mu^6h_3^{(2,2)}(z,\xi)+O(\mu^{10})\right).
\end{aligned}
\end{equation}

For given $(z,\xi)=((x,y),(\xi_1,\xi_2))\in \mathcal{D}\times \mathcal{D}$, $h_1^{(1)}$, $h_2^{(1)}$, $h_3^{(1)}:\mathcal{D}\times \mathcal{D} \rightarrow \mathbb{R}$ are the solutions to the following problems with boundary data determined by the explicit profile of degree $2$,
\begin{equation}\label{pre-4}
\begin{cases}
\Delta_z h_1^{(1)}(z,\xi)=0,\qquad\qquad\qquad\qquad\ \Delta_z h_2^{(1)}(z,\xi)=0 ,\qquad\qquad\qquad\quad \Delta_z h_3^{(1)}(z,\xi)=0 ~& \text{in} \ \mathcal{D},\\
h_1^{(1)}(z,\xi)=\frac{(x-\xi_1)^2-(y-\xi_2)^2}{\left|z-\xi\right|^4},\quad h_2^{(1)}(z,\xi)=\frac{2(x-\xi_1)(y-\xi_2)}{\left|z-\xi\right|^4},\quad h_3^{(1)}(z,\xi)=\frac{1}{\left|z-\xi\right|^4} ~& \text{on} \  \partial \mathcal{D}.
\end{cases}
\end{equation}
And $h_i^{(-1,1)}$, $h_i^{(-1,2)}$, $h_i^{(2,1)}$, $h_i^{(2,2)}:\mathcal{D}\times \mathcal{D} \rightarrow \mathbb{R}$, $i=1,2,3$ are the solutions of the following problems,
\begin{equation*}
\begin{cases}
\Delta_z h_1^{(-1,1)}(z,\xi)=0,\qquad\qquad\quad \Delta_z h_2^{(-1,1)}(z,\xi)=0,\qquad\qquad\quad \Delta_z h_3^{(-1,1)}(z,\xi)=0  ~& \text{in} \ \mathcal{D},\\
h_1^{(-1,1)}(z,\xi)=\frac{-2(x-\xi_1)}{\left|z-\xi\right|^2},\qquad h_2^{(-1,1)}(z,\xi)=\frac{-2(y-\xi_2)}{\left|z-\xi\right|^2},\qquad h_3^{(-1,1)}(z,\xi)=\frac{-4(x-\xi_1)}{\left|z-\xi\right|^4}  ~& \text{on} \  \partial \mathcal{D},
\end{cases}
\end{equation*}

\begin{equation*}
\begin{cases}
\Delta_z h_1^{(-1,2)}(z,\xi)=0,\qquad\qquad\ \Delta_z h_2^{(-1,2)}(z,\xi)=0,\qquad\qquad\quad \Delta_z h_3^{(-1,2)}(z,\xi)=0  ~& \text{in} \ \mathcal{D},\\
h_1^{(-1,2)}(z,\xi)=\frac{2(y-\xi_2)}{\left|z-\xi\right|^2},\qquad h_2^{(-1,2)}(z,\xi)=\frac{-2(x-\xi_1)}{\left|z-\xi\right|^2},\qquad h_3^{(-1,2)}(z,\xi)=\frac{-4(y-\xi_2)}{\left|z-\xi\right|^4}  ~& \text{on}  \  \partial \mathcal{D},
\end{cases}
\end{equation*}

\begin{equation*}
\begin{aligned}
&\begin{cases}
\Delta_z h_1^{(2,1)}(z,\xi)=0  ~& \text{in} \ \mathcal{D},\\
h_1^{(2,1)}(z,\xi)=\frac{-\left[(x-\xi_1)^4-6(x-\xi_1)^2(y-\xi_2)^2+(y-\xi_2)^4\right]}{\left|z-\xi\right|^8}~& \text{on} \  \partial \mathcal{D},
\end{cases}
\\&
\begin{cases}
\Delta_z h_2^{(2,1)}(z,\xi)=0  ~& \text{in} \ \mathcal{D},\\
h_2^{(2,1)}(z,\xi)=\frac{-4(x-\xi_1)(y-\xi_2)\left[(x-\xi_1)^2-(y-\xi_2)^2\right]}{\left|z-\xi\right|^8}  ~& \text{on} \  \partial \mathcal{D},
\end{cases}
\\&
\begin{cases}
\Delta_z h_3^{(2,1)}(z,\xi)=0   ~& \text{in} \ \mathcal{D},\\
h_3^{(2,1)}(z,\xi)=\frac{-2\left[(x-\xi_1)^2-(y-\xi_2)^2\right]}{\left|z-\xi\right|^8}  ~& \text{on} \  \partial \mathcal{D},
\end{cases}
\\&
\begin{cases}
\Delta_z h_1^{(2,2)}(z,\xi)=0  ~& \text{in} \ \mathcal{D},\\
h_1^{(2,2)}(z,\xi)=\frac{-4(x-\xi_1)(y-\xi_2)\left[(x-\xi_1)^2-(y-\xi_2)^2\right]}{\left|z-\xi\right|^8}~& \text{on} \  \partial \mathcal{D},
\end{cases}
\\&
\begin{cases}
\Delta_z h_2^{(2,2)}(z,\xi)=0  ~& \text{in} \ \mathcal{D},\\
h_2^{(2,2)}(z,\xi)=\frac{\left[(x-\xi_1)^4-6(x-\xi_1)^2(y-\xi_2)^2+(y-\xi_2)^4\right]}{\left|z-\xi\right|^8}  ~& \text{on} \  \partial \mathcal{D},
\end{cases}
\\&
\begin{cases}
\Delta_z h_3^{(2,2)}(z,\xi)=0  ~& \text{in} \ \mathcal{D},\\
h_3^{(2,2)}(z,\xi)=\frac{-4(x-\xi_1)(y-\xi_2)}{\left|z-\xi\right|^8} ~& \text{on} \  \partial \mathcal{D}.
\end{cases}
\end{aligned}
\end{equation*}
It is standard to deduce from (\ref{A18-1}), (\ref{A15-Z}) and (\ref{2expansion-near-boundary}) that
\begin{equation}\label{A15-expansion}
\begin{aligned}
P\delta_{\mu, \xi, a, p}=\delta_{\mu, \xi, a, p}-\varphi_{\mu, \xi, a, p}
&=\delta_{\mu, \xi}\left(z\right)-\varphi_1(z,\xi)+\sum_{l=1}^2a_l\left[Z_{-1,l}\left(\frac{z-\xi}{\mu}\right)-\varphi_{-1,l}(z,\xi)\right]
\\&\quad+\sum_{l=1}^2p_l\left[Z_{2,l}\left(\frac{z-\xi}{\mu}\right)-\varphi_{2,l}(z,\xi)\right]+\mathcal{R}_{\mathcal{A}}-\varphi_{\mathcal{R}_{\mathcal{A}}}.
\end{aligned}
\end{equation}

To present our results, we require additional information about $\varphi_{\mu, \xi, a, p}$. In \cite{chanillomalchiodi2005cagasymptotic}, let $G(z,\xi)$ be the Green's function of $ \mathcal{D}$ that satisfies $G(z,\xi)\sim -\log|z-\xi|$ for $z\sim \xi$. Denoting the regular part of $G(z,\xi)$ as $H(z,\xi)$, then  $G(z,\xi)=-\log|z-\xi|-H(z,\xi)$ and
\begin{equation*}
h^{(-1,1)}_1(z,\xi)=h^{(-1,2)}_2(z,\xi)=-2\frac{\partial H(z,\xi)}{\partial \xi_1},\quad h^{(-1,1)}_2(z,\xi)=-h^{(-1,2)}_1(z,\xi)=-2\frac{\partial H(z,\xi)}{\partial \xi_2}.
\end{equation*}
And from \cite[Proposition 6.1]{chanillomalchiodi2005cagasymptotic}, we know that
\begin{equation*}
\frac{\partial h^{(-1,1)}_1}{\partial x}(\xi,\xi)=\frac{\pp h^{(-1,1)}_2}{\partial y}(\xi,\xi)=\frac{\partial h^{(-1,2)}_2}{\partial x}(\xi,\xi)=-\frac{\partial h^{(-1,2)}_1}{\partial y}(\xi,\xi)=\frac{-2}{\left(1-|\xi|^2\right)^2}.
\end{equation*}
Nontrivial observations are the following relations,
\begin{equation*}
\begin{aligned}
& \frac{\pp h_2^{(-1,1)}}{\pp \xi_2}(z, \xi) = 2 h^{(1)}_1(z, \xi),\qquad -\frac{\pp h_1^{(-1,1)}}{\pp \xi_2}(z, \xi) = 2h^{(1)}_2(z, \xi).
\end{aligned}
\end{equation*}
Then we have the following result.

\begin{lemma}\label{lemma7.1a}
Assume $\xi\in \mathcal D$, $\tilde h(\cdot, \xi):\partial \mathcal{D}\to \mathbb{R}^3$ is the function defined by
\begin{equation*}
\begin{aligned}
\tilde h(z, \xi) & =\left(\frac{(x-\xi_1)^2-(y-\xi_2)^2}{\left(\left(x-\xi_1\right)^2+(y-\xi_2)^2\right)^2}, \frac{2\left(x-\xi_1\right)\left(y-\xi_2\right)}{\left(\left(x-\xi_1\right)^2+(y-\xi_2)^2\right)^2}, 0\right),\quad z=(x,y)\in \partial \mathcal{D}.
\end{aligned}
\end{equation*}
Then the harmonic extension on $\mathcal{D}$ of $\tilde h(z, \xi)$ is given by
\begin{equation*}
\begin{aligned}
h(z, \xi) & = \left(\frac{\left((\xi_1-\xi_2)\left(x^2+y^2\right)-x+y\right)\left((\xi_1+\xi_2)\left(x^2+y^2\right)-x-y\right)}{\left(\left(\xi_1^2+\xi_2^2\right)\left(x^2+y^2\right)-2\xi_1x-2\xi_2y+1\right)^2},\right. \\
&\quad\quad\quad\quad\quad\quad\quad\quad\quad\quad\quad\quad\quad\quad\quad \left. \frac{2\left(\xi_1(x^2+y^2)-x\right)\left(\xi_2\left(x^2+y^2\right)-y\right)}{\left(\left(\xi_1^2+\xi_2^2\right)\left(x^2+y^2\right)-2\xi_1x-2\xi_2y+1\right)^2}, 0\right),\quad z=(x,y)\in \mathcal{D}.
\end{aligned}
\end{equation*}
\end{lemma}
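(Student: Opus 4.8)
The plan is to recognize the boundary datum as the boundary values of an explicit holomorphic function and then invoke uniqueness of the harmonic extension. Throughout I identify $z=(x,y)$ with the complex number $x+iy$ and write $\xi=\xi_1+i\xi_2$. Since $\xi\in\mathcal{D}$, one has $|z-\xi|\geq\mathrm{dist}(\xi,\partial\mathcal{D})>0$ on $\partial\mathcal{D}$, so $\tilde h(\cdot,\xi)$ is smooth on $\partial\mathcal{D}$ and admits a unique harmonic extension to $\mathcal{D}$; it therefore suffices to produce a function harmonic in $\mathcal{D}$, continuous up to $\overline{\mathcal{D}}$, that agrees with $\tilde h(\cdot,\xi)$ on $\partial\mathcal{D}$, and to check that it coincides with the claimed $h(z,\xi)$.

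First I would rewrite $\tilde h$ in complex form. Setting $w=z-\xi$, we have $(x-\xi_1)^2-(y-\xi_2)^2=\mathrm{Re}(w^2)$, $2(x-\xi_1)(y-\xi_2)=\mathrm{Im}(w^2)$ and $|z-\xi|^4=|w|^4$, so the first two components of $\tilde h$ are the real and imaginary parts of
\[
\frac{w^2}{|w|^4}=\frac{1}{\overline{w}^{\,2}}=\frac{1}{\overline{(z-\xi)}^{\,2}},
\]
and the third component is $0$. This expression is anti-holomorphic in $z$ but singular at the interior point $\xi$, so it is not itself the extension.

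The key step is to exploit $\bar z=1/z$ on $\partial\mathcal{D}$: there $\overline{z-\xi}=\bar z-\bar\xi=(1-\bar\xi z)/z$, hence on $\partial\mathcal{D}$
\[
\frac{1}{\overline{(z-\xi)}^{\,2}}=\frac{z^2}{(1-\bar\xi z)^2}.
\]
Because $|\xi|<1$, the factor $1-\bar\xi z$ does not vanish on $\overline{\mathcal{D}}$, so $z\mapsto z^2/(1-\bar\xi z)^2$ is holomorphic on a neighborhood of $\overline{\mathcal{D}}$, in particular harmonic in $\mathcal{D}$. Taking its real and imaginary parts as the first two components, and $0$ as the third, yields --- by uniqueness of the harmonic extension --- the harmonic extension of $\tilde h(\cdot,\xi)$.

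Finally I would convert $z^2/(1-\bar\xi z)^2$ to real coordinates: using $\overline{1-\bar\xi z}=1-\xi\bar z$ and $z\bar z=|z|^2$ one gets $z^2/(1-\bar\xi z)^2=(z-\xi|z|^2)^2/|1-\bar\xi z|^4$. A short computation gives $|1-\bar\xi z|^2=(\xi_1^2+\xi_2^2)(x^2+y^2)-2\xi_1x-2\xi_2y+1$, which is exactly the quantity appearing squared in the denominator of $h(z,\xi)$, and writing $z-\xi|z|^2=(x-\xi_1(x^2+y^2))+i(y-\xi_2(x^2+y^2))$ and expanding the square reproduces precisely the two listed numerators (while the third component stays $0$). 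There is no genuine obstacle in this argument; the only point requiring care is the non-vanishing of $1-\bar\xi z$ on $\overline{\mathcal{D}}$, guaranteed precisely by $\xi\in\mathcal{D}$, which is what upgrades the formal boundary identity into a bona fide harmonic extension.
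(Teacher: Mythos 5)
Your proof is correct. The identification $\tilde h_1+i\tilde h_2=1/\overline{(z-\xi)}^{\,2}$, the substitution $\bar z=1/z$ on $\partial\mathcal D$ giving the holomorphic function $z^2/(1-\bar\xi z)^2$, the non-vanishing of $1-\bar\xi z$ on $\overline{\mathcal D}$ for $|\xi|<1$, and the back-conversion $(z-\xi|z|^2)^2/|1-\bar\xi z|^4$ all check out; expanding $A=x-\xi_1(x^2+y^2)$, $B=y-\xi_2(x^2+y^2)$ indeed reproduces the two stated numerators as $A^2-B^2$ and $2AB$ over $|1-\bar\xi z|^4$. Your route differs from the paper's: the paper states this lemma without an explicit argument (the displayed $h$ can be confirmed by direct verification, since on $\partial\mathcal D$ one has $x^2+y^2=1$ and the formula visibly collapses to $\tilde h$), and its systematic machinery for such boundary-value computations, developed in Appendix~\ref{Appendix-G} for Lemma~\ref{evalues-at-xi-2}, is the Poisson integral formula combined with a rotation reducing $\xi$ to a point on the positive real axis. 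Your complex-analytic argument is more conceptual: it \emph{derives} the extension rather than verifying a guessed formula, it explains where the expression $1-\bar\xi z$ in the denominator comes from (it is the M\"obius-type factor produced by reflecting $1/\bar w^2$ across the unit circle), and it generalizes immediately to the degree-$m$ data $w^m/|w|^{2m}$, which is relevant for the higher-degree program the paper alludes to. The Poisson-integral approach, by contrast, is what the paper needs anyway to evaluate the various derivatives of the Robin functions at $\xi$ in Lemma~\ref{evalues-at-xi-2}, where a closed-form extension is not always available.
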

Note that $h(z, \xi) = \left(h^{(1)}_1(z, \xi), h^{(1)}_2(z, \xi), 0\right)$. Now we can compute the values of the Robin functions $h^{(-1,l)}_1(\xi, \xi)$, $h^{(-1,l)}_2(\xi, \xi)$, $h^{(1)}_1(\xi, \xi)$, $h^{(1)}_2(\xi, \xi)$, $h^{(2,l)}_1(\xi, \xi)$, $h^{(2,l)}_2(\xi, \xi)$, $l=1,2$.

\begin{lemma}\label{evalues-at-xi-2}
For the Robin functions $h_1^{(-1,l)}(\xi,\xi)$, $h_2^{(-1,l)}(\xi,\xi)$, $l=1,2$, one has
\begin{equation*}
\begin{aligned}
&h_1^{(-1,1)}(\xi,\xi)=h_2^{(-1,2)}(\xi,\xi) = \frac{-2 \xi _1}{1-|\xi|^2}, \quad h_2^{(-1,1)}(\xi,\xi)=-h_1^{(-1,2)}(\xi,\xi) =\frac{-2 \xi _2}{1-|\xi|^2},
\\& \frac{\partial h_1^{(-1,1)}}{\partial x}(\xi,\xi)=\frac{\partial h_2^{(-1,1)}}{\partial y}(\xi,\xi)=\frac{-2}{(1-|\xi|^2)^2},\qquad\frac{\partial h_1^{(-1,1)}}{\partial y}(\xi,\xi)=-\frac{\partial h_2^{(-1,1)}}{\partial x}(\xi,\xi)=0,
\\&\frac{\partial h_1^{(-1,2)}}{\partial x}(\xi,\xi)=\frac{\partial h_2^{(-1,2)}}{\partial y}(\xi,\xi)=0,\qquad\ \frac{\partial h_1^{(-1,2)}}{\partial y}(\xi,\xi)=-\frac{\partial h_2^{(-1,2)}}{\partial x}(\xi,\xi)=\frac{2}{(1-|\xi|^2)^2},
\end{aligned}
\end{equation*}
and
\begin{equation*}
\begin{aligned}
\frac{\partial^2 h_1^{(-1,1)}}{\partial x^2}(\xi,\xi) = -\frac{\partial^2 h_1^{(-1,1)}}{\partial y^2}(\xi,\xi)=-\frac{4\xi_1}{(1-|\xi|^2)^3},\quad\ \frac{\partial^2 h_2^{(-1,1)}}{\partial x^2}(\xi,\xi) = -\frac{\partial^2 h_2^{(-1,1)}}{\partial y^2}(\xi,\xi) =\frac{4\xi_2}{(1-|\xi|^2)^3},
\end{aligned}
\end{equation*}
\begin{equation*}
\begin{aligned}
\frac{\partial^2 h_1^{(-1,2)}}{\partial x^2}(\xi,\xi) = -\frac{\partial^2 h_1^{(-1,2)}}{\partial y^2}(\xi,\xi)=-\frac{4\xi_2}{(1-|\xi|^2)^3},\quad\ \frac{\partial^2 h_2^{(-1,2)}}{\partial x^2}(\xi,\xi) = -\frac{\partial^2 h_2^{(-1,2)}}{\partial y^2}(\xi,\xi) =-\frac{4\xi_1}{(1-|\xi|^2)^3}.
\end{aligned}
\end{equation*}
For the functions $h_1^{(1)}(z,\xi)$ and $h_2^{(1)}(z,\xi)$, we have the following identities
\begin{align*}
&h_1^{(1)}(\xi,\xi) = \frac{\xi_1^2-\xi_2^2}{(1-|\xi|^2)^2}, \quad h_2^{(1)}(\xi,\xi) = \frac{2\xi_1\xi_2}{(1-|\xi|^2)^2},\quad
\frac{\partial h_1^{(1)}}{\partial x}(\xi,\xi)=\frac{\partial h_2^{(1)}}{\partial y}(\xi,\xi)=\frac{2\xi_1}{(1-|\xi|^2)^3},
\\&\frac{\partial h_1^{(1)}}{ \partial y}(\xi,\xi)=-\frac{\partial h_2^{(1)}}{ \partial x}(\xi,\xi)=-\frac{2\xi_2}{(1-|\xi|^2)^3},\qquad \frac{\partial^2 h_1^{(1)}}{\partial x^2}(\xi,\xi)=\frac{\partial^2 h_2^{(1)}}{\partial x\pp y}(\xi,\xi)=\frac{2(1+2|\xi|^2)}{(1-|\xi|^2)^4},
\\&
\frac{\partial^2 h_1^{(1)}}{\partial y^2}(\xi,\xi)=-\frac{\partial^2 h_2^{(1)}}{\pp x\partial y}(\xi,\xi)=\frac{-2(1+2|\xi|^2)}{(1-|\xi|^2)^4},
\quad
\frac{\partial^2 h_1^{(1)}}{\partial x \partial y}(\xi,\xi)=-\frac{\partial^2 h_2^{(1)}}{\partial x^2}(\xi,\xi)=\frac{\partial^2 h_2^{(1)}}{\pp y^2}(\xi,\xi)= 0,
\\&
\frac{\partial^3 h_1^{(1)}}{\partial x^3}(\xi,\xi)=\frac{\partial^3 h_2^{(1)}}{\partial x^2\pp y}(\xi,\xi)=-\frac{\partial^3 h_1^{(1)}}{\partial x\pp y^2}(\xi,\xi)=-\frac{\partial^3 h_2^{(1)}}{\pp y^3}(\xi,\xi)=\frac{12 \xi _1 \left(1+|\xi|^2\right)}{(1-|\xi|^2)^5},
\\&\frac{\partial^3 h_1^{(1)}}{\partial y^3}(\xi,\xi)=-\frac{\partial^3 h_2^{(1)}}{\pp x\partial y^2}(\xi,\xi)=-\frac{\partial^3 h_1^{(1)}}{\pp x^2\partial y}(\xi,\xi)=\frac{\partial^3 h_2^{(1)}}{\pp x^3}(\xi,\xi)=-\frac{-12 \xi _2 \left(1+|\xi|^2\right)}{(1-|\xi|^2)^5},
\\&
\frac{\partial^4 h_1^{(1)}}{\partial x^4}(\xi,\xi)=\frac{\partial^4 h_2^{(1)}}{\partial x^3\pp y}(\xi,\xi)=-\frac{\partial^4 h_1^{(1)}}{\partial x^2\pp y^2}(\xi,\xi)=-\frac{\partial^4 h_2^{(1)}}{\partial x\pp y^3}(\xi,\xi)=\frac{\partial^4 h_1^{(1)}}{\partial y^4}(\xi,\xi)=
\frac{24(\xi_1^2-\xi_2^2)(3+2|\xi|^2)}{(1-|\xi|^2)^6},
\\&
\frac{\partial^4 h_2^{(1)}}{\partial x^4}(\xi,\xi)=-\frac{\partial^4 h_1^{(1)}}{\partial x^3\pp y}(\xi,\xi)=-\frac{\partial^4 h_2^{(1)}}{\partial x^2\pp y^2}(\xi,\xi)=\frac{\partial^4 h_1^{(1)}}{\partial x\pp y^3}(\xi,\xi)=\frac{\partial^4 h_2^{(1)}}{\partial y^4}(\xi,\xi)=\frac{-48\xi_1\xi_2(3+2|\xi|^2)}{(1-|\xi|^2)^6},
\\&
\frac{\partial^6 h_1^{(1)}}{\partial x^6}(\xi,\xi)=- \frac{\partial^6 h_1^{(1)}}{\partial y^6}(\xi,\xi)=\frac{720 \left(2 \xi _1^6+\left(5-10 \xi _2^2\right) \xi _1^4-10 \xi _2^2 \left(\xi _2^2+3\right) \xi _1^2+\xi _2^4 \left(2 \xi _2^2+5\right)\right)}{(1-|\xi|^2)^8}.
\end{align*}

For the functions $h^{(2,l)}$, $l=1,2$, one has
\begin{align*}
&\frac{\pp h_1^{(2,1)}}{\pp x}(\xi,\xi)=\frac{\pp h_2^{(2,1)}}{\pp y}(\xi,\xi)=-\frac{4\xi_1 \left(\xi_1^2-3\xi_2^2\right)}{\left(1-|\xi|^2\right)^5},\qquad \frac{\pp h_1^{(2,1)}}{\pp y}(\xi,\xi)=-\frac{\pp h_2^{(2,1)}}{\pp x}(\xi,\xi)=-\frac{4\xi_2 \left(-3\xi_1^2+\xi_2^2\right)}{\left(1-|\xi|^2\right)^5},
\\&\frac{\pp^4 h_1^{(2,1)}}{\pp x^4}(\xi,\xi)=\frac{\pp^4 h_1^{(2,1)}}{\pp y^4}(\xi,\xi)=-\frac{24 \left(35+4 \left(|\xi|^2-1\right)^3+30 \left(|\xi|^2-1\right)^2+60 \left(|\xi|^2-1\right)\right)}{\left(1-|\xi|^2\right)^8},
\\&\frac{\pp^4 h_2^{(2,1)}}{\pp x^4}(\xi,\xi)=\frac{\pp^4 h_2^{(2,1)}}{\pp y^4}(\xi,\xi)=0,
\\&\frac{\pp h_1^{(2,2)}}{\pp x}(\xi,\xi)=\frac{\pp h_2^{(2,2)}}{\pp y}(\xi,\xi)=\frac{4\xi_2 \left(-3\xi_1^2+\xi_2^2\right)}{\left(1-|\xi|^2\right)^5}
,\qquad \frac{\pp h_1^{(2,2)}}{\pp y}(\xi,\xi)=-\frac{\pp h_2^{(2,2)}}{\pp x}(\xi,\xi)=-\frac{4\xi_1 \left(\xi_1^2-3\xi_2^2\right)}{\left(1-|\xi|^2\right)^5},
\\&
\frac{\pp^4 h_1^{(2,2)}}{\pp x^4}(\xi,\xi)=\frac{\pp^4 h_1^{(2,2)}}{\pp y^4}(\xi,\xi)=0,
\\&
\frac{\pp^4 h_2^{(2,2)}}{\pp x^4}(\xi,\xi)=\frac{\pp^4 h_2^{(2,2)}}{\pp y^4}(\xi,\xi)=\frac{24 \left(35+4 \left(|\xi|^2-1\right)^3+30 \left(|\xi|^2-1\right)^2+60 \left(|\xi|^2-1\right)\right)}{\left(1-|\xi|^2\right)^8}.
\end{align*}
\end{lemma}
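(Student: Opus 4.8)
\emph{Proof proposal.} The plan is to realize every harmonic extension appearing in the statement as the real or imaginary part of an explicit rational holomorphic function on $\mathcal D$, after which all the asserted values and derivatives at the diagonal $z=\xi$ are obtained by Wirtinger calculus together with a direct substitution; this is the degree-$2$ counterpart of the closed-form harmonic extension found in Lemma~\ref{lemma7.1a}. The starting point is the elementary observation that on $\partial\mathcal D$ one has $\bar z=1/z$, hence $\bar z-\bar\xi=z^{-1}(1-\bar\xi z)$ and therefore, for $z\in\partial\mathcal D$, $\xi\in\mathcal D$, and any $m\ge1$,
\[
\frac{1}{(\bar z-\bar\xi)^m}=\frac{z^m}{(1-\bar\xi z)^m},
\]
whose right-hand side is holomorphic on all of $\mathcal D$ since $1-\bar\xi z\neq0$ there. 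Using in addition $x-\xi_1=\mathrm{Re}(z-\xi)$, $y-\xi_2=\mathrm{Im}(z-\xi)$, $(z-\xi)^2=\bigl((x-\xi_1)^2-(y-\xi_2)^2\bigr)+2i(x-\xi_1)(y-\xi_2)$, the analogous formula for $(z-\xi)^4$, and $|z-\xi|^{2m}=|(z-\xi)^m|^2$, each boundary datum in \eqref{pre-4} and in the subsequent boundary value problems for $h^{(-1,l)}_i$ and $h^{(2,l)}_i$ ($i=1,2$) is the restriction to $\partial\mathcal D$ of a holomorphic function. Concretely, on $\partial\mathcal D$,
\begin{align*}
h^{(1)}_1+i\,h^{(1)}_2&=\frac{z^2}{(1-\bar\xi z)^2},\qquad
h^{(-1,1)}_1+i\,h^{(-1,1)}_2=\frac{-2z}{1-\bar\xi z},\qquad
h^{(-1,2)}_1+i\,h^{(-1,2)}_2=\frac{-2iz}{1-\bar\xi z},\\
h^{(2,1)}_1+i\,h^{(2,1)}_2&=\frac{-z^4}{(1-\bar\xi z)^4},\qquad
h^{(2,2)}_1+i\,h^{(2,2)}_2=\frac{i\,z^4}{(1-\bar\xi z)^4}.
\end{align*}

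Since each right-hand side is holomorphic on $\mathcal D$ and continuous up to the boundary, its real and imaginary parts are harmonic functions carrying the prescribed boundary data, so by uniqueness for the Dirichlet problem these identities in fact hold on all of $\mathcal D$. As consistency checks, the first formula recovers the two non-trivial components of $h(z,\xi)$ in Lemma~\ref{lemma7.1a}, and, using the explicit regular part $H(z,\xi)=-\log|1-\bar\xi z|$ of the Green's function of $\mathcal D$, the relations $h^{(-1,1)}_1=-2\partial_{\xi_1}H$ and $h^{(-1,1)}_2=-2\partial_{\xi_2}H$ recalled just before the lemma reproduce the second formula; the same closed forms also fall out of the Poisson-integral computation in Appendix~\ref{Appendix-G}.

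It then remains to differentiate and evaluate at $z=\xi$. For holomorphic $f$ one has $\partial_x^{\,a}\partial_y^{\,b}[\mathrm{Re}\,f]=\mathrm{Re}\bigl(i^{\,b}f^{(a+b)}\bigr)$ and likewise with $\mathrm{Im}$, and the required derivatives are elementary, e.g. $\bigl(z^2(1-\bar\xi z)^{-2}\bigr)'=2z(1-\bar\xi z)^{-3}$, or from the expansion $z^2/(1-\bar\xi z)^2=\sum_{n\ge2}(n-1)\bar\xi^{\,n-2}z^n$. Setting $z=\xi$ turns $1-\bar\xi z$ into $1-|\xi|^2$ and recombines powers of $\bar\xi$ and $\xi$ into powers of $|\xi|^2$ together with $\mathrm{Re}(\xi^k)$ and $\mathrm{Im}(\xi^k)$, and expanding $\mathrm{Re}(\xi^2)=\xi_1^2-\xi_2^2$, $\mathrm{Re}(\xi^3)=\xi_1(\xi_1^2-3\xi_2^2)$, $\mathrm{Re}(\xi^4)=\xi_1^4-6\xi_1^2\xi_2^2+\xi_2^4$, and so on, produces each identity in the statement (for instance $\mathrm{Re}\,f^{(6)}(\xi)$ with $f=z^2/(1-\bar\xi z)^2$ gives $720\,\mathrm{Re}(\xi^4)(5+2|\xi|^2)/(1-|\xi|^2)^8$, which is the asserted value of $\partial_x^6 h^{(1)}_1(\xi,\xi)$). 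For the mixed $z$–$\xi$ derivatives in the $h^{(-1,l)}$ block one differentiates the explicit $H(z,\xi)$ — equivalently the closed forms above in $\xi_1,\xi_2$ — before restricting to the diagonal.

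The argument carries no conceptual difficulty: the one genuine subtlety is to select, among several expressions agreeing with a given datum on $\partial\mathcal D$ (for instance $1/(\bar z-\bar\xi)^m$, $(-1)^m/(z-\xi)^m$, and $z^m/(1-\bar\xi z)^m$), the representative that is holomorphic \emph{throughout} $\mathcal D$; the substitution $\bar z=1/z$ always produces it for the data arising here, whereas the naive $(z-\xi)^{-m}$ has an interior pole at $z=\xi$ and would instead require the Poisson-kernel/partial-fraction treatment of Appendix~\ref{Appendix-G} (as is genuinely needed for the $h_3^{(\cdot)}$ boundary data, which however do not enter this lemma). The remaining effort is purely the bookkeeping of the many Wirtinger derivatives and the tracking of signs and conjugations.
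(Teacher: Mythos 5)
Your argument is correct, and it reaches the conclusions of the lemma by a genuinely different route from the paper. The paper's proof (Appendix~\ref{Appendix-G}) never writes down the harmonic extensions: it represents $h_1+ih_2$ by the Poisson integral, uses the rotation $z\mapsto e^{-i\theta_0}z$ to reduce to $\xi_0=(|\xi|,0)$ on the positive real axis, and then differentiates the Poisson kernel under the integral sign, evaluating the resulting trigonometric integrals one derivative at a time. You instead observe that every boundary datum entering this lemma is the restriction to $\partial\mathcal D$ of $c\,(\bar z-\bar\xi)^{-m}$ for a constant $c\in\{1,-2,-2i,-1,i\}$ and $m\in\{1,2,4\}$, and that the substitution $\bar z=1/z$ converts this into the function $c\,z^m(1-\bar\xi z)^{-m}$, holomorphic on all of $\mathcal D$; uniqueness for the Dirichlet problem then gives the extension in closed form, and everything reduces to single-variable complex differentiation plus $\partial_x^a\partial_y^b\,\mathrm{Re}\,f=\mathrm{Re}(i^b f^{(a+b)})$. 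I checked the representative computations ($f'$, $f''$, $f^{(4)}$ for the $h^{(1)}$ and $h^{(2,1)}$ blocks, and your $f^{(6)}(\xi)=720\,\bar\xi^4(5+2|\xi|^2)(1-|\xi|^2)^{-8}$) and they reproduce the tabulated values, including the vanishing of $\partial_x^4h_2^{(2,1)}$ and $\partial_x^4h_1^{(2,2)}$ at the diagonal. What each approach buys: yours is shorter, yields the extensions themselves (consistent with Lemma~\ref{lemma7.1a} and with $h^{(-1,1)}=-2\nabla_\xi H$), and makes the algebraic relations $h^{(-1,2)}=ih^{(-1,1)}$, $h^{(2,2)}=-ih^{(2,1)}$ transparent; the paper's Poisson-kernel method is the one that survives for data such as $h_3^{(\cdot)}$ (and for general degree), where no rational holomorphic extension exists — you correctly flag exactly this limitation. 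Your remark about ``mixed $z$--$\xi$ derivatives'' is superfluous: every derivative in the statement is taken in $z$ alone before restricting to the diagonal, so the Wirtinger rule with $\xi$ frozen already covers all cases.

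One concrete byproduct worth recording: your method gives $\partial_y^3h_1^{(1)}(\xi,\xi)=\mathrm{Im}\,f'''(\xi)=-12\xi_2(1+|\xi|^2)(1-|\xi|^2)^{-5}$, whereas the lemma's doubly negated display evaluates to $+12\xi_2(1+|\xi|^2)(1-|\xi|^2)^{-5}$. A direct check at $\xi=(0,b)$, where $h_1^{(1)}(0,y,\xi)=-y^2(1-by)^{-2}$ and $\partial_y^3\big|_{y=b}=-12b(1+b^2)(1-b^2)^{-5}$, confirms your sign; the stated value appears to carry a typographical extra minus sign (the chain of equalities in that line is internally consistent with harmonicity and Cauchy--Riemann, so only the final numerical value is affected). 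This does not indicate any gap in your argument.
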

%{\color{red}The explicit forms of the functions $h^{(2,l)}$, $l=1,2$ are unknown, and as a result, we are unable to  compute their values or higher-order derivatives at $\xi$ directly.}
In Appendix \ref{Appendix-G}, we provide the proof of Lemma \ref{evalues-at-xi-2} using the Poisson integral formula. This approach does not depend on the explicit forms of the Robin functions. We believe it will be valuable for future studies on harmonic maps and $H$-bubbles of any degree.

Let us set up the problem as follows. For $\mu$ sufficiently small, we define
\begin{equation}\label{asymp-d}
d=d_\varepsilon=1-|\xi|^2=O\left(\mu^{\frac{2}{3}-\alpha}\right),
\end{equation}
where $\alpha$ is a small but fixed positive constant. Given a positive integer $k$, we place $k$ points on a circle centered at the origin with radius $1-d$, and they are parametrized as follows,
$$
\left((1-d)\cos\frac{2\pi j}{k}, (1-d)\sin\frac{2\pi j}{k}\right),\quad j = 1,2,\dots, k.
$$
The locations of bubbles are visualized as in Figure \ref{figure2} below.
\begin{figure}[H]
  \centering
  \includegraphics [width=0.45\textwidth]{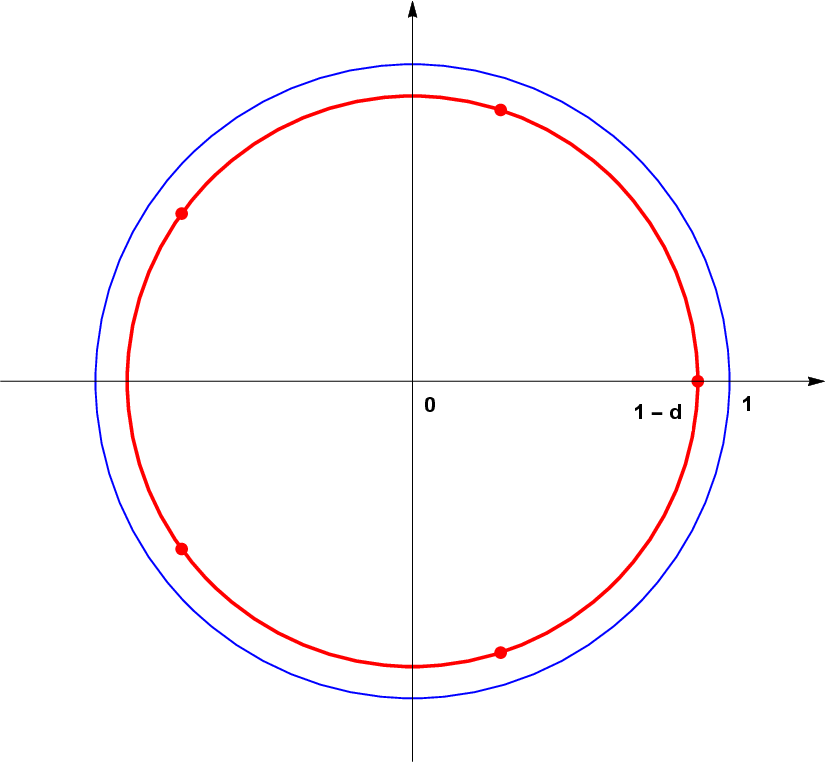}\\
  \caption{The approximate multiple spikes are located at $k$ given points, which are equidistributed on a circle of radius $1-d$ with center at origin, $k=5$ for example.}\label{figure2}
\end{figure}
As we will see in Subsection \ref{remark4.1}, the main orders of the parameters are
\begin{equation}\label{asymp-ap}
\mu_0 = \sqrt{\varepsilon},\quad a_1\sim \sqrt{\varepsilon} d^2,\quad a_2\sim \sqrt{\varepsilon} d^2,\quad
p_1\sim \frac{d^5}{\varepsilon}, \quad p_2\sim \frac{d^5}{\varepsilon}.
\end{equation}

We conclude this section with some estimates for further use. Let $\tau=\frac{d}{2}$ and $B_{\tau}:=B_{\tau}(\xi)$ be the ball of radius $\tau$ centered at $\xi$. Then for all $z \in \mathcal{D},$ $l=1, 2$, we have
\begin{equation}\label{estimates-1}
\begin{aligned}
&\|P\delta\|_{L^1\left(B_{\tau}\right)}= O\left(\mu^2\left|\log \mu\right|\right),\quad
\|\nabla P\delta\|_{L^1\left(B_{\tau}\right)}= O\left(\mu\right),\quad
\|P\delta\|_{L^2\left(B_{\tau}\right)}= O\left(\mu\right), \quad
\|\nabla P\delta\|_{L^2\left(B_{\tau}\right)}= O\left(1\right),
\\&  \|P\delta\|_{L^2\left(\mathcal{D}\backslash B_{\tau}\right)}+ \|\nabla P\delta\|_{L^2\left(\mathcal{D}\backslash B_{\tau}\right)}= O\left(\mu^2 d^{-2}\right),\quad
 |\varphi_1-(0,0,1)|(z)=O\left(\mu^2 d^{-2}\right),\quad |\nabla \varphi_1|(z)=O\left(\mu^2 d^{-3}\right),
\end{aligned}
\end{equation}
\begin{equation}\label{estimates-2}
\begin{aligned}
&\|PZ_{-1,l}\|_{L^1\left(B_{\tau}\right)}= O\left(\mu d\right),\quad
\|\nabla PZ_{-1,l}\|_{L^1\left(B_{\tau}\right)}= O\left(\mu \left|\log \mu\right|\right),\quad
\|PZ_{-1,l}\|_{L^2\left(B_{\tau}\right)}= O\left(\mu \left|\log \mu\right|\right),
 \\& \| PZ_{-1,l}\|_{L^2\left(\mathcal{D}\backslash B_{\tau}\right)}+ \|\nabla PZ_{-1,l}\|_{L^2\left(\mathcal{D}\backslash B_{\tau}\right)}= O\left(\mu d^{-1}\right),\quad |\varphi_{-1,l}|(z)=O(\mu d^{-1}),\quad |\nabla \varphi_{-1,l}|(z) =O(\mu d^{-2}).
 \end{aligned}
\end{equation}
\begin{equation}\label{estimates-3}
\begin{aligned}
&\|PZ_{2,l}\|_{L^1\left(B_{\tau}\right)}= O\left(\mu^2\right),\quad
\|\nabla PZ_{2,l}\|_{L^1\left(B_{\tau}\right)}= O\left(\mu\right),\quad
\|PZ_{2,l}\|_{L^2\left(B_{\tau}\right)}= O\left(\mu \right),
 \\& \|PZ_{2,l}\|_{L^2\left(\mathcal{D}\backslash B_{\tau}\right)}+\|\nabla PZ_{2,l}\|_{L^2\left(\mathcal{D}\backslash B_{\tau}\right)}= O\left(\mu^4 d^{-4}\right),\quad |\varphi_{2,l}|(z) =O(\mu^4 d^{-4}),\quad |\nabla \varphi_{2,l}|(z)=O(\mu^4 d^{-5}),
\end{aligned}
\end{equation}

\begin{equation}\label{estimates-4}
\begin{aligned}
&\|P\delta_{\mu, \xi, a, p}\|_{L^1\left(B_{\tau}\right)}= O\left(\mu^2\left|\log \mu\right|\right),\quad
\|\nabla P\delta_{\mu, \xi, a, p}\|_{L^1\left(B_{\tau}\right)}= O\left(\mu\right),\quad
\|P\delta_{\mu, \xi, a, p}\|_{L^2\left(B_{\tau}\right)}= O\left(\mu\right),
\\& \|\nabla P\delta_{\mu, \xi, a, p}\|_{L^2\left(B_{\tau}\right)}= O\left(1\right),\quad \|P\delta_{\mu, \xi, a, p}\|_{L^2\left(\mathcal{D}\backslash B_{\tau}\right)}+ \|\nabla P\delta_{\mu, \xi, a, p}\|_{L^2\left(\mathcal{D}\backslash B_{\tau}\right)}= O\left(\mu^2 d^{-2}\right).
\end{aligned}
\end{equation}

\section{The finite-dimensional reduction and $C^1$-estimates}\label{finite-dimensional-reduction}
In this section, we employ the finite-dimensional reduction method to reduce the problem into a finite-dimensional one on the parameters $\mathcal Q,\ \mu,\ \xi,\ a,\ p$.
Given $k\in \mathbb{N}$, $\bar C>0$, $\mathcal Q_i\in SO(3)$, set
\begin{equation}\label{solution-set}
\begin{aligned}
Z & =\left \{\sum_{i=1}^kP\mathcal Q_i\delta_{\mu_i,\xi^{(i)},a_i, p_i}\,\Bigg|\,\ \text{dist}\left(\xi^{(i)},\xi^{(j)}\right)\geq \bar{C}^{-1},\ \forall i\neq j, \ \frac{\varepsilon^{\frac{1}{2}}}{\mu_i}, \ \frac{\varepsilon^{\frac{1}{2}}d^2}{|a_{i}|}, \ \frac{\varepsilon^{-1}d^5}{|p_{i}|}\in \left[\bar{C}^{-1},\bar C\right] \right\}.
\end{aligned}
\end{equation}
We denote the elements in $Z$ as $z_{\mathcal{A}}=\sum_{i=1}^k z_{\mathcal{A}_i}$, where $z_{\mathcal{A}_i}=P\mathcal Q_i\delta_{\mu_i,\xi^{(i)},a_i, p_i}.$ The following result is a direct consequence of the  non-degeneracy of the degree 2 $H$-bubble proved in \cite{SireWeiZhengZhou-H-system} recently.
\begin{prop}\label{prop 3.1} There exists a constant $C_0>0$ such that
\begin{equation*}
\bar{I}''\left(Q\delta_{\mu,\xi,a,p}\right)\left[Q\delta_{\mu,\xi,a,p},Q\delta_{\mu,\xi,a,p}\right]\leq -C_0 \|\nabla(Q\delta_{\mu,\xi,a,p})\|_{L^2(\mathbb{R}^2)}^2 \quad \text {for all}\quad Q\delta_{\mu,\xi,a,p}\in \bar{Z},
\end{equation*}
and
\begin{equation*}
\bar{I}''\left(Q\delta_{\mu,\xi,a,p}\right)\left[v,v\right]\geq C_0 \|\nabla v\|_{L^2(\mathbb{R}^2)}^2 \quad \text {for all}\quad Q\delta_{\mu,\xi,a,p}\in \bar{Z}\ \text{and} \ v \perp \left(T_{Q\delta_{\mu,\xi,a,p}}\bar{Z}\oplus \{tQ\delta_{\mu,\xi,a,p}\}_t\right).
\end{equation*}
In particular, $\bar{I}''\left(Q\delta_{\mu,\xi,a,p}\right)\left[v\right]=0$ implies $v-c \in T_{Q\delta_{\mu,\xi,a,p}}\bar{Z}$ for some $c\in \mathbb{R}^3$.
\end{prop}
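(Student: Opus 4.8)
The plan is to transfer the $D^{1,2}$-nondegeneracy of the degree $2$ $H$-bubble $\mathcal W$ established in \cite{SireWeiZhengZhou-H-system} to the precise quadratic-form statement above, which amounts to an appropriate spectral decomposition of $\bar I''$ at a critical point. First I would recall that $\bar I''(\mathcal W)[v] = L_{\mathcal W}[v]$ in the weak sense, so that, after the inverse stereographic projection identifying $\mathcal X$ with $H^1(\mathbb S^2,\mathbb R^3)$ (equivalently, working with the Dirichlet form on $D^{1,2}$ as noted after \eqref{kernels-WW}), the operator $\bar I''(\mathcal W)$ is a compact perturbation of the identity on the relevant Hilbert space. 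Hence its spectrum is discrete with finite-dimensional eigenspaces, and the kernel is exactly the span of the bounded solutions of \eqref{e:linearized1}, namely the ten functions $Z_{k,j}$ in \eqref{A15-Z} together with the three extra kernels in \eqref{kernels-WW} coming from translations in $\mathbb R^3$; modulo constant vectors this kernel is $T_{\mathcal W}\bar Z \oplus \{t\mathcal W\}_t$, using that $\mathcal W$ itself (the dilation-type direction combined with a constant) lies in the negative/zero structure in the right way.

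\medskip

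The two displayed inequalities are then the statements that $\bar I''(\mathcal W)$ is negative definite on $\{t\mathcal W\}_t$ (one negative direction, reflecting the mountain-pass nature of $\mathcal W$ for $\bar I$ in \eqref{functional}), is zero on $T_{\mathcal W}\bar Z$, and is uniformly positive definite on the orthogonal complement of $T_{\mathcal W}\bar Z\oplus\{t\mathcal W\}_t$. The negative direction is computed directly: since $\bar I$ is $2$-homogeneous in its quadratic part and cubic in the wedge term, a Pohozaev/scaling computation along $t\mapsto \bar I(t\mathcal W)$ yields $\bar I''(\mathcal W)[\mathcal W,\mathcal W] = -2\bar I(\mathcal W) < 0$ (after normalizing $\bar I(\mathcal W)>0$), giving the first inequality with an explicit $C_0$ on $\bar Z$ by invariance, since all elements of $\bar Z$ are images of $\mathcal W$ under the energy-preserving transformations (1)--(6). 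The vanishing on $T_{\mathcal W}\bar Z$ is immediate by differentiating the family $\mathcal Q\delta_{\mu,\xi,a,p}$ (all critical points of $\bar I$) in the parameters. For the coercivity on the complement, one argues by contradiction with the spectral theorem: if the infimum of $\bar I''(\mathcal W)[v]/\|\nabla v\|_{L^2}^2$ over that complement were $\le 0$, a minimizing sequence would converge (by compactness of the perturbation) to a nonzero eigenfunction with eigenvalue $\le 0$ orthogonal to $T_{\mathcal W}\bar Z\oplus\{t\mathcal W\}_t$; but the nondegeneracy result says the only zero-eigenvalue directions are the listed kernels, and a separate check shows $\{t\mathcal W\}_t$ (together with the constant-vector kernels) exhausts the non-positive spectrum, so no such eigenfunction exists, forcing the infimum to be a positive constant $C_0$. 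Finally, the last assertion ($\bar I''(\mathcal W)[v]=0 \Rightarrow v-c\in T_{\mathcal W}\bar Z$) is just the statement that $\ker \bar I''(\mathcal W)$ is spanned by the $Z_{k,j}$ and the three extra kernels \eqref{kernels-WW}, the latter being precisely the derivatives in the $\mathbb R^3$-translation directions, i.e. constant shifts $c$; and then transporting to a general $Q\delta_{\mu,\xi,a,p}\in\bar Z$ by the invariances.

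\medskip

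The main obstacle I expect is \emph{not} the abstract spectral bookkeeping but pinning down that the non-positive eigenspace of $\bar I''(\mathcal W)$ is exactly one-dimensional (spanned by $\mathcal W$ modulo the genuine zero modes): the nondegeneracy theorem of \cite{SireWeiZhengZhou-H-system} controls the kernel but one must separately rule out any strictly negative eigenvalue other than the one along $\mathcal W$. This is where the precise description in \cite[Section 3]{SireWeiZhengZhou-H-system} is used — the Fourier-mode decomposition in the complex variable reduces the eigenvalue problem to a family of ODEs (one per mode $k$), and one checks mode by mode that each nonpositive eigenvalue is accounted for by a bounded kernel element or by $\mathcal W$ itself; the modes $k=0,\pm1,\pm2$ carry the kernels, the extra three kernels sit in low modes, and all remaining modes are strictly positive. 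A secondary, more bookkeeping-type difficulty is keeping the constant $C_0$ uniform over $\bar Z$: this follows because every element of $\bar Z$ is obtained from $\mathcal W$ by a composition of the maps (1)--(6), all of which leave $\bar I$ (hence $\bar I''$ and the Dirichlet norm, by conformal invariance of the two-dimensional Dirichlet energy) invariant, so the quadratic-form estimate is literally the same after change of variables.
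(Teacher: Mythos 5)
The paper offers no proof of this proposition at all: it is asserted as ``a direct consequence'' of the kernel characterization in \cite{SireWeiZhengZhou-H-system}, so there is no argument of the authors' to compare yours against line by line. Your spectral reconstruction (compact perturbation of the Dirichlet form on $\mathcal X\cong H^1(\mathbb S^2,\mathbb R^3)$, one negative direction along $\mathcal W$, zero directions along $T_{\mathcal W}\bar Z$, coercivity on the complement, uniformity over $\bar Z$ by the conformal/rotational/translational invariance of $\bar I$ and of the Dirichlet energy) is the standard and essentially correct route, and is surely what the authors have in mind. Two local corrections. First, the scaling computation: with $A=\int|\nabla\mathcal W|^2$ and $B=\int\mathcal W\cdot(\mathcal W_x\wedge\mathcal W_y)$, criticality gives $A+2B=0$, hence $\bar I''(\mathcal W)[\mathcal W,\mathcal W]=A+4B=-A=-\|\nabla\mathcal W\|_{L^2}^2=-6\bar I(\mathcal W)$, not $-2\bar I(\mathcal W)$; the sign and the first inequality (with $C_0=1$) survive, but the constant you quote is wrong. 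Second, your statement that the kernel of $\bar I''(\mathcal W)$ modulo constants is $T_{\mathcal W}\bar Z\oplus\{t\mathcal W\}_t$ is false as written: $L_{\mathcal W}[\mathcal W]=-2\,\mathcal W_x\wedge\mathcal W_y\neq 0$, so $\mathcal W$ is a genuinely negative direction and not a kernel direction; the kernel modulo constants is $T_{\mathcal W}\bar Z$ alone (the three extra kernels \eqref{kernels-WW} being the $\mathbb R^3$-translation directions up to elements of $T_{\mathcal W}\bar Z$). Your later paragraphs treat this correctly, so it is a presentational slip rather than a structural one.

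The one genuine gap --- which you honestly flag yourself --- is that nondegeneracy of the kernel does not, by itself, control the Morse index: one must separately prove that the negative eigenspace of $\bar I''(\mathcal W)$ is exactly one-dimensional, spanned by $\mathcal W$. Your proposal defers this to a mode-by-mode ODE analysis in \cite[Section 3]{SireWeiZhengZhou-H-system}, but that reference (as described in this paper) establishes the kernel, not the index, so as written this step is not yet proved. Two ways to close it: (i) the min-max characterization --- the paper states that $\mathcal Q\delta_{\mu,\xi,a,p}$ are mountain-pass critical points of \eqref{functional}, which caps the Morse index at one, and the explicit negative direction along $\mathcal W$ then pins it at exactly one; or (ii) the isoperimetric-inequality argument of Isobe \cite{Takeshi2001a} cited in the introduction, which yields the second-variation inequality on the orthogonal complement directly. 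Until one of these (or an actual index computation in the cited reference) is invoked, the coercivity inequality in the proposition is asserted rather than proved.
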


\begin{lemma}\label{lemma-3.2}
For any $z_{\mathcal{A}}\in Z$, there exists a positive constant $C$ such that if $v\in H_0^1(\mathcal{D})$, $v\perp T_{z_{\mathcal{A}}}Z$, $v\perp z_{\mathcal{A}_i},\ \forall i$, then
\begin{equation*}
I_{\varepsilon}''\left(z_{\mathcal{A}}\right)\left[v,v\right]\geq C^{-1} \| v\|_{ H_0^1(\mathcal{D})}^2.
\end{equation*}
\end{lemma}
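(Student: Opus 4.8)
\textbf{Proof proposal for Lemma \ref{lemma-3.2}.} The plan is to transfer the coercivity from the limiting profile $\bar I''$ (Proposition \ref{prop 3.1}) to the perturbed, projected configuration $z_{\mathcal A}$, handling (i) the passage from the single $H$-bubble $\mathcal Q\delta_{\mu,\xi,a,p}$ to a sum of $k$ well-separated bubbles, (ii) the passage from the true bubble $\delta_{\mu,\xi,a,p}$ to its Dirichlet projection $P\delta_{\mu,\xi,a,p}$, and (iii) the passage from $\bar I$ to $I_\varepsilon$, which differs only by lower-order $\varepsilon$-terms in \eqref{Euler-functional}. Throughout I would argue by contradiction: suppose there exist $z_{\mathcal A}^{(n)}\in Z$ (for a sequence of parameters) and $v_n\in H_0^1(\mathcal D)$ with $\|v_n\|_{H_0^1(\mathcal D)}=1$, $v_n\perp T_{z_{\mathcal A}^{(n)}}Z$, $v_n\perp z_{\mathcal A_i}^{(n)}$ for all $i$, but $I_\varepsilon''(z_{\mathcal A}^{(n)})[v_n,v_n]\to \ell\le 0$.

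First I would record the standard expansion $I_\varepsilon''(z_{\mathcal A})[v,v]=\bar I''(z_{\mathcal A})[v,v]+O(\varepsilon)\|v\|_{H_0^1}^2$, using that the $\varepsilon$- and $\varepsilon^2$-terms of \eqref{Euler-functional} are quadratic/lower order in $v$ with coefficients controlled by $\|g\|_{H^1}$; this reduces matters to showing $\bar I''(z_{\mathcal A})[v,v]\ge C^{-1}\|v\|_{H_0^1}^2$ for $v$ in the stated orthogonal complement. Next, I would localize: fix cutoffs $\chi_i$ supported near $\xi^{(i)}$ (on the balls $B_\tau$ of radius $\tau=d/2$, which are mutually disjoint by the separation $\mathrm{dist}(\xi^{(i)},\xi^{(j)})\ge \bar C^{-1}$ when $d$ is small), write $v=\sum_i \chi_i v + \big(1-\sum_i\chi_i\big)v=:\sum_i v_i + v_0$, and use the estimates \eqref{estimates-1}--\eqref{estimates-4} — in particular $\|P\delta_{\mu,\xi,a,p}\|_{L^2(\mathcal D\setminus B_\tau)}+\|\nabla P\delta_{\mu,\xi,a,p}\|_{L^2(\mathcal D\setminus B_\tau)}=O(\mu^2 d^{-2})=o(1)$, together with the corresponding estimates for $PZ_{-1,l}$ and $PZ_{2,l}$ — to show that the cross-terms $\bar I''(z_{\mathcal A})[v_i,v_j]$ with $i\ne j$ and the interactions with $v_0$ are all $o(1)$, so that $\bar I''(z_{\mathcal A})[v,v]=\sum_i \bar I''(z_{\mathcal A_i})[v_i,v_i]+o(1)$, and on the complement region the quadratic form is $\|\nabla v_0\|_{L^2}^2+o(1)\ge 0$ since the nonlinear contribution there is negligible.

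It then remains to prove a one-bubble coercivity estimate: for $z_{\mathcal A_i}=P\mathcal Q_i\delta_{\mu_i,\xi^{(i)},a_i,p_i}$ and $v_i$ supported near $\xi^{(i)}$, orthogonal (approximately) to $z_{\mathcal A_i}$ and to its parameter derivatives, one has $\bar I''(z_{\mathcal A_i})[v_i,v_i]\ge C^{-1}\|\nabla v_i\|_{L^2}^2$. Here I would rescale $z=\xi^{(i)}+\mu_i\zeta$ so that the bubble becomes $\mathcal Q_i\delta_{1,0,a_i/\mu_i,p_i/\mu_i}$, which by the parameter asymptotics $a_i/\mu_i=o(1)$, $p_i/\mu_i=o(1)$ (from \eqref{asymp-ap}, since $a_i\sim\sqrt\varepsilon d^2$, $p_i\sim d^5/\varepsilon$, $\mu_i\sim\sqrt\varepsilon$, and $d\to 0$) converges in $D^{1,2}$ to the pure bubble $\mathcal Q_i\mathcal W=\mathcal Q_i\pi(z^2)\in\bar Z$; moreover $\varphi_{\mu_i,\xi^{(i)},a_i,p_i}\to$ a constant vector in $D^{1,2}$ after rescaling by the estimates in \eqref{estimates-1}--\eqref{estimates-3}. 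Since $\bar I''$ is invariant under the rigid motions and adding constants, Proposition \ref{prop 3.1} applied to $\mathcal Q_i\mathcal W$ gives coercivity on the $D^{1,2}$-orthogonal complement of $T_{\mathcal Q_i\mathcal W}\bar Z\oplus\mathbb R^3\mathcal Q_i\mathcal W$. I would then check that the rescaled $v_i$, being orthogonal to $z_{\mathcal A_i}$ and $T_{z_{\mathcal A_i}}Z$ up to $o(1)$ errors (the difference between the true kernel directions $PZ_{k,l}$ and the exact kernels $Z_{k,l}$, and between $z_{\mathcal A_i}$ and $\mathcal Q_i\mathcal W$, being $o(1)$ in $D^{1,2}$ by \eqref{estimates-2}--\eqref{estimates-3}), is $o(1)$-close to the good subspace; a small perturbation/continuity argument off Proposition \ref{prop 3.1} (the quadratic form $\bar I''$ depends continuously on the base point in $D^{1,2}$, and the projection onto the finite-dimensional kernel space is continuous) then yields $\bar I''(z_{\mathcal A_i})[v_i,v_i]\ge \tfrac12 C_0\|\nabla v_i\|_{L^2}^2$ for $n$ large. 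Summing over $i$ and combining with the $v_0$ term contradicts $\ell\le 0$, provided $\|v_n\|_{H_0^1}=1$ forces $\sum_i\|\nabla v_i\|_{L^2}^2+\|\nabla v_0\|_{L^2}^2\ge c>0$, which follows from the partition and Poincar\'e's inequality on $\mathcal D$.

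The main obstacle I anticipate is the precise bookkeeping in the one-bubble step: one must show that the orthogonality conditions imposed on $v_i$ (orthogonality in $H_0^1(\mathcal D)$ to the \emph{projected} objects $z_{\mathcal A_i}$ and $\partial_{\mathcal A_i} z_{\mathcal A_i}$) indeed translate, after rescaling and taking $n\to\infty$, to orthogonality to the full $2\cdot 5+1=11$-dimensional "bad" subspace $T_{\mathcal Q_i\mathcal W}\bar Z\oplus\mathbb R^3\mathcal Q_i\mathcal W$ (there are $10$ parameters in $\bar Z$ plus the dilation direction $\mathcal Q_i\mathcal W$ itself) — and crucially that the three \emph{extra} bounded kernels \eqref{kernels-WW} corresponding to $\mathbb R^3$-translations, which are \emph{not} parameters in $Z$, do not cause a loss of coercivity. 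This is exactly where the $D^{1,2}$-nondegeneracy (which excludes all $4|m|+5=13$ directions, hence in particular controls those three extra kernels) of \cite{SireWeiZhengZhou-H-system} must be invoked: on the orthogonal complement of the $10$-parameter tangent space, $\bar I''$ could a priori be degenerate along those three directions, so one must verify that the component of $v_i$ along them is either controlled by the imposed orthogonality (it is not, directly) or that $\bar I''$ is in fact positive definite there — which it is, since those directions lie in the positivity cone of $\bar I''$ and are not part of the negativity/null space. Sorting out this finite-dimensional linear algebra carefully, and ensuring the $o(1)$ perturbation estimates are uniform over the parameter range in \eqref{solution-set}, is the delicate part; the rest is routine localization and rescaling.
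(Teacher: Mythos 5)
Your overall architecture (reduce $I_\varepsilon''$ to $\bar I''$, localize around each bubble, rescale, and invoke Proposition \ref{prop 3.1} with a perturbation argument) matches the paper's in spirit, but your localization step has a genuine gap. You split $v=\sum_i\chi_i v+(1-\sum_i\chi_i)v$ with cutoffs supported on the balls $B_\tau$ of radius $\tau=d/2$ and claim all cross terms are $o(1)$. They are not: the commutator terms in $\int|\nabla v|^2$ produced by the cutoffs are of size $\int v^2|\nabla\chi_i|^2\sim d^{-2}\|v\|_{L^2(\mathrm{ann}_i)}^2$, and since $v\in H_0^1(\mathcal D)$ has no a priori smallness on the transition annuli (Poincar\'e is not available on a ball where $v$ does not vanish on the boundary), this error can be comparable to $\|v\|_{H_0^1}^2$ rather than $o(1)$; taking the cutoff at a fixed scale instead of $d$ only trades $d^{-2}\|v\|^2_{L^2(\mathrm{ann})}$ for $O(\|v\|_{L^2(\mathcal D)}^2)$, still not negligible. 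The paper avoids this entirely by using, instead of cutoffs, the \emph{orthogonal projections} $P_i$ of $H_0^1(\mathcal D)$ onto $H_0^1(B_i)$: this yields the exact Pythagorean identity $\|v\|^2=\sum_i\|P_iv\|^2+\|v_1\|^2$ with no commutator error, and the remainder $v_1$ is harmonic in each $B_i$, which gives the interior regularity needed to estimate its pairings with $z_{\mathcal A_i}$ and the parameter derivatives (see \eqref{reduction-3} and the bounds leading to \eqref{Mar8-1}). To make your argument work you would need to replace the cutoffs by this (or an equivalent) exact decomposition, or restructure the contradiction argument as a genuine concentration--compactness argument in which only the negative part of the form (which does concentrate, being weighted by $z_{\mathcal A}$) needs to be localized.

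A secondary point: your resolution of the worry about the three extra bounded kernels \eqref{kernels-WW} is incorrect as stated. These functions are in the kernel of $L_{\mathcal W}$, hence they are \emph{null} directions of $\bar I''(\mathcal W)$, not directions "in the positivity cone". They are harmless for a different reason, already encoded in the last clause of Proposition \ref{prop 3.1}: modulo $T_{Q\delta}\bar Z$ they are constant vectors (e.g.\ the first one equals $\tfrac12 Z_{0,1}+(0,0,1)$), and constants carry no $\|\nabla\cdot\|_{L^2}$ mass, so they cannot obstruct coercivity of $\bar I''$ with respect to $\|\nabla v\|_{L^2}^2$ on the orthogonal complement of $T_{Q\delta}\bar Z\oplus\{tQ\delta\}_t$. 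With the localization repaired and this point stated correctly, the remaining steps of your proposal (quantifying the transfer of the orthogonality conditions to the localized pieces, which is exactly the paper's estimate $\|\Pi_iP_iv\|\le C\varepsilon^{1/2}d^{-1}\|v\|$, and the continuity of $\bar I''$ in the base point) would go through.
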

\begin{proof}
For $i=1,\dots,k$, let $B_i$ and $\tilde{B}_i$ be the balls of radius $\frac{d}{2}$ and $\frac{d}{4}$ centered at $\xi^{(i)}$, respectively. For any $v\in H_0^1(\mathcal{D})$, set
\begin{equation}\label{Mar5-1}
v_1=v-\sum_{i=1}^{k}P_i v,
\end{equation}
where $P_i$ is the orthogonal projection of $H_0^1(\mathcal{D})$ onto $H_0^1(B_i)$. It follows that
\begin{equation}\label{reduction-2}
\|v\|_{H_0^1(\mathcal{D})}^2=\sum_{i=1}^{k}\|P_i v\|_{H_0^1(\mathcal{D})}^2+\| v_1\|_{H_0^1(\mathcal{D})}^2.
\end{equation}
Since $v_1$ is harmonic in each $B_i$ and coincides with $v$ on $\partial B_i$, by standard elliptic regularity theory, we have
\begin{equation}\label{reduction-3}
\|v_1\|_{C^2(\tilde{B}_i)}\leq C\|v\|_{ H_0^1(\mathcal{D})},\quad \forall~ i=1,\dots,k,
\end{equation}
where $C$ is a constant independent of $v$.

Now we compute $(P_i v,z_{\mathcal{A}_i})$. Using (\ref{Mar5-1}), we have
\begin{equation*}
    (P_i v,z_{\mathcal{A}_i})=\left(v-v_1-\sum_{i\neq j}^{k}P_i v,z_{\mathcal{A}_i}\right)=-(v_1,z_{\mathcal{A}_i})+\left(v-\sum_{i\neq j}^{k}P_i v,z_{\mathcal{A}_i}\right).
\end{equation*}
Since $v$ is orthogonal to $z_{\mathcal{A}_i}$, from (\ref{estimates-4}), and using H\"{o}lder's inequality, we deduce
\begin{equation*}
\begin{aligned}
\left(v-\sum_{j\neq i}P_j v,z_{\mathcal{A}_i}\right)=-\left(\sum_{j\neq i}P_j v,z_{\mathcal{A}_i}\right)
=-\sum_{j\neq i}\int_{B_j}\nabla P_j v\cdot \nabla z_{\mathcal{A}_i}=O(\mu_i^2)\| v\|_{ H_0^1(\mathcal{D})}.
\end{aligned}
\end{equation*}
Additionally, by using (\ref{estimates-4}) and (\ref{reduction-3}), we have
\begin{equation*}
\begin{aligned}
(v_1,z_{\mathcal{A}_i})&=\int_{\mathcal{D}}\nabla v_1\cdot \nabla z_{\mathcal{A}_i}
\\&=\int_{\tilde{B}_i}\nabla v_1\cdot \nabla z_{\mathcal{A}_i}+\int_{\mathcal{D} \backslash \tilde{B}_i}\nabla v_1\cdot \nabla z_{\mathcal{A}_i}
\\&\leq\|\nabla z_{\mathcal{A}_i}\|_{L^1{\left(\tilde{B}_i\right)}}\| v\|_{ H_0^1(\mathcal{D})}+\|\nabla z_{\mathcal{A}_i}\|\|_{L^2{\left(\mathcal{D} \backslash \tilde{B}_i\right)}}\| v\|_{ H_0^1(\mathcal{D})}
\\&=O(\mu_i)\| v\|_{ H_0^1(\mathcal{D})}+O(\mu_i^2d^{-2})\| v\|_{ H_0^1(\mathcal{D})}.
\end{aligned}
\end{equation*}
Combine the above estimates, and recall that $\frac{\varepsilon^{\frac{1}{2}}}{\mu_i}\in \left[\bar{C}^{-1},\bar C\right]$, $d=d_\varepsilon=1-|\xi^{(i)}|^2=O\left(\mu_i^{\frac{2}{3}-\alpha}\right)$, where $\alpha$ is a small but fixed positive constant, we obtain
\begin{equation}\label{Mar8-1}
|(P_iv,z_{\mathcal{A}_i})|=O(\mu_i^2d^{-2})\| v\|_{ H_0^1(\mathcal{D})}\leq C\varepsilon d^{-2}\| v\|_{ H_0^1(\mathcal{D})}.
\end{equation}

Next, we evaluate $\left(P_i v,\frac{\partial z_{\mathcal{A}_i}}{\partial a_{i,1}}\right)$. Similarly,
\begin{equation*}
\left(P_i v, \frac{\partial z_{\mathcal{A}_i}}{\partial a_{i,1}}\right)=\left(v-v_1-\sum_{j\neq i}P_j v, \frac{\partial z_{\mathcal{A}_i}}{\partial a_{i,1}}\right)=-\left(v_1, \frac{\partial z_{\mathcal{A}_i}}{\partial a_{i,1}}\right)+\left(v-\sum_{j\neq i}P_j v, \frac{\partial z_{\mathcal{A}_i}}{\partial a_{i,1}}\right).
\end{equation*}
Since $v$ is orthogonal to $\frac{\partial z_{\mathcal{A}_i}}{\partial a_{i,1}}$, we deduce from (\ref{estimates-2}) that
\begin{align*}
\left(v-\sum_{j\neq i}P_j v, \frac{\partial z_{\mathcal{A}_i}}{\partial a_{i,1}}\right)
&=-\left(\sum_{j\neq i}P_j v, \frac{\partial z_{\mathcal{A}_i}}{\partial a_{i,1}}\right)
\\&=-\sum_{j\neq i}\int_{B_j}\nabla P_j v\cdot \nabla \frac{\partial z_{\mathcal{A}_i}}{\partial a_{i,1}}
\\&\leq C \sum_{j\neq i}\left\|\nabla \left(P\mathcal Q_iZ_{-1,1}\left(\frac{z-\xi^{(i)}}{\mu_i}\right)\right)\right\|_{L^2{\left(B_j\right)}}\|v\|_{H_0^1(\mathcal{D})}
\\&=O(\mu_i)\|v\|_{H_0^1(\mathcal{D})}.
\end{align*}
Furthermore, using (\ref{estimates-2}) and (\ref{reduction-3}), we have
\begin{equation*}
\begin{aligned}
\left(v_1, \frac{\partial z_{\mathcal{A}_i}}{\partial a_{i,1}}\right)&=\int_{\mathcal{D}}\nabla v_1\cdot \nabla \frac{\partial z_{\mathcal{A}_i}}{\partial a_{i,1}}
\\&=\int_{\tilde{B}_i}\nabla v_1\cdot \nabla \frac{\partial z_{\mathcal{A}_i}}{\partial a_{i,1}}+\int_{\mathcal{D} \backslash \tilde{B}_i}\nabla v_1\cdot \nabla \frac{\partial z_{\mathcal{A}_i}}{\partial a_{i,1}}
\\&\leq\left\|\nabla \frac{\partial z_{\mathcal{A}_i}}{\partial a_{i,1}}\right\|_{L^1{\left(\tilde{B}_i\right)}}\|v\|_{H_0^1(\mathcal{D})}+\left\|\nabla \frac{\partial z_{\mathcal{A}_i}}{\partial a_{i,1}}\right\|_{L^2{\left(\mathcal{D} \backslash \tilde{B}_i\right)}}\|v\|_{H_0^1(\mathcal{D})}
\\&=O\left(\mu_i|\log \mu_i|\right)\|v\|_{H_0^1(\mathcal{D})}+O(\mu_id^{-1})\| v\|_{H_0^1(\mathcal{D})}.
\end{aligned}
\end{equation*}
Thus, we obtain
$$\left|\left(P_i v,\frac{\partial z_{\mathcal{A}_i}}{\partial a_{i,1}}\right)\right|\leq  C\varepsilon^{\frac{1}{2}} d^{-1}\| v\|_{H_0^1(\mathcal{D})}.$$

For $\left(P_i v,\frac{\partial z_{\mathcal{A}_i}}{\partial p_{i,1}}\right)$, using the orthogonality between $v$ and $\frac{\partial z_{\mathcal{A}_i}}{\partial p_{i,1}}$, along with the estimates provided in (\ref{estimates-3}) and (\ref{reduction-3}), there holds
\begin{equation*}
\begin{aligned}
\left(P_i v, \frac{\partial z_{\mathcal{A}_i}}{\partial p_{i,1}}\right)
&=\left(v-v_1-\sum_{j\neq i}P_j v, \frac{\partial z_{\mathcal{A}_i}}{\partial p_{i,1}}\right)
\\&=-\left(v_1, \frac{\partial z_{\mathcal{A}_i}}{\partial p_{i,1}}\right)+\left(v-\sum_{j\neq i}P_j v, \frac{\partial z_{\mathcal{A}_i}}{\partial p_{i,1}}\right)
\\&=-\int_{\mathcal{D}}\nabla v_1\cdot \nabla \frac{\partial z_{\mathcal{A}_i}}{\partial p_{i,1}}-\left(\sum_{j\neq i}P_j v, \frac{\partial z_{\mathcal{A}_i}}{\partial p_{i,1}}\right)
\\&=-\int_{\tilde{B}_i}\nabla v_1\cdot \nabla \frac{\partial z_{\mathcal{A}_i}}{\partial p_{i,1}}-\int_{\mathcal{D} \backslash \tilde{B}_i}\nabla v_1\cdot \nabla \frac{\partial z_{\mathcal{A}_i}}{\partial p_{i,1}}-\sum_{j\neq i}\int_{B_j}\nabla P_j v\cdot \nabla \frac{\partial z_{\mathcal{A}_i}}{\partial p_{i,1}}
\\&=O(\mu_i)\|v\|_{H_0^1(\mathcal{D})}+O(\mu_i^4d^{-4})\|v\|_{H_0^1(\mathcal{D})}.
\end{aligned}
\end{equation*}
Thus
$$\left|\left(P_i v,\frac{\partial z_{\mathcal{A}_i}}{\partial p_{i,1}}\right)\right|\leq  C\varepsilon^{\frac{1}{2}}\|v\|_{H_0^1(\mathcal{D})}.$$
Similarly, we can deduce
$$\left|\left(P_i v,\frac{\partial z_{\mathcal{A}_i}}{\partial Q_{\alpha_i}}+\mu_i\frac{\partial z_{\mathcal{A}_i}}{\partial\mu_i}+\mu_i\frac{\partial z_{\mathcal{A}_i}}{\partial \xi^{(i)}_1}+\mu_i\frac{\partial z_{\mathcal{A}_i}}{\partial \xi^{(i)}_2}+\frac{\partial z_{\mathcal{A}_i}}{\partial a_{i,2}}+\frac{\partial z_{\mathcal{A}_i}}{\partial p_{i,2}}\right)\right|\leq  C\varepsilon^{\frac{1}{2}} d^{-1}\|v\|_{H_0^1(\mathcal{D})}.$$

Let $\Pi_i$ denote the orthogonal projection onto the space spanned by $z_{\mathcal{A}_i}$, $\frac{\partial z_{\mathcal{A}_i}}{\partial \mathcal Q_i}$, $\mu_i\frac{\partial z_{\mathcal{A}_i}}{\partial\mu_i}$, $\mu_i\frac{\partial z_{\mathcal{A}_i}}{\partial \xi^{(i)}_1}$, $\mu_i\frac{\partial z_{\mathcal{A}_i}}{\partial \xi^{(i)}_2}$, $\frac{\partial z_{\mathcal{A}_i}}{\partial a_{i,1}}$, $\frac{\partial z_{\mathcal{A}_i}}{\partial a_{i,2}}$, $\frac{\partial z_{\mathcal{A}_i}}{\partial p_{i,1}}$ and $\frac{\partial z_{\mathcal{A}_i}}{\partial p_{i,2}}$. Then we obtain
$$\|\Pi_iP_i v\|_{H_0^1(\mathcal{D})}\leq C\varepsilon^{\frac{1}{2}} d^{-1} \|v\|_{H_0^1(\mathcal{D})}.$$

The functional $I_{\varepsilon}''\left(z_{\mathcal{A}}\right)$ is given by
$$
I_{\varepsilon}''\left(z_{\mathcal{A}}\right)[v,\tilde v]=\int_{\mathcal{D}}\nabla v\cdot \nabla \tilde v+2\int_{\mathcal{D}}z_{\mathcal{A}}\cdot (v_x \wedge \tilde v_y +\tilde v_x \wedge  v_y)+2\varepsilon \int_{\mathcal{D}}\tilde v \cdot (g_x \wedge v_y + v_x \wedge  g_y),\quad v, \tilde v \in H_0^1(\mathcal{D}).
$$
Hence, for an arbitrary function $v$, there holds
\begin{equation*}
I_{\varepsilon}''\left(z_{\mathcal{A}}\right)[v, v]=\sum_{i=1}^kI_{\varepsilon}''\left(z_{\mathcal{A}}\right)[P_i v,P_i v]+I_{\varepsilon}''\left(z_{\mathcal{A}}\right)[v_1,v_1]+2\sum_{i=1}^kI_{\varepsilon}''\left(z_{\mathcal{A}}\right)[P_i v,v_1].
\end{equation*}
For $\sum_{i=1}^kI_{\varepsilon}''\left(z_{\mathcal{A}}\right)[P_i v,P_i v]$, we have
$$I_{\varepsilon}''\left(z_{\mathcal{A}}\right)[P_i v,P_i v]=\bar I''\left(z_{\mathcal{A}}\right)[P_i v,P_i v]+2\varepsilon \int_{\mathcal{D}}P_i v\cdot \left(g_x \wedge (P_i v)_y + (P_i v)_x \wedge  g_y\right),$$
where $\bar I (u)$ is the functional given in (\ref{functional}). Furthermore, we have
$$\bar I''\left(z_{\mathcal{A}}\right)[P_i v,P_i v]=\bar I''\left(z_{\mathcal{A}}\right)[\Pi_iP_i v,\Pi_iP_i v]+\bar I''\left(z_{\mathcal{A}}\right)[P_i v-\Pi_iP_i v,P_i v-\Pi_iP_i v].$$
By Proposition \ref{prop 3.1} and (\ref{Mar8-1}), we have
$$
\bar I''\left(z_{\mathcal{A}}\right)[\Pi_iP_i v,\Pi_iP_i v]
=\left(P_i v,z_{\mathcal{A}_i}\right)^2\bar I''\left(z_{\mathcal{A}}\right)[z_{\mathcal{A}_i},z_{\mathcal{A}_i}]
\leq -C \varepsilon^2d^{-4} \| v\|^2_{ H_0^1(\mathcal{D})},
$$
and
\begin{align*}
\bar I''\left(z_{\mathcal{A}}\right)[P_i v-\Pi_iP_i v,P_i v-\Pi_iP_i v] &\geq C_0\| \nabla P_i v-\nabla\Pi_iP_i v \|^2_{L^2(\mathbb{R}^2)}
\\&\geq C_0\left(\| \nabla P_i v\|^2_{L^2(\mathcal{D})}-\|\nabla\Pi_iP_i v \|^2_{L^2(\mathcal{D})}\right)
\\&\geq C_0\left(\|  P_i v\|^2_{H_0^1(\mathcal{D})}-\|\Pi_iP_i v \|^2_{H_0^1(\mathcal{D})}\right)
\\&\geq C_0\left(\|  P_i v\|^2_{H_0^1(\mathcal{D})}-C \varepsilon d^{-2}\| v\|^2_{ H_0^1(\mathcal{D})}\right).
\end{align*}
Moreover,
$$\left|\varepsilon \int_{\mathcal{D}}P_i v\cdot [g_x \wedge (P_i v)_y + (P_i v)_x \wedge  g_y]\right|\leq C \varepsilon\|P_iv\|^2_{H_0^1(\mathcal{D})},$$
then
\begin{equation}\label{reduction-7}
I_{\varepsilon}''\left(z_{\mathcal{A}}\right)[P_i v,P_i v]\geq C_0\|  P_i v\|^2_{H_0^1(\mathcal{D})}-C \varepsilon d^{-2}\| v\|^2_{ H_0^1(\mathcal{D})}.
\end{equation}

Next, we compute $I_{\varepsilon}''\left(z_{\mathcal{A}}\right)[P_i v,v_1]$. From the orthogonality of $P_iv$ and $v_1$, we have
\begin{equation}\label{reduction-8}
\begin{aligned}
I_{\varepsilon}''\left(z_{\mathcal{A}}\right)[P_i v,v_1]
&=2\int_{\mathcal{D}}z_{\mathcal{A}}\cdot [(P_i v)_x \wedge  (v_1)_y +(v_1)_x \wedge  (P_i v)_y]+2\varepsilon \int_{\mathcal{D}}P_i v\cdot [g_x \wedge (v_1)_y + (v_1)_x \wedge  g_y]
\\&=O\left(\int_{\mathcal{D}}\left|z_{\mathcal{A}}\right||\nabla P_i v| |\nabla v_1|\right)+O(\varepsilon)\|v\|^2_{H_0^1(\mathcal{D})},
\end{aligned}\end{equation}
and by (\ref{estimates-4}), (\ref{reduction-2}) and (\ref{reduction-3}), we have
\begin{equation}\label{reduction-9}
\begin{aligned}
\int_{\mathcal{D}}\left|z_{\mathcal{A}}\right||\nabla P_i v| |\nabla v_1|
&=\int_{\tilde{B}_i}\left|z_{\mathcal{A}}\right||\nabla P_i v| |\nabla v_1|+\int_{\mathcal{D} \backslash \tilde{B}_i}\left|z_{\mathcal{A}}\right||\nabla P_i v| |\nabla v_1|
\\&=\|z_{\mathcal{A}}\|_{L^2(\tilde{B}_i)}\|v\|^2_{H_0^1(\mathcal{D})}+ \|z_{\mathcal{A}}\|_{L^{\infty}(\mathcal{D} \backslash \tilde{B}_i)}\|v\|^2_{H_0^1(\mathcal{D})}
\\&=O(\mu_i)\|v\|^2_{ H_0^1(\mathcal{D})}+O(\mu_i^2d^{-2})\|v\|^2_{ H_0^1(\mathcal{D})}.
\end{aligned}
\end{equation}
Similarly, we have
\begin{align}
\notag I_{\varepsilon}''\left(z_{\mathcal{A}}\right)[v_1,v_1]
&=\int_{\mathcal{D}}|\nabla v_1|^2+2\int_{\mathcal{D}}z_{\mathcal{A}}\cdot [(v_1)_x \wedge (v_1)_y +(v_1)_x \wedge  (v_1)_y]+2\varepsilon\int_{\mathcal{D}}v_1\cdot [g_x \wedge (v_1)_y + (v_1)_x \wedge  g_y]
\\ \label{reduction-10} &= \|v_1\|^2_{H_0^1(\mathcal{D})}+O(\mu_i^2d^{-2})\|v\|^2_{ H_0^1(\mathcal{D})}+O(\varepsilon)\|v\|^2_{H_0^1(\mathcal{D})}.
\end{align}
From (\ref{reduction-2}), (\ref{reduction-7}), (\ref{reduction-8}), (\ref{reduction-9}) and (\ref{reduction-10}), we can find a positive constant $C$ such that
$$I_{\varepsilon}''\left(z_{\mathcal{A}}\right)[v, v] \geq  C^{-1}\|v\|^2_{ H_0^1(\mathcal{D})}.$$
Thus, we complete the proof of Lemma \ref{lemma-3.2}.
\end{proof}

\begin{prop}\label{prop-3.3}
If $\varepsilon>0$ is sufficiently small, for every $z_{\mathcal{A}}\in Z$, there exists a function $w_{\varepsilon}\left(z_{\mathcal{A}}\right)\in H^1(\mathcal{D},\mathbb{R}^3)$ and a constant $C>0$ such that for all $z_{\mathcal{A}}\in Z$, $w_{\varepsilon}\left(z_{\mathcal{A}}\right)$ is orthogonal to $T_{z_{\mathcal{A}}} Z$, $I_{\varepsilon}'\left(z_{\mathcal{A}}+w_{\varepsilon}\left(z_{\mathcal{A}}\right)\right)\in T_{z_{\mathcal{A}}} Z$ and $\|w_{\varepsilon}\left(z_{\mathcal{A}}\right)\| \leq C\|I_{\varepsilon}'\left(z_{\mathcal{A}}\right)\|$. Therefore, the manifold
$$Z_{\varepsilon}=\{z_{\mathcal{A}}+w_{\varepsilon}\left(z_{\mathcal{A}}\right): z_{\mathcal{A}}\in Z\}$$ is a natural constraint for the functional $I_{\varepsilon}'$. That is to say, if $u\in Z_{\varepsilon}$ and $I_{\varepsilon}'|_{Z_{\varepsilon}}(u)=0$, then $I_{\varepsilon}'(u)=0$.
\end{prop}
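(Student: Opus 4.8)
\medskip
\noindent\textbf{Proof proposal.} The plan is a Lyapunov--Schmidt (finite-dimensional) reduction, exploiting that the $H$-system nonlinearity is quadratic --- so that $I_\varepsilon'$ is affine plus one fixed quadratic map --- and that this quadratic part obeys a Wente-type estimate. Throughout I identify $(H_0^1(\mathcal D,\mathbb R^3))'$ with $H_0^1(\mathcal D,\mathbb R^3)$ via the Dirichlet inner product $\langle u,v\rangle=\int_{\mathcal D}\nabla u\cdot\nabla v$, so that $I_\varepsilon'(u)$ denotes the associated gradient and the phrases ``orthogonal to $T_{z_{\mathcal A}}Z$'' and ``$\in T_{z_{\mathcal A}}Z$'' are meaningful; let $\Pi$ be the orthogonal projection onto $T_{z_{\mathcal A}}Z$ and $\Pi^\perp=\mathrm{Id}-\Pi$ the projection onto $V:=(T_{z_{\mathcal A}}Z)^\perp$.

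\emph{Step 1 (structure of the equation).} The cubic term of \eqref{Euler-functional} has differential $2\int_{\mathcal D}\phi\cdot(u_x\wedge u_y)$ (using $\int u\cdot(\phi_x\wedge u_y+u_x\wedge\phi_y)=2\int\phi\cdot(u_x\wedge u_y)$ for $\phi\in H_0^1$), and the $\varepsilon$- and $\varepsilon^2$-terms are affine in $u$; hence for $w,\phi\in H_0^1(\mathcal D,\mathbb R^3)$,
\[
I_\varepsilon'(z_{\mathcal A}+w)=I_\varepsilon'(z_{\mathcal A})+I_\varepsilon''(z_{\mathcal A})[w]+N(w),\qquad N(w)[\phi]:=2\int_{\mathcal D}\phi\cdot(w_x\wedge w_y),
\]
so $N$ is exactly quadratic and independent of $z_{\mathcal A}$ and $\varepsilon$. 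By Wente's inequality $\bigl|\int_{\mathcal D}\phi\cdot(w_x\wedge w_y)\bigr|\le C\|\phi\|_{H_0^1}\|w\|_{H_0^1}^2$, so $\|N(w)\|\le C\|w\|^2$ and $\|N(w_1)-N(w_2)\|\le C(\|w_1\|+\|w_2\|)\|w_1-w_2\|$.

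\emph{Step 2 (uniform invertibility on $V$) and the fixed point.} Write $V=\tilde V_-\oplus V_+$ with $V_+:=V\cap\{z_{\mathcal A_1},\dots,z_{\mathcal A_k}\}^\perp$ and $\tilde V_-$ its $k$-dimensional orthogonal complement in $V$ (essentially $\mathrm{span}\{z_{\mathcal A_1},\dots,z_{\mathcal A_k}\}$). On $V_+$, Lemma~\ref{lemma-3.2} gives $I_\varepsilon''(z_{\mathcal A})[v,v]\ge C^{-1}\|v\|^2$. On $\tilde V_-$, Proposition~\ref{prop 3.1}, the uniform separation of the bubbles (which makes inter-bubble interactions and the error from the projection $P$ contribute $o(1)$), and the $O(\varepsilon)$ bound on the $\varepsilon$-bilinear term give $I_\varepsilon''(z_{\mathcal A})[v,v]\le -C_0'\|v\|^2$ for small $\varepsilon$. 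A self-adjoint operator with a uniformly negative-definite block and a uniformly positive-definite block is invertible with uniformly bounded inverse (irrespective of the off-diagonal block), so $L:=\Pi^\perp I_\varepsilon''(z_{\mathcal A})\Pi^\perp\colon V\to V$ is invertible with $\|L^{-1}\|\le C$ uniformly in $z_{\mathcal A}\in Z$ and small $\varepsilon$. Applying $\Pi^\perp$ to the expansion of Step~1, the equation $\Pi^\perp I_\varepsilon'(z_{\mathcal A}+w)=0$, $w\in V$, becomes $w=\mathcal G(w):=-L^{-1}\Pi^\perp\bigl(I_\varepsilon'(z_{\mathcal A})+N(w)\bigr)$. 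Since $\|I_\varepsilon'(z_{\mathcal A})\|$ is small uniformly on $Z$ for small $\varepsilon$ --- as one sees on writing $z_{\mathcal A_i}=\mathcal Q_i\delta_{\mu_i,\xi^{(i)},a_i,p_i}-\varphi_i$, using that $\mathcal Q_i\delta_{\mu_i,\xi^{(i)},a_i,p_i}$ solves \eqref{intro-5}, that $\varphi_i$ is harmonic, and the estimates \eqref{estimates-1}--\eqref{estimates-4} --- Step~1 shows that $\mathcal G$ maps the ball $\{w\in V:\|w\|\le 2C\|I_\varepsilon'(z_{\mathcal A})\|\}$ into itself and is a contraction there. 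Banach's fixed-point theorem then produces the desired $w_\varepsilon(z_{\mathcal A})\in V$ with $\|w_\varepsilon(z_{\mathcal A})\|\le C\|I_\varepsilon'(z_{\mathcal A})\|$ and $I_\varepsilon'(z_{\mathcal A}+w_\varepsilon(z_{\mathcal A}))\in T_{z_{\mathcal A}}Z$; uniformity of the contraction (or the implicit function theorem applied to $(\mathcal A,w)\mapsto\Pi^\perp I_\varepsilon'(z_{\mathcal A}+w)$) gives that $\mathcal A\mapsto w_\varepsilon(z_{\mathcal A})$ is $C^1$, as used below and for the $C^1$-estimates of this section.

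\emph{Step 3 (natural constraint), and the main obstacle.} Let $u=z_{\mathcal A}+w_\varepsilon(z_{\mathcal A})\in Z_\varepsilon$ be a critical point of $I_\varepsilon|_{Z_\varepsilon}$, i.e.\ $\partial_\lambda\bigl(I_\varepsilon(z_{\mathcal A}+w_\varepsilon(z_{\mathcal A}))\bigr)=0$ for every parameter $\lambda$ in $\mathcal A$; expanding $I_\varepsilon'(u)=\sum_\mu c_\mu\,\partial_\mu z_{\mathcal A}\in T_{z_{\mathcal A}}Z$, this reads $\sum_\mu c_\mu\bigl(\langle\partial_\mu z_{\mathcal A},\partial_\lambda z_{\mathcal A}\rangle+\langle\partial_\mu z_{\mathcal A},\partial_\lambda w_\varepsilon\rangle\bigr)=0$ for all $\lambda$. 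Differentiating $\langle w_\varepsilon,\partial_\mu z_{\mathcal A}\rangle=0$ in $\lambda$ yields $\langle\partial_\lambda w_\varepsilon,\partial_\mu z_{\mathcal A}\rangle=-\langle w_\varepsilon,\partial_\lambda\partial_\mu z_{\mathcal A}\rangle$, a small perturbation (since $\|w_\varepsilon\|=o(1)$, with the appropriate rescalings) of the invertible, near-block-diagonal Gram matrix $\langle\partial_\mu z_{\mathcal A},\partial_\lambda z_{\mathcal A}\rangle$; hence the linear system forces $c_\mu\equiv0$, i.e.\ $I_\varepsilon'(u)=0$, which is the natural-constraint statement. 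The main obstacle is Step~2: establishing a \emph{uniformly} bounded inverse of $I_\varepsilon''(z_{\mathcal A})$ on $(T_{z_{\mathcal A}}Z)^\perp$. This marries the coercivity of Lemma~\ref{lemma-3.2} on the ``good'' subspace with the non-degeneracy of the degree~$2$ $H$-bubble (Proposition~\ref{prop 3.1}) controlling the finitely many negative directions $z_{\mathcal A_i}$, and requires verifying that all cross terms --- among distinct bubbles, and between a bubble and its harmonic correction --- are negligible under the constraints \eqref{solution-set}; the borderline (conformally invariant) nature of the $H$-system nonlinearity makes the Wente estimate of Step~1 the second essential ingredient.
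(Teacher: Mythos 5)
Your proposal is correct and follows essentially the same route as the paper: a Lyapunov--Schmidt reduction solved by contraction mapping, with the uniform transversal invertibility of $I_{\varepsilon}''(z_{\mathcal{A}})$ supplied by Lemma \ref{lemma-3.2} and Proposition \ref{prop 3.1} and the exactly quadratic remainder controlled by Wente-type estimates --- the paper encodes the latter as the bounds \eqref{reduction-11} on $H_{\varepsilon}$ and phrases the invertibility through the Lagrange-multiplier map $F_{\varepsilon}(z_{\mathcal{A}},w_{\varepsilon},\beta)$ rather than your projected operator $\Pi^{\perp}I_{\varepsilon}''(z_{\mathcal{A}})\Pi^{\perp}$, which is an equivalent bookkeeping. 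Your explicit block decomposition of $(T_{z_{\mathcal{A}}}Z)^{\perp}$ into the negative directions near $\mathrm{span}\{z_{\mathcal{A}_i}\}$ and the coercive complement, and your Step~3 for the natural-constraint property, are if anything slightly more complete than the paper's write-up, which applies Lemma \ref{lemma-3.2} directly and leaves the natural-constraint claim to the standard argument.
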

\begin{proof}
The proof is standard, and we present it here for convenience; see also  \cite{chanillomalchiodi2005cagasymptotic}. Let $q_i=q_i(z_{\mathcal{A}})$, $i=1,\dots, 10$ denote the orthogonal basis of $T_{z_{\mathcal{A}}} Z$ (note that there are three kernel functions corresponding to the rotation matrix). Define a functional $$F_{\varepsilon}: Z\times H^1(\mathcal{D},\mathbb{R}^3)\times T_{z_{\mathcal{A}}} Z\rightarrow H^1(\mathcal{D},\mathbb{R}^3)\times \mathbb{R}^{10}$$
by
\begin{equation}\label{reduction-11:2}
 F_{\varepsilon}(z_{\mathcal{A}},w_{\varepsilon},\beta):=
 \begin{pmatrix}
 F^{(1)}_{\varepsilon}(z_{\mathcal{A}},w_{\varepsilon},\beta) \\F^{(2)}_{\varepsilon}(z_{\mathcal{A}},w_{\varepsilon},\beta)
\end{pmatrix}
=\begin{pmatrix}I_{\varepsilon}'\left(z_{\mathcal{A}}+w_{\varepsilon}\right)-\sum_{i=1}^{10}\beta_i q_i\\ \left(\langle w_{\varepsilon},q_1\rangle, \cdots, \langle w_{\varepsilon},q_{10}\rangle\right)
 \end{pmatrix}.
 \end{equation}
Thus, the unknown $(w_{\varepsilon},\beta)$ can be implicitly defined by the following equation
$$F_{\varepsilon}\left(z_{\mathcal{A}},w_{\varepsilon},\beta\right)=(0,0)^{\mathrm{T}}.$$
Let
$$G_{\varepsilon}\left(z_{\mathcal{A}},w_{\varepsilon},\beta\right)=F_{\varepsilon}\left(z_{\mathcal{A}},w_{\varepsilon},\beta\right)-\frac{\partial F_{\varepsilon}}{\partial (w_{\varepsilon},\beta)}\left(z_{\mathcal{A}},0,0\right)[(w_{\varepsilon},\beta)],$$
we have
$$F_{\varepsilon}\left(z_{\mathcal{A}},w_{\varepsilon},\beta\right)=0\quad \Longleftrightarrow \quad  \frac{\partial F_{\varepsilon}}{\partial (w_{\varepsilon},\beta)}\left(z_{\mathcal{A}},0,0\right)[(w_{\varepsilon},\beta)]+G_{\varepsilon}\left(z_{\mathcal{A}},w_{\varepsilon},\beta\right)=0.$$
Note that $ F_{\varepsilon}(z_{\mathcal{A}},w_{\varepsilon},\beta)=0$ implies that for all $z_{\mathcal{A}}\in Z$, $I_{\varepsilon}'\left(z_{\mathcal{A}}+w_{\varepsilon}\right)\in T_{z_{\mathcal{A}}} Z$ and $w_{\varepsilon}\left(z_{\mathcal{A}}\right)\perp T_{z_{\mathcal{A}}} Z$.

Now, for a fixed $z_{\mathcal{A}}\in Z$, we consider the derivatives of $F_{\varepsilon}$ evaluated on $(z_{\mathcal{A}},0,0)$.
For $\forall\  (v_{\varepsilon},\gamma)\in H^1(\mathcal{D},\mathbb{R}^3)\times \mathbb{R}^{10}$, there holds
\begin{equation*}
 \frac{\partial F_{\varepsilon}}{\partial (w_{\varepsilon},\beta)}\left(z_{\mathcal{A}},0,0\right)[v_{\varepsilon},\gamma]=
 \begin{pmatrix}
\frac{\partial F^{(1)}_{\varepsilon}}{\partial (w_{\varepsilon},\beta)}\left(z_{\mathcal{A}},0,0\right)[v_{\varepsilon},\gamma]
\\\frac{\partial F^{(2)}_{\varepsilon}}{\partial (w_{\varepsilon},\beta)}\left(z_{\mathcal{A}},0,0\right)[v_{\varepsilon},\gamma]
\end{pmatrix}
=\begin{pmatrix}I_{\varepsilon}''\left(z_{\mathcal{A}}\right)[v_{\varepsilon}]-\sum_{i=1}^{10}\gamma_i q_i
\\ \left(\langle v_{\varepsilon},q_1\rangle, \cdots, \langle v_{\varepsilon},q_{10}\rangle\right)
 \end{pmatrix}.
 \end{equation*}
We now show that $\frac{\partial F_{\varepsilon}}{\partial (w_{\varepsilon},\beta)}\left(z_{\mathcal{A}},0,0\right)$ is a one-to-one and onto map.

Firstly, suppose that
$$ \frac{\partial F_{\varepsilon}}{\partial (w_{\varepsilon},\beta)}\left(z_{\mathcal{A}},0,0\right)[v_{\varepsilon},\gamma]=(0,0)^{\mathrm{T}},$$
which implies
$$I_{\varepsilon}''\left(z_{\mathcal{A}}\right)[v_{\varepsilon}]=\sum_{i=1}^{10}\gamma_i q_i,\quad \langle v_{\varepsilon},q_i\rangle=0,~ i=1,\dots,10,$$
thus $v_{\varepsilon}\left(z_{\mathcal{A}}\right)$ is orthogonal to $T_{z_{\mathcal{A}}} Z$. From Lemma \ref{lemma-3.2}, we deduce that $v_{\varepsilon} = 0$ and consequently,
$\gamma_i=0$. Hence, $\frac{\partial F_{\varepsilon}}{\partial (w_{\varepsilon},\beta)}\left(z_{\mathcal{A}},0,0\right)$ is injective.

Secondly, we show that $ \frac{\partial F_{\varepsilon}}{\partial (w_{\varepsilon},\beta)}\left(z_{\mathcal{A}},0,0\right)$ is surjective. For given $(\tilde w, \tilde\gamma)\in H^1(\mathcal{D},\mathbb{R}^3)\times \mathbb{R}^{10}$, we need to find $(v_{\varepsilon}, \gamma)\in H^1(\mathcal{D},\mathbb{R}^3)\times \mathbb{R}^{10}$ such that
\begin{equation}\label{1-invertible}
I_{\varepsilon}''\left(z_{\mathcal{A}}\right)[v_{\varepsilon}]-\sum_{i=1}^{10}\gamma_i q_i= \tilde w,\quad (\langle v_{\varepsilon},q_1\rangle, \cdots, \langle v_{\varepsilon},q_{10}\rangle)= \tilde \gamma.
\end{equation}
Let
$$v_{\varepsilon}=v_{\varepsilon}^{(1)}+v_{\varepsilon}^{(2)},$$
where $v_{\varepsilon}^{(1)} \perp T_{z_{\mathcal{A}}} Z$ and $v_{\varepsilon}^{(2)}\in T_{z_{\mathcal{A}}} Z$. Then,
$$\left(\langle v_{\varepsilon}^{(2)},q_1\rangle, \cdots, \langle v_{\varepsilon}^{(2)},q_{10}\rangle\right)= \tilde \gamma.$$
Choosing $\gamma_i=-\langle\tilde w,q_i\rangle$, we obtain
$$\left(\tilde w+\sum_{i=1}^{10}\gamma_i q_i\right)\perp T_{z_{\mathcal{A}}} Z.$$
By Lemma \ref{lemma-3.2}, we have $$v_{\varepsilon}^{(1)}=\left(I_{\varepsilon}''\left(z_{\mathcal{A}}\right)\right)^{-1}\left(\tilde w+\sum_{i=1}^{10}\gamma_i q_i\right),$$
therefore, (\ref{1-invertible}) has a solution and $\frac{\partial F_{\varepsilon}}{\partial (w_{\varepsilon},\beta)}\left(z_{\mathcal{A}},0,0\right)$ is onto. Therefore, $\frac{\partial F_{\varepsilon}}{\partial (w_{\varepsilon},\beta)}\left(z_{\mathcal{A}},0,0\right)$ is uniformly reversible for $z_{\mathcal{A}}\in Z$ and $\varepsilon$ sufficiently small. Consequently, we can write
$$\begin{aligned}
F_{\varepsilon}\left(z_{\mathcal{A}},w_{\varepsilon},\beta\right)=0\quad \Longleftrightarrow \quad (w_{\varepsilon},\beta)&=-\left( \frac{\partial F_{\varepsilon}}{\partial (w_{\varepsilon},\beta)}\left(z_{\mathcal{A}},0,0\right)\right)^{-1}[G_{\varepsilon}\left(z_{\mathcal{A}},w_{\varepsilon},\beta\right)]
\\&=-\left( \frac{\partial F_{\varepsilon}}{\partial (w_{\varepsilon},\beta)}\left(z_{\mathcal{A}},0,0\right)\right)^{-1}[F_{\varepsilon}\left(z_{\mathcal{A}},0,0\right)+H_{\varepsilon}\left(z_{\mathcal{A}},w_{\varepsilon},\beta\right)]
\\&:=W_{\varepsilon}\left(z_{\mathcal{A}},w_{\varepsilon},\beta\right),
\end{aligned}$$
where
$$H_{\varepsilon}\left(z_{\mathcal{A}},w_{\varepsilon},\beta\right)=F_{\varepsilon}\left(z_{\mathcal{A}},w_{\varepsilon},\beta\right) -F_{\varepsilon}\left(z_{\mathcal{A}},0,0\right)-\frac{\partial F_{\varepsilon}}{\partial (w_{\varepsilon},\beta)}\left(z_{\mathcal{A}},0,0\right)[(w_{\varepsilon},\beta)].$$
Then $H_{\varepsilon}\left(z_{\mathcal{A}},w_{\varepsilon},\beta\right)$ satisfies
\begin{equation}\label{reduction-11}
\begin{cases}
\|H_{\varepsilon}\left(z_{\mathcal{A}},w_{\varepsilon},\beta\right)\|\leq C\|(w_{\varepsilon},\beta)\|^2,\\
\|H_{\varepsilon}\left(z_{\mathcal{A}},w_{\varepsilon},\beta\right)-H_{\varepsilon}\left(z_{\mathcal{A}},\bar w_{\varepsilon},\bar \beta\right)\| \leq C\left(\|(w_{\varepsilon},\beta)\|+\|(\bar w_{\varepsilon},\bar \beta)\|\right)\|(w_{\varepsilon},\beta)-(\bar w_{\varepsilon},\bar \beta)\|,
\end{cases}
\end{equation}
where $C=C(\mathcal{D},g,\bar C)$ is a constant.

Using (\ref{reduction-11}), we can prove that the map $W_{\varepsilon}$ is a contraction
in a ball of radius $\tilde C \|F_{\varepsilon}(z_{\mathcal{A}},0,0)\|$ for some positive constant $\tilde C$. Hence, there exists a fixed point $(w_{\varepsilon},\beta)$ such that
$$(w_{\varepsilon},\beta)=W_{\varepsilon}\left(z_{\mathcal{A}},w_{\varepsilon},\beta\right).$$
That is, $F_{\varepsilon}\left(z_{\mathcal{A}},w_{\varepsilon},\beta\right)=0$. Since $\|F_{\varepsilon}\left(z_{\mathcal{A}},0,0\right)\|\leq C\|I_{\varepsilon}'\left(z_{\mathcal{A}}\right)\|$ for some constant $C$ and $\left(z_{\mathcal{A}},w_{\varepsilon},\beta\right)\in B_{\tilde C \|F_{\varepsilon}\left(z_{\mathcal{A}},0,0\right)\|}$, we have
$$\|(w_{\varepsilon},\beta)\|\leq \tilde C \|F_{\varepsilon}\left(z_{\mathcal{A}},0,0\right)\| \leq C\|I_{\varepsilon}'\left(z_{\mathcal{A}}\right)\|.$$
Thus, we conclude that
$$\|w_{\varepsilon}\left(z_{\mathcal{A}}\right)\|\leq C\|I_{\varepsilon}'\left(z_{\mathcal{A}}\right)\|.$$
\end{proof}

In order to control the norm of $w_{\varepsilon}\left(z_{\mathcal{A}}\right)$ in Proposition \ref{prop-3.3}, we estimate $\left\|I_{\varepsilon}'\left(z_{\mathcal{A}}\right)\right\|$ for $z_{\mathcal{A}}\in Z$.

 \begin{lemma}\label{lemma-3.4}
For $\varepsilon$ sufficiently small and for all $z_{\mathcal{A}}\in Z$, there holds
$$\|I_{\varepsilon}'\left(z_{\mathcal{A}}\right)\|\leq e(\varepsilon),$$
where $e(\varepsilon) :=O\left(\sum_{i,j=1}^k\left(\mu_i^2\mu_j^{1-\frac{\alpha}{2}}+\sum_{l=1}^2\left(|a_{i, l}|\mu_i\mu_j^{1-\frac{\alpha}{2}}d^{-2}+|p_{i, l}|\mu_i^4\mu_j^{1-\frac{\alpha}{2}}d^{-5}\right)\right)\right)=O\left(\varepsilon^{\frac{3}{2}-\frac{\alpha}{4}}\right)$.
\end{lemma}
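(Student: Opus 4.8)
\textbf{Proof proposal for Lemma \ref{lemma-3.4}.}

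The plan is to estimate $\|I_\varepsilon'(z_{\mathcal A})\|$ by testing $I_\varepsilon'(z_{\mathcal A})$ against an arbitrary $\phi\in H_0^1(\mathcal D,\mathbb R^3)$ with $\|\phi\|_{H_0^1}\le 1$ and bounding the resulting expression. Recall that $z_{\mathcal A}=\sum_{i=1}^k z_{\mathcal A_i}$ with $z_{\mathcal A_i}=P\mathcal Q_i\delta_{\mu_i,\xi^{(i)},a_i,p_i}$, and that each $\delta_{\mu_i,\xi^{(i)},a_i,p_i}$ solves the $H$-system in $\R^2$ while its projection $z_{\mathcal A_i}$ differs from it only by the harmonic regular part $\varphi_{\mu_i,\xi^{(i)},a_i,p_i}$. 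Writing out
$$
I_\varepsilon'(z_{\mathcal A})[\phi]=\int_{\mathcal D}\nabla z_{\mathcal A}\cdot\nabla\phi+2\int_{\mathcal D}z_{\mathcal A}\cdot\big((z_{\mathcal A})_x\wedge\phi_y+\phi_x\wedge(z_{\mathcal A})_y\big)+\varepsilon\!\int_{\mathcal D}\phi\cdot(g_x\wedge(z_{\mathcal A})_y+(z_{\mathcal A})_x\wedge g_y)+2\varepsilon^2\!\int_{\mathcal D}\phi\cdot(g_x\wedge g_y),
$$
I would first integrate by parts in the linear term, using $\Delta z_{\mathcal A_i}=\Delta\delta_{\mu_i,\xi^{(i)},a_i,p_i}=2(\delta_{\mu_i})_x\wedge(\delta_{\mu_i})_y$ (the harmonic part drops), so that the leading single-bubble contributions cancel against the diagonal part of the quadratic term, and what survives is: (a) cross terms between distinct bubbles $i\ne j$; (b) error terms coming from the difference between $\delta$ and $P\delta$ (i.e.\ the regular parts $\varphi$); (c) the $\varepsilon$- and $\varepsilon^2$-terms involving $g$.

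The second step is to estimate each surviving piece using the pointwise and integral bounds collected in \eqref{estimates-1}--\eqref{estimates-4}. For the cross terms (a), the key point is that on the support where bubble $i$ is large (the ball $B_{\tau_i}$ of radius $\sim d/2$), bubble $j$ ($j\ne i$) is small: since $\mathrm{dist}(\xi^{(i)},\xi^{(j)})\ge\bar C^{-1}$ while $\tau\sim d\to 0$, one has $|z_{\mathcal A_j}|\lesssim\mu_j^2$ there by the far-field expansion \eqref{expansion-near-boundary}, and similarly for gradients. Splitting $\mathcal D=B_{\tau_i}\cup(\mathcal D\setminus B_{\tau_i})$ and applying H\"older together with $\|\nabla z_{\mathcal A_i}\|_{L^2(B_{\tau_i})}=O(1)$, $\|z_{\mathcal A_j}\|_{L^\infty}$-type bounds on the relevant region, one obtains a contribution of order $\mu_i^2\mu_j^{1-\alpha/2}$ (the exponent $1-\alpha/2$ on $\mu_j$ reflecting the $d$-loss $d^{-2}\sim\mu_j^{-(4/3-2\alpha)\cdot\ldots}$ absorbed via \eqref{asymp-d}). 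The regular-part errors (b) are handled by decomposing $P\delta=\delta-\varphi_1-\sum a_l\varphi_{-1,l}-\sum p_l\varphi_{2,l}-\varphi_{\mathcal R}$ as in \eqref{A15-expansion} and using $|\varphi_1-(0,0,1)|=O(\mu^2d^{-2})$, $|\varphi_{-1,l}|=O(\mu d^{-1})$, $|\varphi_{2,l}|=O(\mu^4 d^{-4})$ and the corresponding gradient bounds; these feed in precisely the terms $|a_{i,l}|\mu_i\mu_j^{1-\alpha/2}d^{-2}$ and $|p_{i,l}|\mu_i^4\mu_j^{1-\alpha/2}d^{-5}$. The $g$-terms in (c) are benign because $g$ is smooth and $\varepsilon$-independent (it is the harmonic extension of the chosen degree-$2$ boundary datum), so $\varepsilon\int\phi\cdot(g_x\wedge(z_{\mathcal A})_y+\ldots)=O(\varepsilon)\|z_{\mathcal A}\|_{H_0^1}=O(\varepsilon)$ and $\varepsilon^2\int\phi\cdot(g_x\wedge g_y)=O(\varepsilon^2)$, both lower order once one substitutes the parameter scalings \eqref{asymp-ap}.

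The final step is bookkeeping: taking the supremum over $\|\phi\|_{H_0^1}\le 1$ collects all the above into $e(\varepsilon)=O\big(\sum_{i,j}(\mu_i^2\mu_j^{1-\alpha/2}+\sum_l(|a_{i,l}|\mu_i\mu_j^{1-\alpha/2}d^{-2}+|p_{i,l}|\mu_i^4\mu_j^{1-\alpha/2}d^{-5}))\big)$, and then plugging in $\mu_i\sim\sqrt\varepsilon$, $d\sim\mu^{2/3-\alpha}\sim\varepsilon^{1/3-\alpha/2}$, $|a_{i,l}|\sim\sqrt\varepsilon d^2$, $|p_{i,l}|\sim d^5/\varepsilon$ from \eqref{asymp-d}--\eqref{asymp-ap} reduces every term to $O(\varepsilon^{3/2-\alpha/4})$: indeed $\mu_i^2\mu_j^{1-\alpha/2}\sim\varepsilon^{3/2-\alpha/4}$, while the $a$- and $p$-terms turn out to be of the same order after the $d$-powers cancel. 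I expect the main obstacle to be the careful treatment of the cross terms (a) and the regular-part errors (b) near the boundary: one has to track the $d$-dependence through \eqref{estimates-1}--\eqref{estimates-4} very precisely, making sure the $d^{-2}$, $d^{-4}$, $d^{-5}$ losses are compensated by the smallness $\mu_i,\,\mu_j$ and by the constraint $d=O(\mu^{2/3-\alpha})$ rather than, say, $d=O(\mu^{2/3})$ — the buffer $\alpha$ is what keeps the final exponent strictly above $1$, which is in turn what makes the fixed-point argument in Proposition \ref{prop-3.3} and the subsequent energy expansion work.
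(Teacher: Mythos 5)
Your overall skeleton is the same as the paper's: test against $v\in H_0^1$, integrate by parts so that $\Delta\delta_{\mu_i,\xi^{(i)},a_i,p_i}=2(\delta)_x\wedge(\delta)_y$ kills the diagonal single-bubble contribution, then estimate the cross terms between distinct bubbles and the regular-part remainders by splitting $\mathcal D$ into small balls around the $\xi^{(i)}$ and their complement and invoking \eqref{estimates-1}--\eqref{estimates-4} with H\"older (the exponent $1-\tfrac{\alpha}{2}$ comes from the $L^p$ bound with $p=\tfrac{4}{4-\alpha}$, not from absorbing a $d$-loss, but that is a cosmetic point).

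The genuine gap is your treatment of the boundary-datum terms. You declare them ``benign'' and bound $\varepsilon\int_{\mathcal D}\phi\cdot\big(g_x\wedge(z_{\mathcal A})_y+(z_{\mathcal A})_x\wedge g_y\big)=O(\varepsilon)$, but $O(\varepsilon)\gg O(\varepsilon^{3/2-\alpha/4})=e(\varepsilon)$, so this alone already destroys the claimed estimate. Worse, $g$ is \emph{not} uniformly bounded in $\varepsilon$: it is the harmonic extension of the near-singular datum of Lemma \ref{lemma7.1a} centered at $\omega^{(i)}$, and near the concentration point $|\nabla g^{(i)}|\sim d^{-3}$, so a crude bound on this term gives something of order $\varepsilon d^{-2}$, which by \eqref{asymp-d} does not even tend to zero at the required rate. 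The same problem infects your item (b): the pure $\varphi_1$ contribution $\int v\cdot[\delta_x\wedge(\varphi_1)_y+\cdots]$ is of size $\mu_i^{1-\alpha/2}\mu_i^2 d^{-3}\sim\varepsilon^{1/2+O(\alpha)}$, again far larger than $e(\varepsilon)$. The whole point of the choice $g^{(i)}=(2h_1^{(1)},2h_2^{(1)},-2\varepsilon h_3^{(1)})$ is that $\varepsilon g^{(i)}$ reproduces the leading far-field behavior of $\varphi_1(\cdot,\xi^{(i)})$, so that one must group every occurrence of $\varphi$ and $g$ into the combinations $\mathcal Q_i\varphi_{\mu_i,\xi^{(i)},a_i,p_i}-\varepsilon g^{(i)}$ and $P\mathcal Q_i\delta_i+\varepsilon g^{(i)}$ before estimating; only then does one get $\|\nabla(\mathcal Q_i\varphi-\varepsilon g^{(i)})\|_{L^\infty}=O\big((\mu_i^2-\varepsilon)d^{-3}+\sum_l(|a_{i,l}|\mu_i d^{-2}+|p_{i,l}|\mu_i^4 d^{-5})\big)$ as in \eqref{Apr30-2}, and hence the bound \eqref{Mar10-5}. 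Without exhibiting this cancellation your argument cannot reach $e(\varepsilon)$; with it, the rest of your bookkeeping goes through essentially as in the paper.
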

\begin{proof}
Let $v\in H_0^1(\mathcal{D},\mathbb{R}^3)$ and $z_{\mathcal{A}}\in Z$. The functional $I_{\varepsilon}'\left(z_{\mathcal{A}}\right)$ is given by
\begin{equation}\label{reduction-12}
I_{\varepsilon}'\left(z_{\mathcal{A}}\right)[v]=\int_{\mathcal{D}}\nabla z_{\mathcal{A}}\cdot \nabla v+2\int_{\mathcal{D}}v\cdot \left(\left(z_{\mathcal{A}}\right)_x \wedge \left(z_{\mathcal{A}}\right)_y \right)+2\varepsilon \int_{\mathcal{D}}z_{\mathcal{A}}\cdot (v_x \wedge g_y + g_x \wedge v_y)+2\varepsilon^2 \int_{\mathcal{D}}v\cdot (g_x \wedge g_y).
\end{equation}
For the first two terms in (\ref{reduction-12}),  integrating by parts, we have
\begin{equation}\label{reduction-13}
\int_{\mathcal{D}}\nabla z_{\mathcal{A}}\cdot \nabla v+2\int_{\mathcal{D}}v\cdot \left[\left(z_{\mathcal{A}}\right)_x \wedge \left(z_{\mathcal{A}}\right)_y \right]=\int_{\mathcal{D}}v\cdot \left[-\Delta z_{\mathcal{A}} +2\left(z_{\mathcal{A}}\right)_x \wedge \left(z_{\mathcal{A}}\right)_y \right].
\end{equation}
Since
$$\Delta z_{\mathcal{A}}=\Delta (P\mathcal Q_i\delta_{\mu_i,\xi^{(i)},a_i, p_i})=\Delta \mathcal Q_i\delta_{\mu_i,\xi^{(i)},a_i, p_i}=2\left(\mathcal Q_i\delta_{\mu_i,\xi^{(i)},a_i, p_i}\right)_x \wedge \left(\mathcal Q_i\delta_{\mu_i,\xi^{(i)},a_i, p_i}\right)_y,$$
we have
\begin{equation}\label{e:3.12}
\begin{aligned}
&\int_{\mathcal{D}}v\cdot \left[-\Delta z_{\mathcal{A}} +2\left(z_{\mathcal{A}}\right)_x \wedge \left(z_{\mathcal{A}}\right)_y \right]
\\&=2\sum_{i\neq j}\int_{\mathcal{D}}v\cdot\left[\left(z_{\mathcal{A}_i}\right)_x
\wedge \left(z_{\mathcal{A}_j}\right)_y\right]+2\sum_{i=1}^k  \int_{\mathcal{D}}v\cdot\left[ \left(\mathcal Q_i\varphi_{\mu_i,\xi^{(i)},a_i, p_i}\right)_x
\wedge \left(\mathcal Q_i\varphi_{\mu_i,\xi^{(i)},a_i, p_i}\right)_y\right]
\\&\quad -2\sum_{i=1}^k  \int_{\mathcal{D}}v\cdot \left[ \left(\mathcal Q_i\delta_{\mu_i,\xi^{(i)},a_i, p_i}\right)_x\wedge\left(\mathcal Q_i\varphi_{\mu_i,\xi^{(i)},a_i, p_i}\right)_y+\left(\mathcal Q_i\varphi_{\mu_i,\xi^{(i)},a_i, p_i}\right)_x
\wedge \left(\mathcal Q_i\delta_{\mu_i,\xi^{(i)},a_i, p_i}\right)_y \right] .
\end{aligned}
\end{equation}
Moreover, for the terms containing the boundary function $g=\sum_{i=1}^k g^{(i)}(z,\omega^{(i)})$ in (\ref{reduction-12}), integrating by parts, one has
\begin{equation}\label{Mar10-1}
\begin{aligned}
&2\varepsilon \int_{\mathcal{D}}z_{\mathcal{A}}\cdot (v_x \wedge g_y + g_x \wedge v_y)+2\varepsilon^2 \int_{\mathcal{D}}v\cdot (g_x \wedge g_y)
\\&=2\varepsilon \sum_{i\neq j}\int_{\mathcal{D}}\left(z_{\mathcal{A}_i}\right)\cdot \left[v_x \wedge \left(g^{(j)}\right)_y + \left(g^{(j)}\right)_x \wedge v_y\right]+2\varepsilon^2 \sum_{i\neq j}\int_{\mathcal{D}}v\cdot \left[\left(g^{(i)}\right)_x \wedge \left(g^{(j)}\right)_y\right]
\\&\quad+2\varepsilon \sum_{i=1}^{k}\int_{\mathcal{D}}\left(z_{\mathcal{A}_i}\right)\cdot \left[v_x \wedge \left(g^{(i)}\right)_y + \left(g^{(i)}\right)_x \wedge v_y\right]+2\varepsilon^2 \sum_{i=1}^{k}\int_{\mathcal{D}}v\cdot \left[\left(g^{(i)}\right)_x \wedge \left(g^{(i)}\right)_y\right].
\end{aligned}
\end{equation}

We first consider the following terms in (\ref{e:3.12}) and (\ref{Mar10-1}),
\begin{equation}\label{Mar10-7}
\begin{aligned}
&2\sum_{i\neq j}\int_{\mathcal{D}}v\cdot\left[\left(z_{\mathcal{A}_i}\right)_x
\wedge \left(z_{\mathcal{A}_j}\right)_y\right]
\\&+2\varepsilon \sum_{i\neq j}\int_{\mathcal{D}}\left(z_{\mathcal{A}_i}\right)\cdot \left[v_x \wedge \left(g^{(j)}\right)_y + \left(g^{(j)}\right)_x \wedge v_y\right]+2\varepsilon^2 \sum_{i\neq j}\int_{\mathcal{D}}v\cdot \left[\left(g^{(i)}\right)_x \wedge \left(g^{(j)}\right)_y\right].
\end{aligned}
\end{equation}
Recall that
$$z_{\mathcal{A}_i}=P\mathcal Q_i\delta_{\mu_i,\xi^{(i)},a_i, p_i}= P\mathcal Q_i\delta_{\mu_i,\xi^{(i)}}+P\mathcal Q_i\mathcal{L}_{\mathcal{A}_i}+P\mathcal Q_i\mathcal{R}_{\mathcal{A}_i},$$
where $\mathcal{L}_{\mathcal{A}_i}:=\sum_{l=1}^2\left(a_{i,l}Z_{-1,l}+p_{i,l} Z_{2,l}\right)$ represents the linear part, $\mathcal{R}_{\mathcal{A}_i}$ denotes the higher order terms.

For any given $i\neq j$, we now focus on the terms involving $P\mathcal Q_i\delta_{\mu_i,\xi^{(i)}}$ and $P\mathcal Q_j\delta_{\mu_j,\xi^{(j)}}$ in (\ref{Mar10-7}). For convenience, we denote $P\mathcal Q_i\delta_{\mu_i,\xi^{(i)}}$ and $P\mathcal Q_j\delta_{\mu_j,\xi^{(j)}}$ as $P\mathcal Q_i\delta_i$ and $P\mathcal Q_j\delta_j$, respectively.
There holds
\begin{equation*}
\begin{aligned}
&2\int_{\mathcal{D}}v\cdot\left[\left(P\mathcal{Q}_i\delta_i\right)_x
\wedge \left(P\mathcal{Q}_j\delta_j\right)_y+\left(P\mathcal{Q}_j\delta_j\right)_x\wedge \left(P\mathcal{Q}_i\delta_i\right)_y \right]
+2\varepsilon \int_{\mathcal{D}}v\cdot \left[\left(P\mathcal{Q}_i\delta_i\right)_x \wedge \left(g^{(j)}\right)_y + \left(g^{(j)}\right)_x \wedge \left(P\mathcal{Q}_i\delta_i\right)_y\right]
\\&+2\varepsilon \int_{\mathcal{D}}v\cdot \left[\left(P\mathcal{Q}_j\delta_j\right)_x \wedge \left(g^{(i)}\right)_y + \left(g^{(i)}\right)_x \wedge \left(P\mathcal{Q}_j\delta_j\right)_y\right]
+2\varepsilon^2 \int_{\mathcal{D}}v\cdot \left[\left(g^{(i)}\right)_x \wedge \left(g^{(j)}\right)_y+\left(g^{(j)}\right)_x\wedge \left(g^{(i)}\right)_y\right]
\\&=2\int_{\mathcal{D}}v\cdot\left[\left(P\mathcal{Q}_i\delta_i+\varepsilon g^{(i)}\right)_x
\wedge \left(P\mathcal{Q}_j\delta_j+\varepsilon g^{(j)}\right)_y+\left(P\mathcal{Q}_j\delta_j+\varepsilon g^{(j)}\right)_x\wedge \left(P\mathcal{Q}_i\delta_i+\varepsilon g^{(i)}\right)_y \right].
\end{aligned}
\end{equation*}
Note that the boundary function $g^{(i)}(z,\omega^{(i)})=\left(g^{(i)}_1(z,\omega^{(i)}),g^{(i)}_2(z,\omega^{(i)}),g^{(i)}_3(z,\omega^{(i)})\right)$ is defined as follows
$$g^{(i)}_1(z,\omega^{(i)})=2h_1^{(1)}(z,\omega^{(i)}),\quad g^{(i)}_2(z,\omega^{(i)})=2h_2^{(1)}(z,\omega^{(i)}),\quad g^{(i)}_3(z,\omega^{(i)})=-2\varepsilon h_3^{(1)}(z,\omega^{(i)}), $$
where $h_1^{(1)}(z,\omega^{(i)}),~h_2^{(1)}(z,\omega^{(i)}),~h_3^{(1)}(z,\omega^{(i)})$ are the functions given in (\ref{pre-4}).

Let $\gamma =\frac{d}{2}$ and divide the integral domain into three regions $B_i:=B_{\gamma}(\xi^{(i)})$, $B_j:=B_{\gamma}(\xi^{(j)})$ and $\mathcal{D} \setminus\left(B_i \cup B_j\right)$. From the definition of $Z$, see (\ref{solution-set}), it is clear that $B_i$ and $B_j$ are disjoint. Using (\ref{estimates-1}) and H\"{o}lder's inequality, along with the fact that
$$\left\|\nabla \left(P\mathcal{Q}_i\delta_i+\varepsilon g^{(i)}\right)\right\|_{L^p(B_i)}=O\left(\mu_i^{\frac{2-p}{p}}\right), \quad
\left\|\nabla \left(P\mathcal{Q}_j\delta_j+\varepsilon g^{(j)} \right)\right\|_{L^{\infty}(B_i)}=O\left(\mu_j^2\right),$$
for $p=\frac{4}{4-\alpha}$, where $\alpha$ is the constant in \eqref{asymp-d}, it is straightforward to verify that
\begin{equation*}
\begin{aligned}
&\left|\int_{B_i}v\cdot\left[\left(P\mathcal{Q}_i\delta_i+\varepsilon g^{(i)}\right)_x
\wedge \left(P\mathcal{Q}_j\delta_j+\varepsilon g^{(j)}\right)_y+\left(P\mathcal{Q}_j\delta_j+\varepsilon g^{(j)}\right)_x\wedge \left(P\mathcal{Q}_i\delta_i+\varepsilon g^{(i)}\right)_y \right]\right|
\\&=O\left(\mu_j^2\right)\left\|\nabla \left(P\mathcal{Q}_i\delta_i+\varepsilon g^{(i)}\right)\right\|_{L^p(B_i)}\| v\|_{L^q(B_i)}
\\&=O\left(\mu_i^{1-\frac{\alpha}{2}}\mu_j^2\right)\| v\|_{ H_0^1(\mathcal{D})},
\end{aligned}
\end{equation*}
where $\frac{1}{p}+\frac{1}{q}=1$. Similarly, we have
\begin{equation*}
\begin{aligned}
&\left|\int_{B_j}v\cdot\left[\left(P\mathcal{Q}_i\delta_i+\varepsilon g^{(i)}\right)_x
\wedge \left(P\mathcal{Q}_j\delta_j+\varepsilon g^{(j)}\right)_y+\left(P\mathcal{Q}_j\delta_j+\varepsilon g^{(j)}\right)_x\wedge \left(P\mathcal{Q}_i\delta_i+\varepsilon g^{(i)}\right)_y \right]\right|
\\&=O\left(\mu_i^2\mu_j^{1-\frac{\alpha}{2}}\right)\| v\|_{ H_0^1(\mathcal{D})}.
\end{aligned}
\end{equation*}
And since $\left\|\nabla \left(P\mathcal{Q}_j\delta_j+\varepsilon g^{(j)} \right)\right\|_{L^{\infty}(\mathcal{D} \backslash \left(B_i \cup B_j\right))}=O\left(\mu_j^2+(\mu_j^2-\varepsilon)d^{-3}\right)$, there holds
\begin{equation*}
\begin{aligned}
&\left|\int_{\mathcal{D} \setminus\left(B_i \cup B_j\right)}v\cdot\left[\left(P\mathcal{Q}_i\delta_i+\varepsilon g^{(i)}\right)_x
\wedge \left(P\mathcal{Q}_j\delta_j+\varepsilon g^{(j)}\right)_y+\left(P\mathcal{Q}_j\delta_j+\varepsilon g^{(j)}\right)_x\wedge \left(P\mathcal{Q}_i\delta_i+\varepsilon g^{(i)}\right)_y \right]\right|
\\&=O\left(\left(\mu_i^2+(\mu_i^2-\varepsilon)d^{-3}\right)\left(\mu_j^2+(\mu_j^2-\varepsilon)d^{-3}\right)\right)\| v\|_{ H_0^1(\mathcal{D})}.
\end{aligned}
\end{equation*}
Thus, we conclude that
\begin{equation}\label{Mar10-3}
\begin{aligned}
&\left|\int_{\mathcal{D}}v\cdot\left[\left(P\mathcal{Q}_i\delta_i+\varepsilon g^{(i)}\right)_x
\wedge \left(P\mathcal{Q}_j\delta_j+\varepsilon g^{(j)}\right)_y+\left(P\mathcal{Q}_j\delta_j+\varepsilon g^{(j)}\right)_x\wedge \left(P\mathcal{Q}_i\delta_i+\varepsilon g^{(i)}\right)_y \right]\right|
\\&=O\left(\mu_i^{1-\frac{\alpha}{2}}\mu_j^2+\mu_i^2\mu_j^{1-\frac{\alpha}{2}}\right)\| v\|_{ H_0^1(\mathcal{D})}.
\end{aligned}
\end{equation}

In (\ref{Mar10-7}), for given $i\neq j$, we also need to consider terms of the following form.
\begin{equation}\label{Mar10-8}
\begin{aligned}
&2\int_{\mathcal{D}}v\cdot\left[\left(P\mathcal Q_i\mathcal{L}_{\mathcal{A}_i}+P\mathcal Q_i\mathcal{R}_{\mathcal{A}_i}\right)_x
\wedge \left(z_{\mathcal{A}_j}+\varepsilon g^{(j)}\right)_y+\left(z_{\mathcal{A}_j}+\varepsilon g^{(j)}\right)_x\wedge \left(P\mathcal Q_i\mathcal{L}_{\mathcal{A}_i}+P\mathcal Q_i\mathcal{R}_{\mathcal{A}_i}\right)_y\right]
\\&=2\int_{\mathcal{D}}\left(P\mathcal Q_i\mathcal{L}_{\mathcal{A}_i}+P\mathcal Q_i\mathcal{R}_{\mathcal{A}_i}\right)\cdot\left[v_x
\wedge \left(z_{\mathcal{A}_j}+\varepsilon g^{(j)}\right)_y+\left(z_{\mathcal{A}_j}+\varepsilon g^{(j)}\right)_x\wedge v_y\right].
\end{aligned}
\end{equation}
Since
\begin{equation}\label{Apr30-1}
\begin{aligned}
&\left\|\nabla \left(P\mathcal Q_i\mathcal{L}_{\mathcal{A}_i}+P\mathcal Q_i\mathcal{R}_{\mathcal{A}_i}\right)\right\|_{L^p(B_i)}=O\left(\sum_{l=1}^2a_{i,l}\mu_i d^{-2}+p_{i,l}\mu_i^4d^{-5}\right), \quad
\left\|\nabla \left(z_{\mathcal{A}_j}+\varepsilon g^{(j)}\right)\right\|_{L^{\infty}(B_i)}=O(\mu_j^2),
\\&\left\|\nabla \left(P\mathcal Q_i\mathcal{L}_{\mathcal{A}_i}+P\mathcal Q_i\mathcal{R}_{\mathcal{A}_i}\right)\right\|_{L^{\infty}(B_j)}=O\left(\sum_{l=1}^2a_{i,l}\mu_i+p_{i,l}\mu_i^4\right),
\\&
\left\|\nabla \left(z_{\mathcal{A}_j}+\varepsilon g^{(j)}\right)\right\|_{L^{p}(B_j)}=O\left(\mu_j^{1-\frac{\alpha}{2}}+\sum_{l=1}^2a_{j,l}\mu_j d^{-2}+p_{j,l}\mu_j^4d^{-5}\right),
\end{aligned}
\end{equation}
following a similar approach as in the estimate of (\ref{Mar10-3}), we can estimate (\ref{Mar10-8}) as
\begin{equation}\label{Mar10-9}
\begin{aligned}
&O\left(\sum_{l=1}^2a_{i,l}\mu_i\mu_j^2 d^{-2}+p_{i,l}\mu_i^4\mu_j^2d^{-5}+\sum_{l=1}^2a_{i,l}\mu_i\mu_j^{1-\frac{\alpha}{2}}+p_{i,l}\mu_i^4\mu_j^{1-\frac{\alpha}{2}}\right).
\end{aligned}
\end{equation}

For the remaining terms in (\ref{e:3.12}) and (\ref{Mar10-1}), we have
\begin{equation}\label{Mar10-2}
\begin{aligned}
&\left|2  \int_{\mathcal{D}}v\cdot\left[ \left(\mathcal Q_i\varphi_{\mu_i,\xi^{(i)},a_i, p_i}\right)_x
\wedge \left(\mathcal Q_i\varphi_{\mu_i,\xi^{(i)},a_i, p_i}\right)_y\right]\right.
\\&~-2\int_{\mathcal{D}}v\cdot \left[ \left(\mathcal Q_i\delta_{\mu_i,\xi^{(i)},a_i, p_i}\right)_x\wedge\left(\mathcal Q_i\varphi_{\mu_i,\xi^{(i)},a_i, p_i}\right)_y+\left(\mathcal Q_i\varphi_{\mu_i,\xi^{(i)},a_i, p_i}\right)_x
\wedge \left(\mathcal Q_i\delta_{\mu_i,\xi^{(i)},a_i, p_i}\right)_y \right]
\\&~\left.+2\varepsilon \int_{\mathcal{D}}v\cdot \left[\left(z_{\mathcal{A}_i}\right)_x \wedge \left(g^{(i)}\right)_y + \left(g^{(i)}\right)_x \wedge \left(z_{\mathcal{A}_i}\right)_y\right]+2\varepsilon^2 \int_{\mathcal{D}}v\cdot \left[\left(g^{(i)}\right)_x \wedge \left(g^{(i)}\right)_y\right]\right|
\\&=\left|2 \int_{\mathcal{D}}v\cdot\left[ \left(\mathcal Q_i\varphi_{\mu_i,\xi^{(i)},a_i, p_i}-\varepsilon g^{(i)}\right)_x
\wedge \left(\mathcal Q_i\varphi_{\mu_i,\xi^{(i)},a_i, p_i}-\varepsilon g^{(i)}\right)_y\right]\right.
\\&~\left.\quad-2\int_{\mathcal{D}}v\cdot \left[ \left(\mathcal Q_i\delta_{\mu_i,\xi^{(i)},a_i, p_i}\right)_x\wedge\left(\mathcal Q_i\varphi_{\mu_i,\xi^{(i)},a_i, p_i}-\varepsilon g^{(i)}\right)_y+\left(\mathcal Q_i\varphi_{\mu_i,\xi^{(i)},a_i, p_i}-\varepsilon g^{(i)}\right)_x
\wedge \left(\mathcal Q_i\delta_{\mu_i,\xi^{(i)},a_i, p_i}\right)_y \right]\right|.
\end{aligned}
\end{equation}
Since
\begin{equation}\label{Apr30-2}
\begin{aligned}
&\left\|\nabla \left(\mathcal Q_i\varphi_{\mu_i,\xi^{(i)},a_i, p_i}-\varepsilon g^{(i)}\right)\right\|_{L^{\infty}(\mathcal{D})}=O\left((\mu_i^2-\varepsilon)d^{-3}+\sum_{l=1}^2\left(|a_{i, l}|\mu_id^{-2}+|p_{i, l}|\mu_i^4d^{-5}\right)\right),
\\&\|\nabla \left(\mathcal Q_i\delta_{\mu_i,\xi^{(i)},a_i, p_i}\right)\|_{L^p(B_i)}=O\left(\mu_i^{1-\frac{\alpha}{2}}+\sum_{l=1}^2a_{i,l}\mu_i^{1-\frac{\alpha}{2}}+p_{i,l}\mu_i^{1-\frac{\alpha}{2}}\right),
\end{aligned}
\end{equation}
we can estimate (\ref{Mar10-2}) as
\begin{equation}\label{Mar10-5}
\begin{aligned}
O\left(\mu_i^{1-\frac{\alpha}{2}}\left((\mu_i^2-\varepsilon)d^{-3}+\sum_{l=1}^2\left(|a_{i, l}|\mu_id^{-2}+|p_{i, l}|\mu_i^4d^{-5}\right)\right)\right)\| v\|_{ H_0^1(\mathcal{D})}.
\end{aligned}
\end{equation}
Combining (\ref{Mar10-3}), (\ref{Mar10-9}), (\ref{Mar10-5}), we obtain the conclusion.
\end{proof}

\begin{lemma}\label{lemma2-3.4}
Let $Z$ be as in (\ref{solution-set}). Then for $z_{\mathcal{A}}\in Z$, $l=1,2$, we have the following estimates
\begin{equation*}
\begin{aligned}
&\left\|I_{\varepsilon}''\left(z_{\mathcal{A}}\right)\left[\frac{\partial z_{\mathcal{A}}}{\partial \xi_l^{(i)}}\right]\right\|
=O\left(\sum_{i=1}^k\mu_i^{3-\frac{\alpha}{2}}d^{-4}\right),\qquad
\left\|I_{\varepsilon}''\left(z_{\mathcal{A}}\right)\left[\frac{\partial z_{\mathcal{A}}}{\partial \mathcal Q_{i}}\right]\right\|=O\left(\sum_{i=1}^k\mu_i^{3-\frac{\alpha}{2}}d^{-3}\right),
\\&
\left\|I_{\varepsilon}''\left(z_{\mathcal{A}}\right)\left[\frac{\partial z_{\mathcal{A}}}{\partial \mu_i}\right]\right\|
=O\left(\sum_{i=1}^k\mu_i^{2-\frac{\alpha}{2}}d^{-3}\right),\qquad
\left\|I_{\varepsilon}''\left(z_{\mathcal{A}}\right)\left[\frac{\partial z_{\mathcal{A}}}{\partial a_{i,l}}\right]\right\|
=O\left(\sum_{i=1}^k\mu_i^{2-\frac{\alpha}{2}}d^{-2}\right),
\\&
\left\|I_{\varepsilon}''\left(z_{\mathcal{A}}\right)\left[\frac{\partial z_{\mathcal{A}}}{\partial p_{i,l}}\right]\right\|
=O\left(\sum_{i=1}^k\mu_i^{5-\frac{\alpha}{2}}d^{-5}\right).
\end{aligned}
\end{equation*}
\end{lemma}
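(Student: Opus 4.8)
The plan is to bound each norm by duality,
$$\big\|I_\varepsilon''(z_{\mathcal{A}})[\partial_\sigma z_{\mathcal{A}}]\big\|=\sup\Big\{\,\big|I_\varepsilon''(z_{\mathcal{A}})[\partial_\sigma z_{\mathcal{A}},v]\big| : v\in H_0^1(\mathcal{D},\RRN),\ \|v\|_{H_0^1(\mathcal{D})}\le 1\,\Big\},$$
and to run, for each of the ten parameters $\sigma$ of the $i$-th bubble, the same scheme as in the proof of Lemma \ref{lemma-3.4}, with $z_{\mathcal{A}}$ there replaced by $w:=\partial_\sigma z_{\mathcal{A}}$. Since the harmonic projection commutes with $\partial_\sigma$, one has $w=\partial_\sigma z_{\mathcal{A}_i}=P\big(\partial_\sigma(\mathcal{Q}_i\delta_{\mu_i,\xi^{(i)},a_i,p_i})\big)\in H_0^1(\mathcal{D})$. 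Writing $I_\varepsilon''(z_{\mathcal{A}})[w,v]$ out from the expression recorded in the proof of Lemma \ref{lemma-3.2} and integrating by parts in the quadratic part (legitimate as $v|_{\partial\mathcal{D}}=0$), it equals $\int_{\mathcal{D}}v\cdot\big(-\Delta w+2(z_{\mathcal{A}})_x\wedge w_y+2w_x\wedge(z_{\mathcal{A}})_y\big)+2\varepsilon\int_{\mathcal{D}}v\cdot(g_x\wedge w_y+w_x\wedge g_y)$.

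Next, I would exploit that $\psi_i:=\partial_\sigma(\mathcal{Q}_i\delta_{\mu_i,\xi^{(i)},a_i,p_i})$ lies in the kernel of $L_{\mathcal{Q}_i\delta_{\mu_i,\xi^{(i)},a_i,p_i}}$, obtained by differentiating $\Delta(\mathcal{Q}_i\delta_{\mu_i,\xi^{(i)},a_i,p_i})=2(\mathcal{Q}_i\delta_{\mu_i,\xi^{(i)},a_i,p_i})_x\wedge(\mathcal{Q}_i\delta_{\mu_i,\xi^{(i)},a_i,p_i})_y$ in $\sigma$. Writing $w=\psi_i-\chi_i$ with $\chi_i:=\partial_\sigma(\mathcal{Q}_i\varphi_{\mu_i,\xi^{(i)},a_i,p_i})$ harmonic, the diagonal self-interaction of the $i$-th bubble cancels and one is left with
$$I_\varepsilon''(z_{\mathcal{A}})[w,v]=\int_{\mathcal{D}}v\cdot\Big(-2(\mathcal{Q}_i\delta_i)_x\wedge(\chi_i)_y-2(\chi_i)_x\wedge(\mathcal{Q}_i\delta_i)_y-2(\mathcal{Q}_i\varphi_i)_x\wedge w_y-2w_x\wedge(\mathcal{Q}_i\varphi_i)_y\Big)+2\sum_{j\ne i}\int_{\mathcal{D}}v\cdot\big[(z_{\mathcal{A}_j})_x\wedge w_y+w_x\wedge(z_{\mathcal{A}_j})_y\big]+2\varepsilon\int_{\mathcal{D}}v\cdot(g_x\wedge w_y+w_x\wedge g_y),$$
where $\delta_i:=\delta_{\mu_i,\xi^{(i)},a_i,p_i}$ and $\varphi_i:=\varphi_{\mu_i,\xi^{(i)},a_i,p_i}$. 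Then, as in the proof of Lemma \ref{lemma-3.4}, group $\mathcal{Q}_i\varphi_i$ with the matching piece $\varepsilon g^{(i)}$ of $\varepsilon g=\varepsilon\sum_j g^{(j)}$ (producing $\mathcal{Q}_i\varphi_i-\varepsilon g^{(i)}$, controlled by \eqref{Apr30-2}) and the $j\ne i$ terms with $\varepsilon g^{(j)}$ (producing $z_{\mathcal{A}_j}+\varepsilon g^{(j)}$); it is here that the explicit degree-$2$ harmonic extension of Lemma \ref{lemma7.1a} and the Robin-function identities of Lemma \ref{evalues-at-xi-2} are used, as they make these cancellations quantitative. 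A further useful device is that, since $v|_{\partial\mathcal{D}}=0$, one may re-integrate by parts to shift the derivatives among the three factors in each wedge term and, in particular, subtract an arbitrary constant vector from $\mathcal{Q}_i\varphi_i$ — replacing it by $\mathcal{Q}_i(\varphi_i-(0,0,1))$, which is $O(\mu_i^2 d^{-2})$ by \eqref{estimates-1}.

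Finally, split $\mathcal{D}=B_i\cup\bigcup_{j\ne i}B_j\cup\big(\mathcal{D}\setminus\bigcup_j B_j\big)$ with $B_j:=B_{d/2}(\xi^{(j)})$, apply Hölder with conjugate exponents $(p,q)$, $p=\tfrac{4}{4-\alpha}$, together with $H_0^1(\mathcal{D})\hookrightarrow L^q(\mathcal{D})$, and insert the pointwise and $L^p$-bounds for $\delta_i$, $\varphi_i$ and for $\psi_i$, $\chi_i$ from \eqref{estimates-1}--\eqref{estimates-4}, using that $\psi_i$ is to leading order a rescaled generator $Z_{k,j}$ from \eqref{A15-Z} (so that $w$ and $\nabla w$ are correspondingly small away from $\xi^{(i)}$, exactly as in Lemma \ref{lemma-3.4}). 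The five different powers of $\mu_i$ and $d$ in the statement come out by tracking, for each $\sigma$, which regular part $\partial_\sigma$ activates in $\varphi_i$ ($\varphi_1$ for $\sigma\in\{\mu_i,\xi^{(i)}_l,\mathcal{Q}_i\}$, $\varphi_{-1,l}$ for $\sigma=a_{i,l}$, $\varphi_{2,l}$ for $\sigma=p_{i,l}$, cf. \eqref{2expansion-near-boundary} and \eqref{estimates-2}--\eqref{estimates-3}), with the extra factor $d^{-1}$ for $\sigma=\xi^{(i)}_l$ coming from differentiating the Robin function in its base point. I expect this last bookkeeping to be the main obstacle: one has to combine terms carefully — re-integrating by parts to place derivatives on the slowly-varying factors, subtracting constants from $\mathcal{Q}_i\varphi_i$, and using the $\varepsilon g^{(i)}$ cancellation together with $d=O(\mu^{2/3-\alpha})$ — so that in each of the five cases the contribution from $B_i$ is dominant and comes out with exactly the claimed exponent, while the interaction terms over the $B_j$, $j\ne i$, and over $\mathcal{D}\setminus\bigcup_j B_j$ are absorbed; a crude term-by-term estimate loses powers of $d^{-1}$ and is not good enough.
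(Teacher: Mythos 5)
Your proposal is correct and follows essentially the same route as the paper's proof: both exploit the cancellation obtained by differentiating $\Delta(\mathcal Q_i\delta_{\mu_i,\xi^{(i)},a_i,p_i})=2(\mathcal Q_i\delta_{\mu_i,\xi^{(i)},a_i,p_i})_x\wedge(\mathcal Q_i\delta_{\mu_i,\xi^{(i)},a_i,p_i})_y$ in the parameter, the harmonicity of $\partial_\sigma(\mathcal Q_i\varphi_{\mu_i,\xi^{(i)},a_i,p_i})$, the grouping of $\mathcal Q_i\varphi_{\mu_i,\xi^{(i)},a_i,p_i}-\varepsilon g^{(i)}$ and $z_{\mathcal A_j}+\varepsilon g^{(j)}$ controlled by \eqref{Apr30-1}--\eqref{Apr30-2}, and the domain splitting with H\"older at $p=\tfrac{4}{4-\alpha}$ together with the estimates \eqref{estimates-1}--\eqref{estimates-4}. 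The residual identity you derive after integration by parts matches the paper's \eqref{reduction-16} term for term, and the remaining parameter-by-parameter bookkeeping you describe is exactly what the paper carries out (in detail only for $\xi_1^{(i)}$, with the other cases asserted to be similar).
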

\begin{proof}
The functional $I_{\varepsilon}''\left(z_{\mathcal{A}}\right)$ is defined as follows,
$$
I_{\varepsilon}''\left(z_{\mathcal{A}}\right)[v,\tilde v]=\int_{\mathcal{D}}\nabla v\cdot \nabla \tilde v+2\int_{\mathcal{D}}\tilde v\cdot \left(\left(z_{\mathcal{A}}\right)_x \wedge v_y +v_x \wedge  \left(z_{\mathcal{A}}\right)_y\right)+2\varepsilon \int_{\mathcal{D}}\tilde v \cdot (g_x \wedge v_y + v_x \wedge  g_y),\quad \forall \ v, \tilde v \in H_0^1(\mathcal{D},\mathbb{R}^3).
$$
Now, we choose $\tilde v=\frac{\partial z_{\mathcal{A}}}{\partial \xi_1^{(i)}}$ and let $v\in H_0^1(\mathcal{D},\mathbb{R}^3)$ be an arbitrary test function. Clearly, we have
\begin{equation*}
\frac{\partial z_{\mathcal{A}}}{\partial \xi_1^{(i)}}=\frac{\partial \left(\mathcal Q_i\delta_{\mu_i,\xi^{(i)},a_i, p_i}\right)}{\partial \xi_1^{(i)}}-\frac{\partial \left(\mathcal Q_i\varphi_{\mu_i,\xi^{(i)},a_i, p_i}\right)}{\partial \xi_1^{(i)}},
\end{equation*}
where $\varphi_{\mu_i,\xi^{(i)},a_i, p_i}$ is defined in (\ref{pre-3}) corresponding to $\delta_{\mu_i,\xi^{(i)},a_i, p_i}$. We have
\begin{equation}\label{reduction-16}
\begin{aligned}
&\int_{\mathcal{D}}\nabla v\cdot \nabla \frac{\partial z_{\mathcal{A}}}{\partial \xi_1^{(i)}}+2\int_{\mathcal{D}}\frac{\partial z_{\mathcal{A}}}{\partial \xi_1^{(i)}}\cdot \left(\left(z_{\mathcal{A}}\right)_x \wedge v_y +v_x \wedge  \left(z_{\mathcal{A}}\right)_y\right)+2\varepsilon \int_{\mathcal{D}}\frac{\partial z_{\mathcal{A}}}{\partial \xi_1^{(i)}} \cdot (g_x \wedge v_y + v_x \wedge  g_y)
\\&=\int_{\mathcal{D}}\nabla v\cdot \nabla\left(\frac{\partial\left(\mathcal Q_i\delta_{\mu_i,\xi^{(i)},a_i, p_i}\right) }{\partial \xi_1^{(i)}} \right)\\
&\quad +2\int_{\mathcal{D}}\frac{\partial\left(\mathcal Q_i\delta_{\mu_i,\xi^{(i)},a_i, p_i}\right)}{\partial \xi_1^{(i)}} \cdot\left[\Big(\mathcal Q_i\delta_{\mu_i,\xi^{(i)},a_i, p_i}\Big)_x \wedge v_y +v_x \wedge  \Big(\mathcal Q_i\delta_{\mu_i,\xi^{(i)},a_i, p_i}\Big)_y\right]
\\& \quad+2\sum_{j\neq i}\int_{\mathcal{D}}\frac{\partial\left(\mathcal Q_i\delta_{\mu_i,\xi^{(i)},a_i, p_i}\right)}{\partial \xi_1^{(i)}}\cdot\left[\left(z_{\mathcal{A}_j}+\varepsilon g^{(j)}\right)_x \wedge v_y +v_x \wedge  \left(z_{\mathcal{A}_j}+\varepsilon g^{(j)}\right)_y\right]
\\&\quad-\int_{\mathcal{D}}\nabla v\cdot \nabla \frac{\partial \left(\mathcal Q_i\varphi_{\mu_i,\xi^{(i)},a_i, p_i}\right)}{\partial \xi_1^{(i)}}-2\int_{\mathcal{D}}\frac{\partial \left(\mathcal Q_i\varphi_{\mu_i,\xi^{(i)},a_i, p_i}\right)}{\partial \xi_1^{(i)}}\cdot \left[\left(z_{\mathcal{A}}+\varepsilon g\right)_x \wedge v_y
+v_x \wedge  \left(z_{\mathcal{A}}+\varepsilon g\right)_y\right]
\\&\quad-2 \int_{\mathcal{D}}\frac{\partial\left(\mathcal Q_i\delta_{\mu_i,\xi^{(i)},a_i, p_i}\right)}{\partial \xi_1^{(i)}} \cdot \left[\left(\mathcal Q_i\varphi_{\mu_i,\xi^{(i)},a_i, p_i}-\varepsilon g^{(i)}\right)_x \wedge v_y +v_x \wedge  \left(\mathcal Q_i\varphi_{\mu_i,\xi^{(i)},a_i, p_i}-\varepsilon g^{(i)}\right)_y\right].
\end{aligned}
\end{equation}
Since $
\Delta (\mathcal Q_i\delta_{\mu_i,\xi^{(i)},a_i, p_i}) = 2(\mathcal Q_i\delta_{\mu_i,\xi^{(i)},a_i, p_i})_x\wedge (\mathcal Q_i\delta_{\mu_i,\xi^{(i)},a_i, p_i})_y,
$
integrating by parts, we have
\begin{equation*}
\begin{aligned}
&\int_{\mathcal{D}}\nabla v\cdot  \nabla\left(\frac{\partial\left(\mathcal Q_i\delta_{\mu_i,\xi^{(i)},a_i, p_i}\right)}{\partial \xi_1^{(i)}} \right)
\\&+2\int_{\mathcal{D}}\frac{\partial \left(\mathcal Q_i\delta_{\mu_i,\xi^{(i)},a_i, p_i}\right)}{\partial \xi_1^{(i)}} \cdot\left[\Big(\mathcal Q_i\delta_{\mu_i,\xi^{(i)},a_i, p_i}\Big)_x \wedge v_y +v_x \wedge  \Big(\mathcal Q_i\delta_{\mu_i,\xi^{(i)},a_i, p_i}\Big)_y\right] = 0.
\end{aligned}
\end{equation*}
For the third term in (\ref{reduction-16}), let $B_i$ and $B_j$ denote the balls of radius $\frac{d}{2}$ centered at $\xi^{(i)}$ and $\xi^{(j)}$, respectively. By (\ref{estimates-4}), we obtain
\begin{align*}
&\Bigg|2\sum_{j\neq i}\int_{\mathcal{D}}\frac{\partial  \left(\mathcal Q_i\delta_{\mu_i,\xi^{(i)},a_i, p_i}\right)}{\partial \xi_1^{(i)}}\cdot\left[\left(z_{\mathcal{A}_j}+\varepsilon g^{(j)}\right)_x \wedge v_y +v_x \wedge  \left(z_{\mathcal{A}_j}+\varepsilon g^{(j)}\right)_y\right]\Bigg|
\\&=\Bigg|2\sum_{j\neq i}\left(\int_{B_i}+\int_{B_j}+\int_{\mathcal{D} \backslash \left(B_i \cup B_j\right)}\right)\frac{\partial \left(\mathcal Q_i\delta_{\mu_i,\xi^{(i)},a_i, p_i}\right)}{\partial \xi_1^{(i)}} \cdot\left[\left(z_{\mathcal{A}_j}+\varepsilon g^{(j)}\right)_x \wedge v_y +v_x \wedge  \left(z_{\mathcal{A}_j}+\varepsilon g^{(j)}\right)_y\right]\Bigg|
\\&\lesssim \sum_{j\neq i}\left\|\frac{\partial \left(\mathcal Q_i\delta_{\mu_i,\xi^{(i)},a_i, p_i}\right)}{\partial \xi_1^{(i)}}\right\|_{L^2(B_i)}\left\|\left(z_{\mathcal{A}_j}+\varepsilon g^{(j)}\right)_x\right\|_{L^{\infty}(B_i)}\| v\|_{ H_0^1(\mathcal{D})}
\\&\quad+\sum_{j\neq i}\left\|\frac{\partial \left(\mathcal Q_i\delta_{\mu_i,\xi^{(i)},a_i, p_i}\right)}{\partial \xi_1^{(i)}}\right\|_{L^{\infty}(B_j)}\left\|\left(z_{\mathcal{A}_j}+\varepsilon g^{(j)}\right)_x\right\|_{L^2(B_j)}\| v\|_{ H_0^1(\mathcal{D})}
\\&\quad+\sum_{j\neq i}\left\|\frac{\partial \left(\mathcal Q_i\delta_{\mu_i,\xi^{(i)},a_i, p_i}\right)}{\partial \xi_1^{(i)}}\right\|_{L^{\infty}(\mathcal{D} \backslash \left(B_i \cup B_j\right))}\left\|\left(z_{\mathcal{A}_j}+\varepsilon g^{(j)}\right)_x\right\|_{L^{\infty}(\mathcal{D} \backslash \left(B_i \cup B_j\right))}\| v\|_{ H_0^1(\mathcal{D})}
\\&=O\left(\sum_{j\neq i}\mu_j^2+\mu_i^2+\mu_i^2\mu_j^2\right)\| v\|_{ H_0^1(\mathcal{D})}.
\end{align*}
We proceed to estimate the terms in (\ref{reduction-16}) involving $\mathcal Q_i\varphi_{\mu_i,\xi^{(i)},a_i, p_i}$. Since
$$
\Delta (\mathcal Q_i\varphi_{\mu_i,\xi^{(i)},a_i, p_i}) = 0\quad \text{in}~ \mathcal{D},$$
we apply integration by parts, which gives
$$\int_{\mathcal{D}}\nabla v\cdot \nabla \frac{\partial (\mathcal Q_i\varphi_{\mu_i,\xi^{(i)},a_i, p_i})}{\partial \xi_1^{(i)}}=0.$$
Furthermore, by \eqref{Apr30-1} and
$$\left\|\nabla\frac{\partial (\mathcal Q_i\varphi_{\mu_i,\xi^{(i)},a_i, p_i})}{\partial \xi_1^{(i)}}\right\|_{L^{\infty}(\mathcal{D})}=O(\mu_i^2d^{-4}),$$
we get
\begin{equation*}
\begin{aligned}
&\left|2\int_{\mathcal{D}}\frac{\partial (\mathcal Q_i\varphi_{\mu_i,\xi^{(i)},a_i, p_i})}{\partial \xi_1^{(i)}}\cdot \left[\left(z_{\mathcal{A}}+\varepsilon g\right)_x \wedge v_y
+v_x \wedge  \left(z_{\mathcal{A}}+\varepsilon g\right)_y\right]\right|
\\&=\left|2\int_{\mathcal{D}}v\cdot \left[\left(z_{\mathcal{A}}+\varepsilon g\right)_x \wedge \left(\frac{\partial (\mathcal Q_i\varphi_{\mu_i,\xi^{(i)},a_i, p_i})}{\partial \xi_1^{(i)}}\right)_y
+\left(\frac{\partial (\mathcal Q_i\varphi_{\mu_i,\xi^{(i)},a_i, p_i})}{\partial \xi_1^{(i)}}\right)_x \wedge  \left(z_{\mathcal{A}}+\varepsilon g\right)_y\right]\right|
\\&=O\left(\sum_{j\neq i}\mu_i^{3-\frac{\alpha}{2}}d^{-4}+\mu_i^2\mu_j^{1-\frac{\alpha}{2}}+\mu_i^2d^{-4}\mu_j^{2}\right)\| v\|_{ H_0^1(\mathcal{D})}.
\end{aligned}
\end{equation*}
Additionally, through \eqref{Apr30-2} and the boundedness of $\left\|\frac{\partial\left(\mathcal Q_i\delta_{\mu_i,\xi^{(i)},a_i, p_i}\right)}{\partial \xi_1^{(i)}}\right\|_{L^{2}(\mathcal{D})}$, we obtain
\begin{equation*}
\begin{aligned}
&\left|2 \int_{\mathcal{D}}\frac{\partial\left(\mathcal Q_i\delta_{\mu_i,\xi^{(i)},a_i, p_i}\right)}{\partial \xi_1^{(i)}} \cdot \left[\left(\mathcal Q_i\varphi_{\mu_i,\xi^{(i)},a_i, p_i}-\varepsilon g^{(i)}\right)_x \wedge v_y +v_x \wedge  \left(\mathcal Q_i\varphi_{\mu_i,\xi^{(i)},a_i, p_i}-\varepsilon g^{(i)}\right)_y\right]\right|
\\&=O\left((\mu_i^2-\varepsilon)d^{-3}+\sum_{l=1}^2\left(|a_{i, l}|\mu_id^{-2}+|p_{i, l}|\mu_i^4d^{-5}\right)\right)\| v\|_{ H_0^1(\mathcal{D})}.
\end{aligned}
\end{equation*}
Combine the above estimates, and recall that for all $i=1,\dots,k$, $\frac{\varepsilon^{\frac{1}{2}}}{\mu_i}, \ \frac{\varepsilon^{\frac{1}{2}}d^2}{|a_{i}|}, \ \frac{\varepsilon^{-1}d^5}{|p_{i}|}\in \left[\bar{C}^{-1},\bar C\right]$, we conclude that
$$
\left\|I_{\varepsilon}''\left(z_{\mathcal{A}}\right)\left[\frac{\partial z_{\mathcal{A}}}{\partial \xi_1^{(i)}}\right]\right\| =O\left(\sum_{i=1}^k\mu_i^{3-\frac{\alpha}{2}}d^{-4}\right).
$$
From similar arguments, we can obtain the results of this lemma.
\end{proof}
\begin{prop}\label{prop-3.6}
Let $w_{\varepsilon}\left(z_{\mathcal{A}}\right)$ be as in Proposition \ref{prop-3.3}, then
\begin{equation}\label{reduction-19}
\left|I_{\varepsilon}(z_{\mathcal{A}}+w_{\varepsilon}\left(z_{\mathcal{A}}\right))-I_{\varepsilon}\left(z_{\mathcal{A}}\right)\right|\lesssim e^2(\varepsilon),\quad \forall z_{\mathcal{A}}\in Z,
\end{equation}
where $e(\varepsilon)$ is given in Lemma \ref{lemma-3.4}. Moreover, for any $z_{\mathcal{A}}\in Z$, $l=1,2$, there holds
\begin{equation}\label{reduction-20}
\begin{aligned}
&\left|\frac{\partial I_{\varepsilon}(z_{\mathcal{A}}+w_{\varepsilon}\left(z_{\mathcal{A}}\right))}{\partial \xi_l^{(i)}}-\frac{\partial I_{\varepsilon}\left(z_{\mathcal{A}}\right)}{\partial \xi_l^{(i)}}\right|=O\left(\sum_{i=1}^k\mu_i^{3-\frac{\alpha}{2}}d^{-4}\right) e(\varepsilon)+\frac{e^2(\varepsilon)}{\mu_i},
\\&\left|\frac{\partial I_{\varepsilon}(z_{\mathcal{A}}+w_{\varepsilon}\left(z_{\mathcal{A}}\right))}{\partial \mathcal Q_{i}}-\frac{\partial I_{\varepsilon}\left(z_{\mathcal{A}}\right)}{\partial \mathcal Q_{i}}\right|=O\left(\sum_{i=1}^k\mu_i^{3-\frac{\alpha}{2}}d^{-3}\right) e(\varepsilon)+e^2(\varepsilon),
\\&\left|\frac{\partial I_{\varepsilon}(z_{\mathcal{A}}+w_{\varepsilon}\left(z_{\mathcal{A}}\right))}{\partial \mu_i}-\frac{\partial I_{\varepsilon}\left(z_{\mathcal{A}}\right)}{\partial \mu_i}\right|=O\left(\sum_{i=1}^k\mu_i^{2-\frac{\alpha}{2}}d^{-3}\right)e(\varepsilon)+\frac{e^2(\varepsilon)}{\mu_i},
\\&\left|\frac{\partial I_{\varepsilon}(z_{\mathcal{A}}+w_{\varepsilon}\left(z_{\mathcal{A}}\right))}{\partial a_{i,l}}-\frac{\partial I_{\varepsilon}\left(z_{\mathcal{A}}\right)}{\partial a_{i,l}}\right|=O\left(\sum_{i=1}^k\mu_i^{2-\frac{\alpha}{2}}d^{-2}\right)e(\varepsilon)+e^2(\varepsilon),
\\&\left|\frac{\partial I_{\varepsilon}(z_{\mathcal{A}}+w_{\varepsilon}\left(z_{\mathcal{A}}\right))}{\partial p_{i,l}}-\frac{\partial I_{\varepsilon}\left(z_{\mathcal{A}}\right)}{\partial p_{i,l}}\right|=O\left(\sum_{i=1}^k\mu_i^{5-\frac{\alpha}{2}}d^{-5}\right)e(\varepsilon)+e^2(\varepsilon).
\end{aligned}
\end{equation}
\end{prop}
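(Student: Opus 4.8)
\medskip

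\noindent\textbf{Proof proposal.} The plan is to exploit that $I_\varepsilon$ is a polynomial of degree three in $u$, so that every Taylor expansion about $z_{\mathcal A}$ terminates exactly and each remainder is controlled by a fixed power of $\|w_\varepsilon(z_{\mathcal A})\|$, and to combine this with the orthogonality built into the Lyapunov--Schmidt reduction of Proposition~\ref{prop-3.3}. I abbreviate $w=w_\varepsilon(z_{\mathcal A})$ and will use repeatedly that $\|w\|\le C\|I_\varepsilon'(z_{\mathcal A})\|\le Ce(\varepsilon)$ (Proposition~\ref{prop-3.3} and Lemma~\ref{lemma-3.4}), that $I_\varepsilon''(z_{\mathcal A})$ has operator norm bounded uniformly in $z_{\mathcal A}\in Z$ and $\varepsilon$ small (its coefficients are $\|z_{\mathcal A}\|_{H_0^1(\mathcal D)}=O(1)$ and $\varepsilon\|g\|=o(1)$, and the cubic term of $I_\varepsilon$ is estimated by a standard Wente-type inequality), and that $I_\varepsilon'''$ is a fixed bounded trilinear form.

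First I would prove \eqref{reduction-19} from the exact identity
$$
I_\varepsilon(z_{\mathcal A}+w)-I_\varepsilon(z_{\mathcal A})=I_\varepsilon'(z_{\mathcal A})[w]+\tfrac12 I_\varepsilon''(z_{\mathcal A})[w,w]+\tfrac16 I_\varepsilon'''(z_{\mathcal A})[w,w,w],
$$
bounding the three terms by $\|I_\varepsilon'(z_{\mathcal A})\|\,\|w\|\lesssim e^2(\varepsilon)$, by $C\|w\|^2\lesssim e^2(\varepsilon)$, and by $C\|w\|^3=o(e^2(\varepsilon))$, respectively. Equivalently, one may write the difference as $\int_0^1 I_\varepsilon'(z_{\mathcal A}+tw)[w]\,dt$ and use $I_\varepsilon'(z_{\mathcal A}+w)[w]=0$, which holds since $I_\varepsilon'(z_{\mathcal A}+w)\in T_{z_{\mathcal A}}Z$ while $w\perp T_{z_{\mathcal A}}Z$.

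For the $C^1$ estimates \eqref{reduction-20} I would first note that $z_{\mathcal A}\mapsto w_\varepsilon(z_{\mathcal A})$ is $C^1$ in the parameters, since the fixed point in Proposition~\ref{prop-3.3} is produced by the implicit function theorem for a $C^1$ map. Writing $\lambda$ for any one of $\xi_l^{(i)},\mathcal Q_i,\mu_i,a_{i,l},p_{i,l}$ and $\{q_j\}$ for the orthonormal basis of $T_{z_{\mathcal A}}Z$ from Proposition~\ref{prop-3.3}, the chain rule together with $I_\varepsilon'(z_{\mathcal A}+w)=\sum_j\beta_j q_j\in T_{z_{\mathcal A}}Z$ and the relation $\langle\partial_\lambda w,q_j\rangle=-\langle w,\partial_\lambda q_j\rangle$ (from differentiating $\langle w,q_j\rangle=0$) gives
$$
\partial_\lambda\big[I_\varepsilon(z_{\mathcal A}+w)-I_\varepsilon(z_{\mathcal A})\big]=\big\langle I_\varepsilon'(z_{\mathcal A}+w)-I_\varepsilon'(z_{\mathcal A}),\,\partial_\lambda z_{\mathcal A}\big\rangle-\sum_j\beta_j\langle w,\partial_\lambda q_j\rangle.
$$
In the first term I expand $I_\varepsilon'(z_{\mathcal A}+w)-I_\varepsilon'(z_{\mathcal A})=I_\varepsilon''(z_{\mathcal A})[w]+\tfrac12 I_\varepsilon'''(z_{\mathcal A})[w,w]$, so by Cauchy--Schwarz it is at most $\|I_\varepsilon''(z_{\mathcal A})[\partial_\lambda z_{\mathcal A}]\|\,\|w\|+C\|\partial_\lambda z_{\mathcal A}\|\,\|w\|^2$; the first factor is exactly the content of Lemma~\ref{lemma2-3.4}, while $\|\partial_\lambda z_{\mathcal A}\|=O(\mu_i^{-1})$ for $\lambda\in\{\xi_l^{(i)},\mu_i\}$ and $O(1)$ for $\lambda\in\{\mathcal Q_i,a_{i,l},p_{i,l}\}$ (scale-invariance of the relevant Dirichlet norms of the kernels in \eqref{A15-Z}), which supplies the $e^2(\varepsilon)/\mu_i$, resp.\ $e^2(\varepsilon)$, remainder. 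In the second term $|\beta_j|\le\|I_\varepsilon'(z_{\mathcal A}+w)\|\le\|I_\varepsilon'(z_{\mathcal A})\|+C\|w\|+C\|w\|^2\lesssim e(\varepsilon)$, and $\|\partial_\lambda q_j\|$ has the same order as $\|\partial_\lambda z_{\mathcal A}\|$ up to the normalizing factors, which are scale-invariant to leading order; combining with $\|w\|\lesssim e(\varepsilon)$ gives $\big|\sum_j\beta_j\langle w,\partial_\lambda q_j\rangle\big|\lesssim e^2(\varepsilon)/\mu_i$ or $e^2(\varepsilon)$, as required. Matching $\lambda$ against the five cases yields each line of \eqref{reduction-20}.

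The only genuinely non-formal ingredient is Lemma~\ref{lemma2-3.4}: there the domain must be split into the disjoint balls $B_i$ about the $\xi^{(i)}$ and their complement, the cancellation $-\Delta\big(\mathcal Q_i\delta_{\mu_i,\xi^{(i)},a_i,p_i}\big)+2(\mathcal Q_i\delta_{\mu_i,\xi^{(i)},a_i,p_i})_x\wedge(\mathcal Q_i\delta_{\mu_i,\xi^{(i)},a_i,p_i})_y=0$ must be used, and the pointwise and $L^p$ decay estimates \eqref{estimates-1}--\eqref{estimates-4} for the projected bubble and its kernels must be inserted. Granted that lemma (and Lemma~\ref{lemma-3.4}), the present proposition is a formal consequence of the cubic structure of $I_\varepsilon$ and the orthogonality of the reduction, and the remaining effort is purely the bookkeeping of the powers of $\mu_i$ and $d$ absorbed into $e(\varepsilon)$.
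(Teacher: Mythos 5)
Your argument is correct, and the estimate \eqref{reduction-19} is obtained exactly as in the paper (integral/Taylor expansion of the cubic functional, plus $\|w\|\lesssim \|I_\varepsilon'(z_{\mathcal A})\|\lesssim e(\varepsilon)$). For the $C^1$ estimates \eqref{reduction-20}, however, you take a genuinely different route. The paper differentiates the fixed-point equation $F_\varepsilon(z_{\mathcal A},w_\varepsilon,\beta)=0$ with respect to the parameter and uses the uniform invertibility of $\partial F_\varepsilon/\partial(w_\varepsilon,\beta)$ (Lemma \ref{lemma-3.2}) to produce the explicit bound \eqref{reduction-22} on $\|\partial_\lambda w_\varepsilon\|$, after which the troublesome term $I_\varepsilon'(z_{\mathcal A})[\partial_\lambda w_\varepsilon]$ is estimated by $\|I_\varepsilon'(z_{\mathcal A})\|\,\|\partial_\lambda w_\varepsilon\|$. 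You instead eliminate $\partial_\lambda w$ altogether via the identity $I_\varepsilon'(z_{\mathcal A}+w)[\partial_\lambda w]=\sum_j\beta_j\langle q_j,\partial_\lambda w\rangle=-\sum_j\beta_j\langle w,\partial_\lambda q_j\rangle$, which follows from $I_\varepsilon'(z_{\mathcal A}+w)\in T_{z_{\mathcal A}}Z$ and differentiation of the orthogonality $\langle w,q_j\rangle=0$. This is cleaner in that it never requires a quantitative bound on $\partial_\lambda w$, but it shifts the burden onto the claim $\|\partial_\lambda q_j\|\sim\|\partial_\lambda z_{\mathcal A}\|$, which you assert rather than verify: the $q_j$ come from a Gram--Schmidt normalization of the tangent vectors, so $\partial_\lambda q_j$ involves second parameter-derivatives of $z_{\mathcal A}$ and derivatives of the normalizing factors, and one should check (by the same scaling heuristics you invoke for $\|\partial_\lambda z_{\mathcal A}\|$) that these are indeed $O(\mu_i^{-1})$ for $\lambda\in\{\xi_l^{(i)},\mu_i\}$ and $O(1)$ otherwise. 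Granting that routine verification, your bounds match the statement line by line; both approaches ultimately rest on the same two inputs, Lemma \ref{lemma-3.4} and Lemma \ref{lemma2-3.4}, and on the cubic structure of $I_\varepsilon$.
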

\begin{proof}
From the mean value theorem, we have
\begin{equation}\label{reduction-21}
\begin{aligned}
I_{\varepsilon}(z_{\mathcal{A}}+w_{\varepsilon}\left(z_{\mathcal{A}}\right))-I_{\varepsilon}\left(z_{\mathcal{A}}\right)&=\int_0^1 I'_{\varepsilon}(z_{\mathcal{A}}+sw_{\varepsilon}\left(z_{\mathcal{A}}\right))[w_{\varepsilon}\left(z_{\mathcal{A}}\right)]ds
\\&=I'_{\varepsilon}\left(z_{\mathcal{A}}\right)[w_{\varepsilon}\left(z_{\mathcal{A}}\right)]+\int_0^1 \left(I'_{\varepsilon}\left(z_{\mathcal{A}}+sw_{\varepsilon}\left(z_{\mathcal{A}}\right)\right)-I'_{\varepsilon}\left(z_{\mathcal{A}}\right)\right)[w_{\varepsilon}\left(z_{\mathcal{A}}\right)]ds.
\end{aligned}\end{equation}
Since the functional $I''_{\varepsilon}$ is locally bounded, we have
$$\begin{aligned}
\left|I'_{\varepsilon}\left(z_{\mathcal{A}}+sw_{\varepsilon}\left(z_{\mathcal{A}}\right)\right)-I'_{\varepsilon}\left(z_{\mathcal{A}}\right)\right|&\leq \left|\int_0^1 I''_{\varepsilon}\left(z_{\mathcal{A}}+tsw_{\varepsilon}\left(z_{\mathcal{A}}\right)\right)[w_{\varepsilon}\left(z_{\mathcal{A}}\right)]dt\right|
\\& \leq \sup_{t,s\in[0,1]}\|I''_{\varepsilon}\left(z_{\mathcal{A}}+tsw_{\varepsilon}\left(z_{\mathcal{A}}\right)\right)\|\|w_{\varepsilon}\left(z_{\mathcal{A}}\right)\|
\\&\lesssim \|w_{\varepsilon}\left(z_{\mathcal{A}}\right)\|.
\end{aligned}$$
In the sequel, we abbreviate $\|\cdot\|_{H_0^{1}(\mathcal{D})}$ as $\|\cdot\|$. By Proposition \ref{prop-3.3} and Lemma \ref{lemma-3.4}, we obtain
$$\begin{aligned}
\left|I_{\varepsilon}\left(z_{\mathcal{A}}+w_{\varepsilon}\left(z_{\mathcal{A}}\right)\right)-I_{\varepsilon}\left(z_{\mathcal{A}}\right)\right|\lesssim \|I'_{\varepsilon}\left(z_{\mathcal{A}}\right)\|\|w_{\varepsilon}\left(z_{\mathcal{A}}\right)\|+\|w_{\varepsilon}\left(z_{\mathcal{A}}\right)\|^2\lesssim e^2(\varepsilon).
\end{aligned}$$
Thus (\ref{reduction-19}) holds.

Next, we prove (\ref{reduction-20}). Differentiating the equation $F_{\varepsilon}\left(z_{\mathcal{A}},w_{\varepsilon},\beta \right)$ with respect to $\xi_1^{(i)}$, which is defined in (\ref{reduction-11:2}), we obtain
$$0=\frac{\partial F_{\varepsilon}}{\partial z_{\mathcal{A}}}\left[\frac{\partial z_{\mathcal{A}}}{\partial \xi_1^{(i)}}\right]+\frac{\partial F_{\varepsilon}}{\partial (w_{\varepsilon},\beta)}\frac{\partial(w_{\varepsilon},\beta)}{\partial \xi_1^{(i)}}=I''_{\varepsilon}\left(z_{\mathcal{A}}+w_{\varepsilon}\left(z_{\mathcal{A}}\right)\right)\left[\frac{\partial z_{\mathcal{A}}}{\partial \xi_1^{(i)}}\right]+\frac{\partial F_{\varepsilon}}{\partial (w_{\varepsilon},\beta)}\frac{\partial(w_{\varepsilon},\beta)}{\partial \xi_1^{(i)}}.$$
From Lemma \ref{lemma-3.2}, $\frac{\partial F_{\varepsilon}}{\partial (w_{\varepsilon},\beta)}$ is uniformly invertible. Combine the fact that $I''_{\varepsilon}$ is locally Lipschitz, we have
\begin{equation*}
\frac{\partial(w_{\varepsilon},\beta)}{\partial \xi_1^{(i)}}=-\left(\frac{\partial F_{\varepsilon}}{\partial (w_{\varepsilon},\beta)}\right)^{-1}\left[I''_{\varepsilon}\left(z_{\mathcal{A}}+w_{\varepsilon}\left(z_{\mathcal{A}}\right)\right)\left[\frac{\partial z_{\mathcal{A}}}{\partial \xi_1^{(i)}}\right]\right],
\end{equation*}
and then
\begin{equation}\label{reduction-22}
\left\|\frac{\partial(w_{\varepsilon},\beta)}{\partial \xi_1^{(i)}}\right\|\lesssim \left\|I''_{\varepsilon}\left(z_{\mathcal{A}}+w_{\varepsilon}\left(z_{\mathcal{A}}\right)\right)\left[\frac{\partial z_{\mathcal{A}}}{\partial \xi_1^{(i)}}\right]\right\|\lesssim \left(\left\|I''_{\varepsilon}\left(z_{\mathcal{A}}\right)\left[\frac{\partial z_{\mathcal{A}}}{\partial \xi_1^{(i)}}\right]\right\|+\left\|w_{\varepsilon}\left(z_{\mathcal{A}}\right)\right\|\left\|\frac{\partial z_{\mathcal{A}}}{\partial \xi_1^{(i)}}\right\|\right).
\end{equation}
By (\ref{reduction-21}), we have
\begin{equation*}
\begin{aligned}
&\frac{\partial I_{\varepsilon}\left(z_{\mathcal{A}}+w_{\varepsilon}\left(z_{\mathcal{A}}\right)\right)}{\partial \xi_1^{(i)}}-\frac{\partial I_{\varepsilon}\left(z_{\mathcal{A}}\right)}{\partial \xi_1^{(i)}}
\\&= I''_{\varepsilon}\left(z_{\mathcal{A}}\right)\left[\frac{\partial z_{\mathcal{A}}}{\partial \xi_1^{(i)}},w_{\varepsilon}\left(z_{\mathcal{A}}\right)\right]
+I'_{\varepsilon}\left(z_{\mathcal{A}}\right)\left[\frac{\partial w_{\varepsilon}\left(z_{\mathcal{A}}\right)}{\partial \xi_1^{(i)}}\right]
\\&\quad+\int_0^1 \left(I''_{\varepsilon}\left(z_{\mathcal{A}}+sw_{\varepsilon}\left(z_{\mathcal{A}}\right)\right)-I''_{\varepsilon}\left(z_{\mathcal{A}}\right)\right)\left[\frac{\partial z_{\mathcal{A}}}{\partial \xi_1^{(i)}},w_{\varepsilon}\left(z_{\mathcal{A}}\right)\right]ds
\\&\quad+\int_0^1 s\left(I''_{\varepsilon}\left(z_{\mathcal{A}}+sw_{\varepsilon}\left(z_{\mathcal{A}}\right)\right)-I''_{\varepsilon}\left(z_{\mathcal{A}}\right)\right)\left[\frac{\partial w_{\varepsilon}\left(z_{\mathcal{A}}\right)}{\partial \xi_1^{(i)}},w_{\varepsilon}\left(z_{\mathcal{A}}\right)\right]ds+\int_0^1 sI''_{\varepsilon}\left(z_{\mathcal{A}}\right)\left[\frac{\partial w_{\varepsilon}\left(z_{\mathcal{A}}\right)}{\partial \xi_1^{(i)}},w_{\varepsilon}\left(z_{\mathcal{A}}\right)\right]ds
\\&\quad+\int_0^1 \left(I'_{\varepsilon}\left(z_{\mathcal{A}}+sw_{\varepsilon}\left(z_{\mathcal{A}}\right)\right)-I'_{\varepsilon}\left(z_{\mathcal{A}}\right)\right)\left[\frac{\partial w_{\varepsilon}\left(z_{\mathcal{A}}\right)}{\partial \xi_1^{(i)}}\right]ds.
\end{aligned}
\end{equation*}
Thus
\begin{equation*}
\begin{aligned}
\left|\frac{\partial I_{\varepsilon}\left(z_{\mathcal{A}}+w_{\varepsilon}\left(z_{\mathcal{A}}\right)\right)}{\partial \xi_1^{(i)}}-\frac{\partial I_{\varepsilon}\left(z_{\mathcal{A}}\right)}{\partial \xi_1^{(i)}}\right|
&\lesssim \left\| I''_{\varepsilon}\left(z_{\mathcal{A}}\right)\left[\frac{\partial z_{\mathcal{A}}}{\partial \xi_1^{(i)}}\right]\right\|\|w_{\varepsilon}\left(z_{\mathcal{A}}\right)\|
+\left\|I'_{\varepsilon}\left(z_{\mathcal{A}}\right)\right\|\left\|\frac{\partial w_{\varepsilon}\left(z_{\mathcal{A}}\right)}{\partial \xi_1^{(i)}}\right\|
\\&\quad+\|w_{\varepsilon}\left(z_{\mathcal{A}}\right)\| \left\|\frac{\partial w_{\varepsilon}\left(z_{\mathcal{A}}\right)}{\partial \xi_1^{(i)}}\right\|
+\|w_{\varepsilon}\left(z_{\mathcal{A}}\right)\|^2\left(\left\|\frac{\partial z_{\mathcal{A}}}{\partial \xi_1^{(i)}}\right\|+\left\|\frac{\partial w_{\varepsilon}\left(z_{\mathcal{A}}\right)}{\partial \xi_1^{(i)}}\right\|\right).
\end{aligned}
\end{equation*}
From Lemma \ref{lemma-3.4}, (\ref{reduction-22}), Lemma \ref{lemma2-3.4}, Proposition \ref{prop-3.3} and the fact that $\left\|\frac{\partial z_{\mathcal{A}}}{\partial \xi_1^{(i)}}\right\|=O\left(\frac{1}{\mu_i}\right)$, we obtain
$$\left|\frac{\partial I_{\varepsilon}\left(z_{\mathcal{A}}+w_{\varepsilon}\left(z_{\mathcal{A}}\right)\right)}{\partial \xi_1^{(i)}}-\frac{\partial I_{\varepsilon}\left(z_{\mathcal{A}}\right)}{\partial \xi_1^{(i)}}\right|\lesssim \left(\sum_{i=1}^k\mu_i^{3-\frac{\alpha}{2}}d^{-4}\right)e(\varepsilon)+\frac{e^2(\varepsilon)}{\mu_i}.$$
The remaining parts of (\ref{reduction-20}) can be proved in a similar way.
\end{proof}

\section{Energy expansion for one bubble}\label{expansion-for-one-bubble}
In this section, we evaluate the expansion of $I_{\varepsilon}\left(z_{\mathcal{A}}\right)$ for sufficiently small $\varepsilon$ and $z_{\mathcal{A}}\in Z$ in the case where $k=1$. To simplify the notation, we denote $\mu_1,\  \xi^{(1)},\ \mathcal Q_{1},\ a_{1,1},\ a_{1,2},\ p_{1,1},\ p_{1,2}$ as
$ \mu,\  \xi,\  \mathcal Q,\ a_1,\ a_2,\ p_1,\ p_2$.

For a given boundary condition $\tilde g$ and its harmonic extension $g$ on $\mathcal D$, we aim to calculate the expansion of the functional $I_{\varepsilon}\left(P\mathcal{Q}\delta_{\mu,\xi, a, p}\right)$ as a function of $\mathcal{Q}\in SO(3)$, $\mu$, $\xi$, $a_l$, $p_l$, $l=1, 2$ and $\varepsilon$. For a fixed point $\omega\in \mathcal D$ to be chosen, we define the boundary function $g(z,\omega)=\left(g_1(z,\omega),g_2(z,\omega),g_3(z,\omega)\right)$ as follows
$$g_1(z,\omega)=2h_1^{(1)}(z,\omega),\quad g_2(z,\omega)=2h_2^{(1)}(z,\omega),\quad g_3(z,\omega)=-2\varepsilon h_3^{(1)}(z,\omega), $$
where $\left(h_1^{(1)}(z,\omega),h_2^{(1)}(z,\omega),h_3^{(1)}(z,\omega)\right)$ are the functions given in (\ref{pre-4}). The main result of this section is the following proposition.

\begin{prop}\label{prop 4.1} Let $ \mu, \ \xi,\ \mathcal Q,\  a_1,\ a_2,\ p_1,\ p_2$ be such that $P\mathcal{Q}\delta_{\mu,\xi,a,p}\in Z$, and $d=d_{\varepsilon}=1-|\xi|^2$.
Then there holds
\begin{equation*}
I_{\varepsilon}\left(P\mathcal{Q}\delta_{\mu,\xi,a,p}\right)=\frac{8}{3}\pi+F_{\mathcal{D},g}(\mu,\xi,\mathcal{Q},a,p)+\tilde{e}^{(1)}(\varepsilon,\mu,a,p)+\tilde{e}^{(2)}(\varepsilon,\mu,a,p),
\end{equation*}
where
\begin{align*}
&F_{\mathcal{D},g}(\mu,\xi,\mathcal{Q},a,p)
\\&:=8\pi\mu^4\frac{\partial^2 h_2^{(1)}}{\partial x \partial y}(\xi,\xi)
-4\pi\varepsilon \mu^2\left[\frac{\partial^2 \left(\mathcal Q^{-1} g\right)_1}{\partial x^2}(\xi,\omega)+\frac{\partial^2 \left(\mathcal Q^{-1} g\right)_2}{\partial x \partial y}(\xi,\omega)\right]
\\&\quad-16\pi a_1^2\mu^2\frac{\partial h_1^{(-1,1)}}{\partial x}(\xi,\xi)
-16\pi a_2^2\mu^2\frac{\partial h_2^{(-1,2)}}{\partial x}(\xi,\xi)
-\frac{4\pi}{3}p_1^2\mu^8\frac{\partial^4 h^{(2, 1)}_1}{\partial x^4}(\xi, \xi) +\frac{4\pi}{3}p_2^2\mu^8\frac{\partial^4 h^{(2, 2)}_2}{\partial x^4}(\xi, \xi)
\\&\quad-32\pi p_1a_1\mu^5\frac{\partial h_1^{(2,1)}}{\partial x}(\xi,\xi)-32\pi p_1a_2\mu^5\frac{\partial h_2^{(2,1)}}{\partial x}(\xi,\xi)
-32\pi p_2a_1\mu^5\frac{\partial h_1^{(2,2)}}{\partial x}(\xi,\xi)
-32\pi p_2a_2\mu^5\frac{\partial h_2^{(2,2)}}{\partial x}(\xi,\xi),
\end{align*}

\begin{equation*}
\begin{aligned}
\tilde{e}^{(1)}(\mu,a,p):=&~O\left(\mu^8d^{-10}+\varepsilon\mu^4d^{-6}+\varepsilon^3\mu^2|\log \mu|+\varepsilon^2\mu^2+\varepsilon^3\mu^2d^{-2}\right)
\\&+O\left(\left(a_1^2+a_2^2\right)\mu^4d^{-4}\right)
+ O\left((p_1^2+p_2^2)\mu^{10}d^{-10}\right)+O\left((a_1+a_2)(p_1+p_2)\mu^7d^{-7}\right),
\end{aligned}
\end{equation*}
and
\begin{align}
&\tilde{e}^{(2)}(\varepsilon,\mu,a,p)
\\&:\notag =O\left(\left(p_1+p_2\right)(\mu^2-\varepsilon)\left(\mu^4d^{-6}+(\mu^8+\varepsilon\mu^6+\varepsilon^2\mu^4)d^{-12}\right)\right)
\\ \notag&~\quad+O\left((a_1+a_2)(\mu^2-\varepsilon)\left(\mu d^{-3}+\left(\mu^3\left(\mu^2+\varepsilon\right)+\varepsilon^2\mu\right)d^{-9}\right)\right)
\\ \notag&~\quad+O\left(p_1p_2\mu^{10}d^{-10}+\sum_{k,l=1}^2p_kp_l(\mu^2-\varepsilon)\left(\left((\mu^2+\varepsilon)\right)\left(\mu^6d^{-12}+\mu^2d^{-6}\right)+\varepsilon^2\mu^6d^{-14}\right)\right)
\\ \notag&~\quad+O\left(a_1a_2\mu^4d^{-4}+\sum_{k,l=1}^2a_ka_l\left(\mu^2-\varepsilon\right)\left(\mu^2(\mu^2+\varepsilon)d^{-8}\right)\right)
\\ \notag&~\quad+O\left(\sum_{k,l=1}^2a_kp_l(\mu^2-\varepsilon)\left(\mu^3d^{-5}+(\mu^5(\mu^2+\varepsilon)++\varepsilon^2 \mu^3)d^{-11}\right)\right)
\\ \notag&~\quad+O\left(\sum_{k,l,m=1}^2a_ka_la_m\left(\mu^5d^{-7}+\left(\mu^2-\varepsilon\right)\mu(\mu^2+\varepsilon)d^{-7}\right)+a_ka_lp_m\left(\mu^2d^{-2}+\left(\mu^2-\varepsilon\right)\mu(\mu^2+\varepsilon)d^{-7}\right)\right)
\\ \notag&~\quad+O\left(\sum_{k,l,m=1}^2a_kp_lp_m\left(\mu^5d^{-5}+\left(\mu^2-\varepsilon\right)\mu(\mu^2+\varepsilon)d^{-7}\right)+p_kp_lp_m\left(\mu^6d^{-6}+\left(\mu^2-\varepsilon\right)\mu(\mu^2+\varepsilon)d^{-7}\right)\right)
\\ \notag&~\quad+O\left(\left(a_1+a_2+p_1+p_2\right)^4\left(\mu d^{-1}+(\mu^2-\varepsilon)\mu^2d^{-6}\right)\right)
\\ \label{Ju28-error}&~\quad+O\left(\left(a_1+a_2+p_1+p_2\right)^3\left(a_1+a_2\right)\mu^4 d^{-6}\right)
+O\left(\left(a_1+a_2+p_1+p_2\right)^5\right).
\end{align}
Moreover, we have
\begin{align*}
& \frac{\partial I_{\varepsilon}\left(P\mathcal{Q}\delta_{\mu,\xi,a,p}\right)}{\partial \mu}=\frac{\partial F_{\mathcal{D},g}}{\partial \mu}+\frac{1}{\mu}\tilde{e}^{(1)}(\mu,a,p)+\frac{1}{\mu}\tilde{e}^{(2)}(\varepsilon,\mu,a,p),
\\&\frac{\partial I_{\varepsilon}\left(P\mathcal{Q}\delta _{\mu,\xi,a,p}\right)}{\partial \xi_l}=\frac{\partial F_{\mathcal{D},g}}{\partial \xi_l}+\frac{\pp\tilde{e}^{(1)}(\mu,a,p)}{\pp \xi_l}+\frac{\pp\tilde{e}^{(2)}(\varepsilon,\mu,a,p)}{\pp\xi_l},\quad l = 1,2,
\\&\frac{\partial I_{\varepsilon}\left(P\mathcal{Q}\delta _{\mu,\xi,a,p}\right)}{\partial \mathcal{Q}}=\frac{\partial F_{\mathcal{D},g}}{\partial \mathcal{Q}}+\tilde{e}^{(1)}(\mu,a,p)+\tilde{e}^{(2)}(\varepsilon,\mu,a,p),
\\&\frac{\partial I_{\varepsilon}\left(P\mathcal{Q}\delta _{\mu,\xi,a,p}\right)}{\partial a_l}=\frac{\partial F_{\mathcal{D},g}}{\partial a_l}+\frac{\pp\tilde{e}^{(1)}(\mu,a,p)}{\pp a_l}+\frac{\pp\tilde{e}^{(2)}(\varepsilon,\mu,a,p)}{\pp a_l},\quad l = 1,2,
\\&\frac{\partial I_{\varepsilon}\left(P\mathcal{Q}\delta _{\mu,\xi,a,p}\right)}{\partial p_l}=\frac{\partial F_{\mathcal{D},g}}{\partial p_l}+\frac{\pp\tilde{e}^{(1)}(\mu,a,p)}{\pp p_l}+\frac{\pp\tilde{e}^{(2)}(\varepsilon,\mu,a,p)}{\pp p_l},\quad l = 1,2.
\end{align*}
\end{prop}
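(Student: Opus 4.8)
The plan is to insert the ansatz $u = P\mathcal Q\delta_{\mu,\xi,a,p}$ into the Euler functional \eqref{Euler-functional} and expand each of its four terms, organizing the computation according to the decomposition $P\delta_{\mu,\xi,a,p} = P\delta_{\mu,\xi} + \sum_l a_l PZ_{-1,l} + \sum_l p_l PZ_{2,l} + (\mathcal R_{\mathcal A} - \varphi_{\mathcal R_{\mathcal A}})$ recorded in \eqref{A15-expansion}. First I would treat the leading ``self-energy'' term: since $\mathcal Q\delta_{\mu,\xi,a,p}$ is an exact degree-2 bubble which is a critical point of $\bar I$ with $\bar I = \tfrac83\pi$ on $\bar Z$, one writes $I_\varepsilon(P\mathcal Q\delta) = \bar I(P\mathcal Q\delta) + \varepsilon(\text{cross term}) + 2\varepsilon^2(\text{boundary term})$, and expands $\bar I(P\mathcal Q\delta) = \bar I(\mathcal Q\delta) + [\text{interaction of }\delta\text{ with }\varphi]$. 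The interaction is estimated using integration by parts (turning $\Delta\delta$ into the quadratic nonlinearity) and the harmonicity of $\varphi$, reducing everything to boundary integrals over $\partial\mathcal D$ evaluated via the explicit near-boundary expansions \eqref{expansion-near-boundary}--\eqref{2expansion-near-boundary} and the pointwise/integral bounds \eqref{estimates-1}--\eqref{estimates-4}. This produces the term $8\pi\mu^4\frac{\partial^2 h_2^{(1)}}{\partial x\partial y}(\xi,\xi)$ plus error, after a Taylor expansion of $\varphi_1$ around $\xi$ and use of the Robin-function identities from Lemma \ref{evalues-at-xi-2}.

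Next I would expand the pieces involving $a_l$ and $p_l$. For the $a$-terms one has the quadratic contribution of $\sum_l a_l(Z_{-1,l} - \varphi_{-1,l})$ to $\bar I''(\mathcal W)[\cdot,\cdot]$, which by the explicit kernels \eqref{A15-Z} and the estimates \eqref{estimates-2} localizes to $-16\pi a_1^2\mu^2\frac{\partial h_1^{(-1,1)}}{\partial x}(\xi,\xi) - 16\pi a_2^2\mu^2\frac{\partial h_2^{(-1,2)}}{\partial x}(\xi,\xi)$; similarly the $p$-terms give the $\mu^8$ fourth-derivative contributions, and the mixed $a$-$p$ terms give the $\mu^5$ first-derivative contributions, each coming from a specific cross-pairing of kernel functions with their projections. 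The cross term $\varepsilon\int u\cdot(u_x\wedge g_y + g_x\wedge u_y)$ is computed by replacing $u$ by $\mathcal Q\delta_{\mu,\xi}$ to leading order, integrating by parts, Taylor-expanding $\mathcal Q^{-1}g(\cdot,\omega)$ around $\xi$, and using that the zeroth and first moments of the degree-2 bubble against polynomials vanish by symmetry (this is where the choice of $g$ in Lemma \ref{lemma7.1a} cancels the otherwise-dominant term); the surviving piece is $-4\pi\varepsilon\mu^2\big[\partial_x^2(\mathcal Q^{-1}g)_1 + \partial_x\partial_y(\mathcal Q^{-1}g)_2\big](\xi,\omega)$. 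The $\varepsilon^2$ term contributes only to the error, of size $O(\varepsilon^2\mu^2)$. All remaining contributions — higher-order pieces of $\mathcal R_{\mathcal A}$, cubic and quartic terms in $(a,p)$, higher Taylor remainders of the Robin functions, and the off-diagonal kernel pairings that do not survive — are collected into $\tilde e^{(1)}$ and $\tilde e^{(2)}$; bookkeeping which power of $\mu$, $\varepsilon$, $d$ each carries (using $\mu_0=\sqrt\varepsilon$, \eqref{asymp-d}, \eqref{asymp-ap}) is exactly what produces the displayed error lists.

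Finally, the $C^1$-estimates follow from the same expansion differentiated in the parameters. The mechanism is that each term in $F_{\mathcal D,g}$ and each error term is an explicit (product of) function(s) of $(\mu,\xi,\mathcal Q,a,p)$ whose dependence is through the bubble scaling and through the smooth Robin functions; differentiating in $\mu$ costs a factor $\mu^{-1}$ (scaling homogeneity), in $\xi_l$ a factor $\mu^{-1}$ as well (spatial derivative of the $\mu$-scaled profile, cf. $\|\partial_{\xi_l} z_{\mathcal A}\| = O(\mu^{-1})$ used in Proposition \ref{prop-3.6}), and in $\mathcal Q, a_l, p_l$ the cost is $O(1)$ since those enter analytically; the remainder estimates are then obtained by differentiating under the integral sign in the integral representations of each term and re-running the same $L^p$--$L^q$ splitting on $B_\tau$ and $\mathcal D\setminus B_\tau$. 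The main obstacle is the sheer combinatorial volume of the expansion: one must track a large number of bilinear and trilinear pairings of the ten kernel functions \eqref{A15-Z}, their projections \eqref{2expansion-near-boundary}, and the second-order profiles $Z^{\cdot,\cdot}_{j,k}$, and verify that all the potentially-large contributions (those of order lower than the claimed $F_{\mathcal D,g}$, e.g. the $O(\varepsilon\mu^2)$ and $O(\mu^2\mu^4 d^{-?})$ terms) either vanish by the explicit symmetry of the degree-2 bubble or are killed by the special choice of boundary datum $h$ in Lemma \ref{lemma7.1a}. This cancellation check — rather than any single hard estimate — is the crux, and it is why the paper defers the bulk of it to Appendices \ref{Computations of mixed terms} and \ref{Appendix-F}.
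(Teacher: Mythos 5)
Your proposal follows essentially the same route as the paper: decompose $P\delta_{\mu,\xi,a,p}$ into the unperturbed projected bubble, the linear part $\sum_l(a_lPZ_{-1,l}+p_lPZ_{2,l})$, and the higher-order remainder; integrate by parts against the equation $\Delta\delta=2\delta_x\wedge\delta_y$ and the harmonicity of the regular parts; Taylor-expand the Robin functions at $\xi$ using Lemma \ref{evalues-at-xi-2}; and obtain the $C^1$-estimates by differentiating the same integral representations, with the derivative in $\mu$ and $\xi_l$ costing a factor $\mu^{-1}$. The cancellation mechanisms you single out (the kernel identities $L_{\mathcal W}[Z_{k,j}]=0$, the identities $\partial_x^2h_1^{(1)}+\partial_y^2h_1^{(1)}=0$ and $\partial_xh_1^{(1)}=\partial_yh_2^{(1)}$, and the choice of $g$ matching the leading boundary trace of $\delta_{\mu,\xi}$) are precisely the ones the paper relies on in Subsection \ref{subsection4.1} and Appendix \ref{Computations of mixed terms}, so the only content not supplied is the explicit bookkeeping that the paper itself relegates to the appendices.
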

\begin{proof}
Assume $\mathcal{Q}=Id$, and let $\varphi_{\mu, \xi, a, p}$ be the solution of (\ref{pre-3}). Then $P\delta_{\mu, \xi, a, p}=\delta_{\mu, \xi, a, p}-\varphi_{\mu, \xi, a, p}$ and we have
\begin{equation}\label{mainA24-1}
\begin{aligned}
I_{\varepsilon}\left(P\delta_{\mu, \xi, a, p}\right)
=&\frac{1}{2}\int_{\mathcal{D}}\left|\nabla P\delta_{\mu, \xi, a, p}\right|^2
+\frac{2}{3}\int_{\mathcal{D}}P\delta_{\mu, \xi, a, p} \cdot\left[(P\delta_{\mu, \xi, a, p})_x\wedge (P\delta _{\mu, \xi, a, p})_y\right]
\\&+\varepsilon \int_{\mathcal{D}}\left(P\delta_{\mu, \xi, a, p}\right)
\cdot\left[(P\delta _{\mu, \xi, a, p})_x\wedge g_y+g_x\wedge (P\delta_{\mu, \xi, a, p})_y\right]+2\varepsilon^2\int_{\mathcal{D}}\left(P\delta_{\mu, \xi, a, p}\right)\cdot (g_x\wedge g_y).
\end{aligned}
\end{equation}
Write $\delta_{\mu, \xi, 0, 0}$ as $\delta$, recall that we have the following expansion
$$
\begin{aligned}
\delta_{\mu, \xi, a, p}:=\delta+\mathcal{L}_{\mathcal{A}}+\mathcal{R}_{\mathcal{A}},
\end{aligned}
$$
where $\mathcal{L}_{\mathcal{A}}:=\sum_{l=1}^2\left(a_lZ_{-1,l}+p_l Z_{2,l}\right)$ is the linear part, $\mathcal{R}_{\mathcal{A}}$ denotes the higher order terms.

Now, we highlight some evident relations between the terms in (\ref{mainA24-1}) that arise through integration by parts. Firstly, for the following terms in (\ref{mainA24-1}), there holds
\begin{equation}\label{A30-1}
\begin{aligned}
&\frac{1}{2}\int_{\mathcal{D}}\nabla\left(P\mathcal{L}_{\mathcal{A}}\right)\cdot\nabla\left(P\mathcal{L}_{\mathcal{A}}\right)
+\frac{2}{3}\int_{\mathcal{D}}P\delta\cdot\left[\left(P\mathcal{L}_{\mathcal{A}}\right)_x
 \wedge \left(P\mathcal{L}_{\mathcal{A}}\right)_y\right]
\\&+\frac{2}{3}\int_{\mathcal{D}}\left(P\mathcal{L}_{\mathcal{A}}\right)\cdot\left[\left(P\delta\right)_x
 \wedge \left(P\mathcal{L}_{\mathcal{A}}\right)_y+
\left(P\mathcal{L}_{\mathcal{A}}\right)_x\wedge \left(P\delta\right)_y\right]
\\&=-\frac{1}{2}\int_{\mathcal{D}}\Delta\left(P\mathcal{L}_{\mathcal{A}}\right)\cdot\left(P\mathcal{L}_{\mathcal{A}}\right)+\int_{\mathcal{D}}\left(P\mathcal{L}_{\mathcal{A}}\right)\cdot\left[\delta_x\wedge \left(P\mathcal{L}_{\mathcal{A}}\right)_y+\left(P\mathcal{L}_{\mathcal{A}}\right)_x\wedge\delta_y\right]
\\&\quad-\int_{\mathcal{D}}\left(P\mathcal{L}_{\mathcal{A}}\right)\cdot\left[(\varphi_1)_x\wedge \left(P\mathcal{L}_{\mathcal{A}}\right)_y+\left(P\mathcal{L}_{\mathcal{A}}\right)_x\wedge (\varphi_1)_y\right]
\\&=-\frac{1}{2}\int_{\mathcal{D}}L_\delta\left[P\mathcal{L}_{\mathcal{A}}\right]\cdot\left(P\mathcal{L}_{\mathcal{A}}\right)-2\int_{\mathcal{D}}\varphi_1
\cdot\left[\left(P\mathcal{L}_{\mathcal{A}}\right)_x\wedge\left(P\mathcal{L}_{\mathcal{A}}\right)_y\right],
\end{aligned}
\end{equation}
where $L_\delta[\phi] := \Delta\phi-2\delta_x\wedge\phi_y-2\phi_x\wedge\delta_y$.

\begin{equation*}
\begin{aligned}
&\frac{1}{2}\int_{\mathcal{D}}\nabla \left(P\mathcal{R}_{\mathcal{A}}\right)\cdot\nabla \left(P\mathcal{R}_{\mathcal{A}}\right)+\frac{2}{3}\int_{\mathcal{D}}P\delta\cdot \left[\left(P\mathcal{R}_{\mathcal{A}}\right)_x\wedge\left(P\mathcal{R}_{\mathcal{A}}\right)_y\right]
\\&+\frac{2}{3}\int_{\mathcal{D}}\left(P\mathcal{R}_{\mathcal{A}}\right)\cdot\left[\left(P\delta\right)_x
 \wedge \left(P\mathcal{R}_{\mathcal{A}}\right)_y+
\left(P\mathcal{R}_{\mathcal{A}}\right)_x\wedge\left(P\delta\right)_y\right]
\\&=\frac{1}{2}\int_{\mathcal{D}}\nabla\left(P\mathcal{R}_{\mathcal{A}}\right)\cdot\nabla \left(P\mathcal{R}_{\mathcal{A}}\right)+2\int_{\mathcal{D}}P\delta\cdot \left[\left(P\mathcal{R}_{\mathcal{A}}\right)_x
 \wedge \left(P\mathcal{R}_{\mathcal{A}}\right)_y\right].
\end{aligned}
\end{equation*}
Secondly,
\begin{equation*}
\begin{aligned}
&\int_{\mathcal{D}}\nabla P\delta\cdot\nabla \left(P(\mathcal{L}_{\mathcal{A}}+\mathcal{R}_{\mathcal{A}})\right)
+\frac{2}{3}\int_{\mathcal{D}}\left(P(\mathcal{L}_{\mathcal{A}}+\mathcal{R}_{\mathcal{A}}))\right)\cdot \left[\left(P\delta\right)_x
 \wedge \left(P\delta\right)_y\right]
\\&+\frac{2}{3}\int_{\mathcal{D}}P\delta\cdot \left[\left(P\delta\right)_x
 \wedge \left(P(\mathcal{L}_{\mathcal{A}}+\mathcal{R}_{\mathcal{A}}))\right)_y+
\left(P(\mathcal{L}_{\mathcal{A}}+\mathcal{R}_{\mathcal{A}}))\right)_x\wedge \left(P\delta\right)_y\right]
\\&=-2\int_{\mathcal{D}}\left(P(\mathcal{L}_{\mathcal{A}}+\mathcal{R}_{\mathcal{A}})\right)
\cdot (\delta_x\wedge \delta_y)+2\int_{\mathcal{D}}\left(P(\mathcal{L}_{\mathcal{A}}+\mathcal{R}_{\mathcal{A}})\right)
\cdot \left[(P\delta)_x\wedge (P\delta)_y)\right]
\\&=-2 \int_{\mathcal{D}}\left(P(\mathcal{L}_{\mathcal{A}}+\mathcal{R}_{\mathcal{A}}))\right)
\cdot \left[\delta_x\wedge (\varphi_1)_y+(\varphi_1)_x\wedge \delta_y-(\varphi_1)_x\wedge (\varphi_1)_y\right]
\\&=-2\int_{\mathcal{D}}\varphi_1
\cdot \left[\delta_x\wedge \left(P(\mathcal{L}_{\mathcal{A}}+\mathcal{R}_{\mathcal{A}})\right)_y+\left(P(\mathcal{L}_{\mathcal{A}}+\mathcal{R}_{\mathcal{A}})\right)_x\wedge \delta_y\right]+2\int_{\mathcal{D}} \left(P(\mathcal{L}_{\mathcal{A}}+\mathcal{R}_{\mathcal{A}})\right)\cdot\left[(\varphi_1)_x\wedge (\varphi_1)_y\right].
\end{aligned}
\end{equation*}
Moreover,
\begin{align*}
&\int_{\mathcal{D}}\nabla \left(P\mathcal{L}_{\mathcal{A}}\right)\cdot\nabla \left(P\mathcal{R}_{\mathcal{A}}\right)
+\frac{2}{3}\int_{\mathcal{D}}P\delta\cdot \left[\left(P\mathcal{L}_{\mathcal{A}}\right)_x
 \wedge \left(P\mathcal{R}_{\mathcal{A}}\right)_y+
\left(P\mathcal{R}_{\mathcal{A}}\right)_x\wedge \left(P\mathcal{L}_{\mathcal{A}}\right)_y\right]
\\&+\frac{2}{3}\int_{\mathcal{D}}\left(P\mathcal{L}_{\mathcal{A}}\right)\cdot \left[\left(P\delta\right)_x
 \wedge \left(P\mathcal{R}_{\mathcal{A}}\right)_y+
\left(P\mathcal{R}_{\mathcal{A}}\right)_x\wedge \left(P\delta\right)_y\right]
+\frac{2}{3}\int_{\mathcal{D}}\left(P\mathcal{R}_{\mathcal{A}}\right)
\cdot \left[\left(P\delta\right)_x\wedge \left(P\mathcal{L}_{\mathcal{A}}\right)_y+\left(P\mathcal{L}_{\mathcal{A}}\right)_x\wedge \left(P\delta\right)_y \right]
\\&=-2\int_{\mathcal{D}}\left(P\mathcal{R}_{\mathcal{A}}\right)
\cdot \left[\delta_x\wedge \left(\mathcal{L}_{\mathcal{A}}\right)_y +\left(\mathcal{L}_{\mathcal{A}}\right)_x\wedge\delta_y\right]+2\int_{\mathcal{D}}\left(P\mathcal{R}_{\mathcal{A}}\right)
\cdot \left[\left(P\delta\right)_x\wedge \left(P\mathcal{L}_{\mathcal{A}}\right)_y+\left(P\mathcal{L}_{\mathcal{A}}\right)_x\wedge \left(P\delta\right)_y \right].
\end{align*}
Integrating by parts again, we derive
\begin{equation}\label{A30-5-2}
\begin{aligned}
\varepsilon \int_{\mathcal{D}}\left(P\delta _{\mu, \xi, a, p}\right)
\cdot \left[(P\delta _{\mu, \xi, a, p})_x\wedge g_y
+g_x\wedge (P\delta _{\mu, \xi, a, p})_y\right]
=2\varepsilon\int_{\mathcal{D}}g\cdot  \left[\left(P\delta _{\mu, \xi, a, p}\right)_x\wedge \left(P\delta _{\mu, \xi, a, p}\right)_y\right].
\end{aligned}
\end{equation}
For the sake of clarity and brevity, we expand the terms in (\ref{mainA24-1}), but only list some typical terms here, and the remaining terms which are of higher order will be estimated in Appendix \ref{Computations of mixed terms}. Using the relations derived in (\ref{A30-1})-(\ref{A30-5-2}), we have
\begin{equation}\label{Mar22-2}
\begin{aligned}
 I_{\varepsilon}\left(P\delta_{\mu, \xi, a, p}\right)
&=\frac{1}{2}\int_{\mathcal{D}}|\nabla P\delta|^2 +\frac{2}{3}\int_{\mathcal{D}}P\delta
\cdot \left[(P\delta)_x\wedge (P\delta)_y\right]
\\&\quad-\frac{1}{2}\int_{\mathcal{D}}L_\delta\left[P\mathcal{L}_{\mathcal{A}}\right]\cdot\left(P\mathcal{L}_{\mathcal{A}}\right)-2\int_{\mathcal{D}}\varphi_1
\cdot \left[\left(P\mathcal{L}_{\mathcal{A}}\right)_x\wedge \left(P\mathcal{L}_{\mathcal{A}}\right)_y\right]
\\&\quad+\frac{1}{2}\int_{\mathcal{D}}\nabla \left(P\mathcal{R}_{\mathcal{A}}\right)\cdot\nabla \left(P\mathcal{R}_{\mathcal{A}}\right)+2\int_{\mathcal{D}}P\delta\cdot \left[\left(P\mathcal{R}_{\mathcal{A}}\right)_x
 \wedge \left(P\mathcal{R}_{\mathcal{A}}\right)_y\right]
\\&\quad-2\int_{\mathcal{D}}\varphi_1
\cdot \left[\delta_x\wedge \left(P(\mathcal{L}_{\mathcal{A}}+\mathcal{R}_{\mathcal{A}})\right)_y+\left(P(\mathcal{L}_{\mathcal{A}}+\mathcal{R}_{\mathcal{A}})\right)_x\wedge \delta_y\right]
+2\int_{\mathcal{D}} \left(P(\mathcal{L}_{\mathcal{A}}+\mathcal{R}_{\mathcal{A}})\right)\cdot\left[(\varphi_1)_x\wedge (\varphi_1)_y\right]
\\&\quad
-2\int_{\mathcal{D}}\left(P\mathcal{R}_{\mathcal{A}}\right)
\cdot \left[\delta_x\wedge \left(\mathcal{L}_{\mathcal{A}}\right)_y +\left(\mathcal{L}_{\mathcal{A}}\right)_x\wedge\delta_y\right]
+2\int_{\mathcal{D}}\left(P\mathcal{R}_{\mathcal{A}}\right)
\cdot \left[\left(P\delta\right)_x\wedge \left(P\mathcal{L}_{\mathcal{A}}\right)_y+\left(P\mathcal{L}_{\mathcal{A}}\right)_x\wedge \left(P\delta\right)_y \right]
\\&\quad+\frac{2}{3}\int_{\mathcal{D}}\left(P\left(\mathcal{L}_{\mathcal{A}}+\mathcal{R}_{\mathcal{A}}\right)\right)\cdot \left[\left(P\left(\mathcal{L}_{\mathcal{A}}+\mathcal{R}_{\mathcal{A}}\right)\right)_x
 \wedge \left(P\left(\mathcal{L}_{\mathcal{A}}+\mathcal{R}_{\mathcal{A}}\right)\right)_y\right]
\\&\quad+2\varepsilon \int_{\mathcal{D}}g
\cdot \left[\left(P\delta _{\mu, \xi, a, p}\right)_x\wedge \left(P\delta _{\mu, \xi, a, p}\right)_y\right]+2\varepsilon^2\int_{\mathcal{D}}\left(P\delta _{\mu, \xi, a, p}\right)\cdot (g_x\wedge g_y)
\\&:=I^{(1)}_{\varepsilon}\left(P\delta_{\mu, \xi, a, p}\right)+\mathcal{R}_{\mu,\xi,a,p,\varepsilon},
\end{aligned}
\end{equation}
where
\begin{equation}\label{Au10-I1}
\begin{aligned}
I^{(1)}_{\varepsilon}\left(P\delta _{\mu, \xi, a, p}\right)
&=\frac{1}{2}\int_{\mathcal{D}}|\nabla P\delta|^2+\frac{2}{3}\int_{\mathcal{D}}P\delta
\cdot \left[(P\delta)_x\wedge (P\delta)_y\right]+2\varepsilon \int_{\mathcal{D}}g
\cdot \left[(P\delta)_x\wedge (P\delta)_y\right]
\\&\quad-\sum_{l=1}^2\frac{a_l^2}{2}\int_{\mathcal{D}}L_\delta[P Z_{-1,l}]\cdot(P Z_{-1,l})
-\sum_{l=1}^2\frac{p_l^2}{2}\int_{\mathcal{D}}L_\delta[P Z_{2,l}]\cdot(P Z_{2,l})
\\&\quad-\int_{\mathcal{D}}L_\delta\left[\sum_{l=1}^2p_l PZ_{2,l}\right]\cdot \left(\sum_{l=1}^2a_lP Z_{-1,l}\right).
\end{aligned}
\end{equation}
And $\mathcal{R}_{\mu,\xi,a,p,\varepsilon}$ is given in Appendix \ref{Computations of mixed terms}, see (\ref{Au12-R1}).

We claim that when the boundary function $g(z,\omega)=\left(2h_1^{(1)}(z,\omega),2h_2^{(1)}(z,\omega),-2\varepsilon h_3^{(1)}(z,\omega)\right)$, the terms in
$\mathcal{R}_{\mu,\xi,a,p,\varepsilon}$ become higher order after certain cancellations, which will be discussed in Appendix \ref{Computations of mixed terms}.

\subsection{Estimates of the terms in (\ref{Au10-I1}).}\label{subsection4.1}
In this subsection, we estimate the terms in (\ref{Au10-I1}).

{\bf $\bullet$ Estimates of the terms containing only $\mu$ in (\ref{Au10-I1}).}
Integrating by parts, we can write
\begin{equation*}
\begin{aligned}
\frac{1}{2}\int_{\mathcal{D}}|\nabla P\delta|^2 = -\int_{\mathcal{D}}P\delta \cdot (\delta_x\wedge \delta_y) = \int_{\mathcal{D}}(\varphi_1-\delta)\cdot (\delta_x\wedge \delta_y).
\end{aligned}
\end{equation*}
First, we calculate $\int_{\mathcal{D}}(\varphi_1-\delta)_1\cdot (\delta_x\wedge \delta_y)_1$. By (\ref{cross}) and (\ref{A15-expansion}), we deduce
\begin{equation}\label{expansion-7}
\begin{aligned}
\int_{\mathcal{D}}(\varphi_1-\delta)_1\cdot (\delta_x\wedge \delta_y)_1
=\int_{\mathcal{D}}\left[\frac{2\left[\left(\frac{x-\xi_1}{\mu}\right)^2-\left(\frac{y-\xi_2}{\mu}\right)^2\right]}{1+\left(\left|\frac{z-\xi}{\mu}\right|^2\right)^2}-2\mu^2h_1^{(1)}(z,\xi)+O(\mu^5)\right]\frac{32\left[\left(\frac{x-\xi_1}{\mu}\right)^4-\left(\frac{y-\xi_2}{\mu}\right)^4\right]}{\mu^2\left[1+\left(\left|\frac{z-\xi}{\mu}\right|^2\right)^2\right]^3}.
\end{aligned}
\end{equation}
Using a change of variable, we obtain
\begin{equation}\label{expansion-8}
\begin{aligned}
\frac{1}{\mu^2}\int_{\mathcal{D}}\frac{\left[\left(\frac{x-\xi_1}{\mu}\right)^2-\left(\frac{y-\xi_2}{\mu}\right)^2\right]\left[\left(\frac{x-\xi_1}{\mu}\right)^4-\left(\frac{y-\xi_2}{\mu}\right)^4\right]}{\left[1+\left(\left|\frac{z-\xi}{\mu}\right|^2\right)^2\right]^4}
&=\int_{\mathbb{R}^2}\frac{(x^2-y^2)(x^4-y^4)}{\left(1+|z|^4\right)^4}+O(\mu^8)=\frac{\pi}{24}+O(\mu^8).
\end{aligned}
\end{equation}
From the smoothness of $h_1^{(1)}$ and $\frac{\pp^3h_1^{(1)}}{\pp x^l\pp y^{3-l}}(\xi,\xi)=O\left(d^{-5}\right)$ for $l=0,1,2,3$, where $d=d_{\varepsilon}=1-|\xi|^2$, we obatin
\begin{equation*}
\begin{aligned}
&\left|\int_{\mathcal{D}}\left[h_1^{(1)}(z,\xi)-h_1^{(1)}(\xi,\xi)-(z-\xi) \cdot \nabla h_1^{(1)}(\xi,\xi)-\frac{1}{2}(z-\xi)\cdot \nabla^2h_1^{(1)}(\xi,\xi)\cdot(z-\xi)^{\mathrm{T}} \right] \frac{\left[\left(\frac{x-\xi_1}{\mu}\right)^4-\left(\frac{y-\xi_2}{\mu}\right)^4\right]}{\mu^2\left[1+\left(\left|\frac{z-\xi}{\mu}\right|^2\right)^2\right]^3}\right|
\\&= O\left(\int_{\mathcal{D}}\frac{\left|\left(\frac{x-\xi_1}{\mu}\right)^4-\left(\frac{y-\xi_2}{\mu}\right)^4\right|d^{-5}\left|z-\xi\right|^3}{\left[1+\left(\left|\frac{z-\xi}{\mu}\right|^2\right)^2\right]^3}\right),
\end{aligned}
\end{equation*}
where
$$(z-\xi)\cdot \nabla^2h_1^{(1)}(\xi,\xi)\cdot(z-\xi)^{\mathrm{T}}=(x-\xi_1)^2\frac{\partial^2 h_1^{(1)}}{\partial x^2}(\xi,\xi)+2(x-\xi_1)(y-\xi_2)\frac{\partial^2 h_1^{(1)}}{\partial x \partial y}(\xi,\xi)+(y-\xi_2)^2\frac{\partial^2 h_1^{(1)}}{\partial y^2}(\xi,\xi).
$$
As a consequence, we deduce
\begin{align}
&\notag\int_{\mathcal{D}}h_1^{(1)}(z,\xi) \frac{\left(\frac{x-\xi_1}{\mu}\right)^4-\left(\frac{y-\xi_2}{\mu}\right)^4}{\left[1+\left(\left|\frac{z-\xi}{\mu}\right|^2\right)^2\right]^3}
\\ \notag&= \int_{\mathcal{D}}h_1^{(1)}(\xi,\xi)\frac{\left(\frac{x-\xi_1}{\mu}\right)^4-\left(\frac{y-\xi_2}{\mu}\right)^4}{\left[1+\left(\left|\frac{z-\xi}{\mu}\right|^2\right)^2\right]^3}
+ \int_{\mathcal{D}}\left((z-\xi) \cdot \nabla h_1^{(1)}(\xi,\xi)\right)\frac{\left(\frac{x-\xi_1}{\mu}\right)^4-\left(\frac{y-\xi_2}{\mu}\right)^4}{\left[1+\left(\left|\frac{z-\xi}{\mu}\right|^2\right)^2\right]^3}
\\ \notag&\quad + \frac{1}{2}\int_{\mathcal{D}}\left[(z-\xi)\cdot \nabla^2h_1^{(1)}(\xi,\xi)\cdot(z-\xi)^{\mathrm{T}} \right]\frac{\left(\frac{x-\xi_1}{\mu}\right)^4-\left(\frac{y-\xi_2}{\mu}\right)^4}{\left[1+\left(\left|\frac{z-\xi}{\mu}\right|^2\right)^2\right]^3}
+ O\left(\int_{\mathcal{D}}\frac{\left[\left(\frac{x-\xi_1}{\mu}\right)^4-\left(\frac{y-\xi_2}{\mu}\right)^4\right]\frac{\left|z-\xi\right|^3}{\left(1-|\xi|^2\right)^5}}{\left[1+\left(\left|\frac{z-\xi}{\mu}\right|^2\right)^2\right]^3}\right)
\\ \notag&=\mu^2h_1^{(1)}(\xi,\xi)\int_{\mathbb{R}^2}\frac{x^4-y^4}{\left(1+|z|^4\right)^3}
+\frac{1}{2}\mu^4\int_{\mathbb{R}^2}\frac{x^4-y^4}{\left(1+|z|^4\right)^3}\left[x^2\frac{\partial^2 h_1^{(1)}}{\partial x^2}(\xi,\xi)+y^2\frac{\partial^2 h_1^{(1)}}{\partial y^2}(\xi,\xi)\right]
\\ \notag&\quad+O\left(\mu^5d^{-5}\int_{\mathbb{R}^2}\frac{(x^4-y^4)|z|^3}{\left(1+|z|^4\right)^3}\right)
+O\left(\mu^8d^{-5}\right)
\\ \label{expansion-9}&=\frac{\pi}{32}\mu^4\left[\frac{\partial^2 h_1^{(1)}}{\partial x^2}(\xi,\xi)-\frac{\partial^2 h_1^{(1)}}{\partial y^2}(\xi,\xi)\right]+O\left(\mu^8d^{-5}\right).
\end{align}
In (\ref{expansion-8}) and (\ref{expansion-9}), we use the fact that
\begin{align}
&\int_{\mathbb{R}^2}\frac{(x^2-y^2)(x^4-y^4)}{\left(1+|z|^4\right)^4}=\frac{\pi}{24},\quad \int_{\mathbb{R}^2}\frac{x^4-y^4}{\left(1+|z|^4\right)^3}=0,\quad
\int_{\mathbb{R}^2}\frac{(x^4-y^4)|z|^3}{\left(1+|z|^4\right)^3}=0,
\\&\int_{\mathbb{R}^2}\frac{x^2(x^4-y^4)}{\left(1+|z|^4\right)^3}=-\int_{\mathbb{R}^2}\frac{y^2(x^4-y^4)}{\left(1+|z|^4\right)^3}=\frac{\pi}{16}.
\end{align}
Combining (\ref{expansion-7}), (\ref{expansion-8}) and (\ref{expansion-9})
we obtain
\begin{equation}\label{expansion-10}
\begin{aligned}
\int_{\mathcal{D}}(\varphi_1-\delta)_1\cdot (\delta_x\wedge \delta_y)_1
=\frac{8\pi}{3}-2\pi\mu^4\left[\frac{\partial^2 h_1^{(1)}}{\partial x^2}(\xi,\xi)-\frac{\partial^2 h_1^{(1)}}{\partial y^2}(\xi,\xi)\right]+O\left(\mu^8d^{-5}\right).
\end{aligned}
\end{equation}
Now we consider $\int_{\mathcal{D}}(\varphi_1-\delta)_2\cdot (\delta_x\wedge \delta_y)_2$. From (\ref{cross}) and (\ref{A15-expansion}), we have
\begin{equation}\label{expansion-11}
\begin{aligned}
\int_{\mathcal{D}}(\varphi_1-\delta)_2\cdot (\delta_x\wedge \delta_y)_2
=\int_{\mathcal{D}}\left[\frac{4\frac{(x-\xi_1)(y-\xi_2)}{\mu^2}}{1+\left(\left|\frac{z-\xi}{\mu}\right|^2\right)^2}-2\mu^2h_2^{(1)}(z,\xi)+O(\mu^5)\right]\frac{64\left(\frac{x-\xi_1}{\mu}\right)\left(\frac{y-\xi_2}{\mu}\right)\left|\frac{z-\xi}{\mu}\right|^2}{\mu^2\left[1+\left(\left|\frac{z-\xi}{\mu}\right|^2\right)^2\right]^3}.
\end{aligned}
\end{equation}
And
\begin{equation}\label{expansion-12}
\begin{aligned}
\frac{1}{\mu^2}\int_{\mathcal{D}}\frac{\frac{(x-\xi_1)^2(y-\xi_2)^2}{\mu^4}\left|\frac{z-\xi}{\mu}\right|^2}{\left[1+\left(\left|\frac{z-\xi}{\mu}\right|^2\right)^2\right]^4}=\int_{\mathbb{R}^2}\frac{x^2y^2|z|^2}{\left(1+|z|^4\right)^4}+O(\mu^8)=\frac{\pi}{96}+O\left(\mu^8\right).
\end{aligned}
\end{equation}
And similar as (\ref{expansion-9}), we have
\begin{align}
&\notag\int_{\mathcal{D}}h_2^{(1)}(z,\xi) \frac{\left(\frac{x-\xi_1}{\mu}\right)\left(\frac{y-\xi_2}{\mu}\right)\left|\frac{z-\xi}{\mu}\right|^2}{\left[1+\left(\left|\frac{z-\xi}{\mu}\right|^2\right)^2\right]^3}
\\ \notag&= \int_{\mathcal{D}}h_2^{(1)}(\xi,\xi)\frac{\left(\frac{x-\xi_1}{\mu}\right)\left(\frac{y-\xi_2}{\mu}\right)\left|\frac{z-\xi}{\mu}\right|^2}{\left[1+\left(\left|\frac{z-\xi}{\mu}\right|^2\right)^2\right]^3}
+ \int_{\mathcal{D}}\left[(z-\xi) \cdot \nabla h_2^{(1)}(\xi,\xi)\right]\frac{\left(\frac{x-\xi_1}{\mu}\right)\left(\frac{y-\xi_2}{\mu}\right)\left|\frac{z-\xi}{\mu}\right|^2}{\left[1+\left(\left|\frac{z-\xi}{\mu}\right|^2\right)^2\right]^3}
\\ \notag&\quad + \frac{1}{2}\int_{\mathcal{D}}\left[(z-\xi)\cdot \nabla^2h_2^{(1)}(\xi,\xi)\cdot(z-\xi)^{\mathrm{T}}\right] \frac{\left(\frac{x-\xi_1}{\mu}\right)\left(\frac{y-\xi_2}{\mu}\right)\left|\frac{z-\xi}{\mu}\right|^2}{\left[1+\left(\left|\frac{z-\xi}{\mu}\right|^2\right)^2\right]^3}
+ O\left(\int_{\mathcal{D}}\frac{\left(\frac{x-\xi_1}{\mu}\right)\left(\frac{y-\xi_2}{\mu}\right)\left|\frac{z-\xi}{\mu}\right|^2d^{-5}|z-\xi|^3}{\left[1+\left(\left|\frac{z-\xi}{\mu}\right|^2\right)^2\right]^3}\right)
\\ \notag&=\mu^4\int_{\mathbb{R}^2}\frac{x^2y^2|z|^2}{\left(1+|z|^4\right)^3}\frac{\partial^2 h_2^{(1)}}{\partial x \partial y}(\xi,\xi)+O\left(\frac{\mu^5}{\left(1-|\xi|^2\right)^5}\int_{\mathbb{R}^2}\frac{xy|z|^5}{\left(1+|z|^4\right)^3}\right)+O\left(\mu^8d^{-5}\right)
\\ \label{expansion-13}&=\frac{\pi}{32}\mu^4\frac{\partial^2 h_2^{(1)}}{\partial x \partial y}(\xi,\xi)+O\left(\mu^8d^{-5}\right).
\end{align}
In (\ref{expansion-12}) and (\ref{expansion-13}), we use the fact that $$\int_{\mathbb{R}^2}\frac{x^2y^2|z|^2}{\left(1+|z|^4\right)^4}=\frac{\pi}{96},\quad \int_{\mathbb{R}^2}\frac{x^2y^2|z|^2}{\left(1+|z|^4\right)^3}=\frac{\pi}{32} ,\quad \int_{\mathbb{R}^2}\frac{xy|z|^5}{\left(1+|z|^4\right)^3}=0.$$
Combining (\ref{expansion-11}), (\ref{expansion-12}) and (\ref{expansion-13}), we obtain
\begin{equation}\label{expansion-14}
\begin{aligned}
\int_{\mathcal{D}}(\varphi-\delta)_2\cdot (\delta_x\wedge \delta_y)_2
=\frac{8\pi}{3}-4\pi\mu^4\frac{\partial^2 h_2^{(1)}}{\partial x \partial y}(\xi,\xi)+O\left(\mu^8d^{-5}\right).
\end{aligned}
\end{equation}
Similarly, for $\int_{\mathcal{D}}(\varphi_1-\delta)_3\cdot (\delta_x\wedge \delta_y)_3$, we have
\begin{equation}\label{expansion-3}
\begin{aligned}
\int_{\mathcal{D}}(\varphi_1-\delta)_3\cdot (\delta_x\wedge \delta_y)_3
=\int_{\mathcal{D}}\left(\frac{2}{1+\left(\left|\frac{z-\xi}{\mu}\right|^2\right)^2}-2\mu^4h_3^{(1)}(z,\xi)+O(\mu^5)\right)\frac{16\left[1-\left(\left|\frac{z-\xi}{\mu}\right|^2\right)^2\right]\left|\frac{z-\xi}{\mu}\right|^2}{\mu^2\left[1+\left(\left|\frac{z-\xi}{\mu}\right|^2\right)^2\right]^3}.
\end{aligned}
\end{equation}
Reasoning as (\ref{expansion-8}) and (\ref{expansion-9}), we have
\begin{equation}\label{expansion-4}
\begin{aligned}
\frac{1}{\mu^2}\int_{\mathcal{D}}\frac{\left[1-\left(\left|\frac{z-\xi}{\mu}\right|^2\right)^2\right]\left|\frac{z-\xi}{\mu}\right|^2}{\left[1+\left(\left|\frac{z-\xi}{\mu}\right|^2\right)^2\right]^4}
=\int_{\mathbb{R}^2}\frac{\left(1-|z|^4\right)\left|z\right|^2}{\left(1+|z|^4\right)^4}+O\left(\mu^8\right)
=\frac{\pi}{12}+O(\mu^8),
\end{aligned}
\end{equation}
and
\begin{align}
&\notag\int_{\mathcal{D}}h_3^{(1)}(z,\xi) \frac{\mu^2\left[1-\left(\left|\frac{z-\xi}{\mu}\right|^2\right)^2\right]\left|\frac{z-\xi}{\mu}\right|^2}{\left[1+\left(\left|\frac{z-\xi}{\mu}\right|^2\right)^2\right]^3}
\\ \notag&= \int_{\mathcal{D}}h_3^{(1)}(\xi,\xi)\frac{\mu^2\left[1-\left(\left|\frac{z-\xi}{\mu}\right|^2\right)^2\right]\left|\frac{z-\xi}{\mu}\right|^2}{\left[1+\left(\left|\frac{z-\xi}{\mu}\right|^2\right)^2\right]^3}
+ \int_{\mathcal{D}}\left[(z-\xi) \cdot \nabla h_3^{(1)}(\xi,\xi)\right]\frac{\mu^2\left[1-\left(\left|\frac{z-\xi}{\mu}\right|^2\right)^2\right]\left|\frac{z-\xi}{\mu}\right|^2}{\left[1+\left(\left|\frac{z-\xi}{\mu}\right|^2\right)^2\right]^3}
\\ \notag&\quad + \frac{1}{2}\int_{\mathcal{D}}\left[(z-\xi)\cdot \nabla^2h_3^{(1)}(\xi,\xi)\cdot(z-\xi)^{\mathrm{T}} \right]\frac{\mu^2\left[1-\left(\left|\frac{z-\xi}{\mu}\right|^2\right)^2\right]\left|\frac{z-\xi}{\mu}\right|^2}{\left[1+\left(\left|\frac{z-\xi}{\mu}\right|^2\right)^2\right]^3}
\\ \notag&\quad+ O\left(\int_{\mathcal{D}}\frac{\mu^2\left[1-\left(\left|\frac{z-\xi}{\mu}\right|^2\right)^2\right]d^{-7}\left|z-\xi\right|^3}{\left[1+\left(\left|\frac{z-\xi}{\mu}\right|^2\right)^2\right]^3}\right)
\\ \notag&=\mu^4h_3^{(1)}(\xi,\xi)\int_{\mathbb{R}^2}\frac{\left(1-|z|^4\right)\left|z\right|^2}{\left(1+|z|^4\right)^3}
+\frac{1}{2}\mu^6\int_{\mathbb{R}^2}\frac{\left(1-|z|^4\right)\left|z\right|^2}{\left(1+|z|^4\right)^3}\left[x^2\frac{\partial^2 h_3^{(1)}}{\partial x^2}(\xi,\xi)+y^2\frac{\partial^2 h_3^{(1)}}{\partial y^2}(\xi,\xi)\right]
\\ \notag&\quad+O\left(\mu^7d^{-7}\int_{\mathbb{R}^2}\frac{\left(1-|z|^4\right)\left|z\right|^5}{\left(1+|z|^4\right)^3}\right)+O\left(\mu^8d^{-7}\right)
\\ \label{expansion-5}&=-\frac{\pi^2}{32}\mu^6\left[\frac{\partial^2 h_3^{(1)}}{\partial x^2}(\xi,\xi)+\frac{\partial^2 h_3^{(1)}}{\partial y^2}(\xi,\xi)\right]+O\left(\mu^7d^{-7}\right).
\end{align}
In (\ref{expansion-4}) and (\ref{expansion-5}), we use the fact that \begin{equation*}
\begin{aligned}
&\int_{\mathbb{R}^2}\frac{\left(1-|z|^4\right)\left|z\right|^2}{\left(1+|z|^4\right)^4}=\frac{\pi}{12}, \quad\int_{\mathbb{R}^2}\frac{\left(1-|z|^4\right)\left|z\right|^2}{\left(1+|z|^4\right)^3}=0,\quad \int_{\mathbb{R}^2}\frac{\left(1-|z|^4\right)\left|z\right|^5}{\left(1+|z|^4\right)^3}=-\frac{9\pi^2}{16\sqrt{2}},
\\& \int_{\mathbb{R}^2}\frac{x^2\left(1-|z|^4\right)\left|z\right|^2}{\left(1+|z|^4\right)^3}=\int_{\mathbb{R}^2}\frac{y^2\left(1-|z|^4\right)\left|z\right|^2}{\left(1+|z|^4\right)^3}=-\frac{\pi^2}{16}.
\end{aligned}
\end{equation*}
From (\ref{expansion-3}), (\ref{expansion-4}) and (\ref{expansion-5}), we obtain
\begin{equation}\label{expansion-6}
\begin{aligned}
\int_{\mathcal{D}}(\varphi_1-\delta)_3\cdot (\delta_x\wedge \delta_y)_3&=\frac{8\pi}{3}-\pi^2\mu^6\left[\frac{\partial^2 h_3^{(1)}}{\partial x^2}(\xi,\xi)+\frac{\partial^2 h_3^{(1)}}{\partial y^2}(\xi,\xi)\right]+O(\mu^7d^{-7})=\frac{8\pi}{3}+O\left(\mu^6d^{-6}\right).
\end{aligned}
\end{equation}
From (\ref{expansion-10}), (\ref{expansion-14}) and (\ref{expansion-6}), it follows that
\begin{equation}\label{expansion-15}
\begin{aligned}
\frac{1}{2}\int_{\mathcal{D}}|\nabla P\delta|^2
&=\int_{\mathcal{D}}(\varphi_1-\delta)\cdot (\delta_x\wedge \delta_y)
\\&=8\pi-2\pi\mu^4\left[\frac{\partial^2 h_1^{(1)}}{\partial x^2}(\xi,\xi)+2\frac{\partial^2 h_2^{(1)}}{\partial x \partial y}(\xi,\xi)-\frac{\partial^2 h_1^{(1)}}{\partial y^2}(\xi,\xi)\right]
+O\left(\mu^6d^{-6}\right)
\\&=8\pi-8\pi\mu^4\frac{\partial^2 h_2^{(1)}}{\partial x \partial y}(\xi,\xi)
+O\left(\mu^6d^{-6}\right),
\end{aligned}
\end{equation}
where we use the fact that $\frac{\partial^2 h_1^{(1)}}{\partial x^2}(\xi,\xi)-\frac{\partial^2 h_1^{(1)}}{\partial y^2}(\xi,\xi)=2\frac{\partial^2 h_2^{(1)}}{\partial x \partial y}(\xi,\xi)$ for $\xi\in \mathcal{D}$.

In fact, from the above computation, we can see that
\begin{equation}\label{A30-delta1}
\begin{aligned}
\int_{\mathcal{D}}\delta\cdot (\delta_x\wedge \delta_y)
=-8\pi+O\left(\mu^8\right),
\end{aligned}
\end{equation}
and
\begin{equation}\label{A30-delta2}
\begin{aligned}
\int_{\mathcal{D}}\varphi_1\cdot (\delta_x\wedge \delta_y)
=-8\pi\mu^4\frac{\partial^2 h_2^{(1)}}{\partial x \partial y}(\xi,\xi)
+O\left(\mu^6d^{-6}\right).
\end{aligned}
\end{equation}

We estimate the second integral in (\ref{Au10-I1}). It holds that
\begin{equation}\label{Jan5-1}
\begin{aligned}
&\frac{2}{3}\int_{\mathcal{D}}P\delta
\cdot \left[(P\delta)_x\wedge (P\delta)_y\right]
\\&= \frac{2}{3}\int_{\mathcal{D}}(\delta-\varphi_1)\cdot (\delta_x\wedge \delta_y)
-\frac{2}{3}\int_{\mathcal{D}}(\delta-\varphi_1)\cdot [\delta_x\wedge (\varphi_1)_y+(\varphi_1)_x \wedge \delta_y]+\frac{2}{3} \int_{\mathcal{D}}(\delta-\varphi_1)\cdot [(\varphi_1)_x\wedge (\varphi_1)_y]
\\&=\frac{2}{3}\int_{\mathcal{D}}(\delta-\varphi_1)\cdot (\delta_x\wedge \delta_y)
-\frac{4}{3}\int_{\mathcal{D}}\varphi_1 \cdot (\delta_x\wedge \delta_y)+\frac{2}{3}\int_{\mathcal{D}}\varphi_1 \cdot [\delta_x\wedge (\varphi_1)_y+(\varphi_1)_x \wedge \delta_y]
\\&\quad+\frac{2}{3} \int_{\mathcal{D}}(\delta-\varphi_1)\cdot [(\varphi_1)_x\wedge (\varphi_1)_y].
\end{aligned}
\end{equation}
Since
$$|(\varphi_1)_x\wedge (\varphi_1)_y|_1=O\left(\mu^6d^{-8}\right), ~|(\varphi_1)_x\wedge (\varphi_1)_y|_2=O\left(\mu^6d^{-8}\right),~|(\varphi_1)_x\wedge (\varphi_1)_y|_3=O\left(\mu^4d^{-6}\right),$$
we obtain
\begin{equation}\label{Jan5-3}
\begin{aligned}
\frac{2}{3}\int_{\mathcal{D}}(\delta-\varphi_1)\cdot [(\varphi_1)_x\wedge (\varphi_1)_y]=O\left(\mu^8|\log\mu|d^{-8}+\mu^6d^{-6}+\mu^8d^{-10}\right).
\end{aligned}
\end{equation}
Using (\ref{derivative}), we deduce
\begin{align*}
&\frac{2}{3}\int_{\mathcal{D}}\left(\varphi_1\right)_1[\delta_x\wedge (\varphi_1)_y+(\varphi_1)_x \wedge \delta_y]_1
\\&=-\frac{8}{3}\mu^7\int_{\mathbb{R}^2}\left[h_1^{(1)}(\xi,\xi)+\mu y \frac{\pp h_1^{(1)}}{\pp y}(\xi,\xi)\right]\frac{4 y \left(-3 x^4-2 x^2 y^2+y^4+1\right)}{\left(\left(x^2+y^2\right)^2+1\right)^2}
\left[\frac{\pp h_3^{(1)}}{\pp y}(\xi,\xi)+\mu y \frac{\pp^2 h_3^{(1)}}{\pp y^2}(\xi,\xi)\right]
\\&\quad-\frac{8}{3}\mu^5\int_{\mathbb{R}^2}\left[h_1^{(1)}(\xi,\xi)+\mu x \frac{\pp h_1^{(1)}}{\pp x}(\xi,\xi)\right]\frac{8 x
   \left(x^2+y^2\right)}{\left(\left(x^2+y^2\right)^2+1\right)^2}
\left[\frac{\pp h_2^{(1)}}{\pp y}(\xi,\xi)+\mu x \frac{\pp^2 h_2^{(1)}}{\pp x\pp y}(\xi,\xi)\right]
\\&\quad+\frac{8}{3}\mu^5\int_{\mathbb{R}^2}\left[h_1^{(1)}(\xi,\xi)+\mu y \frac{\pp h_1^{(1)}}{\pp y}(\xi,\xi)\right]\left[\frac{\pp h_2^{(1)}}{\pp x}(\xi,\xi)+\mu y \frac{\pp^2 h_2^{(1)}}{\pp x\pp y}(\xi,\xi)\right]
\frac{8 y
   \left(x^2+y^2\right)}{\left(\left(x^2+y^2\right)^2+1\right)^2}
\\&\quad+\frac{8}{3}\mu^7\int_{\mathbb{R}^2}\left[h_1^{(1)}(\xi,\xi)+\mu x \frac{\pp h_1^{(1)}}{\pp x}(\xi,\xi)\right]\left[\frac{\pp h_3^{(1)}}{\pp x}(\xi,\xi)+\mu x \frac{\pp^2 h_3^{(1)}}{\pp x^2}(\xi,\xi)\right]
\frac{4 x \left(x^4-2 x^2 y^2-3 y^4+1\right)}{\left(\left(x^2+y^2\right)^2+1\right)^2}
+ O\left(\mu^8d^{-8}\right)
\\&=O\left(\mu^6d^{-6}\right).
\end{align*}
For $l=2,3$, $\frac{2}{3}\int_{\mathcal{D}}\left(\varphi_1\right)_l[\delta_x\wedge (\varphi_1)_y+(\varphi_1)_x \wedge \delta_y]_l$ can be estimated similarly, we conclude that
\begin{equation}\label{Jan5-2}
\begin{aligned}
\frac{2}{3}\int_{\mathcal{D}}\varphi_1 \cdot [\delta_x\wedge (\varphi_1)_y+(\varphi_1)_x \wedge \delta_y]=O\left(\mu^8|\log\mu|d^{-8}+\mu^6d^{-6}\right).
\end{aligned}
\end{equation}
Thus, combining (\ref{A30-delta1}), (\ref{A30-delta2}), (\ref{Jan5-1}), (\ref{Jan5-3}) and (\ref{Jan5-2}), we get
\begin{equation}\label{A30-delta3}
\begin{aligned}
\frac{2}{3}\int_{\mathcal{D}}P\delta
\cdot \left[(P\delta)_x\wedge (P\delta)_y\right]
=-\frac{16}{3}\pi+16\pi\mu^4\frac{\partial^2 h_2^{(1)}}{\partial x \partial y}(\xi,\xi)+O\left(\mu^8|\log\mu|d^{-8}+\mu^6d^{-6}+\mu^8d^{-10}\right).
\end{aligned}
\end{equation}

Next, we compute the terms that contain the boundary function $g(z,\xi)$, i.e. $2\varepsilon\int_{\mathcal{D}}g\cdot\left[(P\delta)_x\wedge (P\delta)_y\right]$.
In Proposition \ref{prop 4.1}, the boundary function $g(z,\omega)=\left(g_1(z,\omega),g_2(z,\omega),g_3(z,\omega)\right)$ is given by
$$g_1(z,\omega)=2h_1^{(1)}(z,\omega),\quad g_2(z,\omega)=2h_2^{(1)}(z,\omega),\quad g_3(z,\omega)=-2\varepsilon h_3^{(1)}(z,\omega),$$
and $\left|g(z,\omega)-g(z,\xi)\right|=O(d^{-3}|\xi-\omega|).$
There holds
\begin{equation}\label{Jy18}
\begin{aligned}
2\varepsilon\int_{\mathcal{D}}g
\cdot \left[(P\delta)_x\wedge (P\delta)_y\right]
=2\varepsilon \int_{\mathcal{D}}g
\cdot (\delta_x\wedge \delta_y)-g
\cdot [\delta_x\wedge (\varphi_1)_y+(\varphi_1)_x\wedge \delta_y]+g
\cdot [(\varphi_1)_x\wedge (\varphi_1)_y].
\end{aligned}
\end{equation}
Similar to the calculation of (\ref{expansion-15}), using (\ref{cross}) and $\frac{\partial^2 g_1}{\partial x^2}(\xi,\omega)=-\frac{\partial^2 g_1}{\partial y^2}(\xi,\omega)$ for $\xi\in \mathcal{D}$ as well as the fact that $\frac{\partial^2 g_3}{\partial x^2}(\xi,\omega)+\frac{\partial^2 g_3}{\partial y^2}(\xi,\omega)=0$, we have
\begin{equation}\label{A30-delta4}
\begin{aligned}
2\varepsilon \int_{\mathcal{D}}g
\cdot (\delta_x\wedge \delta_y)
&=2\varepsilon \int_{\mathbb{R}^2}\left[\mu^2\frac{x^2}{2}\frac{\partial^2 g_1}{\partial x^2}(\xi,\omega)+\mu^2 \frac{y^2}{2}\frac{\partial^2 g_1}{\partial y^2}(\xi,\omega)
+O(\mu^4)\right]\frac{-32(x^4-y^4)}{\left(1+|z|^4\right)^3}
\\&\quad+2\varepsilon \int_{\mathbb{R}^2}\left[\mu^2xy\frac{\partial^2 g_2}{\partial x\partial y}(\xi,\omega)+O(\mu^4)\right]\frac{-64xy|z|^2}{\left(1+|z|^4\right)^3}
+O\left(\varepsilon\mu^4d^{-6}\right)
\\&=-4\pi\varepsilon \mu^2\left[\frac{\partial^2 g_1}{\partial x^2}(\xi,\omega)+\frac{\partial^2 g_2}{\partial x \partial y}(\xi,\omega)\right]+O\left(\varepsilon\mu^4d^{-6}\right).
\end{aligned}
\end{equation}
For the second term in (\ref{Jy18}), similar to (\ref{Jan5-2}), we get
\begin{equation}\label{Jy18-delta4}
\begin{aligned}
-2\varepsilon \int_{\mathcal{D}}g
\cdot [\delta_x\wedge (\varphi_1)_y+(\varphi_1)_x\wedge \delta_y]=O\left(\varepsilon\mu^4d^{-3}+\varepsilon\mu^6|\log\mu|d^{-5}+\varepsilon^2\mu^4|\log\mu|d^{-3}\right).
\end{aligned}
\end{equation}
And analogously with (\ref{Jan5-3}), there holds
\begin{equation}\label{Jan5-4}
\begin{aligned}
2\varepsilon \int_{\mathcal{D}}g
\cdot [(\varphi_1)_x\wedge (\varphi_1)_y]=O\left(\varepsilon\mu^6d^{-8}+\varepsilon^2\mu^4d^{-6}\right),
\end{aligned}
\end{equation}

\begin{equation}\label{Jy18-delta5}
\begin{aligned}
\varepsilon^2\int_{\mathcal{D}}(\delta-\varphi)\cdot(g_x\wedge g_y)=O\left(\varepsilon^3\mu^2|\log \mu|+\varepsilon^2\mu^2+\varepsilon^3\mu^2d^{-2}+\varepsilon^2\mu^4d^{-4}\right).
\end{aligned}
\end{equation}

{\bf $\bullet$ Estimates of the terms containing $a_l$ in (\ref{Au10-I1}).}
We start with the terms in (\ref{Au10-I1}) that contain $a_l^2$, $l=1,2$. Recall that $Z_{k,l}$, $k=-1,2$, $l=1,2$ satisfy the linearized equation of \eqref{intro-5} around $\mathcal W$ defined by
\begin{eqnarray*}
L_{\mathcal W}[v]:= \Delta v -2 \mathcal W_x\wedge v_y - 2 v_x\wedge \mathcal W_y = 0
\end{eqnarray*}
for $v\in \mathcal X$. Thus,
\begin{equation*}
\begin{aligned}
-\frac{a_1^2}{2}\int_{\mathcal{D}}L_\delta[P Z_{-1,1}]\cdot(P Z_{-1,1})
&=-a_1^2\int_{\mathcal{D}}\left[\delta_x\wedge (\varphi_{-1,1})_y + (\varphi_{-1,1})_x\wedge \delta_y\right]\cdot (P Z_{-1,1})
\\&=- a_1^2\int_{\mathcal{D}}\varphi_{-1,1}\cdot\left[\delta_x\wedge (P Z_{-1,1})_y + (P Z_{-1,1})_x\wedge \delta_y\right].
\end{aligned}
\end{equation*}
Using (\ref{expansion6-1}), we have
\begin{align}
\notag
&-a_1^2\int_{\mathcal{D}}\varphi_{-1,1}\cdot\left[\delta_x\wedge (Z_{-1,1})_y + (Z_{-1,1})_x\wedge \delta_y\right]
\\\notag &=-2a_1^2\mu\int_{\mathbb{R}^2}\left[\mu x\frac{\partial h_1^{(-1,1)}}{\partial x}(\xi,\xi)+O\left(\mu^3d^{-4}\right)\right]\frac{128 x \left(x^2+y^2\right) \left(x^2 \left(\left(x^2+y^2\right)^2-2\right)+3 y^2\right)}{\left(\left(x^2+y^2\right)^2+1\right)^4}
\\\notag &\quad-2a_1^2\mu\int_{\mathbb{R}^2}\left[\mu y\frac{\partial h_2^{(-1,1)}}{\partial y}(\xi,\xi)+O\left(\mu^3d^{-4}\right)\right]\frac{64 y \left(x^2+y^2\right) \left(3 x^6+7 x^4 y^2+x^2 \left(5y^4-9\right)+y^6+y^2\right)}{\left(\left(x^2+y^2\right)^2+1\right)^4}
\\\notag &\quad+O\left(a_1^2\mu^4d^{-4}\right)
\\\notag &=-8\pi a_1^2\mu^2\left[\frac{\partial h_1^{(-1,1)}}{\partial x}(\xi,\xi)+\frac{\partial h_2^{(-1,1)}}{\partial y}(\xi,\xi)\right]+O\left(a_1^2\mu^4d^{-4}\right)
\\\label{Mar22-1}&=-16\pi a_1^2\mu^2\frac{\partial h_1^{(-1,1)}}{\partial x}(\xi,\xi)+O\left(a_1^2\mu^4d^{-4}\right).
\end{align}
And similar to (\ref{Jan5-2}), there holds
\begin{equation*}
\begin{aligned}
a_1^2\int_{\mathcal{D}}\varphi_{-1,1}\cdot\left[\delta_x\wedge (\varphi_{-1,1})_y + (\varphi_{-1,1})_x\wedge \delta_y\right]=O\left(a_1^2\mu^6|\log\mu|d^{-6}+a_1^2\mu^4d^{-4}\right).
\end{aligned}
\end{equation*}
Therefore,
\begin{equation}\label{A30-a1}
\begin{aligned}
-\frac{a_1^2}{2}\int_{\mathcal{D}}L_\delta[P Z_{-1,1}]\cdot(P Z_{-1,1})
=-16\pi a_1^2\mu^2\frac{\partial h_1^{(-1,1)}}{\partial x}(\xi,\xi)+O\left(a_1^2\mu^4d^{-4}\right).
\end{aligned}
\end{equation}
And similarly, by  (\ref{J9-a2}), one has
\begin{align}
\notag
&-\frac{a_2^2}{2}\int_{\mathcal{D}}L_\delta[P Z_{-1,2}]\cdot(P Z_{-1,2})
 \\
\notag &=-a_2^2\int_{\mathcal{D}}\varphi_{-1,2}\cdot\left(\delta_x\wedge (Z_{-1,2})_y + (Z_{-1,2})_x\wedge \delta_y\right) +O\left(a_2^2\mu^6|\log\mu|d^{-6}+a_2^2\mu^4d^{-4}\right)
\\ \notag &=-2a_2^2\mu\int_{\mathbb{R}^2}\left[\mu y\frac{\partial h_1^{(-1,2)}}{\partial y}(\xi,\xi)+O\left(\mu^3d^{-4}\right)\right]\frac{-128 y \left(x^2+y^2\right) \left(\left(x^4-2\right) y^2+2 x^2 y^4+3 x^2+y^6\right)}{\left(\left(x^2+y^2\right)^2+1\right)^4}
 \\ \notag &\quad-2a_2^2\mu\int_{\mathbb{R}^2}\left[\mu x\frac{\partial h_2^{(-1,2)}}{\partial x}(\xi,\xi)+O\left(\mu^3d^{-4}\right)\right]\frac{64 x \left(x^2+y^2\right) \left(x^6+5 x^4 y^2+x^2 \left(7 y^4+1\right)+3 y^2
   \left(y^4-3\right)\right)}{\left(\left(x^2+y^2\right)^2+1\right)^4}
\\ \notag &\quad +O\left(a_2^2\mu^4d^{-4}\right)
\\ \notag &=8\pi a_2^2\mu^2\left[\frac{\partial h_1^{(-1,2)}}{\partial y}(\xi,\xi)-\frac{\partial h_2^{(-1,2)}}{\partial x}(\xi,\xi)\right]+O\left(a_2^2\mu^4d^{-4}\right)
\\ \label{M27-a2} &=-16\pi a_2^2\mu^2\frac{\partial h_2^{(-1,2)}}{\partial x}(\xi,\xi)+O\left(a_2^2\mu^4d^{-4}\right).
\end{align}

{\bf $\bullet$ Estimates of the terms containing $p_l$ in (\ref{Au10-I1}).}
Now, we consider the term of $p_l^2$, $l=1,2$.
Similar to the calculations of (\ref{A30-a1}), one has
\begin{align*}
&-\sum_{l=1}^2\frac{p_l^2}{2}\int_{\mathcal{D}}L_\delta[P Z_{2,l}]\cdot(P Z_{2,l})
\\&=-\sum_{l=1}^2 p_l^2\int_{\mathcal{D}}\left[\delta_x\wedge (\varphi_{2,l})_y + (\varphi_{2,l})_x\wedge \delta_y\right]\cdot (P Z_{2,l})
\\&=-\sum_{l=1}^2p_l^2\int_{\mathcal{D}}\varphi_{2,l}\cdot\left[\delta_x\wedge (P Z_{2,l})_y +(P Z_{2,l})_x\wedge \delta_y\right]
\\&=-\sum_{l=1}^2p_l^2\int_{\mathcal{D}}\varphi_{2,l}\cdot\left[\delta_x\wedge (Z_{2,l})_y + (Z_{2,l})_x\wedge \delta_y\right]+O\left((p_1^2+p_2^2)\left(\mu^{12}|\log\mu|d^{-12}+\mu^{10}d^{-10}\right)\right).
\end{align*}
Using (\ref{A26-5}), we obtain
\begin{equation}\label{A30-p3-a}
\begin{aligned}
& -p_1^2\int_{\mathcal{D}}\varphi_{2,1}\cdot\left[\delta_x\wedge (Z_{2,1})_y + (Z_{2,1})_x\wedge \delta_y\right]
\\& = -2p_1^2\mu^4\int_{\mathbb{R}^2}\left[h_1^{(2,1)}(\xi,\xi)+\frac{\mu^2}{2} x^2\frac{\partial^2 h^{(2, 1)}_1}{\partial x^2}(\xi, \xi)+\frac{\mu^2}{2}y^2\frac{\partial^2 h^{(2, 1)}_1}{\partial y^2}(\xi, \xi)+\frac{\mu^4}{24}x^4\frac{\partial^4 h^{(2, 1)}_1}{\partial x^4}(\xi, \xi) \right.
\\&\qquad\qquad\qquad\quad\left. +\frac{\mu^4}{24}y^4\frac{\partial^4 h^{(2, 1)}_1}{\partial y^4}(\xi, \xi) +\frac{\mu^4}{4}x^2y^2\frac{\partial^4 h^{(2, 1)}_1}{\partial x^2\partial y^2}(\xi, \xi)\right]\frac{32 \left(x^2+y^2\right) \left(5 x^4-14 x^2 y^2+5 y^4-1\right)}{\left(\left(x^2+y^2\right)^2+1\right)^4}
\\&\quad -2p_1^2\mu^4\int_{\mathbb{R}^2}\left[h_2^{(2,1)}(\xi,\xi)+\mu^2xy\frac{\partial^2 h^{(2, 1)}_2}{\partial x \partial y}(\xi, \xi) +\frac{\mu^4}{6}x^3y\frac{\partial^4 h^{(2, 1)}_2}{\partial x^3\partial y}(\xi, \xi) +\frac{\mu^4}{6}xy^3\frac{\partial^4 h^{(2, 1)}_2}{\partial x\partial y^3}(\xi, \xi)\right]
\\&\qquad\qquad\qquad \times \frac{384 x y \left(x^4-y^4\right)}{\left(\left(x^2+y^2\right)^2+1\right)^4}
\\&\quad+2 p_1^2\mu^6\int_{\mathbb{R}^2}\left[h_3^{(2,1)}(\xi,\xi)+\frac{\mu^2}{2}x^2\frac{\partial^2 h^{(2, 1)}_3}{\partial x^2}(\xi, \xi)+\frac{\mu^2}{2}y^2\frac{\partial^2 h^{(2, 1)}_3}{\partial y^2}(\xi, \xi)\right]
\\&\qquad\qquad\qquad \times \frac{64 (x-y) (x+y) \left(x^2+y^2\right)\left(\left(x^2+y^2\right)^2-2\right)}{\left(\left(x^2+y^2\right)^2+1\right)^4}+O\left(p_1^2\mu^{10}d^{-10}\right)
\\& = -2p_1^2\mu^4\left[\frac{\pi^2}{4}\mu^2\frac{\partial^2 h^{(2, 1)}_1}{\partial x^2}(\xi, \xi)+\frac{\pi^2}{4}\mu^2\frac{\partial^2 h^{(2, 1)}_1}{\partial y^2}(\xi, \xi) + \frac{\pi}{6}\mu^4\frac{\partial^4 h^{(2, 1)}_1}{\partial x^4}(\xi, \xi)+\frac{\pi}{6}\mu^4\frac{\partial^4 h^{(2, 1)}_1}{\partial y^4}(\xi, \xi)\right]
\\&\quad -2p_1^2\mu^4\left[\frac{\pi}{6}\mu^4\frac{\partial^4 h^{(2, 1)}_2}{\partial x^3\partial y}(\xi, \xi) -\frac{\pi}{6}\mu^4\frac{\partial^4 h^{(2, 1)}_2}{\partial x\partial y^3}(\xi, \xi)\right]+O\left(p_1^2\mu^{10}d^{-10}\right)
\\& = -\frac{4\pi}{3}p_1^2\mu^8\frac{\partial^4 h^{(2, 1)}_1}{\partial x^4}(\xi, \xi) +O\left(p_1^2\mu^{10}d^{-10}\right).
\end{aligned}
\end{equation}

And by (\ref{A26-6}), we have
\begin{align}\notag
& -p_2^2\int_{\mathcal{D}}\varphi_{2,2}\cdot\left[\delta_x\wedge (Z_{2,2})_y + (Z_{2,2})_x\wedge \delta_y\right]
\\\notag& = -2p_2^2\mu^4\int_{\mathbb{R}^2}\left[h_1^{(2,2)}(\xi,\xi)+\mu^2xy \frac{\partial^2 h^{(2, 2)}_1}{\partial x \partial y}(\xi, \xi)+\frac{\mu^4}{6}x^3y\frac{\partial^4 h^{(2, 2)}_1}{\partial x^3\partial y}(\xi, \xi)+\frac{\mu^4}{6}xy^3\frac{\partial^4 h^{(2, 2)}_1}{\partial x\partial y^3}(\xi, \xi)\right]
\\\notag&\qquad\qquad\qquad \times \frac{384 x y \left(x^4-y^4\right)}{\left(\left(x^2+y^2\right)^2+1\right)^4}
\\\notag&\quad -2p_2^2\mu^4\int_{\mathbb{R}^2}\left[h_2^{(2,2)}(\xi,\xi)+\frac{\mu^2}{2}x^2\frac{\partial^2 h^{(2, 2)}_2}{\partial x^2}(\xi, \xi) +\frac{\mu^2}{2}y^2\frac{\partial^2 h^{(2, 2)}_2}{\partial y^2}(\xi, \xi)+\frac{\mu^4}{24}x^4\frac{\partial^4 h^{(2, 2)}_2}{\partial x^4}(\xi, \xi) \right.
\\\notag&\qquad\qquad\qquad\quad\left. +\frac{\mu^4}{24}y^4\frac{\partial^4 h^{(2, 2)}_2}{\partial y^4}(\xi, \xi)  +\frac{\mu^4}{4}x^2y^2\frac{\partial^4 h^{(2, 2)}_2}{\partial x^2\partial y^2}(\xi, \xi)\right]\frac{-32\left(x^2+y^2\right) \left(x^4-22x^2 y^2+y^4+1\right)}{\left(\left(x^2+y^2\right)^2+1\right)^4}
\\\notag&\quad+2p_2^2\mu^6\int_{\mathbb{R}^2}\left[h_3^{(2,2)}(\xi,\xi)+\mu^2xy\frac{\partial^2 h^{(2, 2)}_3}{\partial x \partial y}(\xi, \xi)\right] \frac{128 x y \left(x^2+y^2\right)\left(\left(x^2+y^2\right)^2-2\right)}{\left(\left(x^2+y^2\right)^2+1\right)^4}+O\left(p_2^2\mu^{10}d^{-10}\right)
\\\notag& =-2p_2^2\mu^4\left[\frac{\pi}{6}\mu^4\left(\frac{\partial^4 h^{(2, 2)}_1}{\partial x^3\partial y}-\frac{\partial^4 h^{(2, 2)}_1}{\partial x\partial y^3}\right)(\xi, \xi)\right]
\\\notag &\quad -2p_2^2\mu^4\left[\frac{\pi^2}{4}\mu^2\left(\frac{\partial^2 h^{(2, 2)}_2}{\partial x^2}+\frac{\partial^2 h^{(2, 2)}_2}{\partial y^2}\right)(\xi, \xi) + \frac{\pi}{12}\mu^4\left(\frac{\partial^4 h^{(2, 2)}_2}{\partial x^4}+\frac{\partial^4 h^{(2, 2)}_2}{\partial y^4}\right)(\xi, \xi)+\frac{\pi}{2}\mu^4\frac{\partial^4 h^{(2, 2)}_2}{\partial x^2\partial y^2}(\xi, \xi)\right]
\\\notag&\quad+O\left(p_2^2\mu^{10}d^{-10}\right)
\\\label{A30-p3-b}& = \frac{4\pi}{3}p_2^2\mu^8\frac{\partial^4 h^{(2, 2)}_2}{\partial x^4}(\xi, \xi) +O\left(p_2^2\mu^{10}d^{-10}\right).
\end{align}
Thus
\begin{equation}\label{A30-p3}
\begin{aligned}
-\sum_{l=1}^2\frac{p_l^2}{2}\int_{\mathcal{D}}L_\delta[P Z_{2,l}]\cdot(P Z_{2,l})
=-\frac{4\pi}{3}p_1^2\mu^8\frac{\partial^4 h^{(2, 1)}_1}{\partial x^4}(\xi, \xi)+\frac{4\pi}{3}p_2^2\mu^8\frac{\partial^4 h^{(2, 2)}_2}{\partial x^4}(\xi, \xi) + O\left((p_1^2+p_2^2)\mu^{10}d^{-10}\right).
\end{aligned}
\end{equation}

Now, we estimate terms in (\ref{Au10-I1}) that involve $p_l$, $l=1,2$.
Following a similar approach as in (\ref{A30-a1}) and (\ref{M27-a2}), we have
\begin{equation}\label{Au11-1}
\begin{aligned}
&-\int_{\mathcal{D}}L_\delta\left[\sum_{l=1}^2p_l PZ_{2,l}\right]\cdot \left(\sum_{l=1}^2a_lP Z_{-1,l}\right)
\\&=-2\int_{\mathcal{D}}\left[\delta_x\wedge \left(\sum_{l=1}^2p_l \varphi_{2,l}\right)_y+\left(\sum_{l=1}^2p_l \varphi_{2,l}\right)_x \wedge  \delta_y\right]\cdot \left(\sum_{l=1}^2a_lP Z_{-1,l}\right)
\\&=-2\int_{\mathcal{D}}\left(\sum_{l=1}^2p_l \varphi_{2,l}\right)\cdot\left[\delta_x\wedge \left(\sum_{l=1}^2a_l Z_{-1,l}\right)_y+\left(\sum_{l=1}^2a_l Z_{-1,l}\right)_x \wedge  \delta_y\right]
\\&\quad+O\left((a_1+a_2)(p_1+p_2)\left(\mu^9|\log\mu|d^{-9}+\mu^7d^{-7}\right)\right)
\\&=-32\pi p_1a_1\mu^5\frac{\partial h_1^{(2,1)}}{\partial x}(\xi,\xi)-32\pi p_1a_2\mu^5\frac{\partial h_2^{(2,1)}}{\partial x}(\xi,\xi)
-32\pi p_2a_1\mu^5\frac{\partial h_1^{(2,2)}}{\partial x}(\xi,\xi)
-32\pi p_2a_2\mu^5\frac{\partial h_2^{(2,2)}}{\partial x}(\xi,\xi)
\\&\quad+O\left((a_1+a_2)(p_1+p_2)\left(\mu^9|\log\mu|d^{-9}+\mu^7d^{-7}\right)\right).
\end{aligned}
\end{equation}

\subsection{Conclusion}
In this subsection, we conclude the results calculated in Subsection \ref{subsection4.1} and Appendix \ref{Computations of mixed terms}.
Indeed, we have
\begin{equation*}
\begin{aligned}
 I^{(1)}_{\varepsilon}\left(P\delta_{\mu, \xi, a, p}\right)
&=(\ref{expansion-15})+(\ref{A30-delta3})+(\ref{A30-delta4})
+(\ref{Jy18-delta4})
+(\ref{Jan5-4})+(\ref{Jy18-delta5})
+(\ref{A30-a1})+(\ref{M27-a2})
+(\ref{A30-p3})+(\ref{Au11-1})
\\&=\frac{8\pi}{3}
+8\pi\mu^4\frac{\partial^2 h_2^{(1)}}{\partial x \partial y}(\xi,\xi)-4\pi\varepsilon \mu^2\left[\frac{\partial^2 g_1}{\partial x^2}(\xi,\omega)+\frac{\partial^2 g_2}{\partial x \partial y}(\xi,\omega)\right]
\\&\quad-16\pi a_1^2\mu^2\frac{\partial h_1^{(-1,1)}}{\partial x}(\xi,\xi)
-16\pi a_2^2\mu^2\frac{\partial h_2^{(-1,2)}}{\partial x}(\xi,\xi)
\\&\quad-\frac{4\pi}{3}p_1^2\mu^8\frac{\partial^4 h^{(2, 1)}_1}{\partial x^4}(\xi, \xi) +\frac{4\pi}{3}p_2^2\mu^8\frac{\partial^4 h^{(2, 2)}_2}{\partial x^4}(\xi, \xi)
\\&\quad-32\pi p_1a_1\mu^5\frac{\partial h_1^{(2,1)}}{\partial x}(\xi,\xi)-32\pi p_1a_2\mu^5\frac{\partial h_2^{(2,1)}}{\partial x}(\xi,\xi)
\\&\quad-32\pi p_2a_1\mu^5\frac{\partial h_1^{(2,2)}}{\partial x}(\xi,\xi)
-32\pi p_2a_2\mu^5\frac{\partial h_2^{(2,2)}}{\partial x}(\xi,\xi)
+\tilde{e}^{(1)}(\mu,a,p),
\end{aligned}
\end{equation*}
where
\begin{equation*}
\begin{aligned}
 \tilde{e}^{(1)}(\varepsilon,\mu,a,p)
=&~O\left(\mu^8|\log\mu|d^{-8}+\mu^6d^{-6}+\mu^8d^{-10}\right)+O\left(\varepsilon\mu^4d^{-6}+\varepsilon\mu^4d^{-3}+\varepsilon\mu^6|\log\mu|d^{-5}+\varepsilon^2\mu^4|\log\mu|d^{-3}\right)
\\&+O\left(\varepsilon\mu^6d^{-8}+\varepsilon^2\mu^4d^{-6}\right)
+O\left(\varepsilon^3\mu^2|\log \mu|+\varepsilon^2\mu^2+\varepsilon^3\mu^2d^{-2}+\varepsilon^2\mu^4d^{-4}\right)
\\&+O\left(\left(a_1^2+a_2^2\right)\mu^4d^{-4}\right)
+ O\left((p_1^2+p_2^2)\mu^{10}d^{-10}\right)
\\&+O\left((a_1+a_2)(p_1+p_2)\left(\mu^9|\log\mu|d^{-9}+\mu^7d^{-7}\right)\right)
 \\=&~O\left(\mu^8d^{-10}+\varepsilon\mu^4d^{-6}+\varepsilon^3\mu^2|\log \mu|+\varepsilon^2\mu^2+\varepsilon^3\mu^2d^{-2}\right)
\\&+O\left(\left(a_1^2+a_2^2\right)\mu^4d^{-4}\right)
+ O\left((p_1^2+p_2^2)\mu^{10}d^{-10}\right)+O\left((a_1+a_2)(p_1+p_2)\mu^7d^{-7}\right).
\end{aligned}
\end{equation*}

Recall that
$I_{\varepsilon}\left(P\delta_{\mu, \xi, a, p}\right)=I^{(1)}_{\varepsilon}\left(P\delta_{\mu, \xi, a, p}\right)+\mathcal{R}_{\mu,\xi,a,p,\varepsilon}$,
where $\mathcal{R}_{\mu,\xi,a,p,\varepsilon}$ is estimated in Appendix \ref{Computations of mixed terms}, see (\ref{Jan8-R}).
Thus we complete the proof of Proposition \ref{prop 4.1} in this case. For general rotation $\mathcal{Q}$, it is sufficient to consider the boundary datum $\mathcal{Q}^{-1}\tilde g$ and to replace $g_1$, $g_2$, $g_3$ by $\left(\mathcal{Q}^{-1}g\right)_1$, $\left(\mathcal{Q}^{-1}g\right)_2$ and $\left(\mathcal{Q}^{-1}g\right)_3$, respectively.
\end{proof}

\section{Energy expansion for multi-bubbles}\label{expansion-multi-bubbles}
In this section, we consider the case of multiple bubbles, starting with the case of two bubbles. We define
\begin{equation*}
z_{\mathcal{A}_1}:=P\mathcal Q_1\delta_{\mu_1,\xi^{(1)}, a_1, p_1}\approx P\mathcal Q_1\delta_{\mu_1,\xi^{(1)}}+P\mathcal Q_1\left(\mathcal{L}_{\mathcal{A}_1}+\mathcal{R}_{\mathcal{A}_1}\right)
\end{equation*}
and
\begin{equation*}
z_{\mathcal{A}_2}:=P\mathcal Q_2\delta_{\mu_2,\xi^{(2)}, a_2, p_2}\approx P\mathcal Q_2\delta_{\mu_2,\xi^{(2)}}+P\mathcal Q_2\left(\mathcal{L}_{\mathcal{A}_2}+\mathcal{R}_{\mathcal{A}_2}\right),
\end{equation*}
where
$$\mathcal{L}_{\mathcal{A}_1}=\sum_{l=1}^2\left(a_{1,l}Z_{-1,l}\left(\frac{z-\xi^{(1)}}{\mu_1}\right)+p_{1, l}Z_{2,l}\left(\frac{z-\xi^{(1)}}{\mu_1}\right)\right),$$
$$\mathcal{L}_{\mathcal{A}_2}=\sum_{l=1}^2a_{2,l}\left(Z_{-1,l}\left(\frac{z-\xi^{(2)}}{\mu_2}\right)+p_{2, l}Z_{2,l}\left(\frac{z-\xi^{(2)}}{\mu_2}\right)\right),$$
and
$\mathcal{R}_{\mathcal{A}_i}$, $i=1,2$ denotes the corresponding higher order terms, as discussed in Section \ref{expansion-for-one-bubble}.
For simplicity, we denote $\mathcal Q_1\delta_{\mu_1,\xi^{(1)}}$, $\mathcal Q_2\delta_{\mu_2,\xi^{(2)}}$, $\xi^{(1)}$ and $\xi^{(2)}$ as $\delta_1$, $\delta_2$, $\xi$ and $\zeta$, respectively. We assume $\mathcal Q_1=Id$, namely, the first bubble is not rotated. For $\bar C >0,\ k=2$ and
$\sum_{i=1}^2z_{\mathcal{A}_i}\in Z,$
we aim to expand the functional $I_{\varepsilon}$ on $Z$ in terms of the parameters $\mu_i$,  $\xi^{(i)}$, $\mathcal Q_i$,  $a_{i,l}$ and $p_{i, l}$ for $i, l =1, 2$. In the following, we define
\begin{equation}\label{def-sigmasigma}
\sigma:=\xi^{(2)}-\xi^{(1)}=\zeta-\xi=(\sigma_1,\sigma_2)
\end{equation}
and write $P\delta_1=\delta_1-\varphi_1(z,\xi)$, $P\mathcal Q_2\delta_2=\mathcal Q_2\left(\delta_2-\varphi_1(z,\zeta)\right)$. Recall that for $u\in H_0^1(\mathcal{D},\mathbb{R}^3)$, the explicit form of the functional $I_{\varepsilon}(u)$ is
\begin{equation}\label{2-functional}
I_{\varepsilon}(u)=\frac{1}{2}\int_{\mathcal{D}}|\nabla u|^2+\frac{2}{3}\int_{\mathcal{D}} u\cdot (u_x\wedge u_y)+\varepsilon\int_{\mathcal{D}} u\cdot(u_x\wedge g_y+g_x\wedge u_y )+2\varepsilon^2 \int_{\mathcal{D}} u\cdot (g_x\wedge g_y).
\end{equation}

For $u=\sum_{i=1}^2z_{\mathcal{A}_i}\in Z$, we provide the estimates for (\ref{2-functional}). By applying integration by parts, we decompose the functional as $I_{\varepsilon}(u)=I^{(1)}_{\varepsilon}(u)+I^{(2)}_{\varepsilon}(u),$
where
\begin{equation*}
\begin{aligned}
I^{(1)}_{\varepsilon}(u)
&=\frac{1}{2}\int_{\mathcal{D}}\left|\nabla z_{\mathcal{A}_1}\right|^2
+\frac{1}{2}\int_{\mathcal{D}}\left|\nabla z_{\mathcal{A}_2}\right|^2+\frac{2}{3}\int_{\mathcal{D}} z_{\mathcal{A}_1} \cdot \left[\left(z_{\mathcal{A}_1}\right)_x\wedge \left(z_{\mathcal{A}_1}\right)_y\right]+\frac{2}{3}\int_{\mathcal{D}}z_{\mathcal{A}_2} \cdot \left[\left(z_{\mathcal{A}_2}\right)_x\wedge \left(z_{\mathcal{A}_2}\right)_y\right]
\\&\quad+\varepsilon\int_{\mathcal{D}} z_{\mathcal{A}_1}\cdot\left[\left(z_{\mathcal{A}_1}\right)_x\wedge g^{(1)}_y+g^{(1)}_x\wedge \left(z_{\mathcal{A}_1}\right)_y \right]+\varepsilon\int_{\mathcal{D}} z_{\mathcal{A}_2}\cdot\left[\left(z_{\mathcal{A}_2}\right)_x\wedge \left(\mathcal Q_2g^{(2)}\right)_y+\left(\mathcal Q_2g^{(2)}\right)_x\wedge \left(z_{\mathcal{A}_2}\right)_y \right]
\\&\quad+2\varepsilon^2 \int_{\mathcal{D}} z_{\mathcal{A}_1}\cdot \left(g^{(1)}_x\wedge g^{(1)}_y\right)+2\varepsilon^2 \int_{\mathcal{D}} z_{\mathcal{A}_2}\cdot \left[\left(\mathcal Q_2g^{(2)}\right)_x\wedge\left(\mathcal Q_2g^{(2)}\right)_y\right],
\end{aligned}
\end{equation*}

\begin{equation}\label{22-2-interaction}
\begin{aligned}
I^{(2)}_{\varepsilon}(u)
&=\int_{\mathcal{D}}\nabla z_{\mathcal{A}_1}
\cdot \nabla z_{\mathcal{A}_2}+2\int_{\mathcal{D}}z_{\mathcal{A}_1} \cdot \left[\left(z_{\mathcal{A}_2}\right)_x\wedge \left(z_{\mathcal{A}_2}\right)_y\right]
+2\int_{\mathcal{D}}z_{\mathcal{A}_2} \cdot \left[\left(z_{\mathcal{A}_1}\right)_x\wedge \left(z_{\mathcal{A}_1}\right)_y\right]
\\&\quad+2\varepsilon\int_{\mathcal{D}}  g^{(1)} \cdot\left[\left(z_{\mathcal{A}_2}\right)_x\wedge \left(z_{\mathcal{A}_2}\right)_y\right]+2\varepsilon\int_{\mathcal{D}} \left(\mathcal Q_2g^{(2)}\right)\cdot\left[\left(z_{\mathcal{A}_1}\right)_x\wedge \left(z_{\mathcal{A}_1}\right)_y\right]
\\&\quad+2\varepsilon\int_{\mathcal{D}} z_{\mathcal{A}_1}\cdot\left[\left(z_{\mathcal{A}_2}\right)_x\wedge \left(\mathcal Q_2g^{(2)}\right)_y+\left(\mathcal Q_2g^{(2)}\right)_x\wedge \left(z_{\mathcal{A}_2}\right)_y \right]+2\varepsilon\int_{\mathcal{D}} z_{\mathcal{A}_2}\cdot\left[\left(z_{\mathcal{A}_1}\right)_x\wedge g^{(1)}_y+g^{(1)}_x\wedge \left(z_{\mathcal{A}_1}\right)_y \right]
\\&\quad+2\varepsilon^2 \int_{\mathcal{D}} g^{(1)}\cdot \left[\left(z_{\mathcal{A}_2}\right)_x\wedge \left(\mathcal Q_2g^{(2)}\right)_y+\left(\mathcal Q_2g^{(2)}\right)_x\wedge \left(z_{\mathcal{A}_2}\right)_y \right]
\\&\quad+2\varepsilon^2 \int_{\mathcal{D}} \mathcal Q_2g^{(2)}\cdot \left[\left(z_{\mathcal{A}_1}\right)_x\wedge g^{(1)}_y+g^{(1)}_x\wedge \left(z_{\mathcal{A}_1}\right)_y\right]
\\&\quad+2\varepsilon^2 \int_{\mathcal{D}} z_{\mathcal{A}_1}\cdot \left[\left(\mathcal Q_2g^{(2)}\right)_x\wedge\left(\mathcal Q_2g^{(2)}\right)_y\right]+2\varepsilon^2 \int_{\mathcal{D}} z_{\mathcal{A}_2}\cdot  \left(g^{(1)}_x\wedge g^{(1)}_y\right).
\end{aligned}
\end{equation}
The estimate of $I^{(1)}_{\varepsilon}(u)$ is provided in Section \ref{expansion-for-one-bubble}. As for $I^{(2)}_{\varepsilon}(u)$, we estimate it in Subsections \ref{subsection5.1}-\ref{subsection5.4}.

\subsection{Interaction between two bubbles $\delta_1$ and $\delta_2$}\label{subsection5.1}
 In this subsection, we address the terms in (\ref{22-2-interaction}) that involve both $P\delta_1$ and $P\mathcal Q_2\delta_2$, namely,
\begin{equation}\label{3-interaction}
\begin{aligned}
&N(\delta_1,\delta_2,Id,\mathcal Q_2)
\\&:=\int_{\mathcal{D}}\nabla P\delta_1\cdot \nabla P\mathcal Q_2\delta_2
+2 \int_{\mathcal{D}}P\delta_1\cdot \left[\left(P\mathcal Q_2\delta_2\right)_x\wedge (P\mathcal Q_2\delta_2)_y\right]
+2\int_{\mathcal{D}} P\mathcal Q_2\delta_2\cdot \left[(P\delta_1)_x\wedge (P\delta_1)_y\right]
\\&~\quad+2\varepsilon\int_{\mathcal{D}}  g^{(1)} \cdot\left[\left(P\mathcal Q_2\delta_2\right)_x\wedge \left(P\mathcal Q_2\delta_2\right)_y\right]+2\varepsilon\int_{\mathcal{D}} \left(\mathcal Q_2g^{(2)}\right)\cdot\left[\left(P\delta_1\right)_x\wedge \left(P\delta_1\right)_y\right]
\\&~\quad+2\varepsilon\int_{\mathcal{D}} P\delta_1\cdot\left[\left(P\mathcal Q_2\delta_2\right)_x\wedge \left(\mathcal Q_2g^{(2)}\right)_y+\left(\mathcal Q_2g^{(2)}\right)_x\wedge \left(P\mathcal Q_2\delta_2\right)_y \right]
\\&~\quad+2\varepsilon\int_{\mathcal{D}} P\mathcal Q_2\delta_2\cdot\left[\left(P\delta_1\right)_x\wedge g^{(1)}_y+g^{(1)}_x\wedge \left(P\delta_1\right)_y \right]
\\&~\quad+2\varepsilon^2 \int_{\mathcal{D}} g^{(1)}\cdot \left[\left(P\mathcal Q_2\delta_2\right)_x\wedge\left(\mathcal Q_2g^{(2)}\right)_y+\left(\mathcal Q_2g^{(2)}\right)_x\wedge \left(P\mathcal Q_2\delta_2\right)_y \right]
\\&~\quad+2\varepsilon^2 \int_{\mathcal{D}} \mathcal Q_2g^{(2)}\cdot \left[\left(P\delta_1\right)_x\wedge g^{(1)}_y+g^{(1)}_x\wedge \left(P\delta_1\right)_y\right]
\\&~\quad+2\varepsilon^2 \int_{\mathcal{D}} P\delta_1\cdot \left[\left(\mathcal Q_2g^{(2)}\right)_x\wedge\left(\mathcal Q_2g^{(2)}\right)_y\right]+2\varepsilon^2 \int_{\mathcal{D}} P\mathcal Q_2\delta_2\cdot  \left(g^{(1)}_x\wedge g^{(1)}_y\right).
\end{aligned}
\end{equation}
Integrating by parts, expression (\ref{3-interaction}) simplifies to
\begin{align*}
&N(\delta_1,\delta_2,Id,\mathcal Q_2)
\\&=2\int_{\mathcal{D}}P\mathcal Q_2\delta_2\cdot \left[(\delta_1)_x\wedge (\delta_1)_y\right]+2\varepsilon\int_{\mathcal{D}}\mathcal Q_2g^{(2)}\cdot \left[(\delta_1)_x\wedge (\delta_1)_y\right]+2\varepsilon\int_{\mathcal{D}}  g^{(1)} \cdot\left[\left(\mathcal Q_2\delta_2\right)_x\wedge \left(\mathcal Q_2\delta_2\right)_y\right]
\\&\quad-2\int_{\mathcal{D}} \left(P\delta_1+\varepsilon g^{(1)}\right)\cdot \left[(\mathcal{Q}_2\delta_2)_x\wedge \left(\mathcal{Q}_2\varphi_1(z,\zeta)-\varepsilon\mathcal Q_2g^{(2)}\right)_y+\left(\mathcal{Q}_2\varphi_1(z,\zeta)-\varepsilon\mathcal Q_2g^{(2)}\right)_x\wedge (\mathcal{Q}_2\delta_2)_y\right]
\\&\quad+2\int_{\mathcal{D}}P\delta_1\cdot \left[(\mathcal{Q}_2\varphi_1(z,\zeta))_x\wedge  \left(\mathcal{Q}_2\varphi_1(z,\zeta)-\varepsilon\mathcal Q_2g^{(2)}\right)_y-\left(\varepsilon\mathcal Q_2g^{(2)}\right)_x\wedge \left(\mathcal{Q}_2\varphi_1(z,\zeta)-\varepsilon \mathcal Q_2g^{(2)}\right)_y\right]
\\&\quad+2\int_{\mathcal{D}} \varepsilon g^{(1)}\cdot \left[(\mathcal{Q}_2\varphi_1(z,\zeta))_x\wedge  \left(\mathcal{Q}_2\varphi_1(z,\zeta)-\varepsilon\mathcal Q_2g^{(2)}\right)_y-\left(\varepsilon\mathcal Q_2g^{(2)}\right)_x\wedge \left(\mathcal{Q}_2\varphi_1(z,\zeta)\right)_y\right]
\\&\quad-2\int_{\mathcal{D}} \left(P\mathcal Q_2\delta_2+\varepsilon \mathcal Q_2g^{(2)}\right)\cdot \left[(\delta_1)_x\wedge \left(\varphi_1(z,\xi)-\varepsilon g^{(1)}\right)_y+\left(\varphi_1(z,\xi)-\varepsilon g^{(1)}\right)_x\wedge (\delta_1)_y\right]
\\&\quad+2\int_{\mathcal{D}} P\mathcal Q_2\delta_2\cdot \left[(\varphi_1(z,\xi))_x\wedge \left(\varphi_1(z,\xi)-\varepsilon g^{(1)}\right)_y-\left(\varepsilon g^{(1)}\right)_x\wedge \left(\varphi_1(z,\xi)-\varepsilon g^{(1)}\right)_y\right]
\\&\quad+2\int_{\mathcal{D}} \varepsilon \mathcal Q_2g^{(2)}\cdot \left[(\varphi_1(z,\xi))_x\wedge \left(\varphi_1(z,\xi)-\varepsilon g^{(1)}\right)_y-\left(\varepsilon g^{(1)}\right)_x\wedge \left(\varphi_1(z,\xi)\right)_y\right].
\end{align*}

\begin{lemma}\label{lemma 6.3} There holds
\begin{equation*}
\begin{aligned}
N(\delta_1,\delta_2,Id,\mathcal{Q}_2)=F_{\mathcal{D}}(\delta_1,\delta_2,Id,\mathcal{Q}_2)+\mathcal{E}\left(\mu_1,\mu_2\right),
\end{aligned}
\end{equation*}
where
\begin{align}
F_{\mathcal{D}}(\delta_1,\delta_2,Id,\mathcal{Q}_2):= \notag &~2\int_{\mathcal{D}}P\mathcal Q_2\delta_2\cdot \left[(\delta_1)_x\wedge (\delta_1)_y\right]+2\varepsilon\int_{\mathcal{D}}\mathcal Q_2g^{(2)}\cdot \left[(\delta_1)_x\wedge (\delta_1)_y\right]
\\ \notag &+2\varepsilon\int_{\mathcal{D}}  g^{(1)} \cdot\left[\left(\mathcal Q_2\delta_2\right)_x\wedge \left(\mathcal Q_2\delta_2\right)_y\right]
\\= \notag &~q_{11}\left[\frac{-48\pi\left(\sigma_1^4-6 \sigma_2^2 \sigma_1^2+\sigma_2^4\right)}{|\sigma|^8}\mu_1^2\mu_2^2-4\pi \varepsilon\mu_2^2\frac{\partial^2g_1^{(1)}}{\partial x^2}(\zeta,\omega^{(1)})\right]
\\ \label{interaction-23} &+q_{12}\left[\frac{-192\pi\sigma_1\sigma_2(\sigma_1^2-\sigma_2^2)\mu_1^2\mu_2^2}{|\sigma|^8}
-4\pi\varepsilon\mu_2^2\frac{\partial^2 g^{(1)}_2}{\partial x\pp y}(\zeta,\omega^{(1)})\right]
\\ \notag &
+q_{21}\left[\frac{-192\pi\sigma_1\sigma_2(\sigma_1^2-\sigma_2^2)\mu_1^2\mu_2^2}{|\sigma|^8}
-4\pi\varepsilon\mu_2^2\frac{\partial^2 g^{(1)}_1}{\partial x^2}(\zeta,\omega^{(1)})\right]
\\ \notag &+q_{22}\left[\frac{48\pi\left(\sigma_1^4-6 \sigma_2^2 \sigma_1^2+\sigma_2^4\right)}{|\sigma|^8}\mu_1^2\mu_2^2
-4\pi\varepsilon\mu_2^2\frac{\partial^2 g^{(1)}_2}{\partial x \partial y}(\zeta,\omega^{(1)}) \right],
\\ \notag
\mathcal{E}\left(\mu_1,\mu_2\right):=&~
O\left(\varepsilon^{3}d^{-3}
+\varepsilon^{2}\left(\mu_1^2-\varepsilon\right)d^{-6}
+\varepsilon^{2}\left(\mu_2^2-\varepsilon\right)d^{-6}
+\varepsilon(\mu_2^2-\varepsilon)\right).
\end{align}
\end{lemma}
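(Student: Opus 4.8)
The strategy is to take the integration-by-parts form of $N(\delta_1,\delta_2,Id,\mathcal{Q}_2)$ displayed just above the lemma, isolate the three ``leading'' integrals that will constitute $F_{\mathcal{D}}(\delta_1,\delta_2,Id,\mathcal{Q}_2)$, and show that every other integral is absorbed into the error term $\mathcal{E}(\mu_1,\mu_2)$. The three leading terms are
\[
2\int_{\mathcal{D}}P\mathcal Q_2\delta_2\cdot\big[(\delta_1)_x\wedge(\delta_1)_y\big],\qquad
2\varepsilon\int_{\mathcal{D}}\mathcal Q_2 g^{(2)}\cdot\big[(\delta_1)_x\wedge(\delta_1)_y\big],\qquad
2\varepsilon\int_{\mathcal{D}}g^{(1)}\cdot\big[(\mathcal Q_2\delta_2)_x\wedge(\mathcal Q_2\delta_2)_y\big].
\]
First I would treat the first of these. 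The factor $(\delta_1)_x\wedge(\delta_1)_y$ is a bubble concentration kernel of mass $O(\mu_1^2)$ localized near $\xi$, while $P\mathcal Q_2\delta_2$ is smooth near $\xi$ with $P\mathcal Q_2\delta_2(z)=\mathcal Q_2\big((0,0,1)+2\mu_2^2(h_1^{(1)},h_2^{(1)},0)(z,\zeta)+\dots\big)-\mathcal Q_2(0,0,1)+\dots$ after using the expansions \eqref{expansion-near-boundary}, \eqref{2expansion-near-boundary}. Performing a Taylor expansion of $P\mathcal Q_2\delta_2$ around $\xi$, plugging in the explicit entries of $(\delta_x\wedge\delta_y)$ via the identity \eqref{cross}, changing variables $z=\xi+\mu_1 Z$, and using the elementary integral identities already recorded in Section \ref{expansion-for-one-bubble} (e.g.\ $\int_{\mathbb R^2}(x^4-y^4)/(1+|z|^4)^3=0$, $\int x^2(x^4-y^4)/(1+|z|^4)^3=\pi/16$), one extracts the coefficients $-48\pi(\sigma_1^4-6\sigma_1^2\sigma_2^2+\sigma_2^4)|\sigma|^{-8}\mu_1^2\mu_2^2$ etc., where the pure-interaction part comes from evaluating $\nabla^2_z$ of $|z-\zeta|^{-k}$-type terms at $z=\xi$ (hence the $|\sigma|^{-8}$ denominators), and the $-4\pi\varepsilon\mu_2^2\,\partial^2 g^{(1)}_j/\partial x^2(\zeta,\omega^{(1)})$ pieces come from the analogous expansion of the second and third leading integrals. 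The rotation matrix $\mathcal Q_2$ acts linearly and produces exactly the combinations $q_{11},q_{12},q_{21},q_{22}$ of its entries, so one just carries $\mathcal Q_2$ through the scalar computation.

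Next I would bound all the remaining integrals in the displayed expansion of $N$. These fall into a few types: (i) integrals of the form $\int (P\delta_1+\varepsilon g^{(1)})\cdot[(\mathcal Q_2\delta_2)_x\wedge(\mathcal Q_2\varphi_1(z,\zeta)-\varepsilon\mathcal Q_2 g^{(2)})_y+\dots]$, where one uses that $\varphi_1(z,\zeta)-\varepsilon g^{(2)}=(\mathcal Q_2$-rotated$)$ of size $O((\mu_2^2-\varepsilon)d^{-2})$ with gradient $O((\mu_2^2-\varepsilon)d^{-3})$ by \eqref{estimates-1} and the choice of boundary datum — note the crucial cancellation that $\varphi_1$'s leading constant $(0,0,1)$ is killed against $\varepsilon g^{(2)}$'s, so only the $O((\mu_2^2-\varepsilon)d^{-k})$ remainders survive — combined with $\|P\delta_1\|_{L^2}=O(\mu_1)$, $\|\nabla(\mathcal Q_2\delta_2)\|_{L^2(\mathcal D\setminus B)}=O(\mu_2^2 d^{-2})$ and Hölder; (ii) the purely regular-part integrals $\int P\mathcal Q_2\delta_2\cdot[(\varphi_1(z,\xi))_x\wedge(\varphi_1(z,\xi)-\varepsilon g^{(1)})_y-\dots]$, which are $O$ of a product of three factors each $O((\mu_i^2-\varepsilon)d^{-2})$ or $O(\varepsilon d^{-2})$ and so land in $\mathcal{E}$; (iii) the $\varepsilon^2$-terms involving $g^{(1)},g^{(2)}$ only, giving $O(\varepsilon^3 d^{-3})$. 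Splitting the domain into $B_{d/2}(\xi)$, $B_{d/2}(\zeta)$ and the complement, exactly as in the proof of Lemma \ref{lemma-3.4}, and applying the estimates \eqref{estimates-1}–\eqref{estimates-4} with $p=4/(4-\alpha)$ where needed, each such term is seen to be $O\big(\varepsilon^3 d^{-3}+\varepsilon^2(\mu_1^2-\varepsilon)d^{-6}+\varepsilon^2(\mu_2^2-\varepsilon)d^{-6}+\varepsilon(\mu_2^2-\varepsilon)\big)$, which is precisely $\mathcal{E}(\mu_1,\mu_2)$. (Using $\varepsilon\sim\mu_i^2$ and $d=O(\mu^{2/3-\alpha})$ one checks these are genuinely lower order than the $F_{\mathcal D}$ terms.)

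The main obstacle I anticipate is the careful bookkeeping in type-(i) terms: because the boundary datum $g^{(2)}$ was chosen precisely so that $\varphi_1(z,\zeta)-\varepsilon g^{(2)}$ has its leading $O(1)$ constant and its $O(\mu_2^2 d^{-2})$ harmonic part partially cancelled — cf.\ Lemma \ref{lemma7.1a} and the discussion after it — one must expand to sufficient order to be sure that what multiplies $(\mathcal Q_2\delta_2)_x$ really is $O((\mu_2^2-\varepsilon)d^{-3})$ in gradient and not merely $O(\mu_2^2 d^{-3})$; getting the right power of $(\mu_2^2-\varepsilon)$ rather than $\mu_2^2$ is what makes $\mathcal{E}$ small enough to be beaten by $F_{\mathcal D}$ when we later solve for the parameters. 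The cross-interaction denominators $|\sigma|^{-8}$ must also be tracked honestly: they arise from $\partial^4_{(x,y)}(|z-\zeta|^{-4})$-type quantities evaluated at $z=\xi$, so one needs the fourth-order Taylor coefficients of $h^{(1)}_j(z,\zeta)$ near $z=\xi$ — but since $\mathrm{dist}(\xi^{(i)},\xi^{(j)})\ge\bar C^{-1}$ these are bounded, and only the explicit Green's-function part $-\log|z-\zeta|$, not the Robin part, contributes the $|\sigma|^{-8}$ singularity in $\sigma$. Everything else is a routine, if lengthy, repetition of the one-bubble computations of Section \ref{expansion-for-one-bubble}, which I would relegate to an appendix.
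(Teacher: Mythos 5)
Your overall route is the same as the paper's: split off the three leading integrals, compute them by localizing near each concentration point, using the far-field expansion of the other bubble, Taylor expanding and invoking the elementary integral identities, and then absorb everything else into $\mathcal{E}$ by exploiting that $\varphi_1(\cdot,\xi)-\varepsilon g^{(1)}$ and $\varphi_1(\cdot,\zeta)-\varepsilon g^{(2)}$ are of size $O((\mu_i^2-\varepsilon)d^{-2})$ rather than $O(\mu_i^2 d^{-2})$. That part of the plan is sound, modulo one slip: what cancels in $\varphi_1-\varepsilon g$ is the harmonic part $2\mu^2 h^{(1)}$ against $2\varepsilon h^{(1)}$ in the first two components; the constant $(0,0,1)$ in the third component of $\varphi_1$ is \emph{not} matched by $\varepsilon g$, whose third entry is only $O(\varepsilon^2)$, so it must be handled separately (via the null-Lagrangian structure of the wedge product), not by the cancellation you describe.

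The genuine gap is in the passage from the three integrals to the explicit $q_{ij}$-formula for $F_{\mathcal D}$. When you expand $2\int_{\mathcal D} P\mathcal Q_2\delta_2\cdot[(\delta_1)_x\wedge(\delta_1)_y]$, the regular (Robin) part of $P\mathcal Q_2\delta_2$ near $\xi$ contributes terms of the form $8\pi\mu_1^2\mu_2^2\,q_{ij}\,\partial^2 h^{(1)}_j(\xi,\zeta)$, which are of size $\varepsilon^2 d^{-4}$ --- the same order as the $-4\pi\varepsilon\mu_2^2\partial^2 g^{(1)}(\zeta,\omega^{(1)})$ terms that are kept in $F_{\mathcal D}$, and far larger than anything in $\mathcal{E}$ (with $d\sim\varepsilon^{1/3-\alpha/2}$ one has $\varepsilon^2 d^{-4}\gg\varepsilon^3 d^{-3}$). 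Likewise, your second leading integral produces $-4\pi\varepsilon\mu_1^2\, q_{ij}\,\partial^2 g^{(2)}_j(\xi,\omega^{(2)})$, which also does not appear in the stated $F_{\mathcal D}$. Your plan neither retains these contributions nor explains why they disappear; attributing the surviving $g^{(1)}(\zeta,\omega^{(1)})$-terms to ``the second and third leading integrals'' obscures the fact that the second integral's output is absent from the final formula. The missing step is the cancellation \eqref{Jan19-1}: because $g^{(2)}=2h^{(1)}(\cdot,\omega^{(2)})$ in its first two components and $\mu_2^2-\varepsilon$, $|\zeta-\omega^{(2)}|$ are small, the combination $8\pi\mu_1^2\mu_2^2\partial^2 h^{(1)}(\xi,\zeta)-4\pi\varepsilon\mu_1^2\partial^2 g^{(2)}(\xi,\omega^{(2)})$ collapses to $O(\mu_1^2(\mu_2^2-\varepsilon)+\varepsilon^2\mu_1^2)$, which does fit into $\mathcal{E}$. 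This pairing is exactly why the boundary datum was chosen as it was; without it the asserted equality fails at order $\varepsilon^2 d^{-4}$.
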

\begin{proof}
Let us now compute the term $\int_{\mathcal{D}} P\mathcal Q_2\delta_2\cdot \left[(\delta_1)_x\wedge (\delta_1)_y\right]$. Up to an error of order $\varepsilon$, we have
\begin{equation*}
P\mathcal{Q}_2\delta_2=\begin{pmatrix}q_{11}\quad q_{12}\quad q_{13}\\
 q_{21}\quad q_{22}\quad q_{23}\\
 q_{31}\quad q_{32}\quad q_{33}
\end{pmatrix}
\begin{pmatrix}
\frac{2\left[\left(\frac{x-\zeta_1}{\mu_2}\right)^2-\left(\frac{y-\zeta_2}{\mu_2}\right)^2\right]}{1+\left(\left|\frac{z-\zeta}{\mu_2}\right|^2\right)^2}-2\mu_2^2h_1^{(1)}(z,\zeta)\\
\frac{4\frac{(x-\zeta_1)(y-\zeta_2)}{\mu_2^2}}{1+\left(\left|\frac{z-\zeta}{\mu_2}\right|^2\right)^2}-2\mu_2^2h_2^{(1)}(z,\zeta)\\
\frac{-2}{1+\left(\left|\frac{z-\zeta}{\mu_2}\right|^2\right)^2}+2\mu_2^4h_3^{(1)}(z,\zeta)
\end{pmatrix},
\end{equation*}
where $q_{ij},\ i,j=1,2,3$ are the entries of the matrix $\mathcal{Q}_2$.

Since
\begin{equation}\label{interaction-24}
\begin{aligned}
2\int_{\mathcal{D}} P\mathcal{Q}_2\delta_2\cdot \left[(\delta_1)_x\wedge (\delta_1)_y\right]=2\sum_{i,j=1}^{3}q_{ij}\int_{\mathcal{D}}(\delta_2-\varphi_2)_j\cdot \left[(\delta_1)_x\wedge (\delta_1)_y\right]_i,
\end{aligned}
\end{equation}
the terms in (\ref{interaction-24}) involving the coefficients $q_{ij}$ are given by $2\int_{\mathcal{D}}(\delta_2-\varphi_2)_j\cdot \left[(\delta_1)_x\wedge (\delta_1)_y\right]_i$, $i, j=1, 2, 3$, respectively. Recall that in the definition of $Z$, we assume that $\text{dist}~(\xi^{(i)},\xi^{(j)})\geq \bar C^{-1}$ for $i\neq j$ and $\frac{\varepsilon}{\mu^2_i}, \frac{\varepsilon}{\mu^2_j} \in[ \bar C^{-1}, \bar C]$.
Let $\gamma\leq \frac{ \bar C^{-1}}{2}$ be a fixed positive number, we may choose
$\gamma=\frac{|\sigma|}{8}$, where $\sigma$ is defined in \eqref{def-sigmasigma}. Then, we divide the integral domain into two regions,
$$\mathcal{D}_1:=B_{\gamma}(\zeta) ,\qquad\quad\mathcal{D}_2:=\mathcal{D} \backslash B_{\gamma}(\zeta),$$
where $B_{\gamma}(\zeta)$ is the ball of radius $\gamma$ centered at $\zeta$.

On the set $\mathcal{D}_1=B_{\gamma}(\zeta)$, using (\ref{estimates-1}) and recalling that for $i=1,2$,
$$d=d_{\varepsilon}=1-|\xi^{(i)}|^2=O\left(\mu_i^{\frac{2}{3}-\alpha}\right)=O\left(\varepsilon^{\frac{1}{3}-\frac{\alpha}{2}}\right),$$
we have
\begin{equation}\label{Au7-1}
\begin{aligned}
\left|2\int_{\mathcal{D}_1}(\delta_2-\varphi_2)_1\left[(\delta_1)_x\wedge (\delta_1)_y\right]_1\right|
\lesssim \mu_1^6\int_{\mathcal{D}_1} \left|(\delta_2-\varphi_2)_1\right|
\lesssim \mu_1^6\left(\mu_2^2|\log \mu_2|+\mu_2^2 d^{-2}\right).
\end{aligned}
\end{equation}
In $\mathcal{D}_2$, since $z$ is away from $\zeta$, for $\mu_2$ sufficiently small, $\delta_2\left(\frac{z-\zeta}{\mu_2}\right)$ has the following expansion
\begin{equation}\label{Ju25-6}
\begin{aligned}
\delta_2\left(\frac{z-\zeta}{\mu_2}\right)=\left(\frac{2\mu_2^2\left[(x-\zeta_1)^2-(y-\zeta_2)^2\right]}{|z-\zeta|^4}+O\left(\mu_2^6\right),\frac{4\mu_2^2(x-\zeta_1)(y-\zeta_2)}{|z-\zeta|^4}+O\left(\mu_2^6\right),1-\frac{2\mu_2^4}{|z-\zeta|^4}+O\left(\mu_2^8\right)\right).
\end{aligned}
\end{equation}
Then we deduce
\begin{align}
&\notag2\int_{\mathcal{D}_2}(\delta_2-\varphi_2)_1\left[(\delta_1)_x\wedge (\delta_1)_y\right]_1
\\\notag&=-128\int_{\frac{\mathcal{D}_2-\xi}{\mu_1}}\left[\mu_2^2\frac{(\mu_1x+\xi_1-\zeta_1)^2-(\mu_1y+\xi_2-\zeta_2)^2}{|\mu_1z+\xi-\zeta|^4}+O(\mu_2^6)\right]\frac{x^4-y^4}{\left(1+|z|^4\right)^3}
\\\notag&\quad+128\int_{\frac{\mathcal{D}_2-\xi}{\mu_1}}\left[\mu_2^2h_1^{(1)}(\mu_1z+\xi,\zeta)+O(\mu_2^5)\right]\frac{(x^4-y^4)}{\left(1+|z|^4\right)^3}
\\ \label{37-interaction}&=-64\mu_1^2\mu_2^2\int_{\frac{\mathcal{D}_2-\xi}{\mu_1}}\frac{6 \left(\sigma_1^4-6 \sigma_2^2 \sigma_1^2+\sigma_2^4\right)}{|\sigma|^8}\frac{x^2(x^4-y^4)}{\left(1+|z|^4\right)^3}+\frac{-6\left(\sigma_1^4-6 \sigma_2^2 \sigma_1^2+\sigma_2^4\right)}{|\sigma|^8}\frac{y^2(x^4-y^4)}{\left(1+|z|^4\right)^3}
\\\notag&\quad+64\mu_1^2\mu_2^2\int_{\frac{\mathcal{D}_2-\xi}{\mu_1}}\frac{x^2(x^4-y^4)}{\left(1+|z|^4\right)^3}\frac{\partial^2h_1}{\partial x^2}(\xi,\zeta)+\frac{y^2(x^4-y^4)}{\left(1+|z|^4\right)^3}\frac{\partial^2h_1^{(1)}}{\partial y^2}(\xi,\zeta)+O\left(\mu_1^4\mu_2^2\right)
\\ \notag&=\frac{-48\pi\left(\sigma _1^4-6 \sigma_2^2\sigma_1^2+\sigma_2^4\right)}{|\sigma|^8}\mu_1^2\mu_2^2+4\pi\mu_1^2\mu_2^2\left[\frac{\partial^2h_1^{(1)}}{\partial x^2}(\xi,\zeta)-\frac{\partial^2h_1^{(1)}}{\partial y^2}(\xi,\zeta)\right]+O\left(\mu_1^4\mu_2^2\right).
\end{align}
Hence, combining (\ref{Au7-1}) and (\ref{37-interaction}), and the fact that $\frac{\partial^2 h_1^{(1)}}{\partial x^2}(\xi,\zeta)=-\frac{\partial^2 h_1^{(1)}}{\partial y^2}(\xi,\zeta)$, we find
\begin{equation}\label{q11-interaction}
\begin{aligned}
2\int_{\mathcal{D}}(\delta_2-\varphi_2)_1 \left[(\delta_1)_x\wedge (\delta_1)_y\right]_1=\frac{-48\pi \left(\sigma _1^4-6 \sigma _2^2 \sigma _1^2+\sigma _2^4\right)}{|\sigma|^8}\mu_1^2\mu_2^2+8\pi\mu_1^2\mu_2^2\frac{\partial^2h_1^{(1)}}{\partial x^2}(\xi,\zeta) +O\left(\mu_1^4\mu_2^2\right).
\end{aligned}
\end{equation}

Next, we present the details of the terms involving $q_{22}$ , i.e., we need to compute $2\int_{\mathcal{D}}(\delta_2-\varphi_2)_2\left[(\delta_1)_x\wedge (\delta_1)_y\right]_2$. Similarly, in $\mathcal{D}_1$, it can be estimated as $O\left(\mu_1^6\mu_2^2|\log\mu_1|+\mu_1^6\mu_2^2d^{-2}\right)$.
And in $\mathcal{D}_2$, using (\ref{Ju25-6}), we have
\begin{align*}
&2\int_{\mathcal{D}_2}(\delta_2-\varphi_2)_2 \left[(\delta_1)_x\wedge (\delta_1)_y\right]_2
\\&=-512\int_{\frac{\mathcal{D}_2-\xi}{\mu_1}}\left[\mu_2^2\frac{(\mu_1x+\xi_1-\zeta_1)(\mu_1y+\xi_2-\zeta_2)}{|\mu_1z+\xi-\zeta|^4}+O(\mu_2^6)\right]
\frac{xy|z|^2}{\left(1+|z|^4\right)^3}
\\&\quad+256\int_{\frac{\mathcal{D}_2-\xi}{\mu_1}}\frac{\left[\mu_2^2h_2^{(1)}(\mu_1z+\xi,\zeta)+O(\mu_2^5)\right]xy|z|^2}{\left(1+|z|^4\right)^3}
\\&=-512\mu_1^2\mu_2^2 \int_{\frac{\mathcal{D}_2-\xi}{\mu_1}}\frac{-3 \left(\sigma _1^4-6 \sigma _2^2 \sigma _1^2+\sigma _2^4\right)}{|\sigma|^8}\frac{x^2y^2|z|^2}{\left(1+|z|^4\right)^3}
+256\mu_1^2\mu_2^2\int_{\frac{\mathcal{D}_2-\xi}{\mu_1}}\frac{x^2y^2|z|^2}{\left(1+|z|^4\right)^3}\frac{\partial^2 h_2^{(1)}}{\partial x\partial y}(\xi,\zeta)+O\left(\mu_1^4\mu_2^2\right)
\\&=\frac{48\pi\left(\sigma _1^4-6 \sigma _2^2 \sigma _1^2+\sigma _2^4\right)}{|\sigma|^8}\mu_1^2\mu_2^2+8\pi \mu_1^2\mu_2^2 \frac{\partial^2 h_2^{(1)}}{\partial x\partial y}(\xi,\zeta)+O\left(\mu_1^4\mu_2^2\right).
\end{align*}
 Thus,
\begin{equation}
\begin{aligned}
2\int_{\mathcal{D}}(\delta_2-\varphi_2)_2 \left[(\delta_1)_x\wedge (\delta_1)_y\right]_2=&\frac{48\pi\left(\sigma _1^4-6 \sigma _2^2 \sigma _1^2+\sigma _2^4\right)}{|\sigma|^8}\mu_1^2\mu_2^2
+8\pi\mu_1^2\mu_2^2\frac{\partial^2 h_2^{(1)}}{\partial x\partial y}(\xi,\zeta) +O\left(\mu_1^4\mu_2^2\right).
\end{aligned}
\end{equation}
The terms involving the other coefficients of the matrix $\mathcal{Q}_2$ can be handled similarly. We only present the results of these estimations without delving into the detailed calculations. It follows that
\begin{equation}
\begin{aligned}
q_{12}:&\ 2\int_{\mathcal{D}}(\delta_2-\varphi_2)_2 \left[(\delta_1)_x\wedge (\delta_1)_y\right]_1=\frac{-192\pi\sigma_1\sigma_2(\sigma_1^2-\sigma_2^2)\mu_1^2\mu_2^2}{|\sigma|^8}+8\pi\mu_1^2\mu_2^2\frac{\partial^2h_2^{(1)}}{\partial x^2}(\xi,\zeta)+O\left(\mu_1^4\mu_2^2\right),\\
q_{13}:&\ 2\int_{\mathcal{D}}(\delta_2-\varphi_2)_3 \left[(\delta_1)_x\wedge (\delta_1)_y\right]_1=O\left(\mu_1^4\mu_2^2\right),\\
q_{21}:&\ 2\int_{\mathcal{D}}(\delta_2-\varphi_2)_1 \left[(\delta_1)_x\wedge (\delta_1)_y\right]_2=\frac{-192\pi\sigma_1\sigma_2(\sigma_1^2-\sigma_2^2)\mu_1^2\mu_2^2}{|\sigma|^8}
+8\pi\mu_1^2\mu_2^2\frac{\partial^2h_1^{(1)}}{\partial x \partial y}(\xi,\zeta)+O\left(\mu_1^4\mu_2^2\right),\\
q_{23}:&\ 2\int_{\mathcal{D}}(\delta_2-\varphi_2)_3 \left[(\delta_1)_x\wedge (\delta_1)_y\right]_2=O\left(\mu_1^4\mu_2^2\right),\\
q_{31}:&\ 2\int_{\mathcal{D}}(\delta_2-\varphi_2)_1 \left[(\delta_1)_x\wedge (\delta_1)_y\right]_3=O\left(\mu_1^4\mu_2^2|\log\mu_2|+\mu_1^4\mu_2^2d^{-2}\right),\\
q_{32}:&\ 2\int_{\mathcal{D}}(\delta_2-\varphi_2)_2 \left[(\delta_1)_x\wedge (\delta_1)_y\right]_3=O\left(\mu_1^4\mu_2^2|\log\mu_2|+\mu_1^4\mu_2^2d^{-2}\right),\\
q_{33}:&\ 2\int_{\mathcal{D}}(\delta_2-\varphi_2)_3 \left[(\delta_1)_x\wedge (\delta_1)_y\right]_3=O\left(\mu_1^4\mu_2^2+\mu_1^4\mu_2^4d^{-4}\right).
\end{aligned}
\end{equation}
Similarly, one has
\begin{equation}
\begin{aligned}
&2\varepsilon \int_{\mathcal{D}} \left(\mathcal Q_2g^{(2)}\right)\cdot\left[\left(\delta_1\right)_x\wedge\left(\delta_1\right)_y\right]
\\&=-4\pi \varepsilon\mu_1^2\left[q_{11}\frac{\partial^2 g^{(2)}_1}{\partial x^2}(\xi,\omega^{(2)})+q_{12}\frac{\partial^2 g^{(2)}_2}{\partial x^2}(\xi,\omega^{(2)})+q_{13}\frac{\partial^2 g^{(2)}_3}{\partial x^2}(\xi,\omega^{(2)})\right.
\\&\qquad\qquad\qquad\left.
+q_{21}\frac{\partial^2 g^{(2)}_1}{\partial x \partial y}(\xi,\omega^{(2)})+q_{22}\frac{\partial^2 g^{(2)}_2}{\partial x \partial y}(\xi,\omega^{(2)})+q_{23}\frac{\partial^2 g^{(2)}_3}{\partial x \partial y}(\xi,\omega^{(2)})\right]+O\left(\varepsilon\mu_1^4\right),
\end{aligned}
\end{equation}
and
\begin{equation}
\begin{aligned}
&2\varepsilon\int_{\mathcal{D}}  g^{(1)} \cdot\left[\left(\mathcal Q_2\delta_2\right)_x\wedge \left(\mathcal Q_2\delta_2\right)_y\right]
\\&=-4\pi \varepsilon\mu_2^2\left[q_{11}\frac{\partial^2 g^{(1)}_1}{\partial x^2}(\zeta,\omega^{(1)})+q_{12}\frac{\partial^2 g^{(1)}_2}{\partial x\pp y}(\zeta,\omega^{(1)})
+q_{21}\frac{\partial^2 g^{(1)}_1}{\partial x^2}(\zeta,\omega^{(1)})\right.
\\&\qquad\qquad\qquad\left.
+q_{22}\frac{\partial^2 g^{(1)}_2}{\partial x \partial y}(\zeta,\omega^{(1)})
+q_{31}\frac{\partial^2 g^{(1)}_3}{\partial x^2}(\zeta,\omega^{(1)})+q_{32}\frac{\partial^2 g^{(1)}_3}{\partial x \partial y}(\zeta,\omega^{(1)})\right]+O\left(\varepsilon\mu_2^4\right).
\end{aligned}
\end{equation}

Now, we consider
\begin{equation}\label{J10-g1}
\begin{aligned}
-2\int_{\mathcal{D}} \left(P\delta_1+\varepsilon g^{(1)}\right)\cdot \left[(\mathcal{Q}_2\delta_2)_x\wedge \left(\mathcal{Q}_2\varphi_1(z,\zeta)-\varepsilon\mathcal Q_2g^{(2)}\right)_y+\left(\mathcal{Q}_2\varphi_1(z,\zeta)-\varepsilon\mathcal Q_2g^{(2)}\right)_x\wedge (\mathcal{Q}_2\delta_2)_y\right]
.
\end{aligned}
\end{equation}
Note that in $\mathcal{D} \backslash B_{\gamma}(\zeta)$, we have
\begin{equation}\label{F26-1}
\begin{aligned}
\varphi_1(z,\xi)-\varepsilon g^{(1)}(z,\omega^{(1)})=O\left(\left(\mu_1^2-\varepsilon\right)d^{-2}+ \varepsilon d^{-3}|\xi-\omega^{(1)}|\right)=O\left(\left(\mu_1^2-\varepsilon\right)d^{-2}\right),
\end{aligned}
\end{equation}
where the second equality follows from the asymptotic relation $\varepsilon d^{-1}|\xi-\omega^{(1)}|=O\left(\mu_1^2-\varepsilon\right)$. And in $B_{\gamma}(\zeta)$, one has
\begin{equation}\label{F26-2}
\begin{aligned}
\left|\nabla \left(\mathcal{Q}_2\varphi_1(z,\zeta)-\varepsilon\mathcal Q_2g^{(2)}(z,\omega^{(2)})\right)\right|= O\left(\left(\mu_2^2-\varepsilon\right)d^{-3}+ \varepsilon d^{-4}|\zeta-\omega^{(2)}|\right)=O\left(\left(\mu_2^2-\varepsilon\right)d^{-3}\right).
\end{aligned}
\end{equation}
In the following, we will frequently refer to (\ref{F26-1}) and (\ref{F26-2}) to simplify our estimates without further explanation.
It is then straightforward to verify that
\begin{equation}
\begin{aligned}
&\int_{\mathcal{D} \backslash B_{\gamma}(\zeta)}
\left(P\delta_1+\varepsilon g^{(1)}\right)\cdot \left[(\mathcal{Q}_2\delta_2)_x\wedge \left(\mathcal{Q}_2\varphi_1(z,\zeta)-\varepsilon\mathcal Q_2g^{(2)}\right)_y+\left(\mathcal{Q}_2\varphi_1(z,\zeta)-\varepsilon\mathcal Q_2g^{(2)}\right)_x\wedge (\mathcal{Q}_2\delta_2)_y\right]
\\&=O\left(\left(\mu_1^2|\log \mu_1|+\left(\mu_1^2-\varepsilon\right)d^{-2}\right)\mu_2^2\left(\mu_2^2-\varepsilon\right)\right).
\end{aligned}
\end{equation}
Using (\ref{derivative}), we have
\begin{equation}
\begin{aligned}
&\int_{B_{\gamma}(\zeta)}
\left(P\delta_1+\varepsilon g^{(1)}\right)\cdot \left[(\mathcal{Q}_2\delta_2)_x\wedge \left(\mathcal{Q}_2\varphi_1(z,\zeta)-\varepsilon\mathcal Q_2g^{(2)}\right)_y+\left(\mathcal{Q}_2\varphi_1(z,\zeta)-\varepsilon\mathcal Q_2g^{(2)}\right)_x\wedge (\mathcal{Q}_2\delta_2)_y\right]
\\&=O\left(\left(\mu_1^2+\varepsilon\right)\mu_2^2|\log \mu_2|\left(\left(\mu_2^2-\varepsilon\right)d^{-3}+\varepsilon d^{-4}|\zeta-\omega^{(2)}|\right)\right)
\\&=O\left(\left(\mu_1^2+\varepsilon\right)\mu_2^2|\log \mu_2|\left(\mu_2^2-\varepsilon\right)d^{-3}\right).
\end{aligned}
\end{equation}
Moreover,
\begin{equation}
\begin{aligned}
&2\int_{\mathcal{D}} P\delta_1\cdot \left[(\mathcal{Q}_2\varphi_1(z,\zeta))_x\wedge  \left(\mathcal{Q}_2\varphi_1(z,\zeta)-\varepsilon\mathcal Q_2g^{(2)}\right)_y-\left(\varepsilon\mathcal Q_2g^{(2)}\right)_x\wedge \left(\mathcal{Q}_2\varphi_1(z,\zeta)-\varepsilon \mathcal Q_2g^{(2)}\right)_y\right]
\\&=O\left(\left(\mu_1^2|\log \mu_1|+\mu_1^2d^{-2}\right)\mu_2^2\left(\mu_2^2-\varepsilon\right)+\mu_1^2\mu_2^2\left(\mu_2^2-\varepsilon\right)d^{-6}\right),
\end{aligned}
\end{equation}

\begin{equation}
\begin{aligned}
&2\int_{\mathcal{D}} \varepsilon g^{(1)}\cdot \left[(\mathcal{Q}_2\varphi_1(z,\zeta))_x\wedge  \left(\mathcal{Q}_2\varphi_1(z,\zeta)-\varepsilon\mathcal Q_2g^{(2)}\right)_y-\left(\varepsilon\mathcal Q_2g^{(2)}\right)_x\wedge \left(\mathcal{Q}_2\varphi_1(z,\zeta)\right)_y\right]
\\&=O\left(\varepsilon\mu_2^2\left(\mu_2^2-\varepsilon\right)+\varepsilon^2\mu_2^2+\varepsilon\mu_2^2\left(\mu_2^2-\varepsilon\right)d^{-6}+\varepsilon^2\mu_2^2d^{-3}\right).
\end{aligned}
\end{equation}
And similarly, one has
\begin{align*}
&-2\int_{\mathcal{D}} \left(P\mathcal Q_2\delta_2+\varepsilon \mathcal Q_2g^{(2)}\right)\cdot \left[(\delta_1)_x\wedge \left(\varphi_1(z,\xi)-\varepsilon g^{(1)}\right)_y+\left(\varphi_1(z,\xi)-\varepsilon g^{(1)}\right)_x\wedge (\delta_1)_y\right]
\\&+2\int_{\mathcal{D}} P\mathcal Q_2\delta_2\cdot \left[(\varphi_1(z,\xi))_x\wedge \left(\varphi_1(z,\xi)-\varepsilon g^{(1)}\right)_y-\left(\varepsilon g^{(1)}\right)_x\wedge \left(\varphi_1(z,\xi)-\varepsilon g^{(1)}\right)_y\right]
\\&+2\int_{\mathcal{D}} \varepsilon \mathcal Q_2g^{(2)}\cdot \left[(\varphi_1(z,\xi))_x\wedge \left(\varphi_1(z,\xi)-\varepsilon g^{(1)}\right)_y-\left(\varepsilon g^{(1)}\right)_x\wedge \left(\varphi_1(z,\xi)\right)_y\right]
\\&=O\left(\left(\mu_2^2|\log \mu_2|+\left(\mu_2^2-\varepsilon\right)d^{-2}\right)\mu_1^2\left(\mu_1^2-\varepsilon\right)\right)
+O\left(\left(\mu_2^2+\varepsilon\right)\mu_1^2|\log \mu_1|\left(\mu_1^2-\varepsilon\right)d^{-3}\right)
\\&\quad+O\left(\left(\mu_2^2|\log \mu_2|+\mu_2^2d^{-2}\right)\mu_1^2\left(\mu_1^2-\varepsilon\right)+\mu_2^2\mu_1^2\left(\mu_1^2-\varepsilon\right)d^{-6}\right)
\\&\quad+O\left(\varepsilon\mu_1^2\left(\mu_1^2-\varepsilon\right)+\varepsilon^2\mu_1^2+\varepsilon\mu_1^2\left(\mu_1^2-\varepsilon\right)d^{-6}+\varepsilon^2\mu_1^2d^{-3}\right).
\end{align*}

Finally, we use the connection between the function $h^{(1)}$ and the boundary function $g$, i.e.
$$g^{(2)}(z,\omega^{(2)})=\left(2h_1^{(1)}(z,\omega^{(2)}),2h_2^{(1)}(z,\omega^{(2)}),-2\varepsilon h_3^{(1)}(z,\omega^{(2)})\right),$$
\begin{equation}\label{F26-3}
\begin{aligned}
\left|h^{(1)}(\xi,\zeta)-h^{(1)}\left(\xi,\omega^{(2)}\right)\right|=O\left(|\zeta-\omega^{(2)}|\right)
\end{aligned}
\end{equation}
to obtain
\begin{equation}\label{Jan19-1}
\begin{aligned}
&8\pi\mu_1^2\mu_2^2\left[q_{11}\frac{\partial^2h_1^{(1)}}{\partial x^2}(\xi,\zeta)+q_{12}\frac{\partial^2h_2^{(1)}}{\partial x^2}(\xi,\zeta)
+q_{21}\frac{\partial^2h_1^{(1)}}{\partial x \partial y}(\xi,\zeta)+q_{22}\frac{\partial^2 h_2^{(1)}}{\partial x\partial y}(\xi,\zeta)  \right]
\\&-4\pi \varepsilon\mu_1^2\left[q_{11}\frac{\partial^2 g^{(2)}_1}{\partial x^2}(\xi,\omega^{(2)})+q_{12}\frac{\partial^2 g^{(2)}_2}{\partial x^2}(\xi,\omega^{(2)})+q_{13}\frac{\partial^2 g^{(2)}_3}{\partial x^2}(\xi,\omega^{(2)})\right.
\\&\qquad\qquad\quad\left.
+q_{21}\frac{\partial^2 g^{(2)}_1}{\partial x \partial y}(\xi,\omega^{(2)})+q_{22}\frac{\partial^2 g^{(2)}_2}{\partial x \partial y}(\xi,\omega^{(2)})+q_{23}\frac{\partial^2 g^{(2)}_3}{\partial x \partial y}(\xi,\omega^{(2)})\right]
\\&=O\left(\mu_1^2(\mu_2^2-\varepsilon)+\varepsilon\mu_1^2|\zeta-\omega^{(2)}|+\varepsilon^2\mu_1^2\right)
\\&=O\left(\mu_1^2(\mu_2^2-\varepsilon)+\varepsilon^2\mu_1^2\right).
\end{aligned}
\end{equation}
Thus, combining the fact that
$\mu_i=O\left(\sqrt{\varepsilon}\right),~i=1,2$,
Lemma \ref{lemma 6.3} follows from (\ref{q11-interaction})-(\ref{Jan19-1}).
\end{proof}

\subsection{Interaction with the linear and higher order expansions}\label{subsection5.2}
Referring to the energy expansion of one bubble in Section \ref{expansion-for-one-bubble}, along with the results from Lemma \ref{lemma 6.3}, it is natural to consider the mixed terms of $\delta_1,\ \delta_2,\ Z_{-1,l},\ Z_{2,l}$ for $l=1, 2$ in (\ref{22-2-interaction}). For simplicity, we express these terms as $$\mathcal{M}_{\delta_1,\delta_2,Z_{-1,l},Z_{2,l}}:=\mathcal{M}^1_{\delta_1,\delta_2,Z_{-1,l},Z_{2,l}}+\mathcal{M}^2_{\delta_1,\delta_2,Z_{-1,l},Z_{2,l}}+\mathcal{M}^3_{\delta_1,\delta_2,Z_{-1,l},Z_{2,l}}+\mathcal{M}^4_{\delta_1,\delta_2,Z_{-1,l},Z_{2,l}}+\mathcal{M}^5_{\delta_1,\delta_2,Z_{-1,l},Z_{2,l}},$$
and by applying integration by parts, each $\mathcal{M}^i_{\delta_1,\delta_2,Z_{-1,l},Z_{2,l}},\ i=1, \dots,5$, takes the following form
\begin{align}
\notag
&\mathcal{M}^1_{\delta_1,\delta_2,Z_{-1,l},Z_{2,l}}
\\ \notag &:=\int_{\mathcal{D}}\nabla P\mathcal Q_2\delta_2\cdot\nabla P\mathcal{L}_{\mathcal{A}_1}
+2\int_{\mathcal{D}} P\mathcal Q_2\delta_2\cdot \left[\left(P\delta_1\right)_x
\wedge \left(P\mathcal{L}_{\mathcal{A}_1}\right)_y+\left(P\mathcal{L}_{\mathcal{A}_1}\right)_x
\wedge \left(P\delta_1\right)_y\right]
\\ \notag &~\quad+2\int_{\mathcal{D}} P\mathcal{L}_{\mathcal{A}_1}
\cdot\left[ \left(P\mathcal Q_2\delta_2\right)_x
\wedge \left(P\mathcal Q_2\delta_2\right)_y\right]
\\ \notag&~\quad+2\varepsilon\int_{\mathcal{D}} \mathcal Q_2g^{(2)}\cdot \left[\left(P\delta_1\right)_x
\wedge\left(P\mathcal{L}_{\mathcal{A}_1}\right)_y+\left(P\mathcal{L}_{\mathcal{A}_1}\right)_x
\wedge \left(P\delta_1\right)_y\right]
\\ \notag&~\quad+2\varepsilon\int_{\mathcal{D}} P\mathcal{L}_{\mathcal{A}_1}\cdot\left[\left(P\mathcal Q_2\delta_2\right)_x\wedge \left(\mathcal Q_2g^{(2)}\right)_y+\left(\mathcal Q_2g^{(2)}\right)_x\wedge \left(P\mathcal Q_2\delta_2\right)_y \right]
\\ \notag&~\quad+2\varepsilon\int_{\mathcal{D}} P\mathcal Q_2\delta_2\cdot\left[\left(g^{(1)}\right)_x\wedge \left(P\mathcal{L}_{\mathcal{A}_1}\right)_y+\left(P\mathcal{L}_{\mathcal{A}_1}\right)_x\wedge \left(g^{(1)}\right)_y\right]
\\ \notag&~\quad+2\varepsilon^2\int_{\mathcal{D}} \mathcal Q_2g^{(2)}\cdot\left[\left(g^{(1)}\right)_x\wedge \left(P\mathcal{L}_{\mathcal{A}_1}\right)_y+\left(P\mathcal{L}_{\mathcal{A}_1}\right)_x\wedge \left(g^{(1)}\right)_y \right]
\\ \notag&~\quad+2\varepsilon^2\int_{\mathcal{D}} P\mathcal{L}_{\mathcal{A}_1} \cdot\left[\left(\mathcal Q_2g^{(2)}\right)_x\wedge \left(\mathcal Q_2g^{(2)}\right)_y\right]
\\ \notag&~=-2\int_{\mathcal{D}}P\mathcal{L}_{\mathcal{A}_1}\cdot\left[\left(\mathcal Q_2\delta_2\right)_x\wedge\left(\mathcal Q_2\varphi_1(z,\zeta)-\varepsilon \mathcal Q_2g^{(2)}\right)_y+\left(\mathcal Q_2\varphi_1(z,\zeta)-\varepsilon \mathcal Q_2g^{(2)}\right)_x\wedge\left(\mathcal Q_2\delta_2\right)_y\right]
\\ \notag&~\quad+2\int_{\mathcal{D}}P\mathcal{L}_{\mathcal{A}_1}\cdot\left[\left(\mathcal Q_2\varphi_1(z,\zeta)\right)_x\wedge \left(\mathcal Q_2\varphi_1(z,\zeta)-\varepsilon \mathcal Q_2g^{(2)}\right)_y-\left(-\varepsilon \mathcal Q_2g^{(2)}\right)_x\wedge\left(\mathcal Q_2\varphi_1(z,\zeta)-\varepsilon \mathcal Q_2g^{(2)}\right)_y\right]
\\ \label{1-remainder} &~\quad+2\int_{\mathcal{D}} \left(P\mathcal Q_2\delta_2+\varepsilon \mathcal Q_2g^{(2)} \right)\cdot \left[\left(P\delta_1+\varepsilon g^{(1)}\right)_x
\wedge \left(P\mathcal{L}_{\mathcal{A}_1}\right)_y+\left(P\mathcal{L}_{\mathcal{A}_1}\right)_x
\wedge \left(P\delta_1+\varepsilon g^{(1)}\right)_y\right],
\end{align}
and similarly
\begin{equation}\label{2-remainder}
\begin{aligned}
&\mathcal{M}^2_{\delta_1,\delta_2,Z_{-1,l},Z_{2,l}}
\\&:=-2\int_{\mathcal{D}}P\mathcal Q_2\mathcal{L}_{\mathcal{A}_2}\cdot\left[\left(\delta_1\right)_x\wedge\left(\varphi_1(z,\xi)-\varepsilon g^{(1)}\right)_y+\left(\varphi_1(z,\xi)-\varepsilon g^{(1)}\right)_x\wedge\left(\delta_1\right)_y\right]
\\&~\quad+2\int_{\mathcal{D}}P\mathcal Q_2\mathcal{L}_{\mathcal{A}_2}\cdot\left[\left(\varphi_1(z,\xi)\right)_x\wedge \left(\varphi_1(z,\xi)-\varepsilon g^{(1)}\right)_y-\left(-\varepsilon g^{(1)}\right)_x\wedge\left(\varphi_1(z,\xi)-\varepsilon g^{(1)}\right)_y\right]
\\&~\quad+2\int_{\mathcal{D}} \left(P\delta_1+\varepsilon g^{(1)}\right)\cdot \left[\left(P\mathcal Q_2\delta_2+\varepsilon \mathcal Q_2g^{(2)} \right)_x
\wedge \left(P\mathcal Q_2\mathcal{L}_{\mathcal{A}_2}\right)_y+\left(P\mathcal Q_2\mathcal{L}_{\mathcal{A}_2}\right)_x
\wedge\left(P\mathcal Q_2\delta_2+\varepsilon \mathcal Q_2g^{(2)} \right)_y\right].
\end{aligned}
\end{equation}
And
\begin{equation}\label{4-remainder}
\begin{aligned}
\mathcal{M}^3_{\delta_1,\delta_2,Z_{-1,l},Z_{2,l}}
:&=2\int_{\mathcal{D}} \left(P\mathcal Q_2\delta_2+\varepsilon \mathcal Q_2g^{(2)}\right)\cdot\left[\left(P\mathcal{L}_{\mathcal{A}_1}\right)_x\wedge\left(P\mathcal{L}_{\mathcal{A}_1}\right)_y\right]
\\&\quad+2\int_{\mathcal{D}} \left(P\delta_1+\varepsilon g^{(1)}\right)\cdot\left[\left(P\mathcal Q_2\mathcal{L}_{\mathcal{A}_2}\right)_x\wedge\left(P\mathcal Q_2\mathcal{L}_{\mathcal{A}_2}\right)_y\right],
\end{aligned}
\end{equation}

\begin{align}
\notag
\mathcal{M}^4_{\delta_1,\delta_2,Z_{-1,l},Z_{2,l}}
:=&\int_{\mathcal{D}}\nabla P\mathcal{L}_{\mathcal{A}_1}
\cdot \nabla P\mathcal Q_2\mathcal{L}_{\mathcal{A}_2}
+2\int_{\mathcal{D}} P\mathcal Q_2\mathcal{L}_{\mathcal{A}_2}
\cdot \left[\left(P\delta_1\right)_x
\wedge \left(P\mathcal{L}_{\mathcal{A}_1}\right)_y+\left(P\mathcal{L}_{\mathcal{A}_1}\right)_x
\wedge \left(P\delta_1\right)_y
\right]
\\\notag&+2\int_{\mathcal{D}} P\mathcal{L}_{\mathcal{A}_1}
\cdot \left[\left(P\mathcal{Q}_2\delta_2\right)_x
\wedge \left(P\mathcal Q_2\mathcal{L}_{\mathcal{A}_2}\right)_y+\left(P\mathcal Q_2\mathcal{L}_{\mathcal{A}_2}\right)_x
\wedge \left(P\mathcal{Q}_2\delta_2\right)_y
\right]
\\\notag&+2\varepsilon\int_{\mathcal{D}} P\mathcal Q_2\mathcal{L}_{\mathcal{A}_2}
\cdot \left[\left(g^{(1)}\right)_x
\wedge \left(P\mathcal{L}_{\mathcal{A}_1}\right)_y+\left(P\mathcal{L}_{\mathcal{A}_1}\right)_x
\wedge \left(g^{(1)}\right)_y
\right]
\\\notag&+2\varepsilon\int_{\mathcal{D}} P\mathcal{L}_{\mathcal{A}_1}
\cdot \left[\left(\mathcal Q_2g^{(2)}\right)_x
\wedge \left(P\mathcal Q_2\mathcal{L}_{\mathcal{A}_2}\right)_y+\left(P\mathcal Q_2\mathcal{L}_{\mathcal{A}_2}\right)_x
\wedge \left(\mathcal Q_2g^{(2)}\right)_y
\right]
\\\notag=&-2\int_{\mathcal{D}} P\mathcal Q_2\mathcal{L}_{\mathcal{A}_2}
\cdot \left[\left(\delta_1\right)_x
\wedge \left(\sum_{l=1}^2\left(a_{1,l}\varphi_{-1,l}\left(z,\xi\right)+p_{1, l}\varphi_{2,l}\left(z,\xi\right)\right)\right)_y\right.
\\\notag&\qquad\qquad\qquad\qquad\quad\left.
+\left(\sum_{l=1}^2\left(a_{1,l}\varphi_{-1,l}\left(z,\xi\right)+p_{1, l}\varphi_{2,l}\left(z,\xi\right)\right)\right)_x
\wedge \left(\delta_1\right)_y\right]
\\\notag&-2\int_{\mathcal{D}} P\mathcal Q_2\mathcal{L}_{\mathcal{A}_2}
\cdot \left[\left(\varphi_1-\varepsilon g^{(1)}\right)_x
\wedge \left(P\mathcal{L}_{\mathcal{A}_1}\right)_y+\left(P\mathcal{L}_{\mathcal{A}_1}\right)_x
\wedge \left(\varphi_1-\varepsilon g^{(1)}\right)_y
\right]
\\\label{5-remainder}&+2\int_{\mathcal{D}} P\mathcal{L}_{\mathcal{A}_1}
\cdot \left[\left(P\mathcal{Q}_2\delta_2+\varepsilon \mathcal Q_2g^{(2)}\right)_x
\wedge \left(P\mathcal Q_2\mathcal{L}_{\mathcal{A}_2}\right)_y+\left(P\mathcal Q_2\mathcal{L}_{\mathcal{A}_2}\right)_x
\wedge \left(P\mathcal{Q}_2\delta_2+\varepsilon \mathcal Q_2g^{(2)}\right)_y
\right],
\end{align}

\begin{equation}\label{6-remainder}
\begin{aligned}
\mathcal{M}^5_{\delta_1,\delta_2,Z_{-1,l},Z_{2,l}}
:&=~
2\int_{\mathcal{D}}  P\mathcal{L}_{\mathcal{A}_1}\cdot \left[\left(P\mathcal Q_2\mathcal{L}_{\mathcal{A}_2}\right)_x\wedge \left(P\mathcal Q_2\mathcal{L}_{\mathcal{A}_2}\right)_y\right]
+2\int_{\mathcal{D}}  P\mathcal Q_2\mathcal{L}_{\mathcal{A}_2}\cdot \left[\left(P\mathcal{L}_{\mathcal{A}_1}\right)_x\wedge \left(P\mathcal{L}_{\mathcal{A}_1}\right)_y\right].
\end{aligned}
\end{equation}

\begin{lemma}\label{lemma-5.2} There holds
\begin{equation*}
\begin{aligned}
&
\mathcal{M}^1_{\delta_1,\delta_2,Z_{-1,l},Z_{2,l}}=F^{(1)}_{\mathcal{D}}(\delta_1,\delta_2,Id,\mathcal{Q}_2,a_{1,l},p_{1,l})+\mathcal{E}^{(1)}\left(\varepsilon,a_{1,l},p_{1,l}\right),
\\&
\mathcal{M}^2_{\delta_1,\delta_2,Z_{-1,l},Z_{2,l}}=F^{(2)}_{\mathcal{D}}(\delta_1,\delta_2,Id,\mathcal{Q}_2,a_{2,l},p_{2,l})+\mathcal{E}^{(2)}\left(\varepsilon,a_{2,l},p_{2,l}\right),
\\&
\mathcal{M}^3_{\delta_1,\delta_2,Z_{-1,l},Z_{2,l}}=
\mathcal{E}^{(3)}\left(\varepsilon,a_{1,l},a_{2,l},p_{1,l},p_{2,l}\right),
\\&
\mathcal{M}^4_{\delta_1,\delta_2,Z_{-1,l},Z_{2,l}}=\mathcal{E}^{(4)}\left(\varepsilon,a_{1,l},a_{2,l},p_{1,l},p_{2,l}\right),
\\&
\mathcal{M}^5_{\delta_1,\delta_2,Z_{-1,l},Z_{2,l}}=\mathcal{E}^{(5)}\left(\varepsilon,a_{1,l},a_{2,l},p_{1,l},p_{2,l}\right),
\end{aligned}
\end{equation*}
where
\begin{equation}\label{J7-R1}
\begin{aligned}
F^{(1)}_{\mathcal{D}}(\delta_1,\delta_2,Id,\mathcal{Q}_2,a_{1,l},p_{1,l})
:&=a_{1,1}\mu_1\mu_2^2\left\{q_{11}\frac{-32\pi \sigma _1 \left(\sigma _1^2-3 \sigma _2^2\right)}{|\sigma|^6}
+q_{12}\frac{-32\pi \sigma _2 \left(3\sigma _1^2- \sigma _2^2\right)}{|\sigma|^6}\right.
\\&\qquad\qquad\qquad\left.+ q_{21}\frac{-32\pi \sigma _2 \left(3\sigma _1^2- \sigma _2^2\right)}{|\sigma|^6}
+q_{22}\frac{32\pi \sigma _1 \left(\sigma _1^2- 3\sigma _2^2\right)}{|\sigma|^6}\right\}
\\&\quad+a_{1,2}\mu_1\mu_2^2\left\{q_{11}\frac{32\pi \sigma _2 \left(3\sigma _1^2-\sigma _2^2\right)}{|\sigma|^6}+q_{12}\frac{-32\pi \sigma _1 \left(\sigma _1^2- 3\sigma _2^2\right)}{|\sigma|^6}\right.
\\&\qquad\qquad\qquad\quad\left.+q_{21}\frac{-32\pi \sigma _1 \left(\sigma _1^2-3 \sigma _2^2\right)}{|\sigma|^6}+q_{22}\frac{-32\pi \sigma _2 \left(3\sigma _1^2- \sigma _2^2\right)}{|\sigma|^6}\right\},
\end{aligned}
\end{equation}
\begin{equation}\label{J7-R2}
\begin{aligned}
\mathcal{E}^{(1)}\left(\varepsilon,a_{1,l},p_{1,l}\right)
:&=O\left((a_{1,1}+a_{1,2})\left(\varepsilon^{\frac{3}{2}}\left(\mu_2^2-\varepsilon\right)d^{-6}+\varepsilon^{\frac{5}{2}}|\log\varepsilon|d^{-2}+\varepsilon^{\frac{3}{2}}\left(\mu_1^2-\varepsilon\right)d^{-5}\right)\right)
\\&\quad+O\left((p_{1,1}+p_{1,2})\left(\varepsilon^{3}+\varepsilon^{3}\left(\mu_2^2-\varepsilon\right)d^{-6}+\varepsilon^{4}|\log\varepsilon|d^{-5}+\varepsilon^{3}\left(\mu_1^2-\varepsilon\right)d^{-8}\right)\right),
\end{aligned}
\end{equation}

\begin{equation}
\begin{aligned}
&F^{(2)}_{\mathcal{D}}(\delta_1,\delta_2,Id,\mathcal{Q}_2,a_{2,l},p_{2,l})
:=a_{2,1}\mu_1^2\mu_2\left\{q_{11}\frac{32\pi \sigma _1 \left(\sigma _1^2-3 \sigma _2^2\right)}{|\sigma|^6}
+q_{12}\frac{32\pi \sigma _2 \left(3\sigma _1^2- \sigma _2^2\right)}{|\sigma|^6}\right.
\\&\qquad\qquad\qquad\qquad\qquad\qquad\qquad\qquad\qquad\left.+ q_{21}\frac{32\pi \sigma _2 \left(3\sigma _1^2- \sigma _2^2\right)}{|\sigma|^6}
+q_{22}\frac{-32\pi \sigma _1 \left(\sigma _1^2- 3\sigma _2^2\right)}{|\sigma|^6}\right\}
\\&\qquad\qquad\qquad\qquad\qquad\qquad\quad+a_{2,2}\mu_1^2\mu_2\left\{q_{11}\frac{-32\pi \sigma _2 \left(3\sigma _1^2-\sigma _2^2\right)}{|\sigma|^6}+q_{12}\frac{32\pi \sigma _1 \left(\sigma _1^2- 3\sigma _2^2\right)}{|\sigma|^6}\right.
\\&\qquad\qquad\qquad\qquad\qquad\qquad\qquad\qquad\qquad\quad\left.+ q_{21}\frac{32\pi \sigma _1 \left(\sigma _1^2-3 \sigma _2^2\right)}{|\sigma|^6}+q_{22}\frac{32\pi \sigma _2 \left(3\sigma _1^2- \sigma _2^2\right)}{|\sigma|^6}\right\},
\\&\mathcal{E}^{(2)}\left(\varepsilon,a_{2,l},p_{2,l}\right)
:=O\left((a_{2,1}+a_{2,2})\left(\varepsilon^{\frac{3}{2}}\left(\mu_1^2-\varepsilon\right)d^{-6}+\varepsilon^{\frac{5}{2}}|\log\varepsilon|d^{-2}+\varepsilon^{\frac{3}{2}}\left(\mu_2^2-\varepsilon\right)d^{-5}\right)\right)
\\&\qquad\qquad\qquad\qquad+O\left((p_{2,1}+p_{2,2})\left(\varepsilon^{3}+\varepsilon^{3}\left(\mu_1^2-\varepsilon\right)d^{-6}+\varepsilon^{4}|\log\varepsilon|d^{-5}+\varepsilon^{3}\left(\mu_2^2-\varepsilon\right)d^{-8}\right)\right),
\end{aligned}
\end{equation}

\begin{equation}\label{J10-R3}
\begin{aligned}
&\mathcal{E}^{(3)}\left(\varepsilon,a_{1,l},a_{2,l},p_{1,l},p_{2,l}\right)
\\&:=O\left(\left(\left(a_{1,1}+a_{1,2}\right)^2+\left(a_{2,1}+a_{2,2}\right)^2\right)\varepsilon^2d^{-4}\right)
+O\left(\left(\left(p_{1,1}+p_{1,2}\right)^2+\left(p_{2,1}+p_{2,2}\right)^2\right)\left(\varepsilon^2+\varepsilon^5d^{-10}\right)\right)
\\&\quad~+O\left(\left(a_{1,1}+a_{1,2}\right)\left(p_{1,1}+p_{1,2}\right)\left(\varepsilon^{\frac{3}{2}}+\varepsilon^{\frac{7}{2}}d^{-7}\right)\right)
+O\left(\left(a_{2,1}+a_{2,2}\right)\left(p_{2,1}+p_{2,2}\right)\left(\varepsilon^{\frac{3}{2}}+\varepsilon^{\frac{7}{2}}d^{-7}\right)\right),
\end{aligned}
\end{equation}

\begin{equation}\label{J10-R4}
\begin{aligned}
&\mathcal{E}^{(4)}\left(\varepsilon,a_{1,l},a_{2,l},p_{1,l},p_{2,l}\right)
\\&:=O\left(\sum_{i,k,l=1}^2a_{1,k}a_{2,l}\left(\varepsilon+\varepsilon^2|\log \varepsilon|d^{-2}+\varepsilon \left(\mu_i^2-\varepsilon\right)d^{-5}\right)+p_{1,k}p_{2,l}\left(\varepsilon^{5}|\log \varepsilon|d^{-5}+\varepsilon^4\left(\mu_i^2-\varepsilon\right)d^{-8}\right)\right)
\\&~\quad+O\left(\sum_{i,k,l=1}^2\left(a_{1,k}p_{2,l}+p_{1,k}a_{2,l}\right)\left(\varepsilon^{\frac{7}{2}}|\log \varepsilon|d^{-5}+\varepsilon^{\frac{5}{2}} \left(\mu_i^2-\varepsilon\right)d^{-8}\right)\right),
\end{aligned}
\end{equation}

\begin{align}
\notag
&\mathcal{E}^{(5)}\left(\varepsilon,a_{1,l},a_{2,l},p_{1,l},p_{2,l}\right)
\\\notag&:=
O\left(\sum_{j,k,l=1}^2\left(a_{1,j}a_{2,k}a_{2,l}+a_{2,j}a_{1,k}a_{1, l}\right)\left(\varepsilon^{\frac{3}{2}}|\log \varepsilon|+\varepsilon^{\frac{3}{2}}d^{-4}\right)+\left(a_{1,j}a_{2,k}p_{2, l}+a_{2,j}a_{1,k}p_{1, l}\right)\left(\varepsilon+\varepsilon^{3}d^{-7}\right)\right)
\\\notag&~\quad+O\left(\sum_{j,k,l=1}^2\left(a_{1,j}p_{2,k}p_{2, l}+a_{2,j}p_{1,k}p_{1, l}\right)\left(\varepsilon^{\frac{3}{2}}+\varepsilon^{\frac{9}{2}}d^{-10}\right)+
\left(p_{1,j}a_{2,k}a_{2, l}+p_{2,j}a_{1,k}a_{1, l}\right)\left(\varepsilon^3|\log \varepsilon|+\varepsilon^{3}d^{-4}\right)\right)
\\ \label{Ju30-M5}&~\quad
+O\left(\sum_{j,k,l=1}^2\left(p_{1,j}a_{2,k}p_{2, l}++p_{2,j}a_{1,k}p_{1, l}\right)\left(\varepsilon^{\frac{5}{2}}+\varepsilon^{\frac{9}{2}}d^{-7}\right)+\left(p_{1,j}p_{2,k}p_{2, l}+p_{2,j}p_{1,k}p_{1, l}\right)\left(\varepsilon^{3}+\varepsilon^{6}d^{-10}\right)\right).
\end{align}
\end{lemma}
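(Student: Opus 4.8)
The plan is to prove Lemma~\ref{lemma-5.2} by treating each of the five pieces $\mathcal{M}^i_{\delta_1,\delta_2,Z_{-1,l},Z_{2,l}}$ in turn, exploiting the integrated-by-parts forms already displayed in \eqref{1-remainder}--\eqref{6-remainder}. The guiding principle is the same localization argument used in Lemma~\ref{lemma 6.3}: fix $\gamma = |\sigma|/8$ and split $\mathcal D$ into $B_\gamma(\xi^{(1)})$, $B_\gamma(\xi^{(2)})$ and the complement $\mathcal D\setminus(B_\gamma(\xi^{(1)})\cup B_\gamma(\xi^{(2)}))$. On each region one of the two bubbles (or its linear/higher-order part) is concentrated and the other is smooth, so H\"older's inequality combined with the norm bounds in \eqref{estimates-1}--\eqref{estimates-4} for $P\delta$, $PZ_{-1,l}$, $PZ_{2,l}$, together with the pointwise decay estimates for $\varphi_1$, $\varphi_{-1,l}$, $\varphi_{2,l}$ and for $g^{(i)}-g^{(i)}(\cdot,\xi^{(j)})$, reduces everything to explicit integrals over $\mathbb R^2$ after the change of variables $z\mapsto (z-\xi^{(i)})/\mu_i$.

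First I would handle $\mathcal{M}^1$. In \eqref{1-remainder} the leading contribution comes from the term where $P\mathcal Q_2\delta_2$ is replaced by its far-field expansion \eqref{Ju25-6} near $\xi^{(1)}$ and tested against the concentrated profile $\mathcal{L}_{\mathcal{A}_1}=\sum_l(a_{1,l}Z_{-1,l}+p_{1,l}Z_{2,l})$; expanding $\mathcal Q_2\delta_2$ and $h_1^{(1)}(\cdot,\zeta)$ to first order at $\xi^{(1)}$ and using the explicit formulas for $Z_{-1,l}$ in \eqref{A15-Z} yields, via the elementary $\mathbb R^2$-integrals, precisely the dipole-type coefficients $\sigma_1(\sigma_1^2-3\sigma_2^2)/|\sigma|^6$ etc.\ appearing in \eqref{J7-R1}. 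The $p_{1,l}$ contributions turn out to be of higher order (their profile $Z_{2,l}$ decays one order faster, giving the extra $\mu_1^3$ and hence the $\varepsilon^3$ factors in \eqref{J7-R2}); all remaining terms in \eqref{1-remainder}, involving products with $\varphi_1$, $\varepsilon g^{(i)}$ or $\varphi_1-\varepsilon g^{(i)}$, are estimated by brute force using \eqref{F26-1}--\eqref{F26-2}-type bounds and collected into $\mathcal{E}^{(1)}$. The computation of $\mathcal{M}^2$ is identical with the roles of the two bubbles exchanged, producing \eqref{J7-R2}'s analogue with $\mu_1^2\mu_2$ in place of $\mu_1\mu_2^2$ and opposite signs.

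Next, $\mathcal{M}^3$, $\mathcal{M}^4$, $\mathcal{M}^5$ carry no main-order term: in each the integrand is at least quadratic in the small quantities $a_{i,l}$, $p_{i,l}$ (or a product of one such factor with a $\varphi$ or $\varepsilon g$ factor), so the same region-splitting plus H\"older estimate immediately gives bounds of the claimed orders. For $\mathcal{M}^3$, \eqref{4-remainder} pairs a smooth bubble $P\mathcal Q_2\delta_2+\varepsilon\mathcal Q_2 g^{(2)}$ with the quadratic $\left(P\mathcal{L}_{\mathcal{A}_1}\right)_x\wedge\left(P\mathcal{L}_{\mathcal{A}_1}\right)_y$ (and symmetrically), giving the $a^2$, $p^2$, $ap$ bounds in \eqref{J10-R3}; for $\mathcal{M}^4$, \eqref{5-remainder} is a genuine cross term between $\mathcal{L}_{\mathcal{A}_1}$ at $\xi^{(1)}$ and $\mathcal{L}_{\mathcal{A}_2}$ at $\xi^{(2)}$, again with no resonant leading contribution since the two profiles are concentrated at distant points, so after integrating by parts as shown only products $a_{1,k}a_{2,l}$, $a_{1,k}p_{2,l}$, $p_{1,k}p_{2,l}$ survive with the decay rates in \eqref{J10-R4}; for $\mathcal{M}^5$, \eqref{6-remainder} is cubic in the small parameters, yielding \eqref{Ju30-M5}. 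Throughout, the final substitution of the scaling relations $\mu_i = O(\sqrt\varepsilon)$, $|a_{i,l}|\sim\sqrt\varepsilon d^2$, $|p_{i,l}|\sim d^5/\varepsilon$ from \eqref{asymp-ap} and $d = O(\mu_i^{2/3-\alpha})$ converts the $\mu$-expressed errors into the $\varepsilon$-expressed forms displayed.

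I expect the main obstacle to be bookkeeping rather than conceptual: in $\mathcal{M}^1$ and $\mathcal{M}^2$ one must carry the Taylor expansion of the smooth bubble and of $h_i^{(1)}$ to enough orders that the leading $O(\mu_1\mu_2^2)$ (resp.\ $O(\mu_1^2\mu_2)$) coefficient is captured exactly while all lower-order remainders are shown to be absorbable, and one must verify that the several seemingly-leading pieces in \eqref{1-remainder} containing $\varphi_1$ and $\varepsilon g^{(i)}$ genuinely cancel or are subcritical --- this is where the specific choice $g^{(i)}(z,\omega^{(i)}) = (2h_1^{(1)},2h_2^{(1)},-2\varepsilon h_3^{(1)})$ and the identity $\partial_{x}^2 g^{(i)}_1 = -\partial_y^2 g^{(i)}_1$ are essential, exactly as in the one-bubble computation of Section~\ref{expansion-for-one-bubble}. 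A secondary technical point is ensuring the integrals over $\mathbb R^2$ of the rational functions built from $Z_{-1,l}$, $Z_{2,l}$ and the dipole kernels converge and evaluate to the stated multiples of $\pi$; these are finite by the decay of the $Z_{k,l}$ and can be computed by the same residue/symmetry techniques already invoked repeatedly above. Once these are in hand, assembling the five estimates gives the lemma.
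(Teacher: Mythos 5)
Your proposal follows essentially the same route as the paper: the same localization into $B_\gamma(\xi^{(1)})$, $B_\gamma(\xi^{(2)})$ and their complement, the same far-field expansions \eqref{Ju25-6} and \eqref{Ju24-1}--\eqref{Ju24-5} tested against the concentrated profiles to extract the dipole coefficients $\sigma_1(\sigma_1^2-3\sigma_2^2)/|\sigma|^6$ in \eqref{J7-R1}, and the same use of the cancellation $\mu_j^2\partial h^{(1)}-\varepsilon\,\partial g^{(i)}=O(\mu_j^2-\varepsilon)$ to downgrade the would-be leading terms involving $\varphi_1-\varepsilon g^{(i)}$. The treatment of $\mathcal{M}^3$, $\mathcal{M}^4$, $\mathcal{M}^5$ as non-resonant terms at least quadratic in the small parameters also matches the paper's argument, so the plan is sound and consistent with the printed proof.
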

\begin{proof}
Firstly, we estimate $\mathcal{M}^1_{\delta_1,\delta_2,Z_{-1,l},Z_{2,l}}$, i.e. (\ref{1-remainder}). Similar to the calculation of (\ref{interaction-23}),
in $\mathcal{D}\backslash B_{\gamma}(\xi)$, since $z$ is away from $\xi$, for $\mu_1$ sufficiently small, the following expansions hold
\begin{equation}\label{Ju24-1}
\begin{aligned}
Z_{-1,1}\left(\frac{z-\xi}{\mu_1}\right)=\left(\frac{-4\mu_1(x-\xi_1)}{|z-\xi|^2}+O(\mu_1^5),~\frac{-4\mu_1(y-\xi_2)}{|z-\xi|^2}+O(\mu_1^5),~\frac{8\mu_1^3(x-\xi_1)}{|z-\xi|^4}+O(\mu_1^7)\right),
\end{aligned}
\end{equation}
\begin{equation}\label{Ju24-3}
\begin{aligned}
Z_{-1,2}\left(\frac{z-\xi}{\mu_1}\right)=\left(\frac{4\mu_1(y-\xi_2)}{|z-\xi|^2}+O(\mu_1^5),~\frac{-4\mu_1(x-\xi_1)}{|z-\xi|^2}+O(\mu_1^5),~\frac{8\mu_1^3(y-\xi_2)}{|z-\xi|^4}+O(\mu_1^7)\right),
\end{aligned}
\end{equation}

\begin{equation}\label{Ju24-4}
\begin{aligned}
Z_{2,1}\left(\frac{z-\xi}{\mu_1}\right)=
\begin{pmatrix}
\frac{-2\mu_1^4\left((x-\xi_1)^4-6(x-\xi_1)^2(y-\xi_2)^2+(y-\xi_2)^4\right)}{|z-\xi|^8}+O(\mu_1^8)\\
\frac{-8\mu_1^4(x-\xi_1)(y-\xi_2)\left((x-\xi_1)^2-(y-\xi_2)^2\right)}{|z-\xi|^8}+O(\mu_1^8)\\
\frac{4\mu_1^6\left((x-\xi_1)^2-(y-\xi_2)^2\right)}{|z-\xi|^8}+O(\mu_1^{10})
\end{pmatrix},
\end{aligned}
\end{equation}

\begin{equation}\label{Ju24-5}
\begin{aligned}
Z_{2,2}\left(\frac{z-\xi}{\mu_1}\right)=
\begin{pmatrix}
\frac{-8\mu_1^4(x-\xi_1)(y-\xi_2)\left((x-\xi_1)^2-(y-\xi_2)^2\right)}{|z-\xi|^8}+O(\mu_1^8)\\
\frac{2\mu_1^4\left((x-\xi_1)^4-6(x-\xi_1)^2(y-\xi_2)^2+(y-\xi_2)^4\right)}{|z-\xi|^8}+O(\mu_1^8)\\
\frac{8\mu_1^6(x-\xi_1)(y-\xi_2)}{|z-\xi|^8}+O(\mu_1^{10})
\end{pmatrix}.
\end{aligned}
\end{equation}
Using (\ref{F26-2}), (\ref{F26-3}) and (\ref{Ju24-1})-(\ref{Ju24-5}) , it is direct to check that
\begin{equation}\label{Jan17-1}
\begin{aligned}
&-2\int_{\mathcal{D}}P\mathcal{L}_{\mathcal{A}_1}\cdot\left[\left(\mathcal Q_2\delta_2\right)_x\wedge\left(\mathcal Q_2\varphi_1(z,\zeta)-\varepsilon \mathcal Q_2g^{(2)}\right)_y+\left(\mathcal Q_2\varphi_1(z,\zeta)-\varepsilon \mathcal Q_2g^{(2)}\right)_x\wedge\left(\mathcal Q_2\delta_2\right)_y\right]
\\&=O\left((a_{1,1}+a_{1,2})\left(\mu_1d^{-1}\mu_2^2\left(\mu_2^2-\varepsilon\right)+\mu_1\mu_2^2|\log\mu_2|\left(\mu_2^2-\varepsilon\right)d^{-3}\right)\right)
\\&\quad+O\left(
(p_{1,1}+p_{1,2})\left(\left(\mu_1^2+\mu_1^4d^{-4}\right)\mu_2^2\left(\mu_2^2-\varepsilon\right)+\mu_1^4\mu_2^2|\log\mu_2|\left(\mu_2^2-\varepsilon\right)d^{-3}\right)\right)
\\&=O\left((a_{1,1}+a_{1,2})\varepsilon^{\frac{3}{2}}|\log\varepsilon|\left(\mu_2^2-\varepsilon\right)d^{-3}+(p_{1,1}+p_{1,2})\varepsilon^3\left(\mu_2^2-\varepsilon\right)d^{-4}\right),
\end{aligned}
\end{equation}
and
\begin{equation}\label{Jan17-2}
\begin{aligned}
&2\int_{\mathcal{D}}P\mathcal{L}_{\mathcal{A}_1}\cdot\left[\left(\mathcal Q_2\varphi_1(z,\zeta)\right)_x\wedge \left(\mathcal Q_2\varphi_1(z,\zeta)-\varepsilon \mathcal Q_2g^{(2)}\right)_y-\left(-\varepsilon \mathcal Q_2g^{(2)}\right)_x\wedge\left(\mathcal Q_2\varphi_1(z,\zeta)-\varepsilon \mathcal Q_2g^{(2)}\right)_y\right]
\\&=O\left((a_{1,1}+a_{1,2})\left(\mu_1d^{-1}\mu_2^2\left(\mu_2^2-\varepsilon\right)+\mu_1\mu_2^2\left(\mu_2^2-\varepsilon\right)d^{-6})+\mu_1\varepsilon\left(\mu_2^2-\varepsilon\right)d^{-3}\right)\right)
\\&\quad+O\left((p_{1,1}+p_{1,2})\left(\left(\mu_1^2+\mu_1^4 d^{-4}\right) \mu_2^2\left(\mu_2^2-\varepsilon\right)
+\mu_1^4\mu_2^2\left(\mu_2^2-\varepsilon\right)d^{-6}+\mu_1^4\varepsilon\left(\mu_2^2-\varepsilon\right)d^{-3}\right)\right)
\\&=O\left((a_{1,1}+a_{1,2})\varepsilon^{\frac{3}{2}}\left(\mu_2^2-\varepsilon\right)d^{-6}+(p_{1,1}+p_{1,2})\varepsilon^3\left(\mu_2^2-\varepsilon\right)d^{-6}\right).
\end{aligned}
\end{equation}

Now, we compute the last term in (\ref{1-remainder}). There holds
\begin{equation*}
\begin{aligned}
&2\int_{\mathcal{D}} \left(P\mathcal Q_2\delta_2+\varepsilon \mathcal Q_2g^{(2)} \right)\cdot \left[\left(P\delta_1+\varepsilon g^{(1)}\right)_x
\wedge \left(P\mathcal{L}_{\mathcal{A}_1}\right)_y+\left(P\mathcal{L}_{\mathcal{A}_1}\right)_x
\wedge \left(P\delta_1+\varepsilon g^{(1)}\right)_y\right]
\\&=2\int_{\mathcal{D}}\left(P\mathcal Q_2\delta_2+\varepsilon \mathcal Q_2g^{(2)} \right)\cdot \left[\left(\delta_1\right)_x
\wedge \left(\mathcal{L}_{\mathcal{A}_1}\right)_y+\left(\mathcal{L}_{\mathcal{A}_1}\right)_x
\wedge \left(\delta_1\right)_y\right]
\\&\quad-2\int_{\mathcal{D}} \left(P\mathcal Q_2\delta_2+\varepsilon \mathcal Q_2g^{(2)} \right)
\cdot \left[\left(\delta_1\right)_x
\wedge \left(\sum_{l=1}^2\left(a_{1,l}\varphi_{-1,l}\left(z,\xi\right)+p_{1, l}\varphi_{2,l}\left(z,\xi\right)\right)\right)_y\right.
\\&\qquad\qquad\qquad\qquad\qquad\qquad\qquad\quad\left.
+\left(\sum_{l=1}^2\left(a_{1,l}\varphi_{-1,l}\left(z,\xi\right)+p_{1, l}\varphi_{2,l}\left(z,\xi\right)\right)\right)_x
\wedge \left(\delta_1\right)_y\right]
\\&\quad-2\int_{\mathcal{D}} \left(P\mathcal Q_2\delta_2+\varepsilon \mathcal Q_2g^{(2)} \right)
\cdot \left[\left(\varphi_1(z,\xi)-\varepsilon g^{(1)}\right)_x
\wedge \left(P\mathcal{L}_{\mathcal{A}_1}\right)_y
+\left(P\mathcal{L}_{\mathcal{A}_1}\right)_x
\wedge \left(\varphi_1(z,\xi)-\varepsilon g^{(1)}\right)_y\right].
\end{aligned}
\end{equation*}
Recall that
$$g^{(2)}(z,\omega^{(2)})=\left(2h_1^{(1)}(z,\omega^{(2)}),2h_2^{(1)}(z,\omega^{(2)}),-2\varepsilon h_3^{(1)}(z,\omega^{(2)})\right),$$
and since $\varepsilon d^{-1}|\zeta-\omega^{(2)}|=O\left(\mu_2^2-\varepsilon\right)$, in $B_{\gamma}(\zeta)$, we get
$$\left|\varphi_1\left(z,\zeta\right)-\varepsilon g^{(2)}(z,\omega^{(2)})\right|=O\left((\mu_2^2-\varepsilon)d^{-2}+\varepsilon d^{-3}|\zeta-\omega^{(2)}|\right)=O\left((\mu_2^2-\varepsilon)d^{-2}\right).$$
Then
\begin{equation*}
\begin{aligned}
&2\int_{B_{\gamma}(\zeta)} \left(\delta_2-\varphi_1\left(z,\zeta\right)+\varepsilon g^{(2)}(z,\omega^{(2)})\right)_1\cdot \left[\left(\delta_1\right)_x
\wedge \left(Z_{-1,1}\left(\frac{z-\xi}{\mu_1}\right)\right)_y+\left(Z_{-1,1}\left(\frac{z-\xi}{\mu_1}\right)\right)_x
\wedge \left(\delta_1\right)_y\right]_1
\\&=O\left(\mu_1^3\mu_2^{2}|\log\mu_2|+\mu_1^3(\mu_2^2-\varepsilon)d^{-2}\right).
\end{aligned}
\end{equation*}
And using (\ref{Ju25-6}) and (\ref{expansion6-1}), similar to the calculation of (\ref{interaction-23}), in $\mathcal{D}_2=\mathcal{D}\backslash B_{\gamma}(\zeta)$, we  obtain
\begin{align*}
&2\int_{\mathcal{D}_2} \left(\delta_2-\varphi_1+\varepsilon g^{(2)}(z,\omega^{(2)})\right)_1\cdot \left[\left(\delta_1\right)_x
\wedge \left(Z_{-1,1}\left(\frac{z-\xi}{\mu_1}\right)\right)_y+\left(Z_{-1,1}\left(\frac{z-\xi}{\mu_1}\right)\right)_x
\wedge \left(\delta_1\right)_y\right]_1
\\&=4\int_{\frac{\mathcal{D}_2-\xi}{\mu_1}}\left[\mu_2^2\frac{(\mu_1x+\xi_1-\zeta_1)^2-(\mu_1y+\xi_2-\zeta_2)^2}{|\mu_1z+\xi-\zeta|^2}+O(\mu_2^6)\right]\frac{128 x \left(x^2+y^2\right) \left(x^2 \left(\left(x^2+y^2\right)^2-2\right)+3 y^2\right)}{\left(\left(x^2+y^2\right)^2+1\right)^4}
\\&\quad-4\int_{\frac{\mathcal{D}_2-\xi}{\mu_1}}\left[\mu_2^2h_1^{(1)}(\mu_1z+\xi,\zeta)-\varepsilon g^{(2)}(\mu_1z+\xi,\omega^{(2)})+O(\mu_2^3(\mu_2^2-\varepsilon))\right]
\\&\qquad\qquad\qquad\times
\frac{128 x \left(x^2+y^2\right) \left(x^2 \left(\left(x^2+y^2\right)^2-2\right)+3 y^2\right)}{\left(\left(x^2+y^2\right)^2+1\right)^4}
\\&=4\mu_1\mu_2^2\int_{\frac{\mathcal{D}_2-\xi}{\mu_1}}\frac{-2 \sigma _1 \left(\sigma _1^2-3 \sigma _2^2\right)}{|\sigma|^6}\frac{128 x^2 \left(x^2+y^2\right) \left(x^2 \left(\left(x^2+y^2\right)^2-2\right)+3 y^2\right)}{\left(\left(x^2+y^2\right)^2+1\right)^4}
\\&\quad-4\mu_1\int_{\frac{\mathcal{D}_2-\xi}{\mu_1}}\frac{128 x^2 \left(x^2+y^2\right) \left(x^2 \left(\left(x^2+y^2\right)^2-2\right)+3 y^2\right)}{\left(\left(x^2+y^2\right)^2+1\right)^4}\left[\mu_2^2\frac{\partial h_1}{\partial x}(\xi,\zeta)-\varepsilon \frac{\partial g_1^{(1)}}{\partial x}(\xi,\omega^{(2)})\right]
\\&\quad+O\left(\mu_1^3\mu_2^2+\mu_1^3(\mu_2^2-\varepsilon)\right)
\\&=\frac{-32\pi \sigma _1 \left(\sigma _1^2-3 \sigma _2^2\right)}{|\sigma|^6}\mu_1\mu_2^2-16\pi \mu_1\left[\mu_2^2\frac{\partial h_1}{\partial x}(\xi,\zeta)-\varepsilon \frac{\partial g_1^{(1)}}{\partial x}(\xi,\omega^{(2)})\right]
+O\left(\mu_1^3\mu_2^{2}+\mu_1^3(\mu_2^2-\varepsilon)\right)
\\&=\frac{-32\pi \sigma _1 \left(\sigma _1^2-3 \sigma _2^2\right)}{|\sigma|^6}\mu_1\mu_2^2+O\left(\mu_1^3\mu_2^{2}+\mu_1(\mu_2^2-\varepsilon)\right),
\end{align*}
in the last equality, we use the fact that
$$\left|\mu_2^2\frac{\partial h_1}{\partial x}(\xi,\zeta)-\varepsilon \frac{\partial g_1^{(1)}}{\partial x}(\xi,\omega^{(2)})\right|=O\left(\mu_2^2-\varepsilon+\varepsilon|\zeta-\omega^{(2)}|\right)=O\left(\mu_2^2-\varepsilon\right).$$
Similarly, we can obtain
\begin{align*}
&2\int_{\mathcal{D}} \left(\delta_2-\varphi_1+\varepsilon g^{(2)}(z,\omega^{(2)})\right)_1\cdot \left[\left(\delta_1\right)_x
\wedge \left(Z_{-1,1}\left(\frac{z-\xi}{\mu_1}\right)\right)_y+\left(Z_{-1,1}\left(\frac{z-\xi}{\mu_1}\right)\right)_x
\wedge \left(\delta_1\right)_y\right]_2
\\&=4\int_{\frac{\mathcal{D}_2-\xi}{\mu_1}}\left[\mu_2^2\frac{(\mu_1x+\xi_1-\zeta_1)^2-(\mu_1y+\xi_2-\zeta_2)^2}{|\mu_1z+\xi-\zeta|^2}+O(\mu_2^6)\right]
\\&\qquad\qquad\quad\times
\frac{64 y \left(x^2+y^2\right) \left(3 x^6+7 x^4 y^2+x^2 \left(5
   y^4-9\right)+y^6+y^2\right)}{\left(\left(x^2+y^2\right)^2+1\right)^4}
\\&\quad-4\int_{\frac{\mathcal{D}_2-\xi}{\mu_1}}\left[\mu_2^2h_1^{(1)}(\mu_1z+\xi,\zeta)-\varepsilon g^{(2)}(\mu_1z+\xi,\omega^{(2)})+O(\mu_2^3(\mu_2^2-\varepsilon))\right]
\\&\qquad\qquad\qquad\times
\frac{64 y \left(x^2+y^2\right) \left(3 x^6+7 x^4 y^2+x^2 \left(5
   y^4-9\right)+y^6+y^2\right)}{\left(\left(x^2+y^2\right)^2+1\right)^4}
\\&\quad+O\left(\mu_1^3\mu_2^{2}|\log\mu_2|+\mu_1^3(\mu_2^2-\varepsilon)d^{-2}+\mu_1^3\varepsilon d^{-3}|\zeta-\omega^{(2)}|\right)
\\&=4\mu_1\mu_2^2\int_{\frac{\mathcal{D}_2-\xi}{\mu_1}}\frac{-2\sigma _2 \left(3\sigma _1^2- \sigma _2^2\right)}{|\sigma|^6}\frac{64 y^2 \left(x^2+y^2\right) \left(3 x^6+7 x^4 y^2+x^2 \left(5
   y^4-9\right)+y^6+y^2\right)}{\left(\left(x^2+y^2\right)^2+1\right)^4}
\\&\quad+O\left(\mu_1^3\mu_2^{2}+\mu_1(\mu_2^2-\varepsilon)+\mu_1^3\mu_2^{2}|\log\mu_2|+\mu_1^3(\mu_2^2-\varepsilon)d^{-2}\right)
\\&=\frac{-32\pi \sigma _2 \left(3\sigma _1^2- \sigma _2^2\right)}{|\sigma|^6}\mu_1\mu_2^2
+O\left(\mu_1^3\mu_2^{2}+\mu_1(\mu_2^2-\varepsilon)+\mu_1^3\mu_2^{2}|\log\mu_2|+\mu_1^3(\mu_2^2-\varepsilon)d^{-2}\right).
\end{align*}
And
\begin{align*}
&2\int_{\mathcal{D}} \left(\delta_2-\varphi_1+\varepsilon g^{(2)}(z,\omega^{(2)})\right)_1\cdot \left[\left(\delta_1\right)_x
\wedge \left(Z_{-1,1}\left(\frac{z-\xi}{\mu_1}\right)\right)_y+\left(Z_{-1,1}\left(\frac{z-\xi}{\mu_1}\right)\right)_x
\wedge \left(\delta_1\right)_y\right]_3
\\&=4\int_{\frac{\mathcal{D}_2-\xi}{\mu_1}}\left[\mu_2^2\frac{(\mu_1x+\xi_1-\zeta_1)^2-(\mu_1y+\xi_2-\zeta_2)^2}{|\mu_1z+\xi-\zeta|^2}+O(\mu_2^6)\right]\frac{32 x \left(x^2+y^2\right) \left(\left(x^2+y^2\right)^4-8
   \left(x^2+y^2\right)^2+3\right)}{\left(\left(x^2+y^2\right)^2+1\right)^4}
\\&\quad-4\int_{\frac{\mathcal{D}_2-\xi}{\mu_1}}\left[\mu_2^2h_1^{(1)}(\mu_1z+\xi,\zeta)-\varepsilon g^{(2)}(\mu_1z+\xi,\omega^{(2)})+O(\mu_2^3(\mu_2^2-\varepsilon))\right]
\\&\qquad\qquad\qquad\times
\frac{32 x \left(x^2+y^2\right) \left(\left(x^2+y^2\right)^4-8
   \left(x^2+y^2\right)^2+3\right)}{\left(\left(x^2+y^2\right)^2+1\right)^4}
\\&\quad+O\left(\mu_1^3\mu_2^{2}|\log\mu_2|+\mu_1^3(\mu_2^2-\varepsilon)d^{-2}\right)
\\&=O\left(\mu_1^3|\log\mu_1|\mu_2^{2}+\mu_1(\mu_2^2-\varepsilon)+\mu_1^3\mu_2^{2}|\log\mu_2|+\mu_1^3(\mu_2^2-\varepsilon)d^{-2}\right).
\end{align*}
Follow the same path, we conclude that
\begin{align}
\notag&2\int_{\mathcal{D}} \left(P\mathcal Q_2\delta_2+\varepsilon \mathcal Q_2g^{(2)} \right)\cdot \left[\left(\delta_1\right)_x
\wedge \left(a_{1,1}Z_{-1,1}\left(\frac{z-\xi}{\mu_1}\right)\right)_y+\left(a_{1,1}Z_{-1,1}\left(\frac{z-\xi}{\mu_1}\right)\right)_x
\wedge \left(\delta_1\right)_y\right]
\\\notag&=a_{1,1}\mu_1\mu_2^2\left\{q_{11}\frac{-32\pi \sigma _1 \left(\sigma _1^2-3 \sigma _2^2\right)}{|\sigma|^6}
+q_{12}\frac{-32\pi \sigma _2 \left(3\sigma _1^2- \sigma _2^2\right)}{|\sigma|^6}\right.
\\\notag&\qquad\qquad\qquad\left.+ q_{21}\frac{-32\pi \sigma _2 \left(3\sigma _1^2- \sigma _2^2\right)}{|\sigma|^6}
+q_{22}\frac{32\pi \sigma _1 \left(\sigma _1^2- 3\sigma _2^2\right)}{|\sigma|^6}\right\}
\\ \label{Jan17-3}&\quad+O\left(a_{1,1}\left(\varepsilon^{\frac{5}{2}}|\log\varepsilon|+\varepsilon^{\frac{1}{2}}(\mu_2^2-\varepsilon)+\varepsilon^{\frac{3}{2}}(\mu_2^2-\varepsilon)d^{-2}\right)\right).
\end{align}
Using (\ref{J9-a2}), (\ref{A26-5}) and (\ref{A26-6}), the remaining terms in
$$2\int_{\mathcal{D}} \left(P\mathcal Q_2\delta_2+\varepsilon \mathcal Q_2g^{(2)} \right)\cdot \left[\left(\delta_1\right)_x
\wedge \left(\mathcal{L}_{\mathcal{A}_1}\right)_y+\left(\mathcal{L}_{\mathcal{A}_1}\right)_x
\wedge \left(\delta_1\right)_y\right]$$
can be estimated in a similar manner. Indeed, reasoning as (\ref{M27-a2}), (\ref{A30-p3-a}) and (\ref{A30-p3-b}), we have
\begin{align}\notag
&2\int_{\mathcal{D}} \left(P\mathcal Q_2\delta_2+\varepsilon \mathcal Q_2g^{(2)} \right)\cdot \left[\left(\delta_1\right)_x
\wedge \left(a_{1,2}Z_{-1,2}\left(\frac{z-\xi}{\mu_1}\right)\right)_y+\left(a_{1,2}Z_{-1,2}\left(\frac{z-\xi}{\mu_1}\right)\right)_x
\wedge \left(\delta_1\right)_y\right]
\\\notag&=a_{1,2}\mu_1\mu_2^2\left\{q_{11}\frac{32\pi \sigma _2 \left(3\sigma _1^2-\sigma _2^2\right)}{|\sigma|^6}+q_{12}\frac{-32\pi \sigma _1 \left(\sigma _1^2- 3\sigma _2^2\right)}{|\sigma|^6}\right.
\\\notag&\qquad\qquad\qquad\left.+q_{21}\frac{-32\pi \sigma _1 \left(\sigma _1^2-3 \sigma _2^2\right)}{|\sigma|^6}+q_{22}\frac{-32\pi \sigma _2 \left(3\sigma _1^2- \sigma _2^2\right)}{|\sigma|^6}\right\}
\\\label{Jan17-4}&\quad+O\left(a_{1,2}\left(\varepsilon^{\frac{5}{2}}|\log\varepsilon|+\varepsilon^{\frac{1}{2}}(\mu_2^2-\varepsilon)+\varepsilon^{\frac{3}{2}}(\mu_2^2-\varepsilon)d^{-2}\right)\right),
\end{align}

\begin{equation}\label{Jan17-5}
\begin{aligned}
&2\int_{\mathcal{D}} \left(P\mathcal Q_2\delta_2+\varepsilon \mathcal Q_2g^{(2)} \right)\cdot \left[\left(\delta_1\right)_x
\wedge \left(\sum_{l=1}^2p_{1,l}Z_{2,l}\left(\frac{z-\xi}{\mu_1}\right)\right)_y+\left(\sum_{l=1}^2p_{1,l}Z_{2,l}\left(\frac{z-\xi}{\mu_1}\right)\right)_x
\wedge \left(\delta_1\right)_y\right]
\\&=O\left((p_{1,1}+p_{1,2})\left(\varepsilon^3+\varepsilon^2\left(\mu_2^2-\varepsilon\right)+\varepsilon^3(\mu_2^2-\varepsilon)d^{-2}\right)\right).
\end{aligned}
\end{equation}
And similar to the estimate of (\ref{J10-g1}), using (\ref{derivative}), we can get
\begin{equation}\label{JR-2}
\begin{aligned}
&-2\int_{\mathcal{D}} \left(P\mathcal Q_2\delta_2+\varepsilon \mathcal Q_2g^{(2)} \right)
\cdot \left[\left(\delta_1\right)_x
\wedge \left(\sum_{l=1}^2\left(a_{1,l}\varphi_{-1,l}\left(z,\xi\right)+p_{1, l}\varphi_{2,l}\left(z,\xi\right)\right)\right)_y\right.
\\&\qquad\qquad\qquad\qquad\qquad\qquad\qquad\left.
+\left(\sum_{l=1}^2\left(a_{1,l}\varphi_{-1,l}\left(z,\xi\right)+p_{1, l}\varphi_{2,l}\left(z,\xi\right)\right)\right)_x
\wedge \left(\delta_1\right)_y\right]
\\&=O\left((a_{1,1}+a_{1,2})\left(\left(\mu_2^2|\log \mu_2|+\left(\mu_2^2-\varepsilon\right)d^{-2}\right)\mu_1^3+\left(\mu_2^2+\varepsilon\right)\mu_1^3|\log\mu_1|d^{-2}\right)\right)
\\&\quad+O\left((p_{1,1}+p_{1,2})\left(\left(\mu_2^2|\log \mu_2|+\left(\mu_2^2-\varepsilon\right)d^{-2}\right)\mu_1^6+\left(\mu_2^2+\varepsilon\right)\mu_1^6|\log\mu_1|d^{-5}\right)\right)
\\&=O\left((a_{1,1}+a_{1,2})\left(\varepsilon^{\frac{3}{2}}\left(\mu_2^2-\varepsilon\right)d^{-2}+\varepsilon^{\frac{5}{2}}|\log\varepsilon|d^{-2}\right)+(p_{1,1}+p_{1,2})\left(\varepsilon^3\left(\mu_2^2-\varepsilon\right)d^{-2}+\varepsilon^4|\log\varepsilon|d^{-5}\right)\right).
\end{aligned}
\end{equation}
Using (\ref{Ju25-6}), (\ref{M28a1-1}),(\ref{M28a1-2}), (\ref{M30p1-1}) and (\ref{M30p1-2}), it is direct to check that
\begin{equation}\label{JR-1-1}
\begin{aligned}
&-2\int_{\mathcal{D}} \left(P\mathcal Q_2\delta_2+\varepsilon \mathcal Q_2g^{(2)} \right)
\cdot \left[\left(\varphi_1(z,\xi)-\varepsilon g^{(1)}\right)_x
\wedge \left(P\mathcal{L}_{\mathcal{A}_1}\right)_y
+\left(P\mathcal{L}_{\mathcal{A}_1}\right)_x
\wedge \left(\varphi_1(z,\xi)-\varepsilon g^{(1)}\right)_y\right]
\\&=O\left((a_{1,1}+a_{1,2})\left(\left(\mu_2^2|\log \mu_2|+\left(\mu_2^2-\varepsilon\right)d^{-2}\right)\mu_1\left(\mu_1^2-\varepsilon\right)+\left(\mu_2^2+\varepsilon\right)\mu_1\left(\mu_1^2-\varepsilon\right)d^{-5}\right)\right)
\\&\quad+O\left((p_{1,1}+p_{1,2})\left(\mu_2^2|\log \mu_2|+\left(\mu_2^2-\varepsilon\right)d^{-2}\right)\mu_1^4\left(\mu_1^2-\varepsilon\right)\right)
\\&\quad+O\left((p_{1,1}+p_{1,2})\left(\mu_2^2+\varepsilon\right)\left(\mu_1^2\left(\mu_1^2-\varepsilon\right)d^{-3}+\mu_1^4\left(\mu_1^2-\varepsilon\right)d^{-8}\right)\right)
\\&=O\left((a_{1,1}+a_{1,2})\left(\varepsilon^{\frac{3}{2}}|\log \varepsilon|+\varepsilon^{\frac{1}{2}}(\mu_2^2-\varepsilon)d^{-2}+\varepsilon^{\frac{3}{2}}d^{-5}\right)\left(\mu_1^2-\varepsilon\right)\right)
\\&\quad+O\left((p_{1,1}+p_{1,2})\left(\varepsilon^3|\log \varepsilon|+\varepsilon^2\left(\mu_2^2-\varepsilon\right)d^{-2}+\varepsilon^2d^{-3}+\varepsilon^3d^{-8}\right)\left(\mu_1^2-\varepsilon\right)\right).
\end{aligned}
\end{equation}
Thus
\begin{equation*}
\begin{aligned}
\mathcal{M}^1_{\delta_1,\delta_2,Z_{-1,l},Z_{2,l}}
&=(\ref{Jan17-1})+(\ref{Jan17-2})+(\ref{Jan17-3})+(\ref{Jan17-4})+(\ref{Jan17-5})+ (\ref{JR-2})+(\ref{JR-1-1})
\\&=F^{(1)}_{\mathcal{D}}(\delta_1,\delta_2,Id,\mathcal{Q}_2,a_{1,l},p_{1,l})+\mathcal{E}^{(1)}\left(\varepsilon,a_{1,l},p_{1,l}\right),
\end{aligned}
\end{equation*}
where $F^{(1)}_{\mathcal{D}}(\delta_1,\delta_2,Id,\mathcal{Q}_2,a_{1,l},p_{1,l})$ and $\mathcal{E}^{(1)}\left(\varepsilon,a_{1,l},p_{1,l}\right)$ are defined in (\ref{J7-R1}) and (\ref{J7-R2}).

Note that $\mathcal{M}^2_{\delta_1,\delta_2,Z_{-1,l},Z_{2,l}}$, i.e. (\ref{2-remainder}) can be addressed similarly, and the result stated in Lemma \ref{lemma-5.2} holds, we omit the details here.

Now, we proceed to estimate the terms in $\mathcal{M}^3_{\delta_1,\delta_2,Z_{-1,l},Z_{2,l}}$. For simplicity, we will only present the estimate for the first term in (\ref{4-remainder}), as the second term follows a similar approach.
Using (\ref{M27-2}), (\ref{M27-3}), (\ref{Ma30-a1a2}) and reasoning as (\ref{JR-1-1}), we deduce
\begin{equation}\label{Jan18-1}
\begin{aligned}
&2\int_{\mathcal{D}}\left(P\mathcal Q_2\delta_2+\varepsilon \mathcal Q_2g^{(2)}\right)\cdot \left[\left(\sum_{l=1}^2a_{1,l}PZ_{-1,l}\left(\frac{z-\xi}{\mu_1}\right)\right)_x  \wedge \left(\sum_{l=1}^2a_{1,l}PZ_{-1,l}\left(\frac{z-\xi}{\mu_1}\right) \right)_y\right]
\\&=O\left(\left(a_{1,1}+a_{1,2}\right)^2\left(\varepsilon^2|\log \varepsilon|+\varepsilon(\mu_2^2-\varepsilon)d^{-2}+\varepsilon^2d^{-4}\right)\right)
\\&=O\left(\left(a_{1,1}+a_{1,2}\right)^2\varepsilon^2d^{-4}\right).
\end{aligned}
\end{equation}
By (\ref{M30p1-1})-(\ref{M2-p2p2}) and (\ref{M2-p1p2}), it is straightforward to deduce that
\begin{equation}\label{Jan18-2}
\begin{aligned}
&2\sum_{l=1}^2p_{1,l}^2\int_{\mathcal{D}} \left(P\mathcal Q_2\delta_2+\varepsilon \mathcal Q_2g^{(2)}\right)\cdot \left[\left(PZ_{2,l}\left(\frac{z-\xi}{\mu_1}\right)\right)_x \wedge\left(PZ_{2,l}\left(\frac{z-\xi}{\mu_1}\right)\right)_y\right]
\\&=O\left(\left(p_{1,1}+p_{1,2}\right)^2\left(\varepsilon^5|\log \varepsilon|+\varepsilon^4(\mu_2^2-\varepsilon)d^{-2}+\varepsilon^2+\varepsilon^4d^{-5}+\varepsilon^5d^{-10}\right)\right)
\\&=O\left(\left(p_{1,1}+p_{1,2}\right)^2\left(\varepsilon^2+\varepsilon^5d^{-10}\right)\right).
\end{aligned}
\end{equation}
As for the mixed terms of $a_{1,k}$ and $p_{1,l}$, $k,l=1,2$, by (\ref{A27-a1p1}), (\ref{A27-a2p1}), (\ref{A27-a1p2}) and (\ref{A27-a2p2}), there holds
\begin{equation}\label{Jan18-3}
\begin{aligned}
&2\int_{\mathcal{D}} \left(P\mathcal Q_2\delta_2+\varepsilon \mathcal Q_2g^{(2)}\right)\cdot \left[\left(\sum_{l=1}^2a_{1,l}PZ_{-1,l}\left(\frac{z-\xi}{\mu_1}\right)\right)_x  \wedge \left(\sum_{l=1}^2p_{1, l}PZ_{2,l}\left(\frac{z-\xi}{\mu_1}\right)\right)_y\right.
\\&\qquad\qquad\qquad\qquad\qquad\qquad \left.
+\left(\sum_{l=1}^2p_{1, l}PZ_{2,l}\left(\frac{z-\xi}{\mu_1}\right)\right)_x  \wedge \left(\sum_{l=1}^2a_{1,l}PZ_{-1,l}\left(\frac{z-\xi}{\mu_1}\right) \right)_y\right]
\\&=O\left(\left(a_{1,1}+a_{1,2}\right)\left(p_{1,1}+p_{1,2}\right)\left(\varepsilon^{\frac{7}{2}}|\log \varepsilon|+\varepsilon^{\frac{5}{2}}(\mu_2^2-\varepsilon)d^{-2}+\varepsilon^{\frac{3}{2}}+\varepsilon^{\frac{5}{2}}d^{-2}+\varepsilon^{\frac{7}{2}}d^{-7}\right)\right)
\\&=O\left(\left(a_{1,1}+a_{1,2}\right)\left(p_{1,1}+p_{1,2}\right)\left(\varepsilon^{\frac{3}{2}}+\varepsilon^{\frac{7}{2}}d^{-7}\right)\right).
\end{aligned}
\end{equation}
A parallel result applies to the second term in (\ref{4-remainder}). Therefore,
\begin{equation*}
\begin{aligned}
\mathcal{M}^3_{\delta_1,\delta_2,Z_{-1,l},Z_{2,l}}=
\mathcal{E}^{(3)}\left(\varepsilon,a_{1,l},a_{2,l},p_{1,l},p_{2,l}\right),
\end{aligned}
\end{equation*}
where $\mathcal{E}^{(3)}\left(\varepsilon,a_{1,l},a_{2,l},p_{1,l},p_{2,l}\right)$ is as depicted in (\ref{J10-R3}).

Now, we deal with the terms in $\mathcal{M}^4_{\delta_1,\delta_2,Z_{-1,l},Z_{2,l}}$.
Using the expansion of $Z_{-1,l}$ and $Z_{2,l}$, $l=1,2$ as well as the corresponding derivatives provided in  Appendix \ref{Appendix-F}, the first term in (\ref{5-remainder}) can be estimated as
\begin{align*}
&-2\int_{\mathcal{D}} P\mathcal Q_2\mathcal{L}_{\mathcal{A}_2}
\cdot \left[\left(\delta_1\right)_x
\wedge \left(\sum_{l=1}^2\left(a_{1,l}\varphi_{-1,l}\left(z,\xi\right)+p_{1, l}\varphi_{2,l}\left(z,\xi\right)\right)\right)_y\right.
\\&\qquad\qquad\qquad\qquad\quad\left.
+\left(\sum_{l=1}^2\left(a_{1,l}\varphi_{-1,l}\left(z,\xi\right)+p_{1, l}\varphi_{2,l}\left(z,\xi\right)\right)\right)_x
\wedge \left(\delta_1\right)_y\right]
\\&=O\left(\sum_{k,l=1}^2a_{2,k}a_{1,l}\left(\varepsilon^2d^{-1}+\varepsilon^2|\log \varepsilon|d^{-2}\right)\right)
+O\left(\sum_{k,l=1}^2a_{2,k}p_{1,l}\left(\varepsilon^{\frac{7}{2}}d^{-1}+\varepsilon^{\frac{7}{2}}|\log \varepsilon|d^{-5}\right)\right)
\\&\quad+O\left(\sum_{k,l=1}^2p_{2,k}a_{1,l}\left(\varepsilon^{\frac{5}{2}}+\varepsilon^{\frac{7}{2}}d^{-4}+\varepsilon^{\frac{7}{2}}|\log \varepsilon|d^{-2}\right)\right)
+O\left(\sum_{k,l=1}^2p_{2,k}p_{1,l}\left(\varepsilon^{4}+\varepsilon^{5}d^{-4}+\varepsilon^{5}|\log \varepsilon|d^{-5}\right)\right).
\end{align*}
And similar to (\ref{JR-1-1}), there holds
\begin{align*}
&-2\int_{\mathcal{D}} P\mathcal Q_2\mathcal{L}_{\mathcal{A}_2}
\cdot \left[ \left(\varphi_1-\varepsilon g^{(1)}\right)_x
\wedge \left(P\mathcal{L}_{\mathcal{A}_1}\right)_y
+\left(P\mathcal{L}_{\mathcal{A}_1}\right)_x
\wedge  \left(\varphi_1-\varepsilon g^{(1)}\right)_y\right]
\\&=O\left(\sum_{k,l=1}^2a_{2,k}a_{1,l}\left(\varepsilon \left(\mu_1^2-\varepsilon\right)d^{-5}\right)+a_{2,k}p_{1,l}\left(\varepsilon^{\frac{5}{2}} \left(\mu_1^2-\varepsilon\right)d^{-8}\right)\right)
\\&\quad+O\left(\sum_{k,l=1}^2p_{2,k}a_{1,l}\left(\varepsilon^{\frac{3}{2}}\left(\mu_1^2-\varepsilon\right)+\varepsilon^{\frac{5}{2}} \left(\mu_1^2-\varepsilon\right)d^{-4}\right)+p_{2,k}p_{1,l}\left(\varepsilon^4\left(\mu_1^2-\varepsilon\right)d^{-8}\right)\right).
\end{align*}

A similar estimate can be applied to the last term in (\ref{5-remainder}), proceeding with the necessary adjustments.  It is important to note that compared to the second term in
(\ref{5-remainder}), where cancellations do not occur as described above, we need to estimate some additional terms taking the following form
\begin{equation*}
\begin{aligned}
&2\int_{\mathcal{D}} P\mathcal{L}_{\mathcal{A}_1}\cdot \left[\left(\mathcal Q_2\sum_{l=1}^2\left(a_{2,l}Z_{-1,l}\left(\frac{z-\zeta}{\mu_2}\right)+p_{2, l}Z_{2,l}\left(\frac{z-\zeta}{\mu_2}\right)\right)\right)_x
\wedge \left(\mathcal{Q}_2\delta_2\right)_y\right.
\\&\qquad\qquad\qquad\left.
+\left(\mathcal{Q}_2\delta_2\right)_x
\wedge \left(\mathcal Q_2\sum_{l=1}^2\left(a_{2,l}Z_{-1,l}\left(\frac{z-\zeta}{\mu_2}\right)+p_{2, l}Z_{2,l}\left(\frac{z-\zeta}{\mu_2}\right)\right)\right)_y\right].
\end{aligned}
\end{equation*}
Using (\ref{Ju24-1})-(\ref{Ju24-5}), (\ref{expansion6-1}), (\ref{J9-a2}), (\ref{A26-5}) and (\ref{A26-6}), similar to the calculation in (\ref{Mar22-1}), (\ref{M27-a2}), (\ref{A30-p3-a}) and (\ref{A30-p3-b}), we have
\begin{align*}
&2\int_{\mathcal{D}}  P\mathcal{L}_{\mathcal{A}_1}
\cdot \left[\left(\mathcal Q_2\sum_{l=1}^2\left(a_{2,l}Z_{-1,l}\left(\frac{z-\zeta}{\mu_2}\right)+p_{2, l}Z_{2,l}\left(\frac{z-\zeta}{\mu_2}\right)\right)\right)_x
\wedge \left(\mathcal{Q}_2\delta_2\right)_y\right.
\\&\qquad\qquad\qquad\quad\left.
+\left(\mathcal{Q}_2\delta_2\right)_x
\wedge \left(\mathcal Q_2\sum_{l=1}^2\left(a_{2,l}Z_{-1,l}\left(\frac{z-\zeta}{\mu_2}\right)+p_{2, l}Z_{2,l}\left(\frac{z-\zeta}{\mu_2}\right)\right)\right)_y\right]
\\&=O\left(\sum_{k,l=1}^2a_{1,k}a_{2,l}\left(\varepsilon+\varepsilon^2d^{-1}\right)\right)
+O\left(\sum_{k,l=1}^2p_{1,k}a_{2,l}\left(\varepsilon^{\frac{5}{2}}+\varepsilon^{\frac{7}{2}}d^{-4}\right)\right)
\\&\quad+O\left(\sum_{k,l=1}^2a_{1,k}p_{2,l}\left(\varepsilon^{\frac{5}{2}}+\varepsilon^{\frac{7}{2}}d^{-1}\right)\right)+O\left(\sum_{k,l=1}^2p_{1,k}p_{2,l}\left(\varepsilon^{4}+\varepsilon^{5}d^{-4}\right)\right).
\end{align*}
Thus we conclude that
\begin{equation*}
\begin{aligned}
\mathcal{M}^4_{\delta_1,\delta_2,Z_{-1,l},Z_{2,l}}
=\mathcal{E}^{(4)}\left(\varepsilon,a_{1,l},a_{2,l},p_{1,l},p_{2,l}\right),
\end{aligned}
\end{equation*}
where $\mathcal{E}^{(4)}\left(\varepsilon,a_{1,l},a_{2,l},p_{1,l},p_{2,l}\right)$ appears in (\ref{J10-R4}).

Next, we estimate the terms in $\mathcal{M}^5_{\delta_1,\delta_2,Z_{-1,l},Z_{2,l}}$, i.e. (\ref{6-remainder}).
And similar to the estimate of $\mathcal{M}^3_{\delta_1,\delta_2,Z_{-1,l},Z_{2,l}}$, refer to (\ref{Jan18-1}), (\ref{Jan18-2}) and (\ref{Jan18-3}), we deduce
\begin{align*}
2\int_{\mathcal{D}}  P\mathcal Q_2\mathcal{L}_{\mathcal{A}_2}\cdot \left[\left(P\mathcal{L}_{\mathcal{A}_1}\right)_x\wedge \left(P\mathcal{L}_{\mathcal{A}_1}\right)_y\right]
&=O\left(\left(a_{2,1}+a_{2,2}\right)\left(a_{1,1}+a_{1,2}\right)^2\left(\varepsilon^{\frac{3}{2}}|\log \varepsilon|+\varepsilon^{\frac{3}{2}}d^{-4}\right)\right)
\\&\quad+O\left(\left(a_{2,1}+a_{2,2}\right)\left(a_{1,1}+a_{1,2}\right)\left(p_{1,1}+p_{1,2}\right)\left(\varepsilon+\varepsilon^{3}d^{-7}\right)\right)
\\&\quad+O\left(\left(a_{2,1}+a_{2,2}\right)\left(p_{1,1}+p_{1,2}\right)^2\left(\varepsilon^{\frac{3}{2}}+\varepsilon^{\frac{9}{2}}d^{-10}\right)\right)
\\&\quad+O\left(\left(p_{2,1}+p_{2,2}\right)\left(a_{1,1}+a_{1,2}\right)^2\left(\varepsilon^3|\log \varepsilon|+\varepsilon^{3}d^{-4}\right)\right)
\\&\quad+O\left(\left(p_{2,1}+p_{2,2}\right)\left(a_{1,1}+a_{1,2}\right)\left(p_{1,1}+p_{1,2}\right)\left(\varepsilon^{\frac{5}{2}}+\varepsilon^{\frac{9}{2}}d^{-7}\right)\right)
\\&\quad+O\left(\left(p_{2,1}+p_{2,2}\right)\left(p_{1,1}+p_{1,2}\right)^2\left(\varepsilon^{3}+\varepsilon^{6}d^{-10}\right)\right).
\end{align*}
The first term in $\mathcal{M}^5_{\delta_1,\delta_2,Z_{-1,l},Z_{2,l}}$ can be estimated in a similar manner. Thus, we conclude that
$$\mathcal{M}^5_{\delta_1,\delta_2,Z_{-1,l},Z_{2,l}}=
\mathcal{E}^{(5)}\left(\varepsilon,a_{1,l},a_{2,l},p_{1,l},p_{2,l}\right)$$
as stated in (\ref{Ju30-M5}).

This completes the proof of Lemma \ref{lemma-5.2}.
\end{proof}

Now we consider the interaction associated with the higher order expansion $\mathcal{R}_{\mathcal{A}_i}$ in $z_{\mathcal{A}_i}$, $i=1,2$. Specifically, for $i,j=1,2$ and $i\neq j$, we need to estimate terms of the following form
\begin{align*}
\mathcal{M}^6_{\delta_1,\delta_2,Z_{-1,l},Z_{2,l}}
:&=\int_{\mathcal{D}}\nabla\left( P\mathcal Q_i\delta_i+P\mathcal Q_i\mathcal{L}_{\mathcal{A}_i}\right)\cdot\nabla P\mathcal Q_j\mathcal{R}_{\mathcal{A}_j}+\int_{\mathcal{D}}\nabla P\mathcal Q_i\mathcal{R}_{\mathcal{A}_i}\cdot\nabla P\mathcal Q_j\mathcal{R}_{\mathcal{A}_j}
\\&\quad+2\int_{\mathcal{D}} \left( P\mathcal Q_i\delta_i+P\mathcal Q_i\mathcal{L}_{\mathcal{A}_i}+\varepsilon \mathcal Q_i g^{(i)}\right)\cdot \left[\left(P\mathcal Q_j\mathcal{R}_{\mathcal{A}_j}\right)_x
\wedge \left(P\mathcal Q_j\delta_j+P\mathcal Q_j\mathcal{L}_{\mathcal{A}_j}\right)_y\right.
\\&\qquad\qquad\qquad\qquad\qquad\qquad\qquad\qquad\qquad\quad\left.
+\left(P\mathcal Q_j\delta_j+P\mathcal Q_j\mathcal{L}_{\mathcal{A}_j}\right)_x
\wedge \left(P\mathcal Q_j\mathcal{R}_{\mathcal{A}_j}\right)_y\right]
\\&\quad+2\int_{\mathcal{D}} \left( P\mathcal Q_i\delta_i+P\mathcal Q_i\mathcal{L}_{\mathcal{A}_i}+\varepsilon \mathcal Q_i g^{(i)}\right)\cdot \left[\left(P\mathcal Q_j\mathcal{R}_{\mathcal{A}_j}\right)_x
\wedge \left(P\mathcal Q_j\mathcal{R}_{\mathcal{A}_j}\right)_y\right]
\\&\quad+2\int_{\mathcal{D}} P\mathcal Q_i\mathcal{R}_{\mathcal{A}_i}
\cdot\left[ \left(P\mathcal Q_j\delta_j+P\mathcal Q_j\mathcal{L}_{\mathcal{A}_j}+P\mathcal Q_j\mathcal{R}_{\mathcal{A}_j}\right)_x
\wedge \left(P\mathcal Q_j\delta_j+P\mathcal Q_j\mathcal{L}_{\mathcal{A}_j}+P\mathcal Q_j\mathcal{R}_{\mathcal{A}_j}\right)_y\right]
\\&\quad+2\varepsilon\int_{\mathcal{D}} \left( P\mathcal Q_i\delta_i+P\mathcal Q_i\mathcal{L}_{\mathcal{A}_i}\right)
\cdot\left[ \left(P\mathcal Q_j\mathcal{R}_{\mathcal{A}_j}\right)_x
\wedge \left(\mathcal Q_j g^{(j)}\right)_y+\left(\mathcal Q_j g^{(j)}\right)_x\wedge \left(P\mathcal Q_j\mathcal{R}_{\mathcal{A}_j}\right)_y\right]
\\&\quad+2\varepsilon\int_{\mathcal{D}} P\mathcal Q_i\mathcal{R}_{\mathcal{A}_i}
\cdot\left[ \left( P\mathcal Q_j\delta_j+P\mathcal Q_j\mathcal{L}_{\mathcal{A}_j}+P\mathcal Q_j\mathcal{R}_{\mathcal{A}_j}\right)_x
\wedge \left(\mathcal Q_j g^{(j)}\right)_y\right.
\\&\qquad\qquad\qquad\qquad\qquad\left.
+\left(\mathcal Q_j g^{(j)}\right)_x\wedge  \left( P\mathcal Q_j\delta_j+P\mathcal Q_j\mathcal{L}_{\mathcal{A}_j}+P\mathcal Q_j\mathcal{R}_{\mathcal{A}_j}\right)_y\right]
\\&\quad+ 2\varepsilon^2\int_{\mathcal{D}} \mathcal Q_i g^{(i)}
\cdot\left[ \left(P\mathcal Q_j\mathcal{R}_{\mathcal{A}_j}\right)_x
\wedge \left(\mathcal Q_j g^{(j)}\right)_y+\left(\mathcal Q_j g^{(j)}\right)_x\wedge \left(P\mathcal Q_j\mathcal{R}_{\mathcal{A}_j}\right)_y\right]
\\&\quad+ 2\varepsilon^2\int_{\mathcal{D}} P\mathcal Q_i\mathcal{R}_{\mathcal{A}_i}
\cdot\left[\left(\mathcal Q_j g^{(j)}\right)_x
\wedge \left(\mathcal Q_j g^{(j)}\right)_y\right]
.
\end{align*}
Similar to the estimate of $\mathcal{M}^i_{\delta_1,\delta_2,Z_{-1,l},Z_{2,l}}$, $i=1,\dots,5$, we conclude that
\begin{equation}\label{Mar15-1}
\begin{aligned}
\mathcal{M}^6_{\delta_1,\delta_2,Z_{-1,l},Z_{2,l}}=\mathcal{E}^{(3)}\left(\varepsilon,a_{1,l},a_{2,l},p_{1,l},p_{2,l}\right)+\mathcal{E}^{(5)}\left(\varepsilon,a_{1,l},a_{2,l},p_{1,l},p_{2,l}\right).
\end{aligned}
\end{equation}

\subsection{Expansion for $k$ bubbles}\label{subsection5.4}
Given two bubbles $\delta_i=\mathcal Q_{i}\delta_{\mu_i,\xi^{(i)}}$ and $\delta_j=\mathcal Q_{j}\delta_{\mu_j,\xi^{(j)}}$, where $\delta_{\mu_i,\xi^{(i)}}$ is defined in (\ref{A18-1}) and $\mathcal Q_{i}$, $\mathcal Q_{j}\in SO(3)$. By rotation invariance, we know the interaction between $\delta_i$ and $\delta_j$ is the same as that between $\delta_{\mu_i,\xi^{(i)}}$ and $\mathcal Q_{i}^{-1}\delta_j=\mathcal Q_{i}^{-1}\mathcal Q_{j}\delta_{\mu_j,\xi^{(j)}}$. Thus, for $k=2$, combining Proposition \ref{prop 4.1} and Lemma \ref{lemma 6.3}, Lemma \ref{lemma-5.2} and (\ref{Mar15-1}), we have
\begin{equation}\label{Ju30-energy}
\begin{aligned}
I_{\varepsilon}(u)
=&~\frac{16}{3}\pi+F_{\mathcal{D},g^{(1)}}(\mu_1,\xi,Id,a_{1,l},p_{1,l})+F_{\mathcal{D},g^{(2)}}(\mu_2,\zeta,\mathcal{Q}_2,a_{2,l},p_{2,l})
\\&+F_{\mathcal{D}}(\delta_1,\delta_2,Id,\mathcal{Q}_2)
+F^{(1)}_{\mathcal{D}}(\delta_1,\delta_2,Id,\mathcal{Q}_2,a_{1,l},p_{1,l})+F^{(2)}_{\mathcal{D}}(\delta_1,\delta_2,Id,\mathcal{Q}_2,a_{2,l},p_{2,l})
\\&
+\tilde{e}^{(1)}(\mu_1,a_{1,l},p_{1,l})+\tilde{e}^{(2)}(\varepsilon,\mu_1,a_{1,l},p_{1,l})+\tilde{e}^{(1)}(\mu_2,a_{2,l},p_{2,l})+\tilde{e}^{(2)}(\varepsilon,\mu_2,a_{2,l},p_{2,l})
+\mathcal{E}\left(\mu_1,\mu_2\right)
\\&
+\mathcal{E}^{(1)}\left(\varepsilon,a_{1,l},p_{1,l}\right)+\mathcal{E}^{(2)}\left(\varepsilon,a_{2,l},p_{2,l}\right)+\mathcal{E}^{(3)}\left(\varepsilon,a_{1,l},a_{2,l},p_{1,l},p_{2,l}\right)
\\&+\mathcal{E}^{(4)}\left(\varepsilon,a_{1,l},a_{2,l},p_{1,l},p_{2,l}\right)
+\mathcal{E}^{(5)}\left(\varepsilon,a_{1,l},a_{2,l},p_{1,l},p_{2,l}\right).
\end{aligned}
\end{equation}

Now, we consider the case of $k>2$.
From the explicit form of the Euler functional $I_{\varepsilon}$, we observe that $I_{\varepsilon}$ is cubic in $u$, then in the case of $k$ bubbles, by using integration by parts, we may find mixed terms of the form
\begin{align}
\notag
&2\int_{\mathcal{D}}z_{\mathcal{A}_i}\cdot \left[(z_{\mathcal{A}_j})_x\wedge (z_{\mathcal{A}_m})_y+(z_{\mathcal{A}_m})_x\wedge (z_{\mathcal{A}_j})_y\right]
\\ \notag&+2\varepsilon \int_{\mathcal{D}}z_{\mathcal{A}_i}\cdot \left[(z_{\mathcal{A}_j})_x\wedge (g^{(m)})_y+(g^{(m)})_x\wedge (z_{\mathcal{A}_j})_y\right]+2\varepsilon \int_{\mathcal{D}}z_{\mathcal{A}_i}\cdot \left[(g^{(j)})_x\wedge (z_{\mathcal{A}_m})_y+(g^{(j)})_x\wedge (z_{\mathcal{A}_m})_y\right]
\\ \notag&+2\varepsilon \int_{\mathcal{D}}z_{\mathcal{A}_j}\cdot \left[(g^{(i)})_x\wedge (z_{\mathcal{A}_m})_y+(z_{\mathcal{A}_m})_x\wedge (g^{(i)})_y\right]
\\ \notag& +2\varepsilon^2 \int_{\mathcal{D}}z_{\mathcal{A}_i}\cdot \left[(g^{(j)})_x\wedge (g^{(m)})_y+(g^{(m)})_x\wedge (g^{(j)})_y\right]+2\varepsilon^2 \int_{\mathcal{D}}z_{\mathcal{A}_j}\cdot \left[(g^{(i)})_x\wedge (g^{(m)})_y+(g^{(m)})_x\wedge (g^{(i)})_y\right]
\\ \label{Mar4-1} &+2\varepsilon^2 \int_{\mathcal{D}}z_{\mathcal{A}_m}\cdot \left[(g^{(i)})_x\wedge (g^{(j)})_y+(g^{(j)})_x\wedge (g^{(m)})_y\right],
\end{align}
where $i,j,m$ are distinct. Recall that in (\ref{solution-set}), we assume that the distances between the points $\xi^{(i)}$, $\xi^{(j)}$ and $\xi^{(m)}$ are uniformly bounded from below. Using a similar approach as in the proof of Lemma \ref{lemma 6.3}, choosing
$ \gamma \leq \frac{1}{4}\bar{C}$, for the terms that contain $\delta_i$, $\delta_j$, $\delta_m$, it is direct to see that
\begin{equation}\label{interaction-55}
\begin{aligned}
&\int_{\mathcal{D}}P\mathcal Q_i\delta_i\cdot \left[(P\mathcal Q_j\delta_j)_x\wedge (P\mathcal Q_m\delta_m)_y+(P\mathcal Q_m\delta_m)_x\wedge (P\mathcal Q_j\delta_j)_y\right]
\\&= \int_{B_{\gamma}(\xi^{(i)})\cup B_{\gamma}(\xi^{(j)})\cup B_{\gamma}(\xi^{(l)})}P\mathcal Q_i\delta_i\cdot \left[(P\mathcal Q_j\delta_j)_x\wedge (P\mathcal Q_m\delta_m)_y+(P\mathcal Q_m\delta_m)_x\wedge (P\mathcal Q_j\delta_j)_y\right]
\\&\quad+\int_{\mathcal{D} \backslash \left(B_{\gamma}(\xi^{(i)})\cup B_{\gamma}(\xi^{(j)})\cup B_{\tau}(\xi^{(m)})\right)}P\mathcal Q_i\delta_i\cdot \left[(P\mathcal Q_j\delta_j)_x\wedge (P\mathcal Q_m\delta_m)_y+(P\mathcal Q_m\delta_m)_x\wedge (P\mathcal Q_j\delta_j)_y\right]
\\&=O
\left(\varepsilon^3|\log\varepsilon|+\varepsilon^3d^{-3}\right).
\end{aligned}
\end{equation}
For the terms involving $P\mathcal Q_m\mathcal{L}_{\mathcal{A}_m}$, we need to deal with them carefully. Specifically, we need to estimate the following terms
\begin{equation}\label{Mar3-1}
\begin{aligned}
&\int_{\mathcal{D}}P\mathcal Q_i\delta_i\cdot \left[(P\mathcal Q_j\delta_j+\varepsilon g^{(j)})_x\wedge \left(P\mathcal Q_m\mathcal{L}_{\mathcal{A}_m}\right)_y+\left(P\mathcal Q_m\mathcal{L}_{\mathcal{A}_m}\right)_x\wedge (P\mathcal Q_j\delta_j+\varepsilon g^{(j)})_y\right]
\\&+\int_{\mathcal{D}}P\mathcal Q_m\mathcal{L}_{\mathcal{A}_m}\cdot \left[(P\mathcal Q_j\delta_j+\varepsilon g^{(j)})_x\wedge (\varepsilon g^{(i)})_y+(\varepsilon g^{(i)})_x\wedge (P\mathcal Q_j\delta_j+\varepsilon g^{(j)})_y\right].
\end{aligned}
\end{equation}
We can then proceed with similar estimations, and (\ref{Mar3-1}) can be bounded by
\begin{equation}\label{Jan20-1}
\begin{aligned}
O\left(\sum_{l=1}^2a_{m,l}
\left(\varepsilon^{\frac{5}{2}}d^{-2}+(\mu_j^2-\varepsilon)\varepsilon^{\frac{3}{2}}d^{-3}\right)\right)
+O\left(\sum_{l=1}^2p_{m,l}\left(\varepsilon^{3}+\varepsilon^{4}d^{-5}\right)\right).
\end{aligned}
\end{equation}
Similarly,
\begin{equation}\label{Jan20-2}
\begin{aligned}
&\int_{\mathcal{D}}P\mathcal Q_i\delta_i\cdot \left[\left(P\mathcal Q_j\mathcal{L}_{\mathcal{A}_j}\right)_x
\wedge \left(P\mathcal Q_m\mathcal{L}_{\mathcal{A}_m}\right)_y+\left(P\mathcal Q_m\mathcal{L}_{\mathcal{A}_m}\right)_x
\wedge \left(P\mathcal Q_j\mathcal{L}_{\mathcal{A}_j}\right)_y\right]
\\&+2\varepsilon \int_{\mathcal{D}}P\mathcal Q_j\mathcal{L}_{\mathcal{A}_j}\cdot \left[(g^{(i)})_x\wedge \left(P\mathcal Q_m\mathcal{L}_{\mathcal{A}_m}\right)_y+\left(P\mathcal Q_m\mathcal{L}_{\mathcal{A}_m}\right)_x\wedge (g^{(i)})_y\right]
\\&=O
\left(\sum_{l=1}^2a_{j,l} a_{m,l}\left(\varepsilon^{2}|\log\varepsilon|+\varepsilon^{2} d^{-2}\right)\right)
+O
\left(\sum_{l=1}^2a_{j,l} p_{m,l}\left(\varepsilon^{\frac{5}{2}}+\varepsilon^{\frac{7}{2}}d^{-5}
\right)\right)
\\&\quad+O
\left(\sum_{l=1}^2p_{j,l} a_{m,l}\left(\varepsilon^{\frac{5}{2}}+\varepsilon^{\frac{7}{2}}d^{-5}\right)\right)
+O
\left(\sum_{l=1}^2p_{j,l} p_{m,l}\left(\varepsilon^{4}+\varepsilon^{5}d^{-5}\right)\right),
\end{aligned}
\end{equation}

\begin{equation}\label{Ju28-1}
\begin{aligned}
&\int_{\mathcal{D}}\left(P\mathcal Q_i\mathcal{L}_{\mathcal{A}_i}\right)
\cdot \left[\left(P\mathcal Q_j\mathcal{L}_{\mathcal{A}_j}\right)_x
\wedge \left(P\mathcal Q_m\mathcal{L}_{\mathcal{A}_m}\right)_y+\left(P\mathcal Q_m\mathcal{L}_{\mathcal{A}_m}\right)_x
\wedge \left(P\mathcal Q_j\mathcal{L}_{\mathcal{A}_j}\right)_y\right]
\\&=O
\left(\sum_{l=1}^2a_{i,l}a_{j,l} a_{m,l}\left(\varepsilon^{\frac{3}{2}}|\log\varepsilon|+\varepsilon^{\frac{3}{2}}d^{-2}\right)+a_{i,l}a_{j,l} p_{m,l}\left(\varepsilon^{2}+\varepsilon^{3}d^{-5}
\right)\right)
\\&\quad+O
\left(\sum_{l=1}^2a_{i,l}p_{j,l} a_{m,l}\left(\varepsilon^{2}+\varepsilon^{3}d^{-5}\right)+a_{i,l}p_{j,l} p_{m,l}\left(\varepsilon^{\frac{7}{2}}+\varepsilon^{\frac{9}{2}}d^{-5}\right)+p_{i,l}a_{j,l} a_{m,l}\left(\varepsilon^{2}+\varepsilon^{3}d^{-4}
\right)\right)
\\&\quad+O
\left(\sum_{l=1}^2p_{i,l}a_{j,l} p_{m,l}\left(\varepsilon^{\frac{7}{2}}+\varepsilon^{\frac{9}{2}}d^{-5}
\right)+p_{i,l}p_{j,l} a_{m,l}\left(\varepsilon^{\frac{7}{2}}+\varepsilon^{\frac{9}{2}}d^{-5}
\right)+p_{i,l}p_{j,l} p_{m,l}\left(\varepsilon^{5}+\varepsilon^{6}d^{-5}
\right)\right),
\end{aligned}
\end{equation}

\begin{align}
\notag&\int_{\mathcal{D}}P\mathcal Q_i\delta_i\cdot \left[(P\mathcal Q_j\delta_j+\varepsilon g^{(j)})_x\wedge \left(P\mathcal Q_m\mathcal{R}_{\mathcal{A}_m}\right)_y+\left(P\mathcal Q_m\mathcal{        R}_{\mathcal{A}_m}\right)_x\wedge (P\mathcal Q_j\delta_j+\varepsilon g^{(j)})_y\right]
\\\notag&+\int_{\mathcal{D}}P\mathcal Q_m\mathcal{R}_{\mathcal{A}_m}\cdot \left[(P\mathcal Q_j\delta_j+\varepsilon g^{(j)})_x\wedge (\varepsilon g^{(i)})_y+(\varepsilon g^{(i)})_x\wedge (P\mathcal Q_j\delta_j+\varepsilon g^{(j)})_y\right]
\\\notag&=O
\left((a_{m,1}+a_{m,2})^2
\left(\varepsilon^{\frac{5}{2}}+\varepsilon^{3}d^{-3}\right)\right)
+O
\left((a_{m,1}+a_{m,2})(p_{m,1}+p_{m,2})\left(\varepsilon^{\frac{5}{2}}+\varepsilon^{\frac{7}{2}}d^{-4}\right)\right)
\\ \label{Jan20-3}&\quad+O
\left((p_{m,1}+p_{m,2})^2\left(\varepsilon^{\frac{5}{2}}+\varepsilon^{5}d^{-7}\right)\right),
\end{align}

\begin{equation}\label{Jan20-4}
\begin{aligned}
&\int_{\mathcal{D}}P\mathcal Q_i\delta_i\cdot \left[(P\mathcal Q_j\mathcal{L}_{\mathcal{A}_j})_x\wedge \left(P\mathcal Q_m\mathcal{R}_{\mathcal{A}_m}\right)_y+\left(P\mathcal Q_m\mathcal{R}_{\mathcal{A}_m}\right)_x\wedge (P\mathcal Q_j\mathcal{L}_{\mathcal{A}_j})_y\right]
\\&+2\varepsilon \int_{\mathcal{D}}P\mathcal Q_j\mathcal{L}_{\mathcal{A}_j}\cdot \left[(g^{(i)})_x\wedge \left(P\mathcal Q_m\mathcal{R}_{\mathcal{A}_m}\right)_y+\left(P\mathcal Q_m\mathcal{R}_{\mathcal{A}_m}\right)_x\wedge (g^{(i)})_y\right]
\\&=O
\left((a_{j,1}+a_{j,2})(a_{m,1}+a_{m,2})^2
\left(\varepsilon^2+\varepsilon^{\frac{5}{2}}d^{-3}\right)\right)
+O
\left((a_{j,1}+a_{j,2})(a_{m,1}+a_{m,2})(p_{m,1}+p_{m,2})\left(\varepsilon^2+\varepsilon^{3}d^{-4}\right)\right)
\\&\quad+O
\left((a_{j,1}+a_{j,2})(p_{m,1}+p_{m,2})^2\left(\varepsilon^2+\varepsilon^{\frac{9}{2}}d^{-7}\right)\right)
+O
\left((p_{j,1}+p_{j,2})(a_{m,1}+a_{m,2})^2
\left(\varepsilon^{\frac{5}{2}}+\varepsilon^{4}d^{-5}\right)\right)
\\&\quad+O
\left((p_{j,1}+p_{j,2})(a_{m,1}+a_{m,2})(p_{m,1}+p_{m,2})\left(\varepsilon^3+\varepsilon^{\frac{9}{2}}d^{-5}\right)\right)
+O
\left((p_{j,1}+p_{j,2})(p_{m,1}+p_{m,2})^2\left(\varepsilon^{\frac{7}{2}}+\varepsilon^{6}d^{-7}\right)\right).
\end{aligned}
\end{equation}

For simplicity, we denote the errors in (\ref{interaction-55})-(\ref{Jan20-4}) as $\mathcal{E}(\delta_i,\delta_j,\delta_m, a_i,a_j,a_m,p_i,p_j,p_m)$.
The estimation of the remaining terms in (\ref{Mar4-1}) follows a similar approach. These terms are bounded by $\mathcal{E}(\delta_i,\delta_j,\delta_m, a_i,a_j,a_m,p_i,p_j,p_m)$, with the detailed derivation omitted here.

From (\ref{interaction-55})-(\ref{Jan20-4}), we observe that the interaction between three distinct bubbles in (\ref{solution-set}) is negligible compared to the interactions with the boundary function $g$ and the domain $\mathcal{D}$.

Parallel to (\ref{Ju30-energy}), by combining (\ref{interaction-55})-(\ref{Jan20-4}), for general $k$ bubbles, we establish the following proposition.
\begin{prop}\label{prop 6.4}
Let $\bar C>0$, $k\in \mathbb{N}_+$, for $i\neq j$, $l=1,2$, $\mathcal Q_{i}$, $\mathcal Q_{j}\in SO(3)$, set $\delta_i=Id\delta_{\mu_i,\xi^{(i)}}$, $\delta_j=\mathcal Q_{i}^{-1}\mathcal Q_{j}\delta_{\mu_j,\xi^{(j)}}$, $\xi^{(j)}-\xi^{(i)}=\sigma_{ij}=(\sigma_{ij,1},\sigma_{ij,2})$ and
\begin{equation*}
\begin{aligned}
&\vec{\mu}=(\mu_1,\cdots,\mu_k),\quad \vec{\xi}=\left(\xi^{(1)},\cdots,\xi^{(k)}\right),\quad \vec{Q}=\left(\mathcal Q_{1},\cdots,\mathcal Q_{k}\right),\quad \vec{a}=\left(a_{1},\cdots,a_{k}\right),\quad \vec{p}=\left(p_{1},\cdots,p_{k}\right),
\end{aligned}
\end{equation*}
\begin{equation*}
\begin{aligned}
\Sigma_{\mathcal{D},g}(\varepsilon,\vec{\mu},\vec{\xi},\vec{Q},\vec{a},\vec{p})
=&\sum_{i=1}^k F_{\mathcal{D},g^{(i)}}(\mu_i,\xi^{(i)},\mathcal Q_{i},a_{i,l},p_{i,l})+\sum_{i<j} F_{\mathcal{D}}(\delta_i,\delta_j,Id,\mathcal Q_{i}^{-1}\mathcal Q_{j})
\\&+\sum_{i<j}F^{(1)}_{\mathcal{D}}(\delta_i,\delta_j,Id,\mathcal Q_{i}^{-1}\mathcal Q_{j}, a_{i,l},p_{i,l})
+\sum_{i<j}F^{(2)}_{\mathcal{D}}(\delta_i,\delta_j,Id,\mathcal Q_{i}^{-1}\mathcal Q_{j}, a_{j,l},p_{j,l}),
\end{aligned}
\end{equation*}
and
\begin{equation*}
\begin{aligned}
\mathcal{E}(\varepsilon,\vec{\mu},\vec{a},\vec{p})
=&\sum_{i=1}^k\tilde{e}^{(1)}(\mu_i,a_{i,l},p_{i,l})+\tilde{e}^{(2)}(\varepsilon,\mu_i,a_{i,l},p_{i,l})
+\sum_{i,j=1}^k \mathcal{E}(\mu_i,\mu_j)
+\sum_{i=1}^k\mathcal{E}^{(i)}\left(\varepsilon,a_{i,l},p_{i,l}\right)
\\&+\sum_{i<j}\mathcal{E}^{(3)}\left(\varepsilon,a_{i,l},a_{j,l},p_{i,l},p_{j,l}\right)+\mathcal{E}^{(4)}\left(\varepsilon,a_{i,l},a_{j,l},p_{i,l},p_{j,l}\right)
+\mathcal{E}^{(5)}\left(\varepsilon,a_{i,l},a_{j,l},p_{i,l},p_{j,l}\right)
\\&+\sum_{i<j<m}\mathcal{E}(\delta_i,\delta_j,\delta_m, a_i,a_j,a_m,p_i,p_j,p_m),
\end{aligned}
\end{equation*}
where $F_{\mathcal{D},g^{(i)}}(\mu_i,\xi^{(i)},\mathcal Q_{i},a_{i,l},p_{i,l})$ is defined in Proposition \ref{prop 4.1}, $F_{\mathcal{D}}(\delta_i,\delta_j,Id,\mathcal Q_{i}^{-1}\mathcal Q_{j})$ and $\mathcal{E}(\mu_i,\mu_j)$ are defined in Lemma \ref{lemma 6.3}.
$F^{(1)}_{\mathcal{D}}(\delta_i,\delta_j,Id,\mathcal Q_{i}^{-1}\mathcal Q_{j}, a_{i,l},p_{i,l})$, $F^{(2)}_{\mathcal{D}}(\delta_i,\delta_j,Id,\mathcal Q_{i}^{-1}\mathcal Q_{j}, a_{j,l},p_{j,l})$, $\mathcal{E}^{(i)}\left(\varepsilon,a_{i,l},p_{i,l}\right),~i=1,\dots,k$, $\mathcal{E}^{(3)}\left(\varepsilon,a_{i,l},a_{j,l},p_{i,l},p_{j,l}\right)$,
$\mathcal{E}^{(4)}\left(\varepsilon,a_{i,l},a_{j,l},p_{i,l},p_{j,l}\right)$,
$\mathcal{E}^{(5)}\left(\varepsilon,a_{i,l},a_{j,l},p_{i,l},p_{j,l}\right)$ are defined in Lemma \ref{lemma-5.2}.
Then there holds
\begin{equation*}
\begin{aligned}
I_{\varepsilon}(u)=\frac{8k\pi}{3}+\Sigma_{\mathcal{D},g}(\varepsilon,\vec{\mu},\vec{\xi},\vec{Q},\vec{a},\vec{p})+\mathcal{E}(\varepsilon,\vec{\mu},\vec{a},\vec{p})
\end{aligned}
\end{equation*}
and
\begin{equation*}
\begin{aligned}
& \frac{\partial I_{\varepsilon}(u)}{\partial \mu_i}=\frac{\partial \Sigma_{\mathcal{D},g}}{\partial \mu_i}+\frac{1}{\mu_i}\mathcal{E}(\varepsilon,\vec{\mu},\vec{a},\vec{p}),
\quad
\frac{\partial I_{\varepsilon}\left(u\right)}{\partial \xi^{(i)}}=\frac{\partial \Sigma_{\mathcal{D},g}}{\partial \xi^{(i)}}+\frac{\pp\mathcal{E}(\varepsilon,\vec{\mu},\vec{a},\vec{p})}{\pp\xi^{(i)}},\quad \frac{\partial I_{\varepsilon}(u)}{\partial \mathcal Q_{i}}=\frac{\partial \Sigma_{\mathcal{D},g}}{\partial \mathcal Q_{i}}+\mathcal{E}(\varepsilon,\vec{\mu},\vec{a},\vec{p}),
\\& \frac{\partial I_{\varepsilon}\left(u\right)}{\partial a_{i,l}}=\frac{\partial \Sigma_{\mathcal{D},g}}{\partial a_{i,l}}+\frac{\pp\mathcal{E}(\varepsilon,\vec{\mu},\vec{a},\vec{p})}{\pp a_{i,l}},
\quad \frac{\partial I_{\varepsilon}\left(u\right)}{\partial p_{i,l}}=\frac{\partial \Sigma_{\mathcal{D},g}}{\partial p_{i,l}}+\frac{\pp\mathcal{E}(\varepsilon,\vec{\mu},\vec{a},\vec{p})}{\pp p_{i,l}},
\end{aligned}
\end{equation*}
where $u=\sum_{i=1}^kz_{\mathcal{A}_i}=\sum_{i=1}^kP\mathcal Q_i\delta_{\mu_i,\xi^{(i)},a_i, p_i}\in Z.$
\end{prop}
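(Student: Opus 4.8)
The plan is to bootstrap the $k$-bubble energy expansion from the two-bubble case \eqref{Ju30-energy}, which was already assembled from Proposition \ref{prop 4.1}, Lemma \ref{lemma 6.3}, Lemma \ref{lemma-5.2} and \eqref{Mar15-1}. The key structural observation is that $I_\varepsilon$ is a cubic polynomial in $u$; writing $u=\sum_{i=1}^k z_{\mathcal{A}_i}$ and expanding, every term is either a self-interaction of one bubble $z_{\mathcal{A}_i}$, a pairwise interaction involving exactly two distinct bubbles $z_{\mathcal{A}_i},z_{\mathcal{A}_j}$, or a triple interaction involving three distinct bubbles $z_{\mathcal{A}_i},z_{\mathcal{A}_j},z_{\mathcal{A}_m}$. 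Thus I would split $I_\varepsilon(u)=\sum_i I^{\mathrm{self}}_i + \sum_{i<j} I^{\mathrm{pair}}_{ij} + \sum_{i<j<m} I^{\mathrm{triple}}_{ijm}$, after integrating by parts exactly as in Section \ref{expansion-for-one-bubble} and Section \ref{expansion-multi-bubbles}.

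Next I would evaluate each group. For the self-interaction $I^{\mathrm{self}}_i$, Proposition \ref{prop 4.1} gives precisely $\tfrac{8}{3}\pi + F_{\mathcal{D},g^{(i)}}(\mu_i,\xi^{(i)},\mathcal Q_i,a_{i,l},p_{i,l}) + \tilde e^{(1)}(\mu_i,a_{i,l},p_{i,l}) + \tilde e^{(2)}(\varepsilon,\mu_i,a_{i,l},p_{i,l})$; summing over $i$ produces the $\tfrac{8k\pi}{3}$ constant. For the pairwise terms, by the rotation invariance argument in Subsection \ref{subsection5.4} the interaction between $\delta_i=\mathcal Q_i\delta_{\mu_i,\xi^{(i)}}$ and $\delta_j=\mathcal Q_j\delta_{\mu_j,\xi^{(j)}}$ equals that between $\delta_{\mu_i,\xi^{(i)}}$ and $\mathcal Q_i^{-1}\mathcal Q_j\delta_{\mu_j,\xi^{(j)}}$, so I can apply Lemma \ref{lemma 6.3}, Lemma \ref{lemma-5.2} and \eqref{Mar15-1} verbatim with the rotation $\mathcal Q_i^{-1}\mathcal Q_j$ and the displacement $\sigma_{ij}=\xi^{(j)}-\xi^{(i)}$, yielding the main terms $F_{\mathcal{D}}(\delta_i,\delta_j,Id,\mathcal Q_i^{-1}\mathcal Q_j)$, $F^{(1)}_{\mathcal{D}}$, $F^{(2)}_{\mathcal{D}}$ plus the error families $\mathcal E(\mu_i,\mu_j)$, $\mathcal E^{(1)},\dots,\mathcal E^{(5)}$. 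For the triple interactions I would invoke the estimates \eqref{interaction-55}--\eqref{Jan20-4} derived in Subsection \ref{subsection5.4}, which show each such term is bounded by $\mathcal E(\delta_i,\delta_j,\delta_m,a_i,a_j,a_m,p_i,p_j,p_m)$. Collecting the main terms into $\Sigma_{\mathcal{D},g}$ and all the error contributions into $\mathcal E(\varepsilon,\vec\mu,\vec a,\vec p)$ gives the stated expansion of $I_\varepsilon(u)$.

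For the derivative estimates, I would not differentiate term-by-term from scratch but rather track how the parameter derivatives act on each of the three groups. The self-interaction derivatives are supplied directly by the second part of Proposition \ref{prop 4.1}, where each parameter derivative of $\tilde e^{(j)}$ is controlled (with the factor $1/\mu_i$ appearing for $\partial_{\mu_i}$ and $\partial_{\xi^{(i)}}$ because of the scaling $\|\partial_{\xi^{(i)}}(\cdot)\|=O(1/\mu_i)$). For the pairwise and triple terms, since all of $F_{\mathcal{D}},F^{(1)}_{\mathcal{D}},F^{(2)}_{\mathcal{D}}$ and the error families are written as explicit polynomial/rational expressions in $\mu_i,\sigma_{ij},a_{i,l},p_{i,l}$ with denominators bounded below (using $\operatorname{dist}(\xi^{(i)},\xi^{(j)})\geq \bar C^{-1}$ from \eqref{solution-set}), differentiation only changes constants and powers of $\mu_i$ in a controlled way; crucially each error term already has a surplus power of $\varepsilon$ relative to the corresponding main term, so the differentiated errors remain absorbed into $\mathcal E(\varepsilon,\vec\mu,\vec a,\vec p)$ (or $\tfrac{1}{\mu_i}\mathcal E$ for the two cases noted). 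I would carry this out by re-running the integration-by-parts identities of Sections \ref{expansion-for-one-bubble}--\ref{expansion-multi-bubbles} with a derivative applied, exactly as done for the one-bubble case.

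The main obstacle I anticipate is bookkeeping rather than a conceptual gap: one must verify that when a parameter derivative falls on an error term that was only estimated (not computed exactly), the resulting quantity still lies in the declared error class $\mathcal E$. This requires that the estimates \eqref{interaction-55}--\eqref{Jan20-4} and the families $\mathcal E^{(1)},\dots,\mathcal E^{(5)}$, $\tilde e^{(1)},\tilde e^{(2)}$ be \emph{stable under differentiation} in the appropriate weighted sense — i.e. differentiating in $\mu_i$ costs at most a factor $\mu_i^{-1}$, in $\xi^{(i)}$ at most $d^{-1}$ (or $\mu_i^{-1}$), and in $a_{i,l},p_{i,l}$ nothing worse than the scales imposed by \eqref{solution-set}. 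Given the asymptotics $\mu_i=O(\sqrt\varepsilon)$, $|a_i|\sim\sqrt\varepsilon d^2$, $|p_i|\sim d^5/\varepsilon$ and $d=O(\mu_i^{2/3-\alpha})$, each such cost is more than compensated by the surplus $\varepsilon$-power already built into the error, so the absorption goes through; making this precise for every one of the many error subterms is the tedious-but-routine core of the proof.
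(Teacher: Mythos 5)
Your proposal follows essentially the same route as the paper: the paper assembles Proposition \ref{prop 6.4} exactly by splitting the cubic functional into self-interactions (Proposition \ref{prop 4.1}), pairwise interactions reduced via rotation invariance to Lemma \ref{lemma 6.3}, Lemma \ref{lemma-5.2} and \eqref{Mar15-1}, and triple interactions controlled by \eqref{interaction-55}--\eqref{Jan20-4}, with the derivative identities obtained by the same differentiation-of-the-expansion argument used in the one-bubble case. Your remark about verifying that the error families are stable under parameter differentiation is precisely the (largely implicit) bookkeeping the paper relies on, so the proposal is correct and matches the paper's proof.
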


\subsection{Main order of the parameters}\label{remark4.1}
Combining Propositions \ref{prop 4.1} and \ref{prop 6.4}, we formally derive the main order and the next order of the parameters $\mu$, $\xi_1$, $\xi_2$, $\mathcal Q$, $a_1$, $a_2$, $p_1$, $p_2$ for two bubbles.

{\bf $\bullet$ The main order of the parameters $\mu$, $\xi_1$, $\xi_2$,  $\mathcal Q$.}
Recall that from Proposition \ref{prop 4.1}, we have
\begin{equation*}
I_{\varepsilon}\left(P\mathcal{Q}\delta_{\mu,\xi,a,p}\right)=\frac{8}{3}\pi+F_{\mathcal{D},g}(\mu,\xi,\mathcal{Q},a,p)+\tilde{e}^{(1)}(\varepsilon,\mu,a,p)+\tilde{e}^{(2)}(\varepsilon,\mu,a,p),
\end{equation*}
where
\begin{align*}
&F_{\mathcal{D},g}(\mu,\xi,\mathcal{Q},a,p)
\\&:=8\pi\mu^4\frac{\partial^2 h_2^{(1)}}{\partial x \partial y}(\xi,\xi)
-4\pi\varepsilon \mu^2\left[\frac{\partial^2 \left(\mathcal Q^{-1} g\right)_1}{\partial x^2}(\xi,\omega)+\frac{\partial^2 \left(\mathcal Q^{-1} g\right)_2}{\partial x \partial y}(\xi,\omega)\right]
\\&\quad-16\pi a_1^2\mu^2\frac{\partial h_1^{(-1,1)}}{\partial x}(\xi,\xi)
-16\pi a_2^2\mu^2\frac{\partial h_2^{(-1,2)}}{\partial x}(\xi,\xi)
\\&\quad-\frac{4\pi}{3}p_1^2\mu^8\frac{\partial^4 h^{(2, 1)}_1}{\partial x^4}(\xi, \xi) +\frac{4\pi}{3}p_2^2\mu^8\frac{\partial^4 h^{(2, 2)}_2}{\partial x^4}(\xi, \xi)
\\&\quad-32\pi p_1a_1\mu^5\frac{\partial h_1^{(2,1)}}{\partial x}(\xi,\xi)-32\pi p_1a_2\mu^5\frac{\partial h_2^{(2,1)}}{\partial x}(\xi,\xi)
-32\pi p_2a_1\mu^5\frac{\partial h_1^{(2,2)}}{\partial x}(\xi,\xi)
-32\pi p_2a_2\mu^5\frac{\partial h_2^{(2,2)}}{\partial x}(\xi,\xi).
\end{align*}
For simplicity, in the following, we write
\begin{equation*}
\begin{aligned}
H(\xi)=2\frac{\partial^2 h_2^{(1)}}{\partial x \partial y}(\xi,\xi),\qquad d_{\mathcal{Q}} g(\xi)=\frac{\partial^2 (\mathcal{Q} g)_1}{\partial x^2}(\xi,\omega)+\frac{\partial^2(\mathcal{Q} g)_2}{\partial x \partial y}(\xi,\omega), ~\quad\xi,~ \omega \in \mathcal{D}.
\end{aligned}
\end{equation*}
Then differentiating $F_{\mathcal{D},g}(\mu,\xi,\mathcal{Q},a,p)$ with respect to the parameters $\mu$, $\xi$ and $\mathcal Q$,  while neglecting higher order terms, specifically those involving $a_l$, $p_l$ for $l=1,2$, we obtain
\begin{equation}\label{derivative1}
\left\{
\begin{aligned}
&\frac{\partial F_{\mathcal{D},g}(\mu,\xi,\mathcal{Q},a,p)}{\partial \mu}=0~\Longleftrightarrow~ 2\mu^2 H(\xi)-\varepsilon d_{\mathcal{Q}^{-1}}g(\xi)=0,
\\&\nabla_{\xi}F_{\mathcal{D},g}(\mu,\xi,\mathcal{Q},a,p)=0~\Longleftrightarrow~ \mu^2\nabla_{\xi} H(\xi)-\varepsilon \nabla_{\xi} d_{\mathcal{Q}^{-1}}g(\xi)=0,
\\&\frac{\partial F_{\mathcal{D},g}(\mu,\xi,\mathcal{Q},a,p)}{\partial \mathcal{Q}}=0~\Longleftrightarrow~ \frac{\partial}{\partial \mathcal{Q}}d_{\mathcal{Q}^{-1}}g(\xi)=0.
\end{aligned}
\right.
\end{equation}
Then $\mathcal{Q}$ is the identity matrix at main order. Specifically, we parameterize the rotation matrix by its Euler angles $\theta$, $\psi$ and $\phi$ as follows
\begin{equation*}
\mathcal Q^{-1} =\begin{pmatrix}
\cos\psi\cos\phi-\cos\theta\sin\phi\sin\psi & \cos\psi\sin\phi+\cos\theta\cos\phi\sin\psi & \sin\psi\sin\theta\\
-\sin\theta\sin\phi & \sin\theta\cos\phi & -\cos\theta\\
-\sin\psi\cos\phi-\cos\theta\sin\phi\cos\psi & -\sin\psi\sin\phi+\cos\theta\cos\phi\cos\psi & \cos\psi\sin\theta
\end{pmatrix}.
\end{equation*}
When $\theta = \frac{\pi}{2}$, $\phi = 0$ and $\psi = 0$, $\mathcal Q^{-1}$ reduces to the identity matrix.
In the case where $\phi = 0$ and $\psi = 0$, we have
\begin{equation*}
\mathcal Q^ {-1}=\begin{pmatrix}
1&0&0\\
0&\sin \theta & -\cos \theta \\
0&\cos \theta & \sin \theta
\end{pmatrix}
\end{equation*}
and
\begin{equation*}
\begin{aligned}
\frac{\partial}{\partial \mathcal{Q}}d_{\mathcal{Q}^{-1}}g(\xi)
&=\frac{\partial}{\partial \mathcal{Q}}\left[\frac{\partial^2 \left(\mathcal Q^{-1} g\right)_1}{\partial x^2}(\xi,\omega)+\frac{\partial^2 \left(\mathcal Q^{-1} g\right)_2}{\partial x \partial y}(\xi,\omega)\right]
\\& = \frac{\partial }{\partial \theta}\left(\frac{\partial^2}{\partial x^2}g_1(\xi, \omega) + \sin\theta \frac{\partial^2}{\partial x\partial y}g_2(\xi, \omega)\right)
\\&= \cos\theta \frac{\partial^2}{\partial x\partial y}g_2(\xi, \omega)
\\&= 0.
\end{aligned}
\end{equation*}
Therefore, the main order of $\theta$
is $\theta_0 = \frac{\pi}{2}$. Similarly, we find that $\phi_0= 0$ and $\psi_0 = 0$.

From (\ref{derivative1}), we have the following relations
$$\frac{\partial}{\partial \mathcal{Q}}d_{\mathcal{Q}^{-1}}g(\xi)=0,\quad
H(\xi)=\frac{\varepsilon d_{\mathcal{Q}^{-1}}g(\xi)}{2\mu^2}, \quad\nabla_{\xi} \log H(\xi)=2\nabla_{\xi} \log \left(d_{\mathcal{Q}^{-1}}g(\xi)\right).$$
From the definition of $d_{\mathcal{Q}^{-1}}g$, it holds that
$$d_{\mathcal{Q}^{-1}}g(\xi)=\frac{\partial^2 g}{\partial x^2}(\xi)\cdot (\mathcal{Q}e_1)
+\frac{\partial^2 g}{\partial x \partial y}(\xi)\cdot (\mathcal{Q}e_2),$$
where $\{\vec{e}_1, \vec{e}_2, \vec{e}_3\}$ is the standard basis of $\mathbb{R}^3$: $\vec{e}_1=(1,0,0)^{\mathrm{T}} $, $\vec{e}_2=(0,1,0)^{\mathrm{T}} $, $\vec{e}_3=(0,0,1)^{\mathrm{T}}$.
Note that $ d_{\mathcal{Q}^{-1}}g$ is positive. Using  \cite[Lemma 3.1.2]{Takeshi2000}, we have
\begin{equation*}
\begin{aligned}
\frac{\partial}{\partial \mathcal{Q}}d_{\mathcal{Q}^{-1}}g(\xi)=0 ~\Longrightarrow ~d_{\mathcal{Q}^{-1}}g(\xi)=\left(\left|\frac{\partial^2 g}{\partial x^2}\right|^2+\left|\frac{\partial^2 g}{\partial x \partial y}\right|^2 \pm 2\left|\left(\frac{\partial^2 g}{\partial x^2}\right)\wedge \left(\frac{\partial^2 g}{\partial x \partial y}\right)\right|\right)^{\frac{1}{2}}(\xi).
\end{aligned}
\end{equation*}
Thus, under the condition that $\left|\frac{\partial^2 g}{\partial x^2}\right|^2+\left|\frac{\partial^2 g}{\partial x \partial y}\right|^2\neq 0$, the system (\ref{derivative1}) becomes
\begin{equation*}
\begin{aligned}
&\frac{\partial}{\partial \mathcal{Q}}d_{\mathcal{Q}^{-1}}g(\xi)=0,\quad
\mu=\left(\frac{\varepsilon d_{\mathcal{Q}^{-1}}g(\xi)}{2H (\xi)}\right)^{\frac{1}{2}},
\quad
\nabla_{\xi} \left(\frac{ \left|\frac{\partial^2 g}{\partial x^2}\right|^2+\left|\frac{\partial^2 g}{\partial x \partial y}\right|^2 \pm 2\left|\left(\frac{\partial^2 g}{\partial x^2}\right)\wedge \left(\frac{\partial^2 g}{\partial x \partial y}\right)\right|}{H}\right)^{\frac{1}{2}}(\xi)=0.
\end{aligned}
\end{equation*}
and then we obtain the main order $\mu\sim\mu_0=\sqrt \varepsilon$, $\xi\sim \xi_0=\omega$.

{\bf $\bullet$ The next order of $\mu_i$.}
For $k=2$, we have (\ref{Ju30-energy}). Let us now consider the terms in (\ref{Ju30-energy}) that only contains $\mu_1$, $\mu_2$, $\xi$, $\zeta$ and $\varepsilon$, i.e.
\begin{align*}
\tilde I_\varepsilon(\mu_1, \mu_2, \xi, \zeta) : = &~ 4\pi\mu^4_1H(\xi)
-4\pi\varepsilon \mu^2_1 d_{Id}g(\xi) + 4\pi\mu^4_2H(\zeta)
-4\pi\varepsilon \mu^2_2d_{\mathcal{Q}_2^{-1}}g(\zeta)
\\& + q_{11}\left[\frac{-48\pi \left(\sigma _1^4-6 \sigma _2^2 \sigma _1^2+\sigma _2^4\right)}{|\sigma|^8}\mu_1^2\mu_2^2-4\pi \varepsilon\mu_2^2\frac{\partial^2g_1^{(1)}}{\partial x^2}(\zeta,\omega^{(1)})\right]
\\&+q_{12}\left[\frac{-192\pi\sigma_1\sigma_2(\sigma_1^2-\sigma_2^2)\mu_1^2\mu_2^2}{|\sigma|^8}
-4\pi \varepsilon\mu_2^2\frac{\partial^2 g^{(1)}_2}{\partial x\pp y}(\zeta,\omega^{(1)})\right]
\\&
+q_{21}\left[\frac{-192\pi\sigma_1\sigma_2(\sigma_1^2-\sigma_2^2)\mu_1^2\mu_2^2}{|\sigma|^8}
-4\pi \varepsilon\mu_2^2\frac{\partial^2 g^{(1)}_1}{\partial x^2}(\zeta,\omega^{(1)})\right]
\\&+q_{22}\left[\frac{48\pi\left(\sigma _1^4-6 \sigma _2^2 \sigma _1^2+\sigma _2^4\right)}{|\sigma|^8}\mu_1^2\mu_2^2
-4\pi \varepsilon\mu_2^2\frac{\partial^2 g^{(1)}_2}{\partial x \partial y}(\zeta,\omega^{(1)}) \right].
\end{align*}
Then by taking derivatives with respect to $\mu_1$ and $\mu_2$, we obtain that the critical points of $\tilde I_\varepsilon(\mu_1, \mu_2, \xi, \zeta)$ satisfy the following systems of $\mu_1$ and $\mu_2$,
\begin{align*}
& 16\pi\mu^2_1H(\xi)
-8\pi\varepsilon d_{Id}g(\xi)
\\&+ q_{11}\left[\frac{-48\pi \left(\sigma _1^4-6 \sigma _2^2 \sigma _1^2+\sigma _2^4\right)}{|\sigma|^8}2\mu_2^2\right]
+q_{12}\left[\frac{-192\pi\sigma_1\sigma_2(\sigma_1^2-\sigma_2^2)}{|\sigma|^8}2\mu_2^2\right]
\\& + q_{21}\left[\frac{-192\pi\sigma_1\sigma_2(\sigma_1^2-\sigma_2^2)}{|\sigma|^8}2\mu_2^2\right]+q_{22}\left[\frac{48\pi\left(\sigma _1^4-6 \sigma _2^2 \sigma _1^2+\sigma _2^4\right)}{|\sigma|^8}2\mu_2^2\right] = 0,
\end{align*}
and
\begin{align*}
& 16\pi\mu^2_2H(\zeta)
-8\pi\varepsilon d_{\mathcal{Q}_2^{-1}}g(\zeta)
\\&+ q_{11}\left[\frac{-48\pi \left(\sigma _1^4-6 \sigma _2^2 \sigma _1^2+\sigma _2^4\right)}{|\sigma|^8}2\mu_1^2-8\pi\varepsilon\frac{\partial^2 g^{(1)}_1}{\partial x^2}(\zeta,\omega^{(1)})\right]
\\& +q_{12}\left[\frac{-192\pi\sigma_1\sigma_2(\sigma_1^2-\sigma_2^2)}{|\sigma|^8}2\mu_1^2
-8\pi\varepsilon\frac{\partial^2 g^{(1)}_1}{\partial x\partial y}(\zeta,\omega^{(1)})\right]
+q_{21}\left[\frac{-192\pi\sigma_1\sigma_2(\sigma_1^2-\sigma_2^2)}{|\sigma|^8}2\mu_1^2
-8\pi\varepsilon\frac{\partial^2 g^{(1)}_2}{\partial x^2}(\zeta,\omega^{(1)})\right]
\\& +q_{22}\left[\frac{48\pi\left(\sigma _1^4-6 \sigma _2^2 \sigma _1^2+\sigma _2^4\right)}{|\sigma|^8}2\mu_1^2
-8\pi\varepsilon\frac{\partial^2 g^{(1)}_2}{\partial x\partial y}(\zeta,\omega^{(1)})\right] = 0.
\end{align*}
In the matrix form, we have
\begin{equation*}
\begin{pmatrix}
16\pi H(\xi) & A_{\xi, \zeta}\\
A_{\xi, \zeta} & 16\pi H(\zeta)
\end{pmatrix} \begin{pmatrix}
\mu_1^2\\
\mu_2^2
\end{pmatrix}= 8\pi\varepsilon
\begin{pmatrix}
d_{Id}g(\xi) \\
 d_{\mathcal{Q}_2^{-1}}g(\zeta)
\end{pmatrix}+8\pi\varepsilon
\begin{pmatrix}
B_{\xi, \zeta}\\
B_{\zeta, \xi}
\end{pmatrix},
\end{equation*}
where the constants $A_{\xi, \zeta}$, $B_{\xi, \zeta}$ and $B_{\zeta, \xi}$ are defined as
\begin{equation*}
\begin{aligned}
 A_{\xi, \zeta}: =
&~ q_{11}\left[2\frac{-48\pi \left(\sigma _1^4-6 \sigma _2^2 \sigma _1^2+\sigma _2^4\right)}{|\sigma|^8}\right]+q_{12}\left[2\frac{-192\pi\sigma_1\sigma_2(\sigma_1^2-\sigma_2^2)}{|\sigma|^8}\right]
\\&
+q_{21}\left[2\frac{-192\pi\sigma_1\sigma_2(\sigma_1^2-\sigma_2^2)}{|\sigma|^8}\right]+q_{22}\left[2\frac{48\pi\left(\sigma _1^4-6 \sigma _2^2 \sigma _1^2+\sigma _2^4\right)}{|\sigma|^8}\right]
\end{aligned}
\end{equation*}

\begin{equation*}
\begin{aligned}
&B_{\xi, \zeta}: = 0,\quad B_{\zeta, \xi}: = q_{11}\frac{\partial^2 g^{(1)}_1}{\partial x^2}(\zeta,\omega^{(1)})+q_{12}\frac{\partial^2 g^{(1)}_1}{\partial x\partial y}(\zeta,\omega^{(1)})
+q_{21}\frac{\partial^2 g^{(1)}_2}{\partial x^2}(\zeta,\omega^{(1)})
+q_{22}\frac{\partial^2 g^{(1)}_2}{\partial x \partial y}(\zeta,\omega^{(1)}).
\end{aligned}
\end{equation*}
An important observation is that when the concentration points $\xi$, $\zeta$ are sufficiently close to the boundary, the matrix
$$
\begin{pmatrix}
16\pi H(\xi) & A_{\xi, \zeta}\\
A_{\xi, \zeta} & 16\pi H(\zeta)
\end{pmatrix}=\begin{pmatrix}
32\pi\frac{\partial^2 h_2^{(1)}}{\partial x \partial y}(\xi,\xi) & A_{\xi, \zeta}\\
A_{\xi, \zeta} & 32\pi\frac{\partial^2 h_2^{(1)}}{\partial x \partial y}(\zeta,\zeta)
\end{pmatrix}
$$
is almost diagonal and positive definite. Therefore, we have $\mu_1^2 = b_1^2\varepsilon$, $\mu_2^2 = b_2^2\varepsilon$ with $b_1-1 = O((1-|\xi|^2)^4)$ and $b_2-1 = O((1-|\zeta|^2)^4)$ from Lemma \ref{evalues-at-xi-2}.

{\bf $\bullet$ The next order of $\xi^{(i)}$}.
Let us differentiate $\tilde I_\varepsilon(\mu_1, \mu_2, \xi, \zeta)$ with respect to $\xi$, we obtain
\begin{equation*}
\begin{aligned}
&4\pi\mu^4_1\nabla_\xi H(\xi)
-4\pi\varepsilon \mu^2_1\nabla_\xi d_{Id}g(\xi)
\\&+ q_{11}\left[8\pi\mu_1^2\mu_2^2\nabla_\xi\frac{\partial^2h_1^{(1)}}{\partial x^2}(\xi,\zeta)-4\pi\varepsilon \mu_1^2\nabla_\xi\frac{\partial^2 g^{(2)}_1}{\partial x^2}(\xi,\omega^{(2)})\right]
\\&+q_{12}\left[8\pi\mu_1^2\mu_2^2\nabla_\xi\frac{\partial^2h_2^{(1)}}{\partial x^2}(\xi,\zeta)-4\pi\varepsilon \mu_1^2\nabla_\xi\frac{\partial^2 g^{(2)}_2}{\partial x^2}(\xi,\omega^{(2)})\right]
\\ & +q_{21}\left[8\pi\mu_1^2\mu_2^2\nabla_\xi\frac{\partial^2h_1^{(1)}}{\partial x \partial y}(\xi,\zeta)-4\pi\varepsilon \mu_1^2\nabla_\xi\frac{\partial^2 g^{(2)}_1}{\partial x \partial y}(\xi,\omega^{(2)})\right]
\\&+q_{22}\left[8\pi\mu_1^2\mu_2^2\nabla_\xi\frac{\partial^2 h_2^{(1)}}{\partial x\partial y}(\xi,\zeta) -4\pi\varepsilon \mu_1^2\nabla_\xi\frac{\partial^2 g^{(2)}_2}{\partial x \partial y}(\xi,\omega^{(2)})\right] = 0.
\end{aligned}
\end{equation*}
Note that $d_{Id}g(\xi)=4\frac{\pp^2h_2^{(1)}}{\pp x\pp y}(\xi,\omega^{(1)})$, then we have the following approximate equation
\begin{align*}
& \left(8\pi\mu^4_1-16\pi\varepsilon\mu_1^2\right)\nabla_\xi^2\frac{\partial^2 h_2^{(1)}}{\partial x \partial y}(\omega^{(1)},\omega^{(1)})(\xi-\omega^{(1)})
\\& + q_{11}\left[8\pi\mu_1^2\mu_2^2\nabla_\xi\frac{\partial^2h_1^{(1)}}{\partial x^2}(\xi,\zeta)-4\pi\varepsilon \mu_1^2\nabla_\xi\frac{\partial^2 g^{(2)}_1}{\partial x^2}(\xi,\omega^{(2)})\right]
\\&+q_{12}\left[8\pi\mu_1^2\mu_2^2\nabla_\xi\frac{\partial^2h_2^{(1)}}{\partial x^2}(\xi,\zeta)-4\pi\varepsilon \mu_1^2\nabla_\xi\frac{\partial^2 g^{(2)}_2}{\partial x^2}(\xi,\omega^{(2)})\right]
\\&+q_{21}\left[8\pi\mu_1^2\mu_2^2\nabla_\xi\frac{\partial^2h_1^{(1)}}{\partial x \partial y}(\xi,\zeta)-4\pi\varepsilon \mu_1^2\nabla_\xi\frac{\partial^2 g^{(2)}_1}{\partial x \partial y}(\xi,\omega^{(2)})\right]
\\&+q_{22}\left[8\pi\mu_1^2\mu_2^2\nabla_\xi\frac{\partial^2 h_2^{(1)}}{\partial x\partial y}(\xi,\zeta) -4\pi\varepsilon \mu_1^2\nabla_\xi\frac{\partial^2 g^{(2)}_2}{\partial x \partial y}(\xi,\omega^{(2)})\right] = 0.
\end{align*}
Using Lemma \ref{evalues-at-xi-2}, we have
\begin{equation*}
\begin{aligned}
\xi = \omega^{(1)} + O((1-|\omega^{(1)}|^2)^6).
\end{aligned}
\end{equation*}
Similarly, we have
\begin{equation*}
\begin{aligned}
\zeta = \omega^{(2)} + O((1-|\omega^{(2)}|^2)^6).
\end{aligned}
\end{equation*}

{\bf $\bullet$ The next order of $\theta_i$, $\phi_i$, $\psi_i$}. We consider the following function involving $\mu_1$, $\mu_2$, $\xi$, $\zeta$, $\mathcal Q_1$ and $\mathcal Q_2$. Without loss of generality, we assume that $\mathcal Q_1=Id$.
\begin{align*}
&\hat I_\varepsilon(\mu_1, \mu_2, \xi, \zeta, \mathcal Q_1, \mathcal Q_2)
\\&:=4\pi\mu^4_1H(\xi)
-4\pi\varepsilon \mu^2_1 d_{Id}g(\xi) + 4\pi\mu^4_2H(\zeta)
-4\pi\varepsilon \mu^2_2d_{\mathcal{Q}_2^{-1}}g(\zeta)
\\&\quad + q_{11}\left[\frac{-48\pi \left(\sigma _1^4-6 \sigma _2^2 \sigma _1^2+\sigma _2^4\right)}{|\sigma|^8}\mu_1^2\mu_2^2+8\pi\mu_1^2\mu_2^2\frac{\partial^2h_1^{(1)}}{\partial x^2}(\xi,\zeta) -4\pi\varepsilon \mu_1^2\frac{\partial^2 g^{(2)}_1}{\partial x^2}(\xi,\omega^{(2)}) -4\pi\varepsilon \mu_2^2\frac{\partial^2 g^{(1)}_1}{\partial x^2}(\zeta,\omega^{(1)})\right]
\\&\quad+q_{12}\left[\frac{-192\pi\sigma_1\sigma_2(\sigma_1^2-\sigma_2^2)\mu_1^2\mu_2^2}{|\sigma|^8}
+8\pi\mu_1^2\mu_2^2\frac{\partial^2h_2^{(1)}}{\partial x^2}(\xi,\zeta)-4\pi\varepsilon \mu_1^2\frac{\partial^2 g^{(2)}_2}{\partial x^2}(\xi,\omega^{(2)})-4\pi\varepsilon \mu_2^2\frac{\partial^2g^{(1)}_1}{\partial x\partial y}(\zeta,\omega^{(1)})\right]
\\&
\quad+q_{21}\left[\frac{-192\pi\sigma_1\sigma_2(\sigma_1^2-\sigma_2^2)\mu_1^2\mu_2^2}{|\sigma|^8}
+8\pi\mu_1^2\mu_2^2\frac{\partial^2h_1^{(1)}}{\partial x \partial y}(\xi,\zeta) -4\pi\varepsilon \mu_1^2\frac{\partial^2g^{(2)}_1}{\partial x \partial y}(\xi,\omega^{(2)})-4\pi\varepsilon \mu_2^2\frac{\partial^2g^{(1)}_2}{\partial x^2}(\zeta,\omega^{(1)})\right]
\\&\quad+q_{22}\left[\frac{48\pi\left(\sigma _1^4-6 \sigma _2^2 \sigma _1^2+\sigma _2^4\right)}{|\sigma|^8}\mu_1^2\mu_2^2
+8\pi\mu_1^2\mu_2^2\frac{\partial^2 h_2^{(1)}}{\partial x\partial y}(\xi,\zeta)-4\pi\varepsilon \mu_1^2\frac{\partial^2 g^{(2)}_2}{\partial x \partial y}(\xi,\omega^{(2)}) -4\pi\varepsilon \mu_2^2\frac{\partial^2g^{(1)}_2}{\partial x\partial y}(\zeta,\omega^{(1)})\right].
\end{align*}
Then take derivative with respect to $\phi_2$ and set $\theta_2 = \frac{\pi}{2}$, $\psi_2 = 0$,
\begin{equation}\label{Mar25-1}
\begin{aligned}
\frac{\partial }{\partial \phi_2}\left(-4\pi\varepsilon \mu^2_2d_{\mathcal{Q}_2^{-1}}g(\zeta)\right)+ C_{\mathcal Q_2}= 0,
\end{aligned}
\end{equation}
here
\begin{align*}
&C_{\mathcal Q_2}:=\Bigg\{\frac{\partial q_{11}}{\partial \phi_2}\left[\frac{-48\pi \left(\sigma _1^4-6 \sigma _2^2 \sigma _1^2+\sigma _2^4\right)}{|\sigma|^8}\mu_1^2\mu_2^2+8\pi\mu_1^2\mu_2^2\frac{\partial^2h_1^{(1)}}{\partial x^2}(\xi,\zeta)\right.
\\&\qquad\qquad\qquad\qquad\left. -4\pi\varepsilon \mu_1^2\frac{\partial^2 g^{(2)}_1}{\partial x^2}(\xi,\omega^{(2)}) -4\pi\varepsilon \mu_2^2\frac{\partial^2 g^{(1)}_1}{\partial x^2}(\zeta,\omega^{(1)})\right]
\\&\qquad\qquad\quad +\frac{\partial q_{12} }{\partial \phi_2}\left[\frac{-192\pi\sigma_1\sigma_2(\sigma_1^2-\sigma_2^2)\mu_1^2\mu_2^2}{|\sigma|^8}
+8\pi\mu_1^2\mu_2^2\frac{\partial^2h_2^{(1)}}{\partial x^2}(\xi,\zeta)\right.
\\&\qquad\qquad\qquad\qquad\quad\left. -4\pi\varepsilon \mu_1^2\frac{\partial^2 g^{(2)}_2}{\partial x^2}(\xi,\omega^{(2)})-4\pi\varepsilon \mu_2^2\frac{\partial^2g^{(1)}_1}{\partial x\partial y}(\zeta,\omega^{(1)})\right]
\\&\qquad\qquad\quad +\frac{\partial q_{21}}{\partial \phi_2}\left[\frac{-192\pi\sigma_1\sigma_2(\sigma_1^2-\sigma_2^2)\mu_1^2\mu_2^2}{|\sigma|^8}
+8\pi\mu_1^2\mu_2^2\frac{\partial^2h_1^{(1)}}{\partial x \partial y}(\xi,\zeta)\right.\\
&\qquad\qquad\qquad\qquad\quad\left. -4\pi\varepsilon \mu_1^2\frac{\partial^2g^{(2)}_1}{\partial x \partial y}(\xi,\omega^{(2)})-4\pi\varepsilon \mu_2^2\frac{\partial^2g^{(1)}_2}{\partial x^2}(\zeta,\omega^{(1)})\right]
\\&\qquad\qquad\quad +\frac{\partial q_{22} }{\partial \phi_2}\left[\frac{48\pi\left(\sigma_1^4-6 \sigma_2^2 \sigma_1^2+\sigma_2^4\right)}{|\sigma|^8}\mu_1^2\mu_2^2
+8\pi\mu_1^2\mu_2^2\frac{\partial^2 h_2^{(1)}}{\partial x\partial y}(\xi,\zeta)\right.\\
&\qquad\qquad\qquad\qquad\quad\left. -4\pi\varepsilon \mu_1^2\frac{\partial^2 g^{(2)}_2}{\partial x \partial y}(\xi,\omega^{(2)}) -4\pi\varepsilon \mu_2^2\frac{\partial^2g^{(1)}_2}{\partial x\partial y}(\zeta,\omega^{(1)})\right]\Bigg\}.
\end{align*}
(\ref{Mar25-1}) can be rewritten as
\begin{equation*}
\begin{aligned}
&4\pi\varepsilon \mu^2_2\sin\phi_2\left[\frac{\partial^2 g^{(1)}_1}{\partial x^2}(\zeta,\omega^{(2)})+\frac{\partial^2 g^{(1)}_2}{\partial x \partial y}(\zeta,\omega^{(2)})\right]-4\pi\varepsilon \mu^2_2\cos\phi_2\left[\frac{\partial^2 g^{(1)}_2}{\partial x^2}(\zeta,\omega^{(2)})-\frac{\partial^2 g^{(1)}_1}{\partial x \partial y}(\zeta,\omega^{(2)})\right]+ C_{\mathcal Q_2}= 0.
\end{aligned}
\end{equation*}
From Lemma \ref{evalues-at-xi-2}, the term $-4\pi\varepsilon \mu^2_2\cos\phi_2\left[\frac{\partial^2 g^{(1)}_2}{\partial x^2}(\zeta,\omega^{(2)})-\frac{\partial^2 g^{(1)}_1}{\partial x \partial y}(\zeta,\omega^{(2)})\right]$ is negligible, therefore, we have the main order
\begin{equation*}
\begin{aligned}
&\phi_2 \approx \sin\phi_2 = \frac{-C_{\mathcal Q_2}}{4\pi\varepsilon \mu^2_2\left[\frac{\partial^2 g^{(1)}_1}{\partial x^2}(\zeta,\omega^{(2)})+\frac{\partial^2 g^{(1)}_2}{\partial x \partial y}(\zeta,\omega^{(2)})\right]} = O\left((1-|\omega^{(2)}|^2)^4\right) .
\end{aligned}
\end{equation*}
Similarly, we have
\begin{equation*}
\begin{aligned}
\theta_2 = \frac{\pi}{2} + O\left((1-|\omega^{(2)}|^2)^4\right),\quad
\psi_2 = O\left((1-|\omega^{(2)}|^2)^4\right),
\end{aligned}
\end{equation*}
and
\begin{equation*}
\begin{aligned}
\phi_1 = O\left((1-|\omega^{(1)}|^2)^4\right), \quad\theta_1 = \frac{\pi}{2} + O\left((1-|\omega^{(1)}|^2)^4\right),\quad \psi_1 = O\left((1-|\omega^{(1)}|^2)^4\right).
\end{aligned}
\end{equation*}

{\bf $\bullet$ The order of $a_{i,l}$ and $p_{i, l}$, $i,~l= 1, 2$.} Recalling the definitions in Proposition \ref{prop 4.1} and \eqref{J7-R1}, and differentiating
$$F_{\mathcal{D},g^{(1)}}(\mu_1,\xi^{(1)},\mathcal{Q}_1,a_{1,l},p_{1,l})+F^{(1)}_{\mathcal{D}}(\delta_1,\delta_2,Id,\mathcal{Q}_2,a_{1,l},p_{1,l})$$
with respect to the parameters $a_{1,1}$, $a_{1,2}$, $p_{1,1}$, $p_{1,2}$, the following relations hold
\begin{align*}
&\frac{\partial F_{\mathcal{D},g^{(1)}}(\mu_1,\xi^{(1)},\mathcal{Q}_1,a_{1,l},p_{1,l})}{\partial a_{1,1}}+\frac{\partial F^{(1)}_{\mathcal{D}}(\delta_1,\delta_2,Id,\mathcal{Q}_2,a_{1,l},p_{1,l})}{\partial a_{1,1}}=0
\\&\Longleftrightarrow
 -a_{1,1}\mu_1^2\frac{\partial h_1^{(-1,1)}}{\partial x}(\xi,\xi)-p_{1,1}\mu_1^5\frac{\partial h_1^{(2,1)}}{\partial x}(\xi,\xi)
-p_{1,2}\mu_1^5\frac{\partial h_1^{(2,2)}}{\partial x}(\xi,\xi)+\mu_1\mu_2^2\bar A_{1,1}=0,
\\&
\frac{\partial F_{\mathcal{D},g^{(1)}}(\mu_1,\xi^{(1)},\mathcal{Q}_1,a_{1,l},p_{1,l})}{\partial a_{1,2}}+\frac{\partial F^{(1)}_{\mathcal{D}}(\delta_1,\delta_2,Id,\mathcal{Q}_2,a_{1,l},p_{1,l})}{\partial a_{1,2}}=0
\\&\Longleftrightarrow
 -a_{1,2}\mu_1^2\frac{\partial h_1^{(-1,1)}}{\partial x}(\xi,\xi)-p_{1,1}\mu_1^5\frac{\partial h_2^{(2,1)}}{\partial x}(\xi,\xi)
-p_{1,2}\mu_1^5\frac{\partial h_2^{(2,2)}}{\partial x}(\xi,\xi)+\mu_1\mu_2^2\bar A_{1,2}=0,
\\&
\frac{\partial F_{\mathcal{D},g^{(1)}}(\mu_1,\xi^{(1)},\mathcal{Q}_1,a_{1,l},p_{1,l})}{\partial p_{1,1}}+\frac{\partial F^{(1)}_{\mathcal{D}}(\delta_1,\delta_2,Id,\mathcal{Q}_2,a_{1,l},p_{1,l})}{\partial p_{1,1}}=0
\\&\Longleftrightarrow-p_{1,1}\mu_1^8\frac{\partial^4 h^{(2, 1)}_1}{\partial x^4}(\xi, \xi)-12 a_{1,1}\mu_1^5\frac{\partial h_1^{(2,1)}}{\partial x}(\xi,\xi)-12a_{1,2}\mu_1^5\frac{\partial h_2^{(2,1)}}{\partial x}(\xi,\xi)=0,
\\&
\frac{\partial F_{\mathcal{D},g^{(1)}}(\mu_1,\xi^{(1)},\mathcal{Q}_1,a_{1,l},p_{1,l})}{\partial p_{1,2}}+\frac{\partial F^{(1)}_{\mathcal{D}}(\delta_1,\delta_2,Id,\mathcal{Q}_2,a_{1,l},p_{1,l})}{\partial p_{1,2}}=0
\\&
\Longleftrightarrow
p_{1,2}\mu_1^8\frac{\partial^4 h^{(2, 2)}_2}{\partial x^4}(\xi, \xi) -12a_{1,1}\mu_1^5\frac{\partial h_1^{(2,2)}}{\partial x}(\xi,\xi)-12 a_{1,2}\mu_1^5\frac{\partial h_2^{(2,2)}}{\partial x}(\xi,\xi)=0.
\end{align*}
Here
\begin{equation*}
\begin{aligned}
\bar A_{1,2}: =q_{11}\frac{-32\pi\sigma_1 \left(\sigma_1^2-3 \sigma_2^2\right)}{|\sigma|^6}+q_{12}\frac{-32\pi\sigma_2\left(3\sigma_1^2-\sigma_2^2\right)}{|\sigma|^6}+q_{21}\frac{-32\pi\sigma_2\left(3\sigma_1^2- \sigma_2^2\right)}{|\sigma|^6}
+q_{22}\frac{32\pi\sigma_1\left(\sigma_1^2-3\sigma_2^2\right)}{|\sigma|^6},
\end{aligned}
\end{equation*}
\begin{equation*}
\begin{aligned}
\bar A_{1,2}: =q_{11}\frac{32\pi \sigma_2 \left(3\sigma_1^2-\sigma _2^2\right)}{|\sigma|^6}+q_{12}\frac{-32\pi \sigma_1 \left(\sigma_1^2-3\sigma_2^2\right)}{|\sigma|^6}+q_{21}\frac{-32\pi\sigma_1\left(\sigma_1^2-3 \sigma_2^2\right)}{|\sigma|^6}+q_{22}\frac{-32\pi\sigma_2\left(3\sigma_1^2- \sigma_2^2\right)}{|\sigma|^6}.
\end{aligned}
\end{equation*}
Observe that the coefficient matrix of this system
\begin{equation*}
\begin{pmatrix}
-\mu_1^2\frac{\partial h_1^{(-1,1)}}{\partial x}(\xi,\xi) & 0 & -\mu_1^5\frac{\partial h_1^{(2,1)}}{\partial x}(\xi,\xi) & -\mu_1^5\frac{\partial h_1^{(2,2)}}{\partial x}(\xi,\xi) \\
0 & -\mu_1^2\frac{\partial h_1^{(-1,1)}}{\partial x}(\xi,\xi) & -\mu_1^5\frac{\partial h_2^{(2,1)}}{\partial x}(\xi,\xi) &
-\mu_1^5\frac{\partial h_2^{(2,2)}}{\partial x}(\xi,\xi)\\
-12\mu_1^5\frac{\partial h_1^{(2,1)}}{\partial x}(\xi,\xi) & -12\mu_1^5\frac{\partial h_2^{(2,1)}}{\partial x}(\xi,\xi) & -\mu_1^8\frac{\partial^4 h^{(2, 1)}_1}{\partial x^4}(\xi, \xi) & 0\\
-12\mu_1^5\frac{\partial h_1^{(2,2)}}{\partial x}(\xi,\xi) & -12\mu_1^5\frac{\partial h_2^{(2,2)}}{\partial x}(\xi,\xi) & 0 & \mu_1^8\frac{\partial^4 h^{(2, 2)}_2}{\partial x^4}(\xi, \xi)
\end{pmatrix}
\end{equation*}
is positive definite, solving this system, we obtain that
$$
a_{1, 1} = O\left(\mu_0(1-|\omega^{(1)}|^2)^2\right),\quad a_{1, 2} =O\left(\mu_0(1-|\omega^{(1)}|^2)^2\right),
$$
$$
p_{1, 1} = O\left(\frac{(1-|\omega^{(1)}|^2)^5}{\mu_0^2}\right),\quad p_{1, 2} = O\left(\frac{(1-|\omega^{(1)}|^2)^5}{\mu_0^2}\right).
$$
The same results hold for $a_{2,l}$ and $p_{2,l}$ with $l=1,2$.

\section{Proof of Theorem \ref{thm}}\label{proof-of-the-main-theorem}
In this section, we give the proof of Theorem \ref{thm}. We begin with a special case of Lemma \ref{lemma7.1a} with $\omega = (\rho, 0)$.
\begin{lemma}\label{lemma7.1}
Assume $\rho\in (0, 1)$, $\tilde g_{\rho}:\partial \mathcal{D}\to \mathbb{R}^3$ is the function defined by
\begin{equation*}
\tilde g_{\rho}(x,y) =2\left(\frac{(x-\rho)^2-y^2}{\left((x-\rho)^2 + y^2\right)^2}, \frac{2\left(x-\rho\right) y}{\left((x-\rho)^2 + y^2\right)^2}, 0\right),\quad (x,y)\in \partial \mathcal{D}.
\end{equation*}
Then the harmonic extension on $\mathcal{D}$ of $\tilde g_\rho$ is given by
\begin{equation*}
g_\rho(x, y) = 2\left(\frac{\left(\rho\left(x^2+y^2\right)-x+y\right)\left(\rho\left(x^2+y^2\right)-x-y\right)}{\left(\rho^2\left(x^2+y^2\right)-2\rho x +1\right)^2}, \frac{2\left(x-\rho\left(x^2+y^2\right)\right)y}{\left(\rho^2\left(x^2+y^2\right)-2\rho x +1\right)^2}, 0\right),\quad (x,y)\in \mathcal{D}.
\end{equation*}
\end{lemma}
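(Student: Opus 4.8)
The plan is to verify directly that the claimed formula for $g_\rho$ is harmonic in $\mathcal{D}$ and agrees with $\tilde g_\rho$ on $\partial \mathcal{D}$, so that it must be the (unique) harmonic extension. In fact Lemma \ref{lemma7.1} is precisely the special case $\xi = (\rho, 0)$ of Lemma \ref{lemma7.1a}: setting $\xi_1 = \rho$, $\xi_2 = 0$ in the general expression for $h(z,\xi)$ gives, after simplifying the denominator $(\xi_1^2+\xi_2^2)(x^2+y^2) - 2\xi_1 x - 2\xi_2 y + 1 = \rho^2(x^2+y^2) - 2\rho x + 1$, exactly $\tfrac12 g_\rho(x,y)$, and likewise $\tilde h(z, (\rho,0)) = \tfrac12 \tilde g_\rho$ on $\partial\mathcal{D}$. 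Thus the cleanest route is: first I would record that $\tilde g_\rho = 2\,\tilde h(\cdot,(\rho,0))$ and $g_\rho = 2\,h(\cdot,(\rho,0))$ in the notation of Lemma \ref{lemma7.1a}, and then the result follows immediately by linearity of the harmonic extension operator from Lemma \ref{lemma7.1a}.

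For completeness (and because the paper may want this lemma to be self-contained at the point of use in the proof of Theorem \ref{thm}), I would also sketch the direct verification. There are two things to check. First, the boundary trace: on $\partial\mathcal{D}$ we have $x^2 + y^2 = 1$, so the denominator $\rho^2(x^2+y^2) - 2\rho x + 1$ becomes $\rho^2 - 2\rho x + 1 = |z - \rho|^2 = (x-\rho)^2 + y^2$ (using $x^2+y^2=1$), and the numerators become $(\rho - x + y)(\rho - x - y) = (x-\rho)^2 - y^2$ and $2(x - \rho)y$ respectively; hence $g_\rho|_{\partial\mathcal{D}} = \tilde g_\rho$. Second, harmonicity: one checks $\Delta g_\rho = 0$ in $\mathcal{D}$. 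The slick way is to observe that the first two components of $\tfrac12 g_\rho$ are the real and imaginary parts of a holomorphic (or antiholomorphic) function of $z = x + iy$; indeed, writing $w = z/(\rho z - 1)$ — a Möbius map sending $\mathcal{D}$ into itself when $\rho \in (0,1)$ — one can check that $\tfrac12(g_{\rho,1} + i g_{\rho,2})$ equals $\overline{w}^{\,2}$ or a closely related expression, so both real components are harmonic; the third component is identically $0$ and hence harmonic. Alternatively one simply computes $\Delta$ of each component as a rational function and checks it vanishes, but the complex-variable argument avoids that grind.

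I expect there to be no serious obstacle: the only mild nuisance is the bookkeeping in the Möbius/holomorphic identification, i.e.\ pinning down exactly which holomorphic function of $z$ produces the stated rational expressions, and making sure the conjugation and the factor conventions match so that the boundary values come out right. If that identification turns out to be awkward to state cleanly, the fallback is the brute-force Laplacian computation, which is routine (if tedious) since everything is an explicit rational function with a common denominator that is a power of $\rho^2(x^2+y^2) - 2\rho x + 1$. Either way the proof is short. I would present it as: ``This is the special case $\xi = (\rho, 0)$ of Lemma \ref{lemma7.1a}; alternatively, one verifies directly that $g_\rho$ is harmonic in $\mathcal{D}$ (its first two components being harmonic conjugates arising from the Möbius map $z \mapsto z/(\rho z - 1)$, the third vanishing identically) and that $g_\rho = \tilde g_\rho$ on $\partial\mathcal{D}$ by the substitution $x^2 + y^2 = 1$, whence $g_\rho$ is the harmonic extension of $\tilde g_\rho$ by uniqueness.''
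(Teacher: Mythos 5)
Your proposal is correct and matches the paper exactly: the paper introduces Lemma \ref{lemma7.1} precisely as the special case $\omega=(\rho,0)$ of Lemma \ref{lemma7.1a} and gives no further proof, which is your primary route. Your supplementary direct verification is also sound (one finds $\tfrac12\left(g_{\rho,1}+ig_{\rho,2}\right)=z^2/(1-\rho z)^2$, holomorphic on a neighborhood of $\overline{\mathcal{D}}$ since its only pole sits at $1/\rho$), though two cosmetic slips are worth fixing: the relevant function is $w^2$ rather than $\overline{w}^{\,2}$, and $z\mapsto z/(\rho z-1)$ does not map $\mathcal{D}$ into itself --- its holomorphy on $\overline{\mathcal{D}}$ is all that is actually needed.
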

Recall from Subsection \ref{remark4.1} that the critical value of $d_{\mathcal{Q}^{-1}} g$ is
\begin{equation*}
d_{\mathcal{Q}^{-1}} g=\left(\left|\frac{\partial^2 g}{\partial x^2}\right|^2+\left|\frac{\partial^2 g}{\partial x \partial y}\right|^2 \pm 2\left|\left(\frac{\partial^2 g}{\partial x^2}\right)\wedge \left(\frac{\partial^2 g}{\partial x \partial y}\right)\right|\right)^{\frac{1}{2}}.
\end{equation*}
For the function $g_\rho$ and from direct computations we know
\begin{equation*}
\left(\left|\frac{\partial^2 g_\rho}{\partial x^2}\right|^2+\left|\frac{\partial^2 g_\rho}{\partial x \partial y}\right|^2 + 2\left|\left(\frac{\partial^2 g_\rho}{\partial x^2}\right)\wedge \left(\frac{\partial^2 g_\rho}{\partial x \partial y}\right)\right|\right)^{\frac{1}{2}}(x, y) = \frac{8\sqrt{4\rho\left(\rho\left(x^2+y^2\right)+x\right)+1}}{\left(\rho^2\left(x^2+y^2\right)-2x \rho+1\right)^2},
\end{equation*}
and also it holds that
\begin{equation*}
\left(\left|\frac{\partial^2 g_\rho}{\partial x^2}\right|^2+\left|\frac{\partial^2 g_\rho}{\partial x \partial y}\right|^2 - 2\left|\left(\frac{\partial^2 g_\rho}{\partial x^2}\right)\wedge \left(\frac{\partial^2 g_\rho}{\partial x \partial y}\right)\right|\right)^{\frac{1}{2}}(x, y) = 0.
\end{equation*}
Define
\begin{equation*}
W_\rho: = \left(\frac{\left|\frac{\partial^2 g_\rho}{\partial x^2}\right|^2+\left|\frac{\partial^2 g_\rho}{\partial x \partial y}\right|^2 + 2\left|\left(\frac{\partial^2 g_\rho}{\partial x^2}\right)\wedge\left(\frac{\partial^2 g_\rho}{\partial x \partial y}\right)\right|}{H}\right)^{\frac{1}{2}},
\end{equation*}
then we have
\begin{equation*}
W_\rho(\xi) = \frac{8\sqrt{4\rho\left(\rho\left(\xi_1^2+\xi_2^2\right)+\xi_1\right)+1}}{\left(\rho^2\left(\xi_1^2+\xi_2^2\right)-2\rho\xi_1+1\right)^2\sqrt H}.
\end{equation*}
Since the domain we are considering is the unit disk, it holds that\begin{equation*}
H(\xi)=2\frac{\partial^2 h_2^{(1)}}{\partial x \partial y}(\xi,\xi)=\frac{4(1+2|\xi|^2)}{(1-|\xi|^2)^4},
\end{equation*}
and
\begin{equation*}
W_{\rho}(\xi) = \frac{4\sqrt{4\rho\left(\rho\left(\xi_1^2+\xi_2^2\right)+\xi_1\right)+1}(1-|\xi|^2)^2}{\left(\rho^2\left(\xi_1^2+\xi_2^2\right)-2\rho\xi_1+1\right)^2\sqrt{1+2|\xi|^2}}.
\end{equation*}
Through direct computations, $W_{\rho}$ attains a non-degenerate global maximum at $\left(\rho,0\right)$ and
$$W_{\rho}\left(\rho,0\right)=\frac{4\left(\rho^2-1\right)^2\sqrt{4\rho \left(\rho^3+\rho\right)+1}}{\sqrt{2\rho^2+1}\left(\rho^4-2\rho^2+1\right)^2}.
$$
The Hessian of $W_{\rho}$ at the point $\left(\rho,0\right)$ is given by
\begin{equation*}
\begin{pmatrix}
-\frac{24\sqrt{\left(2\rho^2+1\right)^2}\left(3\rho^4+2\rho^2+1\right)}{\left(\rho^2-1\right)^4 \left(2\rho^2+1\right)^{5/2}} & 0 \\
0 &-\frac{24\sqrt{\left(2\rho^2+1\right)^2}\left(3\rho^4+2\rho^2+1\right)}{\left(\rho^2-1\right)^4 \left(2\rho^2+1\right)^{5/2}}\\
\end{pmatrix}.
\end{equation*}
Observe that when $\rho\in (0,1)$ and $\rho$ is close to 1, both the eigenvalues of the Hessian for $W_{\rho}$ at its critical point $\left(\rho,0\right)$ are quite negative, so it is natural that the interaction of this datum with the bubble will be stronger than the interaction among different bubbles. This fact will be crucial in our construction of gluing $k$ different bubbles.

To proceed, we parameterize the rotation matrix by its Euler angles $\theta$, $\psi$ and $\phi$ as follows,
\begin{equation*}
\mathcal Q =\begin{pmatrix}
\cos\psi\cos\phi-\cos\theta\sin\phi\sin\psi & -\sin\theta\sin\phi
& -\sin\psi\cos\phi-\cos\theta\sin\phi\cos\psi \\
\cos\psi\sin\phi+\cos\theta\cos\phi\sin\psi & \sin\theta\cos\phi &-\sin\psi\sin\phi+\cos\theta\cos\phi\cos\psi \\
\sin\psi\sin\theta & -\cos\theta  & \cos\psi\sin\theta
\end{pmatrix}.
\end{equation*}
Note that when $\theta = \frac{\pi}{2}$, $\phi = 0$ and $\psi = 0$, $\mathcal Q$ is the identity matrix.
 Consequently, the function $d_{\mathcal Q^{-1}} g(\xi)$ can be expressed as follows
\begin{align*}
d_{\mathcal Q^{-1}} g(\xi)& =\frac{\partial^2g}{\partial x^2}\cdot\left( \mathcal Q e_1\right)+\frac{\partial^2g}{\partial x\partial y}\cdot \left(\mathcal Qe_2\right)
\\& = \frac{\partial^2g}{\partial x^2}\cdot
\begin{pmatrix}
\cos\psi\cos\phi-\cos\theta\sin\phi\sin\psi\\
\cos\psi\sin\phi+\cos\theta\cos\phi\sin\psi \\
\sin\psi\sin\theta
\end{pmatrix}
+\frac{\partial^2g}{\partial x\partial y}\cdot
\begin{pmatrix}
-\sin\theta\sin\phi\\
\sin\theta\cos\phi\\
 -\cos\theta
\end{pmatrix}
\\& = \frac{\partial^2g_1}{\partial x^2}\left(\cos\psi\cos\phi-\cos\theta\sin\phi\sin\psi\right) + \frac{\partial^2g_2}{\partial x^2}\left(\cos\psi\sin\phi+\cos\theta\cos\phi\sin\psi\right)
\\&\quad +\frac{\partial^2g_1}{\partial x\partial y}\left(-\sin\theta\sin\phi\right) + \frac{\partial^2g_2}{\partial x\partial y}\left(\sin\theta\cos\phi\right).\qquad\qquad\qquad
\end{align*}
From direct computations, we have
\begin{equation*}
\begin{aligned}
\frac{\partial^2g_1}{\partial x^2}(x, y) &= \frac{\tilde f_1(x, y)}{\left(\rho^2\left(x^2+y^2\right)-2\rho x+1\right)^4},\qquad\
\frac{\partial^2g_2}{\partial x^2}(x, y) = \frac{\tilde f_2(x, y)}{\left(\rho^2\left(x^2+y^2\right)-2\rho x +1\right)^4},\\
\frac{\partial^2g_1}{\partial x \partial y}(x, y)& = -\frac{\tilde f_2(x, y)}{\left(\rho^2\left(x^2+y^2\right)-2\rho x+1\right)^4},\qquad
\frac{\partial^2g_2}{\partial x \partial y}(x, y) = \frac{\tilde f_1(x, y)}{\left(\rho^2\left(x^2+y^2\right)-2\rho x+1\right)^4},
\end{aligned}
\end{equation*}
for $g = g_\rho$, the functions $\tilde f_1(x, y)$ and $\tilde f_2(x, y)$ are defined as follows
\begin{equation*}
\begin{aligned}
&\tilde f_1(x, y)
\\&: = 4\rho\left(2\rho^4x\left(x^2-3y^2\right)\left(x^2+y^2\right)+8\rho^2x\left(x^2+3y^2\right)-2 \rho\left(x^2+7y^2\right)-\rho^3\left(7x^4+6x^2y^2-9y^4\right)-2x\right)+4,
\\&\tilde f_2(x, y) : = 8\rho y\left(2\rho^3x\left(3x^2+5y^2\right)+\rho^4\left(-3x^4-2x^2y^2+y^4\right)-6\rho x-8\rho^2y^2+3\right).
\end{aligned}
\end{equation*}
Therefore, we obtain
\begin{align*}
d_{\mathcal Q^{-1}} g(\xi)
=&~\frac{\tilde f_1(\xi_1, \xi_2)}{\left(\rho^2\left(\xi_1^2+\xi_2^2\right)-2\rho\xi_1+1\right)^4}\left(\cos\psi\cos\phi-\cos\theta\sin\phi\sin\psi\right)
\\&+ \frac{\tilde f_2(\xi_1, \xi_2)}{\left(\rho^2\left(\xi_1^2+\xi_2^2\right)-2\rho\xi_1+1\right)^4}\left(\cos\psi\sin\phi+\cos\theta\cos\phi\sin\psi\right)
\\& -\frac{\tilde f_2(\xi_1, \xi_2)}{\left(\rho^2\left(\xi_1^2+\xi_2^2\right)-2\rho\xi_1+1\right)^4}\left(-\sin\theta\sin\phi\right)
+ \frac{\tilde f_1(\xi_1, \xi_2)}{\left(\rho^2\left(\xi_1^2+\xi_2^2\right)-2\rho\xi_1+1\right)^4}\left(\sin\theta\cos\phi\right).
\end{align*}
Based on these facts, we have the following lemma.
\begin{lemma}
Let $\rho\in (0,1)$ be close to 1, and let $g_\rho$ be the harmonic function defined in Lemma \ref{lemma7.1}. Suppose $\lambda$ is a large positive constant, then there exists a constant $C_0>0$, independent of $\varepsilon$, $\mu$ and $\rho$ such that
\begin{equation*}
\nabla_{\chi} F_{\mathcal D, g_\rho}(\chi)\cdot \left(\chi^{(1)}-\chi_0\right) \geq
C_0\varepsilon^2\lambda^{2}(1-\rho)^{4}\quad \text{ on }\ \partial^{(1)} \mathcal T_\lambda,
\end{equation*}
\begin{equation*}
\nabla_{\chi} F_{\mathcal D, g_\rho}(\chi)\cdot \left(\chi^{(2)}-\chi_0\right) \geq
C_0\varepsilon^2\lambda^{2}(1-\rho)^{2}\quad\text{ on }\ \partial^{(2)} \mathcal T_\lambda,
\end{equation*}
and the topological degree satisfies $\text{deg}\left(\nabla_\chi F_{\mathcal D, g_\rho},\mathcal T_\lambda, 0\right) = 1$. Here,  $F_{\mathcal D,g_\rho}$ is the function defined in Proposition \ref{prop 4.1}.
The parameter set $\mathcal T_\lambda$ and its boundary components are defined as follows
\begin{equation*}
\begin{aligned}
\mathcal T_\lambda:=
 \Bigg\{\chi:=&( \mu,\xi_1, \xi_2, \theta, \phi, \psi, a_1, a_2, p_1, p_2)\ |\ \left|\mu - \mu_0\right|\leq\lambda(1-\rho)^4\mu_0,\ \left|\xi_1-\rho\right|\leq\lambda(1-\rho)^{5},
\\&\qquad\qquad  |\xi_2|\leq\lambda(1-\rho)^{5},\
|\theta-\frac{\pi}{2}|\leq\lambda(1-\rho)^4,\ |\phi|\leq\lambda(1-\rho)^4,
\ |\psi|\leq\lambda(1-\rho)^4,
\\&\qquad\qquad |a_1|\leq \lambda(1-\rho)^2\mu_0,\ |a_2|\leq \lambda(1-\rho)^2\mu_0,\ |p_1|\leq\frac{\lambda(1-\rho)^5}{\mu_0^2},\ |p_2|\leq \frac{\lambda(1-\rho)^5}{\mu_0^2}\Bigg\},
\end{aligned}
\end{equation*}

\begin{align*}
\partial^{(1)} \mathcal T_\lambda&:= \partial \mathcal T_\lambda\setminus \partial^{(2)}\mathcal T_\lambda, \\
\partial^{(2)} \mathcal T_\lambda&:= \Bigg\{\chi\in \mathcal T_\lambda\ |\ |a_1| =\lambda(1-\rho)^2\mu_0\text{ or } |a_2| =\lambda(1-\rho)^2\mu_0\text{ or }|p_1|=\frac{\lambda(1-\rho)^5}{\mu_0^2}\text{ or }|p_2|=\frac{\lambda(1-\rho)^5}{\mu_0^2}\Bigg\}.
\end{align*}
And the variables sets are
\begin{align*}
&\chi^{(1)} := (\mu, \xi_1, \xi_2, \theta, \phi, \psi,0,0,0,0), \quad \chi^{(2)} :=\chi= ( \mu,\xi_1, \xi_2, \theta, \phi, \psi, a_1, a_2, p_1, p_2),
\\ &\chi_0 := \left(\mu_0, \rho, 0, \tfrac{\pi}{2}, 0, 0,0, 0, 0, 0\right),
\end{align*}
where $\mu_0 := \sqrt{\varepsilon}$.
\end{lemma}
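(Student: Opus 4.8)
The plan is to show that, near $\chi_0=(\mu_0,\rho,0,\tfrac{\pi}{2},0,0,0,0,0,0)$, the reduced energy $F_{\mathcal D,g_\rho}$ has a non-degenerate local minimum whose Hessian is, at leading order, positive definite and block-diagonal between the ``slow'' block $(\mu,\xi_1,\xi_2,\theta,\phi,\psi)$ and the ``fast'' block $(a_1,a_2,p_1,p_2)$, with block entries whose sizes in $\varepsilon$ and $1-\rho$ are precisely matched to the box $\mathcal T_\lambda$; from this the two boundary inequalities and the degree follow by a standard homotopy argument.

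\textbf{Step 1: second-order expansion of $F_{\mathcal D,g_\rho}$ at $\chi_0$.} First I would Taylor expand $F_{\mathcal D,g_\rho}(\chi)=F_{\mathcal D,g_\rho}(\chi_0)+\tfrac12(\chi-\chi_0)^{\mathrm T}\mathbb H(\chi-\chi_0)+\mathcal R(\chi-\chi_0)$, using the explicit $g_\rho$ and its second derivatives from Lemma \ref{lemma7.1} together with the explicit Robin-function values of Lemma \ref{evalues-at-xi-2}. Concretely: after substituting the stationarity relation $2\mu_0^2H(\xi_0)=\varepsilon\,d_{Id}g_\rho(\xi_0)$, the $\mu$-dependence is a quadratic in $\mu^2$ opening upward with $\partial_\mu^2F(\chi_0)\sim\varepsilon(1-\rho)^{-4}$; reducing out $\mu$, the $\xi$-dependence is $-\pi\varepsilon^2W_\rho(\xi)^2$ up to lower order, and since $W_\rho$ has a non-degenerate maximum at $(\rho,0)$ with Hessian of size $(1-\rho)^{-4}$ and $W_\rho(\rho,0)\sim(1-\rho)^{-2}$, the $\xi$-block of $\mathbb H$ is positive of size $\varepsilon^2(1-\rho)^{-6}$; the Euler-angle block comes from the Hessian of $\mathcal Q\mapsto d_{\mathcal Q^{-1}}g_\rho(\xi_0)$ at the identity, which (the $\partial^2_{xy}g_{\rho,2}$ contribution being negligible near the boundary, as in Subsection \ref{remark4.1}) is positive of size $\varepsilon^2(1-\rho)^{-4}$; and the fast block is exactly the $4\times4$ matrix built from $\mu_0^2\partial_x h_1^{(-1,1)}(\xi_0,\xi_0)$, $\mu_0^5\partial_x h_i^{(2,l)}(\xi_0,\xi_0)$, $\mu_0^8\partial_x^4 h_l^{(2,l)}(\xi_0,\xi_0)$, which was already seen to be positive definite in Subsection \ref{remark4.1}, with diagonal entries of sizes $\varepsilon(1-\rho)^{-2}$ (the $a$'s) and $\varepsilon^4(1-\rho)^{-8}$ (the $p$'s). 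The off-block coupling (e.g.\ the $a^2$ terms carry $\mu^2$ and $h_1^{(-1,1)}(\xi,\xi)$ depending on $(\mu,\xi)$) and the remainder $\mathcal R$ carry extra powers of $\varepsilon$ or of $1-\rho$ and are negligible on $\mathcal T_\lambda$ once $\rho$ is close enough to $1$ (even with $\lambda$ large fixed). These scalings are exactly matched to $\mathcal T_\lambda$: a slow coordinate at its box-boundary produces a quadratic term of size $\varepsilon^2\lambda^2(1-\rho)^4$, a fast coordinate at its box-boundary a term of size $\varepsilon^2\lambda^2(1-\rho)^2$.

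\textbf{Step 2: the two directional estimates.} On $\partial^{(1)}\mathcal T_\lambda$ some slow coordinate is saturated while all fast coordinates stay interior; since $\chi^{(1)}-\chi_0$ is precisely the slow part of $\chi-\chi_0$, positive definiteness of the slow block gives $\nabla_\chi F_{\mathcal D,g_\rho}(\chi)\cdot(\chi^{(1)}-\chi_0)\ge\tfrac12(\text{slow dev.})^{\mathrm T}\mathbb H_{\mathrm{slow}}(\text{slow dev.})-\mathcal O(\mathcal R)\ge C_0\varepsilon^2\lambda^2(1-\rho)^4$, the dominant term coming from the saturated coordinate and the remaining slow contributions being nonnegative, the off-block coupling with the interior fast coordinates being absorbed by the box size. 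On $\partial^{(2)}\mathcal T_\lambda$ some fast coordinate is saturated and $\chi^{(2)}-\chi_0=\chi-\chi_0$; splitting $\nabla_\chi F\cdot(\chi-\chi_0)$ into slow and fast parts, the slow part is $\ge0$ up to negligible error (approximate convexity of $F_{\mathcal D,g_\rho}$ in the slow variables on $\mathcal T_\lambda$, from $\mathbb H_{\mathrm{slow}}>0$ and smallness of $\mathcal R$), while the fast part is $\ge\tfrac12(\text{fast dev.})^{\mathrm T}\mathbb H_{\mathrm{fast}}(\text{fast dev.})\ge C_0\varepsilon^2\lambda^2(1-\rho)^2$ because of the saturated fast coordinate. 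This gives the two stated inequalities, and in particular $\nabla_\chi F_{\mathcal D,g_\rho}(\chi)\cdot(\chi-\chi_0)>0$ on all of $\partial\mathcal T_\lambda$.

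\textbf{Step 3: the degree.} Since $\chi_0$ is the centre of $\mathcal T_\lambda$ and $\nabla_\chi F_{\mathcal D,g_\rho}(\chi)\cdot(\chi-\chi_0)>0$ on $\partial\mathcal T_\lambda$, the affine homotopy $H_t(\chi)=t\,\nabla_\chi F_{\mathcal D,g_\rho}(\chi)+(1-t)(\chi-\chi_0)$ satisfies $H_t(\chi)\cdot(\chi-\chi_0)>0$ on $\partial\mathcal T_\lambda$ for every $t\in[0,1]$, hence $H_t$ never vanishes there; by homotopy invariance of the degree, $\mathrm{deg}(\nabla_\chi F_{\mathcal D,g_\rho},\mathcal T_\lambda,0)=\mathrm{deg}(\chi\mapsto\chi-\chi_0,\mathcal T_\lambda,0)=1$. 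I expect the main obstacle to be Step 1: carrying out the second-order expansion with the correct signs and the exact powers of $\varepsilon$ and $1-\rho$, tracking the near-boundary ($\rho\to1$) blow-up rates of each Robin quantity and of the $\mathcal Q$-Hessian, and verifying uniformly that the leading block structure is positive definite and the remainder negligible; once that bookkeeping is in place, Steps 2 and 3 are routine.
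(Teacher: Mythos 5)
Your proposal is correct and follows essentially the same route as the paper: a second-order expansion of $F_{\mathcal D,g_\rho}$ at the critical point $\chi_0$ of its leading part, verification that the resulting Hessian (the paper's explicit matrix $A_{\rho,\varepsilon}$) is positive definite with entries whose $\varepsilon$- and $(1-\rho)$-scalings are exactly matched to the box $\mathcal T_\lambda$, absorption of the remainder for $\rho$ close to $1$, and the standard homotopy/degree argument. The only cosmetic differences are that you organize the slow block via a Schur reduction of $\mu$ and the function $W_\rho$ rather than writing out the full $10\times 10$ matrix, which requires the same constant checks the paper performs.
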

\begin{proof}
Recall the definition of $F_{\mathcal D,g_\rho}$ in Proposition \ref{prop 4.1}, we have
\begin{align*}
&F_{\mathcal{D},g_\rho}(\mu,\xi,\mathcal{Q},a,p)
\\&=8\pi\mu^4\frac{\partial^2 h_2^{(1)}}{\partial x \partial y}(\xi,\xi)
-4\pi\varepsilon \mu^2\left[\frac{\partial^2 \left(\mathcal Q^{-1} g\right)_1}{\partial x^2}(\xi,\omega)+\frac{\partial^2 \left(\mathcal Q^{-1} g\right)_2}{\partial x \partial y}(\xi,\omega)\right]
\\&\quad-16\pi a_1^2\mu^2\frac{\partial h_1^{(-1,1)}}{\partial x}(\xi,\xi)
-16\pi a_2^2\mu^2\frac{\partial h_2^{(-1,2)}}{\partial x}(\xi,\xi)
-\frac{4\pi}{3}p_1^2\mu^8\frac{\partial^4 h^{(2, 1)}_1}{\partial x^4}(\xi, \xi) +\frac{4\pi}{3}p_2^2\mu^8\frac{\partial^4 h^{(2, 2)}_2}{\partial x^4}(\xi, \xi)
\\&\quad-32\pi p_1a_1\mu^5\frac{\partial h_1^{(2,1)}}{\partial x}(\xi,\xi)-32\pi p_1a_2\mu^5\frac{\partial h_2^{(2,1)}}{\partial x}(\xi,\xi)
-32\pi p_2a_1\mu^5\frac{\partial h_1^{(2,2)}}{\partial x}(\xi,\xi)
-32\pi p_2a_2\mu^5\frac{\partial h_2^{(2,2)}}{\partial x}(\xi,\xi)
\\&=4\pi\mu^4H(\xi)
-4\pi\varepsilon \mu^2d_{\mathcal Q^{-1}} g(\xi)
-16\pi a_1^2\mu_0^2\frac{\partial h_1^{(-1,1)}}{\partial x}(\omega,\omega)
-16\pi a_2^2\mu_0^2\frac{\partial h_2^{(-1,2)}}{\partial x}(\omega,\omega)
\\&\quad-\frac{4\pi}{3}p_1^2\mu_0^8\frac{\partial^4 h^{(2, 1)}_1}{\partial x^4}(\omega, \omega) +\frac{4\pi}{3}p_2^2\mu_0^8\frac{\partial^4 h^{(2, 2)}_2}{\partial x^4}(\omega, \omega)
-32\pi p_1a_1\mu_0^5\frac{\partial h_1^{(2,1)}}{\partial x}(\omega, \omega)-32\pi p_1a_2\mu_0^5\frac{\partial h_2^{(2,1)}}{\partial x}(\omega, \omega)
\\&\quad-32\pi p_2a_1\mu_0^5\frac{\partial h_1^{(2,2)}}{\partial x}(\omega, \omega)
-32\pi p_2a_2\mu_0^5\frac{\partial h_2^{(2,2)}}{\partial x}(\omega, \omega)+\text{h.o.t.}
\\& =4\pi\mu^4H(\xi)-4\pi\varepsilon \mu^2\left[\frac{\tilde f_1(\xi_1, \xi_2)}{\left(\rho^2\left(\xi_1^2+\xi_2^2\right)-2\rho\xi_1+1\right)^4}\left(\cos\psi\cos\phi-\cos\theta\sin\phi\sin\psi\right)\right.
\\&\qquad\qquad\qquad\qquad \qquad
+ \frac{\tilde f_2(\xi_1, \xi_2)}{\left(\rho^2\left(\xi_1^2+\xi_2^2\right)-2\rho\xi_1+1\right)^4}\left(\cos\psi\sin\phi+\cos\theta\cos\phi\sin\psi\right)
\\&\qquad\qquad\qquad\qquad\qquad \left. -\frac{\tilde f_2(\xi_1, \xi_2)}{\left(\rho^2\left(\xi_1^2+\xi_2^2\right)-2\rho\xi_1+1\right)^4}\left(-\sin\theta\sin\phi\right)
 + \frac{\tilde f_1(\xi_1, \xi_2)}{\left(\rho^2\left(\xi_1^2+\xi_2^2\right)-2\rho\xi_1+1\right)^4}\left(\sin\theta\cos\phi\right)\right]
\\&\quad-16\pi a_1^2\mu_0^2\frac{\partial h_1^{(-1,1)}}{\partial x}(\omega,\omega)
-16\pi a_2^2\mu_0^2\frac{\partial h_2^{(-1,2)}}{\partial x}(\omega,\omega)
-\frac{4\pi}{3}p_1^2\mu_0^8\frac{\partial^4 h^{(2, 1)}_1}{\partial x^4}(\omega, \omega) +\frac{4\pi}{3}p_2^2\mu_0^8\frac{\partial^4 h^{(2, 2)}_2}{\partial x^4}(\omega, \omega)
\\&\quad-32\pi p_1a_1\mu_0^5\frac{\partial h_1^{(2,1)}}{\partial x}(\omega, \omega)-32\pi p_1a_2\mu_0^5\frac{\partial h_2^{(2,1)}}{\partial x}(\omega, \omega)
\\&\quad-32\pi p_2a_1\mu_0^5\frac{\partial h_1^{(2,2)}}{\partial x}(\omega, \omega)
-32\pi p_2a_2\mu_0^5\frac{\partial h_2^{(2,2)}}{\partial x}(\omega, \omega)+\text{h.o.t.}
\\&:= \tilde F_{\mathcal D, g_\rho}(\mu,\xi,\mathcal{Q},a,p)+\text{h.o.t.}
\end{align*}
Then by direct computations, we have
\begin{equation*}
\nabla_\chi \tilde F_{\mathcal D, g_\rho}\left( \mu_0, \rho, 0,\frac{\pi}{2}, 0, 0, 0, 0, 0, 0\right) = \vec{0}
\end{equation*}
and
\begin{equation*}
\nabla_\chi^2\tilde F_{\mathcal D, g_\rho}\left(\mu_0, \rho, 0, \frac{\pi}{2}, 0, 0, 0, 0, 0, 0\right) = A_{\rho, \varepsilon},
\end{equation*}
where
\begin{equation*}
A_{\rho, \varepsilon} =
\begin{pmatrix}
A_{1,1} & A_{1,2} & 0 & 0 & 0 & 0 & 0 & 0 & 0 & 0\\
A_{2,1} & A_{2,2} & 0 & 0 & 0 & 0 & 0 & 0 & 0 & 0\\
0 & 0 & A_{3,3} & 0 & A_{3,5} & 0 & 0 & 0 & 0 & 0\\
0 & 0 & 0 & A_{4,4} & 0 & 0 & 0 & 0 & 0 & 0\\
0 & 0 & A_{5,3} & 0 & A_{5,5} & 0 & 0 & 0 & 0 & 0\\
0 & 0 & 0 & 0 & 0 & A_{6,6} & 0 & 0 & 0 & 0\\
0 & 0 & 0 & 0 & 0 & 0 & A_{7,7} & 0 & A_{7,9} & 0\\
0 & 0 & 0 & 0 & 0 & 0 & 0 & A_{8,8} & 0 & A_{8,10}\\
0 & 0 & 0 & 0 & 0 & 0 & A_{9,7} & 0 & A_{9,9} & 0\\
0 & 0 & 0 & 0 & 0 & 0 & 0 & A_{10,8} & 0 & A_{10,10}\\
\end{pmatrix}
\end{equation*}
with
\begin{align*}
A_{1,1} & =\frac{128\pi\left(2\rho^2+1\right)\varepsilon}{\left(\rho^2-1\right)^4},
\qquad
A_{1,2} =-\frac{384\pi\left(\rho^3+\rho\right)\varepsilon^{3/2}}{\left(\rho^2-1\right)^5},\qquad
A_{2,1} = -\frac{384\pi\left(\rho^3+\rho\right)\varepsilon^{3/2}}{\left(\rho^2-1\right)^5} ,\\
A_{2,2} &= \frac{192\pi\left(3\rho^4+6\rho^2+1\right)\varepsilon^2}{\left(\rho^2-1\right)^6},\qquad
A_{3,3}=\frac{192\pi\left(3\rho^4+6\rho^2+1\right)\varepsilon^2}{\left(\rho^2-1\right)^6},\qquad A_{3,5} =\frac{192\pi\rho\left(\rho^2+1\right)\varepsilon^2}{\left(\rho^2-1\right)^5},\\
A_{4,4} & =\frac{16\pi\left(2\rho^2+1\right)\varepsilon^2}{\left(\rho^2-1\right)^4},\quad
A_{5,3} = \frac{192\pi\rho\left(\rho^2+1\right)\varepsilon^2}{\left(\rho^2-1\right)^5},\quad
A_{5,5} =\frac{32\pi\left(2\rho^2+1\right)\varepsilon^2}{\left(\rho^2-1\right)^4}, \quad
A_{6,6} =\frac{16\pi\left(2\rho^2+1\right)\varepsilon^2}{\left(\rho^2-1\right)^4},\\
A_{7,7} &= \frac{64\pi\varepsilon}{(1-\rho^2)^2},\qquad  A_{7,9} = \frac{128\pi\varepsilon^{\frac{5}{2}}\rho^3}{(1-\rho^2)^5},\qquad
A_{8,8} = \frac{64\pi\varepsilon}{(1-\rho^2)^2}, \qquad A_{8,10} = \frac{-128\pi\varepsilon^{\frac{5}{2}}\rho^3}{(1-\rho^2)^5},\\
A_{9,7} &= \frac{128\pi\varepsilon^{\frac{5}{2}}\rho^3}{(1-\rho^2)^5},\qquad A_{9,9} = \frac{2240\pi\varepsilon^4}{(1-\rho^2)^8}, \qquad A_{10,8} = \frac{-128\pi\varepsilon^{\frac{5}{2}}\rho^3}{(1-\rho^2)^5}, \qquad A_{10,10} = \frac{2240\pi\varepsilon^4}{(1-\rho^2)^8}.
\end{align*}
Note that the following estimates hold
$$\begin{aligned}
A_{1,1} &\sim \frac{\varepsilon}{(1-\rho)^4}, \quad
A_{1,2}\sim\frac{\varepsilon^\frac{3}{2}}{(1-\rho)^{5}},\quad A_{2,1} \sim \frac{\varepsilon^\frac{3}{2}}{(1-\rho)^{5}},\quad A_{2,2} \sim \frac{\varepsilon^2}{(1-\rho)^6},\quad
A_{3,3} \sim \frac{\varepsilon^2}{(1-\rho)^6}, \quad
,\\
A_{3,5}&\sim\frac{\varepsilon^2}{(1-\rho)^5},\quad
A_{4,4} \sim \frac{\varepsilon^2}{(1-\rho)^4},\quad
A_{5,3} \sim \frac{\varepsilon^2}{(1-\rho)^5},\quad
A_{5,5} \sim \frac{\varepsilon^2}{(1-\rho)^4},\quad
A_{6,6} \sim \frac{\varepsilon^2}{(1-\rho)^4},\\
A_{7,7} &\sim \frac{\varepsilon}{(1-\rho)^2},\quad A_{7,9} \sim \frac{\varepsilon^{\frac{5}{2}}}{(1-\rho)^5},\quad
A_{8,8} \sim \frac{\varepsilon}{(1-\rho)^2},\quad A_{8,10} \sim \frac{\varepsilon^{\frac{5}{2}}}{(1-\rho)^5},\quad A_{9,7} \sim \frac{\varepsilon^{\frac{5}{2}}}{(1-\rho)^5},\\
A_{9,9} & \sim\frac{\varepsilon^4}{(1-\rho)^8},\quad A_{10,8} \sim \frac{\varepsilon^{\frac{5}{2}}}{(1-\rho)^5},\quad
A_{10,10}\sim\frac{\varepsilon^4}{(1-\rho)^8}.\end{aligned}$$
Furthermore, we have the following expansion
\begin{equation*}
\left|\nabla_\chi \tilde F_{\mathcal D, g_\rho}(\chi)\cdot\left(\chi-\chi_0\right) - \left(\chi-\chi_0\right)\cdot A_{\rho, \varepsilon}\cdot \left(\chi-\chi_0\right)\right|\leq C\varepsilon^{2}\lambda^3(1-\rho)^{8}\quad \text{for}~\chi\in \mathcal T_\lambda.
\end{equation*}
Through detailed computations, the matrix $A_{\rho, \varepsilon}$ is verified to be positive definite when $\rho$ is close to 1. The critical point $$\chi_0 = \left( \mu_0,\rho, 0, \frac{\pi}{2}, 0, 0, 0, 0, 0, 0\right)$$ of $\tilde F_{\mathcal D, g_\rho}$ leads to the lower bound
\begin{equation*}
\nabla_{\chi} \tilde F_{\mathcal D, g_\rho}(\chi)\cdot \left(\chi^{(1)}-\chi_0\right) \geq
2C_0\varepsilon^2\lambda^{2}(1-\rho)^{4}\quad \text{ on }\ \partial^{(1)} \mathcal T_\lambda,
\end{equation*}
\begin{equation*}
\nabla_{\chi} \tilde F_{\mathcal D, g_\rho}(\chi)\cdot \left(\chi^{(2)}-\chi_0\right) \geq
2C_0\varepsilon^2\lambda^{2}(1-\rho)^{2}\quad \text{ on }\ \partial^{(2)} \mathcal T_\lambda,
\end{equation*}
with the topological degree $\text{deg}\left(\nabla \tilde F_{\mathcal D, g_\rho}, \mathcal T_\lambda, 0\right) = 1$. Additionally, since
$$
\left|\left(\nabla_\chi F_{\mathcal D, g_\rho}-\nabla_\chi\tilde F_{\mathcal D, g_\rho}\right)(\chi)\cdot (\chi-\chi_0)\right|\leq C\varepsilon^2\lambda^3(1-\rho)^6\quad \text{ for }~\chi\in T_\lambda,
$$
we still have
\begin{equation*}
\nabla_{\chi} F_{\mathcal D, g_\rho}(\chi)\cdot \left(\chi^{(1)}-\chi_0\right) \geq
C_0\varepsilon^2\lambda^{2}(1-\rho)^{4}\quad \text{ on }\ \partial^{(1)} \mathcal T_\lambda,
\end{equation*}
\begin{equation*}
\nabla_{\chi} F_{\mathcal D, g_\rho}(\chi)\cdot \left(\chi^{(2)}-\chi_0\right) \geq
C_0\varepsilon^2\lambda^{2}(1-\rho)^{2}\quad\text{ on }\ \partial^{(2)} \mathcal T_\lambda,
\end{equation*}
and the topological degree $\text{deg}\left(\nabla_\chi F_{\mathcal D, g_\rho}, \mathcal T_\lambda, 0\right) = 1$ when $\rho = \rho(\lambda, \varepsilon)$ is chosen sufficiently close to 1.
\end{proof}

\noindent {\bf Proof of Theorem \ref{thm}.} Let $\mathcal{S} = \{S_1\cup S_2\cup\cdots \cup S_k\}$ be a collection of $k$ spheres with centers $v_1, v_2, \dots, v_k$. Since each sphere has radius $1$ and passes through the origin, we have $|v_j| = 1$ for all $j =1,\dots, k$. Choose rotation matrices $\mathcal R_1, \dots, \mathcal R_k\in SO(3)$ such that
$$
\mathcal R_j(0, 0, -1) = v_j,\quad j= 1,\dots, k.
$$
For $\rho\in (0, 1)$ we define $\tilde G_{k, \rho}:\partial \mathcal D\to \mathbb{R}^3$ by
$$
\tilde G_{k, \rho}(x, y) = \sum_{j=1}^k\mathcal R_j \tilde g_{j, \varepsilon, \rho}(x, y), \quad (x, y)\in \partial \mathcal D,
$$
where
$$
\tilde g_{j, \varepsilon, \rho}(x, y)  = \tilde g_{\varepsilon, \rho}\left(\left(\cos\frac{2\pi j}{k}\right)x+\left(\sin\frac{2\pi j}{k}\right)y, -\left(\sin\frac{2\pi j}{k}\right)x+\left(\cos\frac{2\pi j}{k}\right)y\right)
$$
and
$$
\tilde g_{\varepsilon, \rho} = \tilde g_\rho + \left(0, 0, \frac{-2\varepsilon}{\left((x-\rho)^2 + y^2\right)^2}\right).
$$
Then the harmonic extension $G_{k, \rho}$ of $\tilde G_{k, \rho}$ to the unit disk $\mathcal D$ can be written as
\begin{equation*}
\begin{aligned}
G_{k, \rho}(x, y)  = \sum_{j=1}^k\mathcal R_j g_{j, \varepsilon, \rho}= \sum_{j=1}^k\mathcal R_j  g_{\varepsilon, \rho}\left(\left(\cos\frac{2\pi j}{k}\right)x+\left(\sin\frac{2\pi j}{k}\right)y, -\left(\sin\frac{2\pi j}{k}\right)x+\left(\cos\frac{2\pi j}{k}\right)y\right),
\end{aligned}
\end{equation*}
where $g_{\varepsilon, \rho}$ is the harmonic extension of $\tilde g_{\varepsilon, \rho}$.
Fix $j\in\{ 1, 2, \dots, k\}$, let $\theta_j$, $\psi_j$, $\phi_j$ be the Euler angle of the rotation matrix $\mathcal Q^{-1}_j\mathcal R_j$, that is to say, we have
\begin{equation*}
\mathcal Q^{-1}_j\mathcal R_j =\begin{pmatrix}
\cos\psi_j\cos\phi_j-\cos\theta_j\sin\phi_j\sin\psi_j & \cos\psi_j\sin\phi_j+\cos\theta_j\cos\phi_j\sin\psi_j & \sin\psi_j\sin\theta_j\\
-\sin\theta_j\sin\phi_j & \sin\theta_j\cos\phi_j & -\cos\theta_j\\
-\sin\psi_j\cos\phi_j-\cos\theta_j\sin\phi_j\cos\psi_j & -\sin\psi_j\sin\phi_j+\cos\theta_j\cos\phi_j\cos\psi_j & \cos\psi_j\sin\theta_j
\end{pmatrix}.
\end{equation*}
Furthermore, let us define the parameter sets as
\begin{align*}
\mathcal T_\lambda^j   =&\Bigg\{\chi_j:=\left(\mu_j, \xi_1^{(j)}, \xi_2^{(j)}, \theta_j, \phi_j, \psi_j, a_{j,1}, a_{j,2}, p_{j,1}, p_{j,2}\right)\ |\ \left|\mu_j-\mu_0\right|\leq\sqrt{\varepsilon}\lambda(1-\rho)^4,
\\&\qquad\qquad \left|\left(\xi_1^{(j)}, \xi_2^{(j)}\right)-\rho\left(\cos\frac{2\pi j }{k},\sin\frac{2\pi j}{k}\right)\right|\leq\lambda(1-\rho)^{5},\ |\theta_j-\frac{\pi}{2}|\leq \lambda (1-\rho)^4, |\phi_j|\leq\lambda(1-\rho)^4,
\\&\qquad\quad|\psi_j|\leq\lambda(1-\rho)^4,\
 |a_{j, 1}|\leq \lambda\mu_0(1-\rho)^2,|a_{j, 2}|\leq \lambda\mu_0(1-\rho)^2,
\ |p_{j, 1}|\leq\frac{\lambda(1-\rho)^5}{\mu_0^2},\ |p_{j, 2}|\leq\frac{\lambda(1-\rho)^5}{\mu_0^2}\Bigg\},
\end{align*}
with
\begin{align*}
&\chi_j^{(1)}:=\left(\mu_j, \xi_1^{(j)}, \xi_2^{(j)}, \theta_j, \phi_j, \psi_j,0,0,0,0\right),\qquad\qquad\qquad
\chi_j^{(2)}:=\chi_j=\left(\mu_j, \xi_1^{(j)}, \xi_2^{(j)}, \theta_j, \phi_j, \psi_j, a_{j,1}, a_{j,2}, p_{j,1}, p_{j,2}\right),
\\&\chi_{0,j}:=\left(\mu_0,\rho\left(\cos\frac{2\pi j }{k},\sin\frac{2\pi j}{k}\right),\frac{\pi}{2},0,0,0,0,0,0\right).
\end{align*}
By Proposition \ref{prop 6.4}, we have
\begin{align*}
&\Sigma_{\mathcal D,G_{k,\rho}}+\mathcal{E}(\varepsilon,\vec{\mu},\vec{a},\vec{p}) \\&= \sum_{i=1}^k F_{\mathcal{D},\mathcal R_i g_{i,\varepsilon, \rho}}(\mu_i,\xi^{(i)},\mathcal Q_{i},a_{i,l},p_{i,l})+\sum_{i<s} F_{\mathcal{D}}(\delta_i,\delta_s,Id,\mathcal Q_{i}^{-1}\mathcal Q_{s})\\
&\quad +\sum_{i<s}F^{(1)}_{\mathcal{D}}(\delta_i,\delta_s,Id,\mathcal Q_{i}^{-1}\mathcal Q_{s}, a_{i,l},p_{i,l})
+\sum_{i<s}F^{(2)}_{\mathcal{D}}(\delta_i,\delta_s,Id,\mathcal Q_{i}^{-1}\mathcal Q_{s}, a_{s,l},p_{s,l})+\mathcal{E}(\varepsilon,\vec{\mu},\vec{a},\vec{p})\\
& = F_{\mathcal{D},\mathcal R_j g_{j,\varepsilon, \rho}}(\mu_j,\xi^{(j)},\mathcal Q_{j},a_{j,l},p_{j,l}) + \sum_{i\neq j} F_{\mathcal{D},\mathcal R_i g_{i,\varepsilon, \rho}}(\mu_i,\xi^{(i)},\mathcal Q_{i},a_{i,l},p_{i,l})+\sum_{i<s} F_{\mathcal{D}}(\delta_i,\delta_s,Id,\mathcal Q_{i}^{-1}\mathcal Q_{s})\\
&\quad +\sum_{i<s}F^{(1)}_{\mathcal{D}}(\delta_i,\delta_s,Id,\mathcal Q_{i}^{-1}\mathcal Q_{s}, a_{i,l},p_{i,l})
+\sum_{i<s}F^{(2)}_{\mathcal{D}}(\delta_i,\delta_s,Id,\mathcal Q_{i}^{-1}\mathcal Q_{s}, a_{s,l},p_{s,l})+\mathcal{E}(\varepsilon,\vec{\mu},\vec{a},\vec{p})\\
& = F_{\mathcal{D}, g_{j, \varepsilon, \rho}}(\mu_j,\xi^{(j)},\mathcal R_j^{-1} \mathcal Q_{j},a_{j,l},p_{j,l}) + \sum_{i\neq j} F_{\mathcal{D},\mathcal R_i g_{i,\varepsilon, \rho}}(\mu_i,\xi^{(i)},\mathcal Q_{i},a_{i,l},p_{i,l})+\sum_{i<s} F_{\mathcal{D}}(\delta_i,\delta_s,Id,\mathcal Q_{i}^{-1}\mathcal Q_{s})\\
&\quad +\sum_{i<s}F^{(1)}_{\mathcal{D}}(\delta_i,\delta_s,Id,\mathcal Q_{i}^{-1}\mathcal Q_{s}, a_{i,l},p_{i,l})
+\sum_{i<s}F^{(2)}_{\mathcal{D}}(\delta_i,\delta_s,Id,\mathcal Q_{i}^{-1}\mathcal Q_{s}, a_{s,l},p_{s,l})+\mathcal{E}(\varepsilon,\vec{\mu},\vec{a},\vec{p}).
\end{align*}
From the results in Section \ref{expansion-multi-bubbles}, one has
\begin{align*}
&\left|\frac{\partial }{\partial \mu_j}\sum_{i\neq j} F_{\mathcal{D},\mathcal R_i g_{i,\varepsilon, \rho}}(\mu_i,\xi^{(i)},\mathcal Q_{i},a_{i,l},p_{i,l})+\frac{\partial }{\partial \mu_j}\sum_{i<s} F_{\mathcal{D}}(\delta_i,\delta_s,Id,\mathcal Q_{i}^{-1}\mathcal Q_{s})\right.\\
&\ \left. +\frac{\partial }{\partial \mu_j}\sum_{i<s}F^{(1)}_{\mathcal{D}}(\delta_i,\delta_s,Id,\mathcal Q_{i}^{-1}\mathcal Q_{s}, a_{i,l},p_{i,l})
+\frac{\partial }{\partial \mu_j}\sum_{i<s}F^{(2)}_{\mathcal{D}}(\delta_i,\delta_s,Id,\mathcal Q_{i}^{-1}\mathcal Q_{s}, a_{s,l},p_{s,l})+\frac{\partial }{\partial \mu_j}\mathcal{E}(\varepsilon,\vec{\mu},\vec{a},\vec{p})\right|\leq C\varepsilon^{\frac{3}{2}},
\end{align*}
thus
\begin{align*}
\left|\frac{\partial }{\partial \mu_j}\Sigma_{\mathcal D, G_{k, \rho}}+\frac{\partial }{\partial \mu_j}\mathcal{E}(\varepsilon,\vec{\mu},\vec{a},\vec{p}) -\frac{\partial }{\partial \mu_j} F_{\mathcal{D}, g_{j, \varepsilon, \rho}}(\mu_j,\xi^{(j)}, \mathcal R_j^{-1}\mathcal Q_{j},a_{j,l},p_{j,l})\right|\leq C\varepsilon^\frac{3}{2}.
\end{align*}
Analogous bounds hold for the gradients with respect to other variables in $\chi_j$. Indeed, let $\tilde{\mathcal{A}}$ denote the set of variables $\left(\xi_1^{(j)}, \xi_2^{(j)},\theta_j, \phi_j, \psi_j, a_{j, 1}, a_{j, 2}, p_{j, 1}, p_{j, 2}\right)$, we derive
\begin{align*}
&\nabla_{\tilde{\mathcal{A}}}\Sigma_{\mathcal D, G_{k, \rho}}+\nabla_{\tilde{\mathcal{A}}}\mathcal{E}(\varepsilon,\vec{\mu},\vec{a},\vec{p}) -\nabla_{\tilde{\mathcal{A}}}F_{\mathcal{D}, g_{j, \varepsilon, \rho}}(\mu_j,\xi^{(j)}, \mathcal R_j^{-1}\mathcal Q_{j},a_{j,l},p_{j,l})
\\&=\left(O\left(\varepsilon^3 d^{-4}\right),O\left(\varepsilon^3 d^{-4}\right),O\left(\varepsilon^2\right),O\left(\varepsilon^2\right),O\left(\varepsilon^2\right),O\left(\varepsilon^\frac{3}{2}\right),O\left(\varepsilon^\frac{3}{2}\right),O\left(\varepsilon^4|\log\varepsilon|d^{-5}\right),O\left(\varepsilon^4|\log\varepsilon|d^{-5}\right)\right).
\end{align*}
Combining these estimates, we obtain
\begin{align*}
\nabla_{\chi_j}I_{\varepsilon, G_{k,\rho}}(z_{\mathcal A})\cdot \left(\chi_j^{(1)}-\chi_{0,j}\right)
&= \left(\nabla_{\chi_j}\Sigma_{\mathcal D, G_{k, \rho}}+\nabla_{\chi_j}\mathcal{E}(\varepsilon,\vec{\mu},\vec{a},\vec{p})\right)\cdot \left(\chi_j^{(1)}-\chi_{0,j}\right)
\\&\geq C_0\varepsilon^2\lambda^{2}(1-\rho)^{4} -C\varepsilon^2\lambda(1-\rho)^{4}  \quad \text{ on }\ \partial^{(1)} T_\lambda^j,
\end{align*}
\begin{align*}
\nabla_{\chi_j}I_{\varepsilon, G_{k,\rho}}(z_{\mathcal A})\cdot \left(\chi_j^{(2)}-\chi_{0,j}\right)
&= \left(\nabla_{\chi_j}\Sigma_{\mathcal D, G_{k, \rho}}+\nabla_{\chi_j}\mathcal{E}(\varepsilon,\vec{\mu},\vec{a},\vec{p})\right)\cdot \left(\chi_j^{(2)}-\chi_{0,j}\right)
\\&\geq C_0\varepsilon^2\lambda^{2}(1-\rho)^{2} - C\varepsilon^2\lambda(1-\rho)^{2} \quad \text{ on }\ \partial^{(2)} T_\lambda^j,
\end{align*}
here $I_{\varepsilon, G_{k,\rho}}$ is the Euler-Lagrange functional (\ref{Euler-functional}) corresponding to the boundary datum $G_{k,\rho}$, $\mathcal{A}=\left(\mathcal{A}_1^{\mathrm{T}},\cdots,\mathcal{A}_k^{\mathrm{T}}\right)$, $\mathcal A_j\in \mathcal T^j_\lambda$, $j=1,\dots,k$.
Therefore, for sufficiently large $\lambda$ which is independent of $\rho$, there holds
\begin{align*}
\nabla_{\chi_j}I_{\varepsilon, G_{k,\rho}}(z_{\mathcal A})\cdot \left(\chi_j^{(1)}-\chi_{0,j}\right) \geq \frac{C_0}{2}\varepsilon^2\lambda^{2}(1-\rho)^{4} \quad \text{ on }\ \partial^{(1)} T_\lambda^j,
\end{align*}
\begin{align*}
\nabla_{\chi_j}I_{\varepsilon, G_{k,\rho}}(z_{\mathcal A})\cdot \left(\chi_j^{(2)}-\chi_{0,j}\right) \geq \frac{C_0}{2}\varepsilon^2\lambda^{2}(1-\rho)^{2} \quad \text{ on }\ \partial^{(2)} T_\lambda^j,
\end{align*}
and
$$
\text{ deg}(\nabla_{\chi_j}I_{\varepsilon, G_{k,\rho}}(z_{\mathcal A}), \mathcal T_\lambda^j, 0) = 1.
$$
From Proposition \ref{prop-3.6}, we have
\begin{align*}
\left|\left(\nabla_{\chi_j}I_{\varepsilon, G_{k,\rho}}(z_{\mathcal A}+ w_\varepsilon(z_{\mathcal A}))-\nabla_{\chi_j}I_{\varepsilon, G_{k,\rho}}(z_{\mathcal A})\right)\cdot \left(\chi_j^{(1)}-\chi_{0,j}\right)\right|
\leq C  o(1) \lambda\varepsilon^{2}(1-\rho)^{4},
\end{align*}
\begin{align*}
\left|\left(\nabla_{\chi_j}I_{\varepsilon, G_{k,\rho}}(z_{\mathcal A}+ w_\varepsilon(z_{\mathcal A}))-\nabla_{\chi_j}I_{\varepsilon, G_{k,\rho}}(z_{\mathcal A})\right)\cdot \left(\chi_j^{(2)}-\chi_{0,j}\right)\right|
\leq C  o(1) \lambda \varepsilon^{2}(1-\rho)^{2},
\end{align*}
where $o(1)\to 0$ as $\varepsilon\to 0^{+}$.
Then we have
$$
\nabla_{\chi_{j}}I_{\varepsilon, G_{k,\rho}}(z_{\mathcal A}+ w_\varepsilon(z_{\mathcal A}))\cdot \left(\chi_{j}^{(1)}-\chi_{0,j}\right) \geq \frac{C_0}{4}\varepsilon^2\lambda^2(1-\rho)^{4} \quad \text{ on }\ \partial^{(1)} T_{\lambda}^j,
$$
and
$$
\nabla_{\chi_{j}}I_{\varepsilon, G_{k,\rho}}(z_{\mathcal A}+ w_\varepsilon(z_{\mathcal A}))\cdot \left(\chi_{j}^{(2)}-\chi_{0,j}\right) \geq \frac{C_0}{4}\varepsilon^2\lambda^2(1-\rho)^{2} \quad \text{ on }\ \partial^{(2)} T_{\lambda}^j.
$$
Consequently, we establish
$$
\nabla_{\chi_j}I_{\varepsilon, G_{k,\rho}}(z_{\mathcal A}+ w_\varepsilon(z_{\mathcal A}))\neq \vec{0}\quad  \text{ for all }~ \chi_j\in \partial T_\lambda^j,$$
and
$$\text{ deg}\left(\nabla_{\chi_j}I_{\varepsilon, G_{k,\rho}}(z_{\mathcal A}+ w_\varepsilon(z_{\mathcal A})), \mathcal T_\lambda^j, 0 \right) = 1\quad \text{ for all }j.
$$
Therefore, we find a weak solution $u_{\varepsilon}$ of (\ref{e:main2}) with the form
\begin{equation*}
u_{\varepsilon}=\sum_{i=1}^kP\mathcal Q_i\delta_{\mu_i,\xi^{(i)},a_i, p_i}+w,
\end{equation*}
and $\|w\|_{H^1_0(\mathcal{D})}\to 0$ as $\varepsilon\to 0^{+}$. Using the Jacobian structure for equation (\ref{e:main2}), along with the regularity result in \cite{Chanillo-Li} and a minor modification of \cite[Lemma 8.5]{chanillomalchiodi2005cagasymptotic} we obtain $\|w\|_{L^\infty(\mathcal{D})}\to 0$ as $\varepsilon\to 0^{+}$. Furthermore, up to a sub-sequence and for a fixed large $\lambda > 0$, we have $u_{\varepsilon}(\mathcal D)$ converges to $\mathcal{S} = \{S_1\cup S_2\cup\cdots \cup S_k\}$ in the Hausdorff sense as $\varepsilon\to 0^+$. This concludes the proof of Theorem \ref{thm}.

\appendix

\section{Computations of mixed terms in Section \ref{expansion-for-one-bubble}}\label{Computations of mixed terms}
In this appendix,  we estimate the terms in $\mathcal{R}_{\mu,\xi,a,p,\varepsilon}$.
From (\ref{Mar22-2}) and (\ref{Au10-I1}), we know that
\begin{align}
\notag
\mathcal{R}_{\mu,\xi,a,p,\varepsilon}=&
-a_1a_2\int_{\mathcal{D}}L_\delta\left[P Z_{-1,2}\right]\cdot \left( PZ_{-1,1}\right)
-p_1p_2\int_{\mathcal{D}}L_\delta\left[P Z_{2,2}\right]\cdot \left(PZ_{2,1}\right)
\\ \notag&
-2\int_{\mathcal{D}}\varphi_1
\cdot \left[\left(P\mathcal{L}_{\mathcal{A}}\right)_x\wedge \left(P\mathcal{L}_{\mathcal{A}}\right)_y\right]
\\ \notag&+\frac{1}{2}\int_{\mathcal{D}}\nabla \left(P\mathcal{R}_{\mathcal{A}}\right)\cdot\nabla \left(P\mathcal{R}_{\mathcal{A}}\right)+2\int_{\mathcal{D}}P\delta\cdot \left[\left(P\mathcal{R}_{\mathcal{A}}\right)_x
 \wedge \left(P\mathcal{R}_{\mathcal{A}}\right)_y\right]
\\ \notag&-2\int_{\mathcal{D}}\varphi_1
\cdot \left[\delta_x\wedge \left(P\left(\mathcal{L}_{\mathcal{A}}+\mathcal{R}_{\mathcal{A}}\right)\right)_y+\left(P\left(\mathcal{L}_{\mathcal{A}}+\mathcal{R}_{\mathcal{A}}\right)\right)_x\wedge \delta_y\right]
\\ \notag&+2\int_{\mathcal{D}} \left(P\left(\mathcal{L}_{\mathcal{A}}+\mathcal{R}_{\mathcal{A}}\right)\right)\cdot\left[(\varphi_1)_x\wedge (\varphi_1)_y\right]
\\ \notag&-2\int_{\mathcal{D}}\left(P\mathcal{R}_{\mathcal{A}}\right)
\cdot \left[\delta_x\wedge \left(\mathcal{L}_{\mathcal{A}}\right)_y +\left(\mathcal{L}_{\mathcal{A}}\right)_x\wedge\delta_y\right]+2\int_{\mathcal{D}}\left(P\mathcal{R}_{\mathcal{A}}\right)
\cdot \left[\left(P\delta\right)_x\wedge \left(P\mathcal{L}_{\mathcal{A}}\right)_y+\left(P\mathcal{L}_{\mathcal{A}}\right)_x\wedge \left(P\delta\right)_y \right]
\\ \notag &+\frac{2}{3}\int_{\mathcal{D}}\left(P\left(\mathcal{L}_{\mathcal{A}}+\mathcal{R}_{\mathcal{A}}\right)\right)\cdot \left[\left(P\left(\mathcal{L}_{\mathcal{A}}+\mathcal{R}_{\mathcal{A}}\right)\right)_x
 \wedge \left(P\left(\mathcal{L}_{\mathcal{A}}+\mathcal{R}_{\mathcal{A}}\right)\right)_y\right]
\\ \notag &-2\varepsilon\int_{\mathcal{D}}g\cdot \left[(\varphi_1)_x\wedge \left(P\mathcal{L}_{\mathcal{A}}\right)_y+\left(P\mathcal{L}_{\mathcal{A}}\right)_x\wedge (\varphi_1)_y\right]
\\ \notag &+2\varepsilon \int_{\mathcal{D}} g \cdot \left[\left(P\mathcal{L}_{\mathcal{A}}\right)_x\wedge \left(P\mathcal{L}_{\mathcal{A}}\right)_y\right]
+2\varepsilon\int_{\mathcal{D}}g\cdot  \left[\left(P\delta\right)_x\wedge \left(P\left(\mathcal{L}_{\mathcal{A}}+\mathcal{R}_{\mathcal{A}}\right)\right)_y+\left(P\left(\mathcal{L}_{\mathcal{A}}+\mathcal{R}_{\mathcal{A}}\right)\right)_x\wedge \left(P\delta\right)_y\right]
\\ \notag &+2\varepsilon \int_{\mathcal{D}} g \cdot \left[\left(P\mathcal{R}_{\mathcal{A}}\right)_x\wedge \left(P\mathcal{R}_{\mathcal{A}}\right)_y\right]
+2\varepsilon\int_{\mathcal{D}}g\cdot  \left[\left(P\mathcal{L}_{\mathcal{A}}\right)_x\wedge \left(P\mathcal{R}_{\mathcal{A}}\right)_y+\left(P\mathcal{R}_{\mathcal{A}}\right)_x\wedge \left(P\mathcal{L}_{\mathcal{A}}\right)_y\right]
\\ \label{Au12-R1}&+2\varepsilon^2\int_{\mathcal{D}}\left(P\mathcal{L}_{\mathcal{A}}+P\mathcal{R}_{\mathcal{A}}\right)\cdot (g_x\wedge g_y).
\end{align}

We point out that there are some interesting and subtle cancellations among the terms in $\mathcal{R}_{\mu,\xi,a,p,\varepsilon}$.
For the sake of convenience and clarity, we categorize and organize the terms in $\mathcal{R}_{\mu,\xi,a,p,\varepsilon}$ as follows.
Write
$$\mathcal{R}_{\mu,\xi,a,p,\varepsilon}=\mathcal{R}^{(1)}_{\mu,\xi,a,p,\varepsilon}+\mathcal{R}^{(2)}_{\mu,\xi,a,p,\varepsilon}+\mathcal{R}^{(3)}_{\mu,\xi,a,p,\varepsilon}+\mathcal{R}^{(4)}_{\mu,\xi,a,p,\varepsilon},$$
where
\begin{equation}\label{Jan6-1}
\begin{aligned}
\mathcal{R}^{(1)}_{\mu,\xi,a,p,\varepsilon}:=&-2\int_{\mathcal{D}}\left(\varphi_1-\varepsilon g\right)
\cdot \left[\delta_x\wedge (P\mathcal{L}_{\mathcal{A}})_y+(P\mathcal{L}_{\mathcal{A}})_x\wedge \delta_y\right]
\\&+2\int_{\mathcal{D}}P\mathcal{L}_{\mathcal{A}}
\cdot \left[(\varphi_1)_x\wedge (\varphi_1)_y\right]-2\varepsilon\int_{\mathcal{D}}g\cdot \left[(\varphi_1)_x\wedge \left(P\mathcal{L}_{\mathcal{A}}\right)_y+\left(P\mathcal{L}_{\mathcal{A}}\right)_x\wedge (\varphi_1)_y\right]
\\&+2\varepsilon^2\int_{\mathcal{D}}P\mathcal{L}_{\mathcal{A}}\cdot (g_x\wedge g_y),
\end{aligned}
\end{equation}

\begin{equation}\label{Jan2-6}
\begin{aligned}
\mathcal{R}^{(2)}_{\mu,\xi,a,p,\varepsilon}:=&-a_1a_2\int_{\mathcal{D}}L_\delta\left[P Z_{-1,2}\right]\cdot \left( PZ_{-1,1}\right)
-p_1p_2\int_{\mathcal{D}}L_\delta\left[P Z_{2,2}\right]\cdot \left(PZ_{2,1}\right)
\\&-2\int_{\mathcal{D}}\varphi_1
\cdot \left[\left(P\mathcal{L}_{\mathcal{A}}\right)_x\wedge \left(P\mathcal{L}_{\mathcal{A}}\right)_y\right]
-2\int_{\mathcal{D}}\varphi_1
\cdot \left[\delta_x\wedge \left(P\mathcal{R}_{\mathcal{A}}\right)_y+\left(P\mathcal{R}_{\mathcal{A}}\right)_x\wedge \delta_y\right]
\\&+2\int_{\mathcal{D}} \left(P\mathcal{R}_{\mathcal{A}}\right)\cdot\left[(\varphi_1)_x\wedge (\varphi_1)_y\right]
+2\varepsilon  \int_{\mathcal{D}} g \cdot \left[\left(P\mathcal{L}_{\mathcal{A}}\right)_x\wedge \left(P\mathcal{L}_{\mathcal{A}}\right)_y\right]
\\&+2\varepsilon\int_{\mathcal{D}}g\cdot  \left[\left(P\delta\right)_x\wedge \left(P\mathcal{R}_{\mathcal{A}}\right)_y+\left(P\mathcal{R}_{\mathcal{A}}\right)_x\wedge \left(P\delta\right)_y\right]
+2\varepsilon^2\int_{\mathcal{D}}\left(P\mathcal{R}_{\mathcal{A}}\right)\cdot (g_x\wedge g_y),
\end{aligned}
\end{equation}

\begin{equation}\label{Jan2-11}
\begin{aligned}
\mathcal{R}^{(3)}_{\mu,\xi,a,p,\varepsilon}:=&-2\int_{\mathcal{D}}\left(P\mathcal{R}_{\mathcal{A}}\right)
\cdot \left[\delta_x\wedge \left(\mathcal{L}_{\mathcal{A}}\right)_y +\left(\mathcal{L}_{\mathcal{A}}\right)_x\wedge\delta_y\right]
+2\int_{\mathcal{D}}\left(P\mathcal{R}_{\mathcal{A}}\right)
\cdot \left[\left(P\delta\right)_x\wedge \left(P\mathcal{L}_{\mathcal{A}}\right)_y+\left(P\mathcal{L}_{\mathcal{A}}\right)_x\wedge \left(P\delta\right)_y \right]
\\&+\frac{2}{3}\int_{\mathcal{D}}\left(P\mathcal{L}_{\mathcal{A}}\right)\cdot \left[\left(P\mathcal{L}_{\mathcal{A}}\right)_x
 \wedge \left(P\mathcal{L}_{\mathcal{A}}\right)_y\right]
\\&+2\varepsilon\int_{\mathcal{D}}g\cdot  \left[\left(P\mathcal{L}_{\mathcal{A}}\right)_x\wedge \left(P\mathcal{R}_{\mathcal{A}}\right)_y+\left(P\mathcal{R}_{\mathcal{A}}\right)_x\wedge \left(P\mathcal{L}_{\mathcal{A}}\right)_y\right],
\end{aligned}
\end{equation}

\begin{equation}\label{Jan7-2}
\begin{aligned}
\mathcal{R}^{(4)}_{\mu,\xi,a,p,\varepsilon}:=&~\frac{1}{2}\int_{\mathcal{D}}\nabla \left(P\mathcal{R}_{\mathcal{A}}\right)\cdot\nabla \left(P\mathcal{R}_{\mathcal{A}}\right)+2\int_{\mathcal{D}}P\delta\cdot \left[\left(P\mathcal{R}_{\mathcal{A}}\right)_x
 \wedge \left(P\mathcal{R}_{\mathcal{A}}\right)_y\right]
\\&+2\int_{\mathcal{D}}\left(P\mathcal{R}_{\mathcal{A}}\right)\cdot \left[\left(P\mathcal{L}_{\mathcal{A}}\right)_x
 \wedge \left(P\mathcal{L}_{\mathcal{A}}\right)_y\right]
+2\int_{\mathcal{D}}\left(P\mathcal{L}_{\mathcal{A}}\right)\cdot \left[\left(P\mathcal{R}_{\mathcal{A}}\right)_x
 \wedge \left(P\mathcal{R}_{\mathcal{A}}\right)_y\right]
 \\&+\frac{2}{3}\int_{\mathcal{D}}\left(P\mathcal{R}_{\mathcal{A}}\right)\cdot \left[\left(P\mathcal{R}_{\mathcal{A}}\right)_x
 \wedge \left(P\mathcal{R}_{\mathcal{A}}\right)_y\right].
 \end{aligned}
\end{equation}
In particular, we provide the cancellations associated with the terms involving $a$ and $p$.

{\bf$\bullet$ Cancellations of the terms containing $p_k$ or $p_kp_l$.}
Firstly, we estimate the terms that contain $p_1$ in $\mathcal{R}^{(1)}_{\mu,\xi,a,p,\varepsilon}$. We start with $-2\sum_{l=1}^2 p_l\int_{\mathcal{D}}\left(\varphi_1-\varepsilon g\right)
\cdot \left[\delta_x\wedge (PZ_{2,l})_y+(PZ_{2,l})_x\wedge \delta_y\right]$.
Using (\ref{A26-5}) and (\ref{A26-6}), we compute $-2\sum_{l=1}^2 p_l\int_{\mathcal{D}}\left(\varphi_1-\varepsilon g\right)
\cdot \left[\delta_x\wedge \left(Z_{2,l}\right)_y+\left(Z_{2,l}\right)_x\wedge \delta_y\right].$
Similar to (\ref{A30-p3-a}) and (\ref{A30-p3-b}) we have
\begin{align*}
&-2\sum_{l=1}^2 p_l\int_{\mathcal{D}}\varphi_1
\cdot \left[\delta_x\wedge (Z_{2,l})_y+(Z_{2,l})_x\wedge \delta_y\right]
\\&=-4p_1\mu^2\left[\frac{\pi^2}{4}\mu^2\frac{\partial^2h_1^{(1)}}{\partial x^2}(\xi,\xi)+\frac{\pi^2}{4}\mu^2\frac{\partial^2h_1^{(1)}}{\partial y^2}(\xi,\xi) + \frac{\pi}{6}\mu^4\frac{\partial^4h_1^{(1)}}{\partial x^4}(\xi,\xi) + \frac{\pi}{6}\mu^4\frac{\partial^4h_1^{(1)}}{\partial y^4}(\xi,\xi)\right]
\\&\quad-4p_1\mu^2\left[\frac{\pi}{6}\mu^4\frac{\partial^4h_2^{(1)}}{\partial x^3\partial y}(\xi,\xi)-\frac{\pi}{6}\mu^4\frac{\partial^4h_2^{(1)}}{\partial x\partial y^3}(\xi,\xi)\right]
-4p_2\mu^2 \left[\frac{\pi}{6}\mu^4\frac{\partial^4 h^{(1)}_1}{\partial x^3 \partial y}(\xi, \xi) -\frac{\pi}{6}\mu^4\frac{\partial^4 h^{(1)}_1}{\partial x\pp y^3 }(\xi, \xi)\right]
\\&\quad-4p_2\mu^2\left[\frac{\pi^2}{4}\mu^2\frac{\partial^2h_2^{(1)}}{\partial x^2}(\xi,\xi)+\frac{\pi^2}{4}\mu^2\frac{\partial^2h_2^{(1)}}{\partial y^2}(\xi,\xi)\right]
\\&\quad-4p_2\mu^2\left[+\frac{\pi}{12}\mu^4 \frac{\partial^4 h^{(1)}_2}{\partial x^4}(\xi, \xi)+
\frac{\pi}{12}\mu^4 \frac{\partial^4 h^{(1)}_2}{\partial y^4}(\xi, \xi)+\frac{\pi}{2}\mu^4 \frac{\partial^4 h^{(1)}_2}{\partial x^2\partial y^2}(\xi, \xi)\right]+O\left((p_1+p_2)\mu^8d^{-8}\right)
\\&=-\frac{8\pi}{3}p_1\mu^6\frac{\partial^4h_1^{(1)}}{\partial x^4}(\xi,\xi)+\frac{8\pi}{3}p_2\mu^6\frac{\partial^4 h^{(1)}_2}{\partial x^4}(\xi, \xi)
+O\left((p_1+p_2)\mu^8d^{-8}\right),
\end{align*}
where in the last equality, we have used the fact that $\frac{\partial^2h_1^{(1)}}{\partial x^2}(\xi,\xi)+\frac{\partial^2h_1^{(1)}}{\partial y^2}(\xi,\xi)=0$. Recall that
$$g(z,\omega)=\left(g_1(z,\omega),g_2(z,\omega),g_3(z,\omega)\right)=\left(2h_1^{(1)}(z,\omega),2h_2^{(1)}(z,\omega),-2\varepsilon h_3^{(1)}(z,\omega)\right),$$
then we have
\begin{equation*}
\begin{aligned}
&2\sum_{l=1}^2 p_l\int_{\mathcal{D}}\varepsilon g
\cdot \left[\delta_x\wedge (Z_{2,l})_y+(Z_{2,l})_x\wedge \delta_y\right]
\\&=\frac{8\pi}{3}p_1\varepsilon \mu^4\frac{\partial^4h_1^{(1)}}{\partial x^4}(\xi,\omega)+\frac{8\pi}{3}p_2\varepsilon\mu^4\frac{\partial^4 h^{(1)}_2}{\partial x^4}(\xi, \omega)
+O\left((p_1+p_2)\left(\varepsilon\mu^6+\varepsilon^2\mu^4\right)d^{-8}\right).
\end{aligned}
\end{equation*}
Since
$$\left|\frac{\partial^4 h_1^{(1)}}{\partial x^4}(\xi,\omega)-\frac{\partial^4 h_1^{(1)}}{\partial x^4}(\xi,\xi)\right|=O\left(d^{-7}|\xi-\omega|\right),\qquad \left|\frac{\partial^4 h_2^{(1)}}{\pp x^4}(\xi,\omega)-\frac{\partial^4 h_2^{(1)}}{\pp x^4}(\xi,\xi)\right|=O\left(d^{-7}|\xi-\omega|\right) ,$$
one has
\begin{equation*}
\begin{aligned}
&-2\sum_{l=1}^2 p_l\int_{\mathcal{D}}\left(\varphi_1-\varepsilon g\right)
\cdot \left[\delta_x\wedge (PZ_{2,l})_y+(PZ_{2,l})_x\wedge \delta_y\right]
\\&=-\frac{8\pi}{3}p_1\left(\mu^2-\varepsilon\right)\mu^4\frac{\partial^4h_1^{(1)}}{\partial x^4}(\xi,\xi)+\frac{8\pi}{3}p_2\left(\mu^2-\varepsilon\right)\mu^4\frac{\partial^4 h^{(1)}_2}{\partial x^4}(\xi, \xi)
\\&\quad+O\left((p_1+p_2)\left(\varepsilon\mu^4d^{-7}|\xi-\omega|+\left(\mu^2-\varepsilon\right)\mu^4\left(\mu^2+\varepsilon\right)d^{-8}\right)\right)
\\&=O\left((p_1+p_2)\left(\mu^2-\varepsilon\right)\mu^4d^{-6}\right).
\end{aligned}
\end{equation*}
Here we employ the derivatives of $h^{(1)}$ established in Lemma \ref{evalues-at-xi-2}, and leverage the asymptotic relation $\varepsilon d^{-1}|\xi-\omega|=O\left(\mu^2-\varepsilon\right)$, which persists throughout subsequent analysis. This allows us to bound the term $\varepsilon\mu^4d^{-7}|\xi-\omega|$ as $O\left((\mu^2-\varepsilon)\mu^4d^{-6}\right)$. Similar to (\ref{Jan5-2}), we have
\begin{equation*}
\begin{aligned}
&2\sum_{l=1}^2p_l\int_{\mathcal{D}}\left(\varphi_1-\varepsilon g\right)
\cdot \left[\delta_x\wedge \left(\varphi_{2,l}\right)_y+\left(\varphi_{2,l}\right)_x\wedge \delta_y\right]
\\&=O\left((p_1+p_2)\left(\left(\mu^2-\varepsilon\right)\mu^{8}|\log\mu|d^{-10}+\varepsilon\mu^8d^{-11}|\log\mu||\xi-\omega|+\left(\mu^2-\varepsilon\right)\mu^6d^{-8}+\varepsilon\mu^6d^{-9}|\xi-\omega|\right)\right).
\end{aligned}
\end{equation*}
Thus we obtain
\begin{equation}\label{M1p1}
\begin{aligned}
-2\sum_{l=1}^2 p_l\int_{\mathcal{D}}\left(\varphi_1-\varepsilon g\right)
\cdot \left[\delta_x\wedge (PZ_{2,l})_y+(PZ_{2,l})_x\wedge \delta_y\right]
=O\left((p_1+p_2)\left(\mu^2-\varepsilon\right)\mu^4d^{-6}\right).
\end{aligned}
\end{equation}
For the remaining terms, we deduce
\begin{equation}\label{M1p3}
\begin{aligned}
&2p_1\int_{\mathcal{D}}PZ_{2,1}\cdot\left[(\varphi_1)_x\wedge (\varphi_1)_y\right]
\\&=-8p_1\mu^8\int_{\mathbb{R}^2}\frac{-2(x^4-6x^2y^2+y^4-1)}{\left(\left(x^2+y^2\right)^2+1\right)^2}
\left[\frac{\pp h_2^{(1)}}{\pp x}(\xi,\xi)\frac{\pp h_3^{(1)}}{\pp y}(\xi,\xi)-\frac{\pp h_3^{(1)}}{\pp x}(\xi,\xi)\frac{\pp h_2^{(1)}}{\pp y}(\xi,\xi)+O\left(\mu^{2}d^{-10}\right)\right]
\\&\quad+O\left(p_1\mu^{10}d^{-12}\right)
\\&=-4\pi^2p_1\mu^8\left[\frac{\pp h_2^{(1)}}{\pp x}(\xi,\xi)\frac{\pp h_3^{(1)}}{\pp y}(\xi,\xi)-\frac{\pp h_3^{(1)}}{\pp x}(\xi,\xi)\frac{\pp h_2^{(1)}}{\pp y}(\xi,\xi)\right]
+O\left(p_1\mu^{10}d^{-12}\right).
\end{aligned}
\end{equation}
Using (\ref{M30p1-1}), (\ref{M30p1-2}), (\ref{M30p2-1}) and (\ref{M30p2-2}), it is easy to check that
\begin{align}
\notag&-2\varepsilon p_1\int_{\mathcal{D}}g\cdot\left[\left(PZ_{2,1}\right)_x\wedge (\varphi_1)_y+(\varphi_1)_x\wedge\left(PZ_{2,1}\right)_y\right]
\\\notag&=2\pi^2\varepsilon p_1\mu^6\left[\frac{\partial g_2}{\partial x}(\xi, \omega)\frac{\partial h^{(1)}_3}{\partial y}(\xi, \xi)-\frac{\partial h^{(1)}_3}{\partial x}(\xi, \xi)\frac{\partial g_2}{\partial y}(\xi, \omega)\right]
\\\notag&\quad-2\pi^2\varepsilon p_1\mu^4\left[\frac{\partial h^{(1)}_2}{\partial x}(\xi, \xi)\frac{\partial g_3}{\partial y}(\xi, \omega)-\frac{\partial g_3}{\partial x}(\xi, \omega)\frac{\partial h^{(1)}_2}{\partial y}(\xi, \xi)\right]
+O\left(p_1\left(\varepsilon \mu^8d^{-12}+\varepsilon^2 \mu^6d^{-12}\right)\right)
\\\notag&=4\pi^2\varepsilon p_1\mu^6\left[\frac{\pp h_2^{(1)}}{\pp x}(\xi,\xi)\frac{\pp h_3^{(1)}}{\pp y}(\xi,\xi)-\frac{\pp h_3^{(1)}}{\pp x}(\xi,\xi)\frac{\pp h_2^{(1)}}{\pp y}(\xi,\xi)\right]
\\\notag&\quad+4\pi^2\varepsilon^2 p_1\mu^4\left[\frac{\partial h^{(1)}_2}{\partial x}(\xi, \xi)\frac{\partial h^{(1)}_3}{\partial y}(\xi, \omega)-\frac{\partial h^{(1)}_3}{\partial x}(\xi, \omega)\frac{\partial h^{(1)}_2}{\partial y}(\xi, \xi)\right]
\\ \label{D27-3}\href{}{}&\quad+O\left(p_1\left(\varepsilon \mu^6d^{-9}|\xi-\omega|+\varepsilon \mu^8d^{-12}+\varepsilon^2 \mu^6d^{-12}\right)\right).
\end{align}
In the last equality, we use the following estimate
\begin{equation}\label{F22-1}
\begin{aligned}
\left|\frac{\partial h_2^{(1)}}{\partial x}(\xi,\xi)-\frac{\partial h_2^{(1)}}{\partial x}(\xi,\omega)\right|=O\left(d^{-4}|\xi-\omega|\right),\quad \left|\frac{\partial h_2^{(1)}}{\partial y}(\xi,\xi)-\frac{\partial h_2^{(1)}}{\partial y}(\xi,\omega)\right|=O\left(d^{-4}|\xi-\omega|\right).
\end{aligned}
\end{equation}
Similar to (\ref{M1p3}), using (\ref{F22-1}), we obtain
\begin{equation}\label{Jan4-1}
\begin{aligned}
2\varepsilon^2p_1\int_{\mathcal{D}}PZ_{2,1}\cdot\left(g_x\wedge g_y\right)
&=\pi^2p_1\varepsilon^2\mu^2\left[\frac{\pp g_2}{\pp x}(\xi,\omega)\frac{\pp g_3}{\pp y}(\xi,\omega)-\frac{\pp g_3}{\pp x}(\xi,\omega)\frac{\pp g_2}{\pp y}(\xi,\omega)\right]
\\&\quad+O\left(p_1\left(\varepsilon^2\mu^{6}d^{-12}+\varepsilon^3\mu^{4}d^{-12}\right)\right)
\\&=-4\pi^2\varepsilon^3 p_1\mu^2\left[\frac{\partial h^{(1)}_2}{\partial x}(\xi, \xi)\frac{\partial h^{(1)}_3}{\partial y}(\xi, \omega)-\frac{\partial h^{(1)}_3}{\partial x}(\xi, \omega)\frac{\partial h^{(1)}_2}{\partial y}(\xi, \xi)\right]
\\&\quad+O\left(p_1\varepsilon^3\mu^2 d^{-9}|\xi-\omega|+p_1\left(\varepsilon^2\mu^{6}d^{-12}+\varepsilon^3\mu^{4}d^{-12}\right)\right).
\end{aligned}
\end{equation}
Combining (\ref{M1p3}), (\ref{D27-3}), (\ref{Jan4-1}) and using the fact that $\varepsilon d^{-1}|\xi-\omega|=O\left(\mu^2-\varepsilon\right)$, we obtain
\begin{align}
\notag&2p_1\int_{\mathcal{D}}PZ_{2,1}\cdot\left[(\varphi_1)_x\wedge (\varphi_1)_y\right]-2\varepsilon p_1\int_{\mathcal{D}}g\cdot\left[\left(PZ_{2,1}\right)_x\wedge (\varphi_1)_y+(\varphi_1)_x\wedge\left(PZ_{2,1}\right)_y\right]+2\varepsilon^2p_1\int_{\mathcal{D}}PZ_{2,1}\cdot\left(g_x\wedge g_y\right)
\\\notag&=-4\pi^2p_1(\mu^2-\varepsilon)\mu^6\left[\frac{\pp h_2^{(1)}}{\pp x}(\xi,\xi)\frac{\pp h_3^{(1)}}{\pp y}(\xi,\xi)-\frac{\pp h_3^{(1)}}{\pp x}(\xi,\xi)\frac{\pp h_2^{(1)}}{\pp y}(\xi,\xi)\right]
\\\notag&\quad+4\pi^2 p_1(\mu^2-\varepsilon)\varepsilon^2\mu^2\left[\frac{\partial h^{(1)}_2}{\partial x}(\xi, \xi)\frac{\partial h^{(1)}_3}{\partial y}(\xi, \omega)-\frac{\partial h^{(1)}_3}{\partial x}(\xi, \omega)\frac{\partial h^{(1)}_2}{\partial y}(\xi, \xi)\right]
\\\notag&\quad+O\left(p_1(\varepsilon \mu^6+\varepsilon^3\mu^2)d^{-9}|\xi-\omega|+p_1(\mu^2-\varepsilon)\left(\mu^8+\varepsilon\mu^6+\varepsilon^2\mu^4\right)d^{-12}\right)
\\\notag&\quad+O\left(p_1\left(\varepsilon \mu^8+\varepsilon^2 \mu^6+\varepsilon^3 \mu^4\right)d^{-13}|\xi-\omega|\right)
\\ \label{Jan4-2}&=O\left(p_1(\mu^2-\varepsilon)\left((\mu^6+\varepsilon^2\mu^2)d^{-8}+\left(\mu^8+\varepsilon\mu^6+\varepsilon^2\mu^4\right)d^{-12}\right)\right).
\end{align}
Similarly, for the corresponding terms that involve $p_2$, there holds
\begin{equation}\label{Jan6-2}
\begin{aligned}
&2p_2\int_{\mathcal{D}}PZ_{2,2}\cdot\left[(\varphi_1)_x\wedge (\varphi_1)_y\right]-2\varepsilon p_2\int_{\mathcal{D}}g\cdot\left[\left(PZ_{2,2}\right)_x\wedge (\varphi_1)_y+(\varphi_1)_x\wedge\left(PZ_{2,2}\right)_y\right]+2\varepsilon^2p_2\int_{\mathcal{D}}PZ_{2,2}\cdot\left(g_x\wedge g_y\right)
\\&=O\left(p_2(\mu^2-\varepsilon)\left((\mu^6+\varepsilon^2\mu^2)d^{-8}+\left(\mu^8+\varepsilon\mu^6+\varepsilon^2\mu^4\right)d^{-12}\right)\right).
\end{aligned}
\end{equation}
From (\ref{M1p1}), (\ref{Jan4-2}) and (\ref{Jan6-2}), we conclude that compared to the terms containing $p_l$, $l=1,2$ in Subsection \ref{subsection4.1}, the corresponding terms in $\mathcal{R}^{(1)}_{\mu,\xi,a,p,\varepsilon}$ are of higher order.

Secondly, we consider the terms containing  $p_kp_l$ in $\mathcal{R}^{(2)}_{\mu,\xi,a,p,\varepsilon}$. Using (\ref{A26-5}) and reasoning as (\ref{A30-p3}) and (\ref{A30-p3-a}), we can get
\begin{equation}\label{Au11-6}
\begin{aligned}
-p_1p_2\int_{\mathcal{D}}L_\delta\left[P Z_{2,2}\right]\cdot \left(PZ_{2,1}\right)
 &=-2p_1p_2\int_{\mathcal{D}}\varphi_{2,2}\cdot \left[\delta_x\wedge(Z_{2,1})_y+(Z_{2,1})_x \wedge\delta_y\right]
+O\left(p_1p_2\mu^{10}d^{-10}\right)
\\&=-\frac{8\pi}{3}p_1p_2\mu^8\frac{\partial^4 h^{(2, 2)}_1}{\partial x^4}(\xi, \xi)+O\left(p_1p_2\mu^{10}d^{-10}\right)
\\&=O\left(p_1p_2\mu^{10}d^{-10}\right),
\end{aligned}
\end{equation}
where we use the fact that $\frac{\partial^4 h^{(2, 2)}_1}{\partial x^4}(\xi, \xi)=0$.
Next, we need to estimate the following terms
\begin{align}
\notag&-2\sum_{k,l=1}^{2}p_kp_l\int_{\mathcal{D}}\left(\varphi_1-\varepsilon g\right)
\cdot \left[(PZ_{2,k})_x\wedge (PZ_{2,l})_y+\left(\delta_x\wedge \left(PZ^{2,2}_{k,l}\right)_y+\left(PZ^{2,2}_{k,l}\right)_x\wedge \delta_y\right)\right]
\\\notag&+2\sum_{k,l=1}^2p_kp_l\int_{\mathcal{D}}PZ^{2,2}_{k,l}
\cdot \left[(\varphi_1)_x\wedge (\varphi_1)_y\right]
\\& \label{D27-2}-2\varepsilon\sum_{k,l=1}^2 p_kp_l\int_{\mathcal{D}}g
\cdot \left[(\varphi_1)_x\wedge\left(PZ^{2,2}_{k,l}\right)_y+\left(PZ^{2,2}_{k,l}\right)_x\wedge (\varphi_1)_y\right]+2\varepsilon^2 \sum_{k,l=1}^2p_kp_l\int_{\mathcal{D}}PZ^{2,2}_{k,l}\cdot \left[g_x\wedge g_y\right].
\end{align}
Recall that
$$Z^{2,2}_{k,l}:=\frac{\pp^2\delta_{\mu, \xi, a, p} }{\pp p_k\pp p_l}\Bigg|_{a=0,p=0}.$$
We begin by considering the terms containing $p_1^2$ in (\ref{D27-2}).
Specifically, we have
\begin{equation}\label{D30-1}
\begin{aligned}
&-2p_1^2\int_{\mathcal{D}}\left(\varphi_1-\varepsilon g\right)
\cdot \left[(PZ_{2,1})_x\wedge (PZ_{2,1})_y+\left(\delta_x\wedge (PZ^{2,2}_{1,1})_y+(PZ^{2,2}_{1,1})_x\wedge \delta_y\right)\right]
\\&=-2p_1^2\int_{\mathcal{D}}\left(\varphi_1-\varepsilon g\right)
\cdot \left[(Z_{2,1})_x\wedge (Z_{2,1})_y+\left(\delta_x\wedge (Z^{2,2}_{1,1})_y+(Z^{2,2}_{1,1})_x\wedge \delta_y\right)\right]
\\&\quad+2p_1^2\int_{\mathcal{D}}\left(\varphi_1-\varepsilon g\right)
\cdot \left[(Z_{2,1})_x\wedge (\varphi_{2,1})_y+(\varphi_{2,1})_x\wedge (Z_{2,1})_y\right]
\\&\quad+O\left(p_1^2(\mu^2-\varepsilon)\left(\mu^8+\mu^6(\mu^2+\varepsilon)\right)d^{-12}+p_1^2\varepsilon\mu^8d^{-13}|\xi-\omega|\right).
\end{aligned}
\end{equation}

Using (\ref{D27-1}), we obtain
\begin{align*}
&-2p_1^2\int_{\mathcal{D}}\varphi_1
\cdot \left[(Z_{2,1})_x\wedge (Z_{2,1})_y+\left(\delta_x\wedge (Z^{2,2}_{1,1})_y+(Z^{2,2}_{1,1})_x\wedge \delta_y\right)\right]
\\&=-4p_1^2\mu^2\int_{\mathbb{R}^2} \left[h_1^{(1)}(\xi,\xi)+\frac{\mu^2}{2}\left(x^2\frac{\partial^2 h_1^{(1)}}{\partial x^2}(\xi,\xi)+y^2\frac{\partial^2 h_1^{(1)}}{\partial y^2}(\xi,\xi)\right)\right.
\\&\left.\qquad\qquad\qquad\quad+\frac{\mu^4}{24}\left(x^4\frac{\pp^4 h_1^{(1)}}{\pp x^4}(\xi,\xi)+6x^2y^2\frac{\pp^4 h_1^{(1)}}{\pp x^2\pp y^2}(\xi,\xi)+y^4\frac{\pp^4 h_1^{(1)}}{\pp y^4}(\xi,\xi)\right)
\right.
\\&\left.\qquad\qquad\qquad\quad+\frac{\mu^6}{720}\left(x^6\frac{\pp^6 h_1^{(1)}}{\pp x^6}(\xi,\xi)+15x^4y^2\frac{\pp^6 h_1^{(1)}}{\pp x^4\pp y^2}(\xi,\xi)+15x^2y^4\frac{\pp^6 h_1^{(1)}}{\pp x^2\pp y^4}(\xi,\xi)+y^6\frac{\pp^6 h_1^{(1)}}{\pp y^6}(\xi,\xi)\right)\right]
\\&\qquad\qquad\qquad\times\frac{-96 (x-y) (x+y) \left(x^2+y^2\right) \left(5 x^4-22 x^2 y^2+5 y^4-3\right)}{\left(\left(x^2+y^2\right)^2+1\right)^5}
\\&\quad -4p_1^2\mu^2\int_{\mathbb{R}^2}\left[\mu^2xy \frac{\partial^2 h_2^{(1)}}{\partial x \partial y}(\xi,\xi)+\frac{\mu^4}{24}\left(4x^3y\frac{\pp^4 h_2^{(1)}}{\pp x^3\pp y}(\xi,\xi)+4xy^3\frac{\pp^4 h_2^{(1)}}{\pp x\pp y^3}(\xi,\xi)\right)
\right.
\\&\left.\qquad\qquad\qquad\quad+\frac{\mu^6}{720}\left(6x^5y\frac{\pp^6 h_2^{(1)}}{\pp x^5\pp y}(\xi,\xi)+20x^3y^3\frac{\pp^6 h_2^{(1)}}{\pp x^3\pp y^3}(\xi,\xi)+6xy^5\frac{\pp^6 h_2^{(1)}}{\pp x\pp y^5}(\xi,\xi)\right)\right]
\\&\qquad\qquad\qquad\times \frac{-192 x y \left(x^2+y^2\right) \left(7 x^4-18
   x^2 y^2+7 y^4-1\right)}{\left(\left(x^2+y^2\right)^2+1\right)^5}
\\&\quad-2p_1^2 \int_{\mathbb{R}^2}\left[1-2\mu^4\left[h_3^{(1)}(\xi,\xi)+\frac{\mu^2}{2}\left(x^2\frac{\partial^2 h_3^{(1)}}{\partial x^2}(\xi,\xi)+y^2\frac{\partial^2 h_3^{(1)}}{\partial y^2}(\xi,\xi)\right)\right. \right.
\\&\left.\left.\qquad\qquad\qquad\qquad\qquad+\frac{\mu^4}{24}\left(x^4\frac{\pp^4 h_3^{(1)}}{\pp x^4}(\xi,\xi)+6x^2y^2\frac{\pp^4 h_3^{(1)}}{\pp x^2\pp y^2}(\xi,\xi)+y^4\frac{\pp^4 h_3^{(1)}}{\pp y^4}(\xi,\xi)\right)\right]\right]
\\&\qquad\qquad\quad\times\frac{-32 \left(x^2+y^2\right) \left(5 x^8-4 x^6 y^2-x^4 \left(18 y^4+17\right)+x^2 \left(38 y^2-4 y^6\right)+5
   y^8-17 y^4+2\right)}{\left(\left(x^2+y^2\right)^2+1\right)^5}
\\&\quad+O\left(p_1^2\mu^{10}d^{-10}\right)
\\&=\frac{\pi^2}{2}p_1^2\mu^6\left[\frac{\partial^2 h_3^{(1)}}{\partial x^2}(\xi,\xi)+\frac{\partial^2 h_3^{(1)}}{\partial y^2}(\xi,\xi)\right]+
\frac{3\pi^2}{32}p_1^2\mu^6\left[\frac{\partial^4 h_1^{(1)}}{\partial x^4}(\xi,\xi)-\frac{\partial^4 h_1^{(1)}}{\partial y^4}(\xi,\xi)\right]
\\&\quad-
\frac{\pi^2}{16}p_1^2\mu^6\left[\frac{\partial^4 h_2^{(1)}}{\partial x^3\pp y}(\xi,\xi)+\frac{\partial^4 h_2^{(1)}}{\partial x\pp y^3}(\xi,\xi)\right]
\\&\quad+\frac{\pi}{30}
 p_1^2\mu^8\left[\frac{\partial^6 h_1^{(1)}}{\partial x^6}(\xi,\xi)-\frac{\partial^6 h_1^{(1)}}{\pp y^6}(\xi,\xi)+\frac{\partial^6 h_2^{(1)}}{\partial x^5\pp y}(\xi,\xi)+\frac{\partial^6 h_2^{(1)}}{\partial x\pp y^5}(\xi,\xi)\right]
+O\left(p_1^2\mu^{10}d^{-10}\right)
\\&=\frac{\pi}{15}
 p_1^2\mu^8\left[\frac{\partial^6 h_1^{(1)}}{\partial x^6}(\xi,\xi)-\frac{\partial^6 h_1^{(1)}}{\pp y^6}(\xi,\xi)\right]
+O\left(p_1^2\mu^{10}d^{-10}\right).
\end{align*}
Since $g(z,\omega)=\left(2h_1^{(1)}(z,\omega),2h_2^{(1)}(z,\omega),-2\varepsilon h_3^{(1)}(z,\omega)\right)$, we deduce
\begin{equation}\label{D28-3}
\begin{aligned}
&-2p_1^2\int_{\mathcal{D}}\left(\varphi_1-\varepsilon g\right)
\cdot \left[(Z_{2,1})_x\wedge (Z_{2,1})_y+\left(\delta_x\wedge (Z^{2,2}_{1,1})_y+(Z^{2,2}_{1,1})_x\wedge \delta_y\right)\right]
\\&=\frac{\pi}{15}
 p_1^2(\mu^2-\varepsilon)\mu^6\left[\frac{\partial^6 h_1^{(1)}}{\partial x^6}(\xi,\xi)-\frac{\partial^6 h_1^{(1)}}{\pp y^6}(\xi,\xi)\right]
+O\left(p_1^2\varepsilon\mu^6d^{-9}|\xi-\omega|+p_1^2(\mu^2-\varepsilon)\left(\mu^8+\mu^6\left(\mu^2+\varepsilon\right)\right)d^{-10}\right)
\\&=O\left(p_1^2(\mu^2-\varepsilon)\mu^6d^{-8}\right).
\end{aligned}
\end{equation}
For the second term in (\ref{D30-1}), similar to the calculation of (\ref{D27-3}), one has
\begin{align}
\notag&2p_1^2\int_{\mathcal{D}}\left(\varphi_1-\varepsilon g\right)
\cdot \left[(Z_{2,1})_x\wedge (\varphi_{2,1})_y+(\varphi_{2,1})_x\wedge (Z_{2,1})_y\right]
\\\notag&=-4\pi^2p_1^2\left(\mu^2-\varepsilon \right)\mu^8\left[\frac{\partial h^{(1)}_2}{\partial x}(\xi, \xi)\frac{\partial h^{(2,1)}_3}{\partial y}(\xi, \xi)-\frac{\partial h^{(2,1)}_3}{\partial x}(\xi, \xi)\frac{\partial h^{(1)}_2}{\partial y}(\xi, \xi)\right]
\\ \notag&\quad-4\pi^2p_1^2\left(\mu^2-\varepsilon \right)\mu^6\left(\mu^2+\varepsilon \right)\left[\frac{\partial h^{(2,1)}_2}{\partial x}(\xi, \xi)\frac{\partial h^{(1)}_3}{\partial y}(\xi, \xi)-\frac{\partial h^{(1)}_3}{\partial x}(\xi, \xi)\frac{\partial h^{(2,1)}_2}{\partial y}(\xi, \xi)\right]
\\\notag&\quad+O\left(p_1^2\left(\mu^8\varepsilon d^{-11}|\xi-\omega|+\mu^6\varepsilon^2 d^{-11}|\xi-\omega|+\left(\mu^2-\varepsilon \right)\left(\mu^{10}+\mu^8\left(\mu^2+\varepsilon\right)\right)d^{-12}\right)\right)
\\ \label{Jan2-7}&=O\left(p_1^2\left(\mu^2-\varepsilon \right)\mu^6(\mu^2+\varepsilon)d^{-10}\right).
\end{align}
Moreover, similar to the estimate of (\ref{Jan4-2}), there holds
\begin{equation}\label{D28-1}
\begin{aligned}
&2p_1^2\int_{\mathcal{D}}PZ^{2,2}_{1,1}
\cdot \left[(\varphi_1)_x\wedge (\varphi_1)_y\right]
\\&-2\varepsilon p_1^2\int_{\mathcal{D}}g
\cdot \left[(\varphi_1)_x\wedge\left(PZ^{2,2}_{1,1}\right)_y+\left(PZ^{2,2}_{1,1}\right)_x\wedge (\varphi_1)_y\right]+2\varepsilon^2 p_1^2\int_{\mathcal{D}}PZ^{2,2}_{1,1}\cdot \left[g_x\wedge g_y\right]
\\&=2\pi^2p_1^2(\mu^2-\varepsilon)\mu^4\left[\frac{\partial h^{(1)}_1}{\partial x}(\xi, \xi)\frac{\partial h^{(1)}_2}{\partial y}(\xi, \xi)-\frac{\partial h^{(1)}_1}{\partial y}(\xi, \xi)\frac{\partial h^{(1)}_2}{\partial x}(\xi, \xi)\right]
\\&\quad-2\pi^2p_1^2(\mu^2-\varepsilon)\varepsilon\mu^2\left[\frac{\partial h^{(1)}_1}{\partial x}(\xi, \xi)\frac{\partial h^{(1)}_2}{\partial y}(\xi, \omega)-\frac{\partial h^{(1)}_1}{\partial y}(\xi, \omega)\frac{\partial h^{(1)}_2}{\partial x}(\xi, \xi)\right]
\\&\quad+O\left(p_1^2(\mu^2-\varepsilon)\left(\left(\mu^{10}+\mu^8(\mu^2+\varepsilon)\right)d^{-14}+\varepsilon\mu^8d^{-12}+\varepsilon^2\mu^6d^{-10}\right)\right)
\\&=O\left(p_1^2(\mu^2-\varepsilon)\left((\mu^4+\varepsilon\mu^2)d^{-6}+(\mu^8(\mu^2+\varepsilon)+\varepsilon^2\mu^6)d^{-14}\right)\right).
\end{aligned}
\end{equation}
Therefore, combining (\ref{D30-1}), (\ref{D28-3}), (\ref{Jan2-7}) and (\ref{D28-1}), we conclude that as $|\xi|\to 1$, with $\mu^2-\varepsilon=O\left(\varepsilon d_{\varepsilon}^4\right)$ and $d_{\varepsilon}=O(\mu^{\frac{2}{3}-\alpha})$ for $\alpha>0$ small, compared to (\ref{A30-p3-a}), the terms in $\mathcal{R}^{(2)}_{\mu,\xi,a,p,\varepsilon}$ that contain $p_1^2$ are of higher order.

Follow the same path, using (\ref{D27-4}) and (\ref{D27-5}), we obtain
\begin{align*}
&-2p_2^2\int_{\mathcal{D}}\varphi_1
\cdot \left[(PZ_{2,2})_x\wedge (PZ_{2,2})_y+\left(\delta_x\wedge (PZ^{2,2}_{2,2})_y+(PZ^{2,2}_{2,2})_x\wedge \delta_y\right)\right]
\\&=\frac{\pi^2}{2}p_2^2\mu^6\left[\frac{\partial^2 h_3^{(1)}}{\partial x^2}(\xi,\xi)+\frac{\partial^2 h_3^{(1)}}{\partial y^2}(\xi,\xi)\right]+
\frac{\pi^2}{32}p_2^2\mu^6\left[\frac{\partial^4 h_1^{(1)}}{\partial x^4}(\xi,\xi)-\frac{\partial^4 h_1^{(1)}}{\partial y^4}(\xi,\xi)\right]
\\&\quad+
\frac{3\pi^2}{16}p_2^2\mu^6\left[\frac{\partial^4 h_2^{(1)}}{\partial x^3\pp y}(\xi,\xi)+\frac{\partial^4 h_2^{(1)}}{\partial x\pp y^3}(\xi,\xi)\right]
\\&\quad+\frac{\pi}{120}
 p_2^2\mu^8\left[\frac{\partial^6 h_1^{(1)}}{\partial x^6}(\xi,\xi)+5\frac{\partial^6 h_1^{(1)}}{\partial x^4y^2}(\xi,\xi)-5\frac{\partial^6 h_1^{(1)}}{\partial x^2y^4}(\xi,\xi)-\frac{\partial^6 h_1^{(1)}}{\pp y^6}(\xi,\xi)\right]
\\&\quad+\frac{\pi}{20}
 p_2^2\mu^8\left[\frac{\partial^6 h_2^{(1)}}{\partial x^5\pp y}(\xi,\xi)+\frac{10}{3}\frac{\partial^6 h_2^{(1)}}{\partial x^3\pp y^3}(\xi,\xi)+\frac{\partial^6 h_2^{(1)}}{\partial x\pp y^5}(\xi,\xi)\right]
+O\left(p_2^2\mu^{10}d^{-10}+p_1^2\mu^{10}d^{-12}\right)
\\&=-\frac{\pi}{15}
 p_2^2\mu^8\left[\frac{\partial^6 h_1^{(1)}}{\partial x^6}(\xi,\xi)-\frac{\partial^6 h_1^{(1)}}{\pp y^6}(\xi,\xi)\right]
+O\left(p_2^2\mu^{10}d^{-10}+p_2^2\mu^{10}d^{-12}\right)
\\&=O\left(p_2^2\mu^8d^{-8}+p_2^2\mu^{10}d^{-12}\right),
\end{align*}
then we deduce
\begin{equation}\label{Jan2-8}
\begin{aligned}
&-2p_2^2\int_{\mathcal{D}}\left(\varphi_1-\varepsilon g\right)
\cdot \left[(PZ_{2,2})_x\wedge (PZ_{2,2})_y+\left(\delta_x\wedge (PZ^{2,2}_{2,2})_y+(PZ^{2,2}_{2,2})_x\wedge \delta_y\right)\right]
\\&=O\left(p_2^2(\mu^2-\varepsilon)\left(\mu^6d^{-8}+\mu^6(\mu^2+\varepsilon)d^{-12}\right)\right).
\end{aligned}
\end{equation}
And
\begin{align*}
&-2p_1p_2\int_{\mathcal{D}} \varphi_1\cdot \left[\left(PZ_{2,1}\right)_x\wedge \left(PZ_{2,2}\right)_y+\left(P Z_{2,2}\right)_x \wedge\left(PZ_{2,1}\right)_y+\delta_x\wedge (PZ^{2,2}_{1,2})_y+(PZ^{2,2}_{1,2})_x\wedge \delta_y\right]
\\&=\frac{\pi^2}{8}p_1p_2\mu^6\left[\frac{\partial^4 h_1^{(1)}}{\partial x^3\pp y}(\xi,\xi)+\frac{\partial^4 h_1^{(1)}}{\partial x\pp y^3}(\xi,\xi)\right]+\frac{\pi^2}{16}p_1p_2\mu^6\left[\frac{\partial^4 h_2^{(1)}}{\partial x^4}(\xi,\xi)-\frac{\partial^4 h_2^{(1)}}{\pp y^4}(\xi,\xi)\right]
\\&\quad+\frac{\pi}{15}
 p_1 p_2\mu^8\left[\frac{\partial^6 h_1^{(1)}}{\partial x^5\pp y}(\xi,\xi)+\frac{\partial^6 h_1^{(1)}}{\partial xy^5}(\xi,\xi)\right]
\\&\quad+\frac{\pi}{60}
 p_1 p_2\mu^8\left[\frac{\partial^6 h_2^{(1)}}{\partial x^6}(\xi,\xi)+5\frac{\partial^6 h_2^{(1)}}{\partial x^4\pp y^2}(\xi,\xi)-5\frac{\partial^6 h_2^{(1)}}{\partial x^2\pp y^4}(\xi,\xi)-\frac{\partial^6 h_2^{(1)}}{\pp y^6}(\xi,\xi)\right]
\\&\quad+O\left(p_1p_2\mu^{10}d^{-10}+p_1p_2\mu^{10}d^{-12}\right)
\\&=-\frac{\pi}{15}
  p_1 p_2\mu^8\left[\frac{\partial^6 h_1^{(1)}}{\partial x^6}(\xi,\xi)-\frac{\partial^6 h_1^{(1)}}{\pp y^6}(\xi,\xi)\right]+O\left(p_1p_2\mu^{10}d^{-10}+p_1p_2\mu^{10}d^{-12}\right)
\\&=O\left(p_1p_2\mu^{8}d^{-8}+p_1p_2\mu^{10}d^{-12}\right),
\end{align*}
then
\begin{equation}\label{Jan2-9}
\begin{aligned}
&-2p_1p_2\int_{\mathcal{D}} \left(\varphi_1-\varepsilon g\right)\cdot \left[\left(PZ_{2,1}\right)_x\wedge \left(PZ_{2,2}\right)_y+\left(P Z_{2,2}\right)_x \wedge\left(PZ_{2,1}\right)_y+\delta_x\wedge (PZ^{2,2}_{1,2})_y+(PZ^{2,2}_{1,2})_x\wedge \delta_y\right]
\\&=O\left(p_1p_2(\mu^2-\varepsilon)\left(\mu^6d^{-8}+\mu^6(\mu^2+\varepsilon)d^{-12}\right)\right).
\end{aligned}
\end{equation}
And similar to (\ref{D28-1}), one has
\begin{equation}\label{D28-4}
\begin{aligned}
&2p_2^2\int_{\mathcal{D}}PZ^{2,2}_{2,2}
\cdot \left[(\varphi_1)_x\wedge (\varphi_1)_y\right]+2p_1p_2\int_{\mathcal{D}}PZ^{2,2}_{1,2}
\cdot \left[(\varphi_1)_x\wedge (\varphi_1)_y\right]
\\&-2\varepsilon p_2^2\int_{\mathcal{D}}g
\cdot \left[\left(PZ^{2,2}_{2,2}\right)_x\wedge (\varphi_1)_y+(\varphi_1)_x\wedge\left(PZ^{2,2}_{2,2}\right)_y\right]+2\varepsilon^2 p_2^2\int_{\mathcal{D}}PZ^{2,2}_{2,2}\cdot \left[g_x\wedge g_y\right]
\\&-2\varepsilon p_1p_2\int_{\mathcal{D}}g
\cdot \left[\left(PZ^{2,2}_{1,2}\right)_x\wedge (\varphi_1)_y+(\varphi_1)_x\wedge\left(PZ^{2,2}_{1,2}\right)_y\right]
+2\varepsilon^2 p_1p_2\int_{\mathcal{D}}PZ^{2,2}_{1,2}\cdot \left[g_x\wedge g_y\right]
\\&=O\left(\left(p_2^2+p_1p_2\right)(\mu^2-\varepsilon)\left((\mu^4+\varepsilon\mu^2)d^{-6}+(\mu^8(\mu^2+\varepsilon)+\varepsilon^2\mu^6)d^{-14}\right)\right).
\end{aligned}
\end{equation}
Therefore, compared to the results in Subsection \ref{subsection4.1}, the terms in $\mathcal{R}^{(2)}_{\mu,\xi,a,p,\varepsilon}$ that contain $p_2^2$ and $p_1p_2$ are also of higher order.

{\bf$\bullet$ Cancellations of the terms containing $a_k$ or $a_ka_l$.}
We now turn to the terms in (\ref{Jan6-1}) that contain $a_l$, $l=1,2$. We begin with
$-2\sum_{l=1}^2 a_l\int_{\mathcal{D}}\left(\varphi_1-\varepsilon g\right)
\cdot \left[\delta_x\wedge (PZ_{-1,l})_y+(PZ_{-1,l})_x\wedge \delta_y\right].$
Similar to (\ref{Jan5-2}), we have
\begin{equation*}
\begin{aligned}
&2\sum_{l=1}^2a_l\int_{\mathcal{D}}\left(\varphi_1-\varepsilon g\right)
\cdot \left[\delta_x\wedge \left(\varphi_{-1,l}\right)_y+\left(\varphi_{-1,l}\right)_x\wedge \delta_y\right]
\\&=O\left((a_1+a_2)\left(\left(\mu^2-\varepsilon\right)\mu^3d^{-5}+\varepsilon\mu^3 d^{-6}|\xi-\omega|\right)\right).
\end{aligned}
\end{equation*}
For $-2\sum_{l=1}^2 a_l\int_{\mathcal{D}}\left(\varphi_1-\varepsilon g\right)
\cdot \left[\delta_x\wedge \left(Z_{-1,l}\right)_y+\left(Z_{-1,l}\right)_x\wedge \delta_y\right]$, we omit the detailed calculation, as it closely resembles the calculations in (\ref{Mar22-1}) and (\ref{M27-a2}). The key difference lies in the definition of $\varphi_1$ and $\varepsilon g$
and the associated changes in the constants. Thus we conclude that
\begin{equation}\label{A30-a5}
\begin{aligned}
&-2\sum_{l=1}^2 a_l\int_{\mathcal{D}}\left(\varphi_1-\varepsilon g\right)
\cdot \left[\delta_x\wedge (PZ_{-1,l})_y+(PZ_{-1,l})_x\wedge \delta_y\right]
\\&=-32\pi a_1\left(\mu^2-\varepsilon\right)\mu\frac{\partial h_1^{(1)}}{\partial x}(\xi,\xi)
-32\pi a_2\left(\mu^2-\varepsilon\right)\mu\frac{\partial h_2^{(1)}}{\partial x}(\xi,\xi)
+O\left((a_1+a_2)\left(\varepsilon\mu d^{-4}|\xi-\omega|+\left(\mu^2-\varepsilon\right)\mu^3d^{-5}\right)\right)
\\&=O\left((a_1+a_2)\left(\left(\mu^2-\varepsilon\right)\mu d^{-3}\right)\right).
\end{aligned}
\end{equation}
And similar to (\ref{Jan4-2}), we can obtain
\begin{equation}\label{Jan4-3}
\begin{aligned}
&2\int_{\mathcal{D}}\left(\sum_{l=1}^2 a_lPZ_{-1,l}\right)\cdot [(\varphi_1)_x\wedge (\varphi_1)_y]-2\varepsilon \sum_{l=1}^2 a_l\int_{\mathcal{D}}g\cdot\left[\left(PZ_{-1,l}\right)_x\wedge (\varphi_1)_y+(\varphi_1)_x\wedge\left(PZ_{-1,l}\right)_y\right]
\\&+2\varepsilon^2\sum_{l=1}^2 a_l\int_{\mathcal{D}}PZ_{-1,l}\cdot\left(g_x\wedge g_y\right)
\\&=O\left((a_1+a_2)(\mu^2-\varepsilon)\left(\mu^3\left(\mu^2+\varepsilon\right)+\varepsilon^2\mu\right)d^{-9}\right).
\end{aligned}
\end{equation}

For the terms that contain $a_ka_l$, $k,l=1,2$, using (\ref{expansion6-1}), computing analogously with (\ref{A30-a1}), we have
\begin{equation}\label{D24-2}
\begin{aligned}
-a_1a_2\int_{\mathcal{D}}L_\delta\left[P Z_{-1,2}\right]\cdot \left( PZ_{-1,1}\right)
&=-2a_1a_2\int_{\mathcal{D}}\varphi_{-1,2}\cdot\left[\delta_x\wedge \left( PZ_{-1,1}\right)_y+\left(PZ_{-1,1}\right)_x \wedge\delta_y\right]
\\&=-32\pi a_1a_2\mu^2\frac{\partial h^{(-1,2)}_1}{\partial x}(\xi,\xi)
+O\left(a_1a_2\mu^4d^{-4}\right)
\\&=O\left(a_1a_2\mu^4d^{-4}\right),
\end{aligned}
\end{equation}
where we use the facts stated in Lemma \ref{evalues-at-xi-2}, i.e. $\frac{\partial h^{(-1,2)}_1}{\partial x}(\xi,\xi)=0.$

Moreover, from (\ref{D31-1}), (\ref{D31-2}) and (\ref{D31-3}), it is easy to see that
\begin{align*}
&-2\sum_{k,l=1}^2a_ka_l\int_{\mathcal{D}}\varphi_1
\cdot \left[(P Z_{-1,k})_x\wedge (P Z_{-1,l})_y+\left(\delta_x\wedge (PZ^{-1,-1}_{k,l})_y+(PZ^{-1,-1}_{k,l})_x\wedge \delta_y\right)\right]
\\&=-4a_1^2\mu^2\int_{\mathbb{R}^2}\left[h_1^{(1)}(\xi,\xi)+O\left(\mu^2d^{-4}\right)\right]
 \frac{-192 \left(x^2+y^2\right)f_3(x,y)}{\left(\left(x^2+y^2\right)^2+1\right)^5}
\\&\quad-4a_2^2\mu^2\int_{\mathbb{R}^2}\left[h_1^{(1)}(\xi,\xi)+O\left(\mu^2d^{-4}\right)\right]
\frac{192 \left(x^2+y^2\right)f_6(x,y)}{\left(\left(x^2+y^2\right)^2+1\right)^5}
\\&\quad-4a_1a_2\mu^2 \int_{\mathbb{R}^2}\left[h^{(1)}_2(\xi,\xi)+O\left(\mu^2d^{-4}\right)\right]\frac{-192 \left(x^2+y^2\right) f_9(x,y)}{\left(\left(x^2+y^2\right)^2+1\right)^5}
+O\left(\left(a_1+a_2\right)^2\mu^6d^{-8}\right)
\\&=O\left(\left(a_1+a_2\right)^2\mu^4d^{-4}+\mu^6d^{-8}\right),
\end{align*}
thus
\begin{align}
\notag&-2\sum_{k,l=1}^2a_ka_l\int_{\mathcal{D}}\left(\varphi_1-\varepsilon g\right)
\cdot \left[(P Z_{-1,k})_x\wedge (P Z_{-1,l})_y+\left(\delta_x\wedge (PZ^{-1,-1}_{k,l})_y+(PZ^{-1,-1}_{k,l})_x\wedge \delta_y\right)\right]
\\ \notag&=O\left(\left(a_1+a_2\right)^2\left(\left(\mu^2-\varepsilon\right)\left(\mu^2d^{-4}+\mu^2(\mu^2+\varepsilon)d^{-8}\right)+\mu^2\varepsilon d^{-5}|\xi-\omega|+\mu^2\varepsilon^2d^{-9}|\xi-\omega|\right)\right)
\\ \label{A30-a2}&=O\left(\left(a_1+a_2\right)^2\left(\mu^2-\varepsilon\right)\mu^2(\mu^2+\varepsilon)d^{-8}\right).
\end{align}
And similar to (\ref{D28-1}), there holds
\begin{equation}\label{Jan2-10}
\begin{aligned}
&2\sum_{k,l=1}^2a_ka_l\int_{\mathcal{D}}PZ^{-1,-1}_{k,l}
\cdot \left[(\varphi_1)_x\wedge (\varphi_1)_y\right]
\\&-2\varepsilon\sum_{k,l=1}^2 a_ka_l\int_{\mathcal{D}}g
\cdot \left[(\varphi_1)_x\wedge\left(PZ^{-1,-1}_{k,l}\right)_y+\left(PZ^{-1,-1}_{k,l}\right)_x\wedge (\varphi_1)_y\right]+2\varepsilon^2 \sum_{k,l=1}^2a_ka_l\int_{\mathcal{D}}PZ^{-1,-1}_{k,l}\cdot \left[g_x\wedge g_y\right]
\\&=O\left(\left(a_1+a_2\right)^2\left(\mu^2-\varepsilon\right)\left((\mu^6+\varepsilon^2 \mu^2)d^{-8}+(\mu^4|\log\mu|+\varepsilon \mu^2|\log\mu|)d^{-6}\right)\right)
\\&\quad+O\left(\left(a_1+a_2\right)^2(\mu^2-\varepsilon)\left(\mu^6(\mu^2+\varepsilon)+\varepsilon^2 \mu^4\right)d^{-12}\right).
\end{aligned}
\end{equation}
Thus, compared to the results in Subsection \ref{subsection4.1}, the terms in $\mathcal{R}^{(1)}_{\mu,\xi,a,p,\varepsilon}$ and $\mathcal{R}^{(2)}_{\mu,\xi,a,p,\varepsilon}$ that involve $a_k$ and $a_ka_l$, $k,l=1,2$ are of higher order.

{\bf$\bullet$ Cancellations of the terms containing $a_kp_l$.}
Now, we deal with the mixed terms that contain $a_kp_l$ for $k,l=1,2$ in $\mathcal{R}^{(2)}_{\mu,\xi,a,p,\varepsilon}$.
Firstly, using (\ref{D31-6}), we obtain
\begin{equation*}
\begin{aligned}
&-2a_1p_1\int_{\mathcal{D}}\left(\varphi_1-\varepsilon g\right)\cdot \left[\left(P Z_{-1,1}\right)_x\wedge \left( PZ_{2,1}\right)_y+\left( PZ_{2,1}\right)_x \wedge\left(P Z_{-1,1}\right)_y+\delta_x\wedge \left(P Z^{-1,2}_{1,1}\right)_y+\left(P Z^{-1,2}_{1,1}\right)_x\wedge \delta_y\right]
\\&=-2a_1p_1\int_{\mathcal{D}}\left(\varphi_1-\varepsilon g\right)\cdot \left[\left(Z_{-1,1}\right)_x\wedge \left(Z_{2,1}\right)_y+\left(Z_{2,1}\right)_x \wedge\left(Z_{-1,1}\right)_y+\delta_x\wedge \left(Z^{-1,2}_{1,1}\right)_y+\left(Z^{-1,2}_{1,1}\right)_x\wedge \delta_y\right]
\\&\quad+O\left(a_1p_1\left(\mu^2-\varepsilon \right)\left(\mu^3d^{-5}+\mu^5(\mu^2+\varepsilon)d^{-11}\right)\right)
\\&=O\left(a_1p_1\left(\mu^2-\varepsilon \right)\left(\mu^3d^{-5}+\mu^5(\mu^2+\varepsilon)d^{-11}\right)\right).
\end{aligned}
\end{equation*}
The remaining terms  can be estimated in a similar manner, and we omit the details here for brevity. There holds
\begin{equation}\label{Jan1-1}
\begin{aligned}
&-2\int_{\mathcal{D}}\left(\varphi_1-\varepsilon g\right)\cdot \left[\left(\sum_{l=1}^2a_lP Z_{-1,l}\right)_x\wedge \left(\sum_{l=1}^2p_l PZ_{2,l}\right)_y+\left(\sum_{l=1}^2p_l PZ_{2,l}\right)_x \wedge\left(\sum_{l=1}^2a_lP Z_{-1,l}\right)_y\right]
\\&-2\int_{\mathcal{D}}\left(\varphi_1-\varepsilon g\right)
\cdot \left[\delta_x\wedge \left(\sum_{k,l=1}^2a_kp_lP Z^{-1,2}_{k,l}\right)_y+\left(\sum_{k,l=1}^2a_kp_lP Z^{-1,2}_{k,l}\right)_x\wedge \delta_y\right]
\\&=O\left((a_1+a_2)(p_1+p_2)\left(\mu^2-\varepsilon \right)\left(\mu^3d^{-5}+\mu^5(\mu^2+\varepsilon)d^{-11}\right)\right).
\end{aligned}
\end{equation}
And
\begin{equation}\label{D31-4}
\begin{aligned}
&2\sum_{k,l=1}^2a_kp_l\int_{\mathcal{D}}P Z^{-1,2}_{k,l}
\cdot \left[(\varphi_1)_x\wedge (\varphi_1)_y\right]
\\&-2\varepsilon\sum_{k,l=1}^2 a_kp_l\int_{\mathcal{D}}g
\cdot \left[(\varphi_1)_x\wedge\left(P Z^{-1,2}_{k,l}\right)_y+\left(P Z^{-1,2}_{k,l}\right)_x\wedge (\varphi_1)_y\right]+2\varepsilon^2 \sum_{k,l=1}^2a_kp_l\int_{\mathcal{D}}P Z^{-1,2}_{k,l}\cdot \left[g_x\wedge g_y\right]
\\&=O\left((a_1+a_2)(p_1+p_2)\left(\mu^2-\varepsilon \right)\left((\mu^6+\varepsilon^2 \mu^2)d^{-8}+(\mu^4+\varepsilon \mu^2)d^{-6}\right)\right)
\\&\quad+O\left((a_1+a_2)(p_1+p_2)(\mu^2-\varepsilon)\left(\mu^5(\mu^2+\varepsilon)+\varepsilon^2 \mu^3\right)d^{-11}\right).
\end{aligned}
\end{equation}

Recall that $\mathcal{R}_{\mathcal{A}}$ denotes the higher order expansions of $\delta_{\mu,\xi, a,p}$, in (\ref{D30-1})-(\ref{D28-4}) and (\ref{A30-a2})-(\ref{D31-4}), we have estimated the corresponding terms that of second order expansion. Similar to the reasoning above, the remaining terms in
\begin{equation*}
\begin{aligned}
&-2\int_{\mathcal{D}}\varphi_1
\cdot \left[\delta_x\wedge \left(P\mathcal{R}_{\mathcal{A}}\right)_y+\left(P\mathcal{R}_{\mathcal{A}}\right)_x\wedge \delta_y\right]
+2\int_{\mathcal{D}} \left(P\mathcal{R}_{\mathcal{A}}\right)\cdot\left[(\varphi_1)_x\wedge (\varphi_1)_y\right]
\\&+2\varepsilon\int_{\mathcal{D}}g\cdot  \left[\left(P\delta\right)_x\wedge \left(P\mathcal{R}_{\mathcal{A}}\right)_y+\left(P\mathcal{R}_{\mathcal{A}}\right)_x\wedge \left(P\delta\right)_y\right]
+2\varepsilon^2\int_{\mathcal{D}}\left(P\mathcal{R}_{\mathcal{A}}\right)\cdot (g_x\wedge g_y)
\end{aligned}
\end{equation*}
can be estimated as
\begin{equation}\label{F24-1}
\begin{aligned}
O\left(\sum_{k,l,m=1}^2\left(a_ka_la_m+a_ka_lp_m+a_kp_lp_m+p_kp_lp_m\right)(\mu^2-\varepsilon)\left(1+(\mu^4+\varepsilon\mu^2)d^{-6}+\mu(\mu^2+\varepsilon)d^{-7}\right)\right).
\end{aligned}
\end{equation}

Therefore, we get the estimate of $\mathcal{R}^{(1)}_{\mu,\xi,a,p,\varepsilon}$ and $\mathcal{R}^{(2)}_{\mu,\xi,a,p,\varepsilon}$. Indeed, combining the fact that $|\xi|\to 1$, $d=d_{\varepsilon}=O(\mu^{\frac{2}{3}-\alpha})$ for $\alpha>0$ small, we conclude that
\begin{equation}\label{Jan8-4}
\begin{aligned}
\mathcal{R}^{(1)}_{\mu,\xi,a,p,\varepsilon}
&=(\ref{M1p1})+(\ref{Jan4-2})+(\ref{Jan6-2})+(\ref{A30-a5})+(\ref{Jan4-3})
\\&=O\left(\left(p_1+p_2\right)(\mu^2-\varepsilon)\left(\mu^4d^{-6}+(\mu^8+\varepsilon\mu^6+\varepsilon^2\mu^4)d^{-12}\right)\right)
\\&\quad+O\left((a_1+a_2)(\mu^2-\varepsilon)\left(\mu d^{-3}+\left(\mu^3\left(\mu^2+\varepsilon\right)+\varepsilon^2\mu\right)d^{-9}\right)\right).
\end{aligned}
\end{equation}
And
\begin{align}
\mathcal{R}^{(2)}_{\mu,\xi,a,p,\varepsilon}
\notag&=(\ref{Au11-6})+(\ref{D30-1})+(\ref{D28-3})+(\ref{Jan2-7})+(\ref{D28-1})+(\ref{Jan2-8})+(\ref{Jan2-9})+(\ref{D28-4})
\\ \notag&\quad+(\ref{D24-2})+(\ref{A30-a2})+(\ref{Jan2-10})+(\ref{Jan1-1})+(\ref{D31-4})+(\ref{F24-1})
\\ \notag&=O\left(p_1p_2\mu^{10}d^{-10}+\sum_{k,l=1}^2p_kp_l(\mu^2-\varepsilon)\left(\mu^6(\mu^2+\varepsilon)d^{-12}+(\mu^4+\varepsilon\mu^2)d^{-6}+\varepsilon^2\mu^6d^{-14}\right)\right)
\\ \notag&\quad+O\left(a_1a_2\mu^4d^{-4}+\sum_{k,l=1}^2a_ka_l\left(\mu^2-\varepsilon\right)\left(\mu^2(\mu^2+\varepsilon)d^{-8}++\varepsilon^2\mu^4d^{-12}\right)\right)
\\ \notag&\quad+O\left(\sum_{k,l=1}^2a_kp_l(\mu^2-\varepsilon)\left(\mu^3d^{-5}+\left(\mu^5(\mu^2+\varepsilon)+\varepsilon^2 \mu^3\right)d^{-11}\right)\right)
\\ \label{Jan2-13}&\quad+O\left(\sum_{k,l,m=1}^2\left(a_ka_la_m+a_ka_lp_m+a_kp_lp_m+p_kp_lp_m\right)(\mu^2-\varepsilon)\left(1+(\mu^4+\varepsilon\mu^2)d^{-6}+\mu(\mu^2+\varepsilon)d^{-7}\right)\right).
\end{align}

{\bf$\bullet$ The estimates of $\mathcal{R}^{(3)}_{\mu,\xi,a,p,\varepsilon}$ and $\mathcal{R}^{(4)}_{\mu,\xi,a,p,\varepsilon}$.}
Now, we estimate $\mathcal{R}^{(3)}_{\mu,\xi,a,p,\varepsilon}$ (\ref{Jan2-11}). Integrating by parts, we can obtain
\begin{align}
\mathcal{R}^{(3)}_{\mu,\xi,a,p,\varepsilon}
\notag&=-2\int_{\mathcal{D}}\left(\varphi_1-\varepsilon g\right)\cdot  \left[\left(P\mathcal{L}_{\mathcal{A}}\right)_x\wedge \left(P\mathcal{R}_{\mathcal{A}}\right)_y+\left(P\mathcal{R}_{\mathcal{A}}\right)_x\wedge \left(P\mathcal{L}_{\mathcal{A}}\right)_y\right]
\\ \notag&\quad-2\int_{\mathcal{D}}\left(\sum_{k=1}^2a_k\varphi_{-1,k}+p_k\varphi_{2,k}\right)\cdot  \left[\delta_x\wedge \left(P\mathcal{R}_{\mathcal{A}}\right)_y+\delta_x\wedge \left(P\mathcal{R}_{\mathcal{A}}\right)_y\right]
\\ \notag&\quad+2\int_{\mathcal{D}}\left(\sum_{k=1}^2a_kP Z_{-1,k}\right)\cdot \left[\left( \sum_{l=1}^2p_l PZ_{2,l}\right)_x\wedge \left( \sum_{m=1}^2p_m PZ_{2,m}\right)_y\right]
\\ \notag&\quad+2\int_{\mathcal{D}}\left( \sum_{k=1}^2p_k PZ_{2,k}\right)\cdot \left[\left(\sum_{l=1}^2a_lP Z_{-1,l}\right)_x\wedge \left(\sum_{k=1}^2a_kP Z_{-1,k}\right)_y\right]
\\ \notag&\quad+\frac{2}{3}\int_{\mathcal{D}}\left( \sum_{k=1}^2a_k PZ_{-1,k}\right)\cdot \left[\left(\sum_{l=1}^2a_l PZ_{-1,l}\right)_x\wedge \left( \sum_{m=1}^2a_m PZ_{-1,m}\right)_y\right]
\\ \label{Au13-7}&\quad+\frac{2}{3}\int_{\mathcal{D}}\left( \sum_{k=1}^2p_k PZ_{2,k}\right)\cdot \left[\left(\sum_{l=1}^2p_l PZ_{2,l}\right)_x\wedge \left( \sum_{m=1}^2p_m PZ_{2,m}\right)_y\right].
\end{align}
We start with the terms that contain $a_kp_lp_m$ for $k,l,m=1,2$.
For the first term in (\ref{Au13-7}), one has
\begin{align*}
&-2\int_{\mathcal{D}}\varphi\cdot \left[\left(\sum_{k=1}^2a_kPZ_{-1,k}\right)_x \wedge\left(\sum_{l,m=1}^2p_lp_mP Z^{2,2}_{l,m}\right)_y+\left(\sum_{l,m=1}^2p_lp_mP Z^{2,2}_{l,m}\right)_x\wedge \left(\sum_{k=1}^2a_kPZ_{-1,k}\right)_y\right]
\\ & =\frac{8\pi}{3}a_1 p^2_1\mu^3\left[\frac{\partial h^{(1)}_1}{\partial x}- \frac{\partial h^{(1)}_2}{\partial y}\right](\xi, \xi) -\frac{8\pi}{3}a_2 p^2_1\mu^3\left[\frac{\partial h^{(1)}_1}{\partial y} + \frac{\partial h^{(1)}_2}{\partial x}\right](\xi, \xi)
\\&\quad-\frac{8\pi}{3}a_1 p^2_2\mu^3\left[\frac{\partial h^{(1)}_1}{\partial x} - \frac{\partial h^{(1)}_2}{\partial y}\right](\xi, \xi)+\frac{8\pi}{3}a_2 p^2_2\mu^3\left[\frac{\partial h^{(1)}_1}{\partial y} +\frac{\partial h^{(1)}_2}{\partial x}\right](\xi, \xi)
\\&\quad +\frac{16\pi}{3}a_1 p_1p_2\mu^3\left[\frac{\partial h^{(1)}_1}{\partial y}+ \frac{\partial h^{(1)}_2}{\partial x}\right](\xi, \xi)+\frac{16\pi}{3}a_2 p_1p_2\mu^3\left[\frac{\partial h^{(1)}_1}{\partial x}-\frac{\partial h^{(1)}_2}{\partial y}\right](\xi, \xi)
\\&\quad+O\left(\sum_{k,l,m=1}^2a_kp_lp_m\left(\mu^5d^{-5}+\mu^9d^{-13}\right)\right)
\\& =O\left(\sum_{k,l,m=1}^2a_kp_lp_m\left(\mu^2-\varepsilon\right)\left(\mu^5d^{-5}+\mu^9d^{-13}\right)\right),
\end{align*}
where we use the fact that $\frac{\partial h^{(1)}_1}{\partial x}- \frac{\partial h^{(1)}_2}{\partial y}=0$ and $\frac{\partial h^{(1)}_1}{\partial y}+ \frac{\partial h^{(1)}_2}{\partial x}=0$.
And
\begin{align*}
&-2\int_{\mathcal{D}}\varphi_1
\cdot \left[\left(\sum_{m=1}^2p_mPZ_{2,m}\right)_x \wedge\left(\sum_{k,l=1}^2a_kp_lP Z^{-1,2}_{k,l}\right)_y+\left(\sum_{k,l=1}^2a_kp_lP Z^{-1,2}_{k,l}\right)_x\wedge \left(\sum_{m=1}^2p_mPZ_{2,m}\right)_y\right]
\\&=\frac{-16\pi}{3}a_1p_1^2\mu^3\left[\frac{\pp h_1^{(1)}}{\pp x}(\xi,\xi)+3\frac{\pp h_2^{(1)}}{\pp y}(\xi,\xi)\right]
+\frac{16\pi}{3}a_2p_1^2\mu^3\left[\frac{\pp h_1^{(1)}}{\pp y}(\xi,\xi)-3\frac{\pp h_2^{(1)}}{\pp x}(\xi,\xi)\right]
\\&\quad-\frac{16\pi}{3}a_1p_2^2\mu^3\left[3\frac{\pp h_1^{(1)}}{\pp x}(\xi,\xi)+\frac{\pp h_2^{(1)}}{\pp y}(\xi,\xi)\right]
+\frac{16\pi}{3}a_2p_2^2\mu^3\left[3\frac{\pp h_1^{(1)}}{\pp y}(\xi,\xi)-\frac{\pp h_2^{(1)}}{\pp x}(\xi,\xi)\right]
\\&\quad+O\left(\sum_{k,l,m=1}^2a_kp_lp_m\left(\mu^2-\varepsilon\right)\left(\mu^5d^{-5}+\mu^9d^{-13}\right)\right)
\\&=O\left(\sum_{k,l,m=1}^2a_kp_lp_m\left(\mu^3d^{-3}+\mu^9d^{-13}\right)\right).
\end{align*}
Thus,
\begin{align*}
&-2\int_{\mathcal{D}}\left(\varphi_1-\varepsilon g\right) \cdot \left[\left(\sum_{k=1}^2a_kPZ_{-1,k}\right)_x \wedge\left(\sum_{l,m=1}^2p_lp_mP Z^{2,2}_{l,m}\right)_y+\left(\sum_{l,m=1}^2p_lp_mP Z^{2,2}_{l,m}\right)_x\wedge \left(\sum_{k=1}^2a_kPZ_{-1,k}\right)_y\right]
\\ & -2\int_{\mathcal{D}}\left(\varphi_1-\varepsilon g\right)
\cdot \left[\left(\sum_{m=1}^2p_mPZ_{2,m}\right)_x \wedge\left(\sum_{k,l=1}^2a_kp_lP Z^{-1,2}_{k,l}\right)_y+\left(\sum_{k,l=1}^2a_kp_lP Z^{-1,2}_{k,l}\right)_x\wedge \left(\sum_{m=1}^2p_mPZ_{2,m}\right)_y\right]
\\& =O\left(\sum_{k,l,m=1}^2a_kp_lp_m\left(\left(\mu^2-\varepsilon\right)\mu d^{-3}+\mu\varepsilon d^{-4}|\xi-\omega|\right)\right).
\end{align*}
And by (\ref{A15-Z}), (\ref{2expansion-near-boundary}), (\ref{M2-p1p1}), (\ref{M2-p2p2}) and (\ref{M2-p1p2}), it is direct to see that
\begin{align*}
&2\int_{\mathcal{D}}\left(\sum_{k=1}^2a_kP Z_{-1,k}\right)\cdot \left[\left( \sum_{l=1}^2p_l PZ_{2,l}\right)_x\wedge \left( \sum_{m=1}^2p_m PZ_{2,m}\right)_y\right]
\\&=-2\int_{\mathcal{D}}\left(\sum_{k=1}^2a_k\varphi_{-1,k}\right)\cdot \left[\left( \sum_{l=1}^2p_l Z_{2,l}\right)_x\wedge \left( \sum_{m=1}^2p_m Z_{2,m}\right)_y\right]
+O\left(\sum_{k,l,m=1}^2a_kp_lp_m\left(\mu^5d^{-5}+\mu^{11}d^{-13}\right)\right).
\end{align*}
Combining the corresponding terms that contain $a_kp_lp_m$, $k,l,m=1,2$ in $$-2\int_{\mathcal{D}}\left(\sum_{k=1}^2a_k\varphi_{-1,k}+p_k\varphi_{2,k}\right)\cdot  \left[\delta_x\wedge \left(P\mathcal{R}_{\mathcal{A}}\right)_y+\delta_x\wedge \left(P\mathcal{R}_{\mathcal{A}}\right)_y\right],$$
and reasoning as (\ref{D28-3}), (\ref{Jan2-8}) and (\ref{Jan2-9}), we deduce
\begin{equation*}
\begin{aligned}
&-2\int_{\mathcal{D}}\left(\sum_{k=1}^2a_k\varphi_{-1,k}\right)\cdot  \left[\left( \sum_{l=1}^2p_l Z_{2,l}\right)_x\wedge \left( \sum_{m=1}^2p_m Z_{2,m}\right)_y\right.
\\&\qquad\qquad\qquad\qquad\qquad\qquad\left.+\delta_x\wedge \left(\sum_{l,m=1}^2p_lp_mP Z^{2,2}_{l,m}\right)_y+\left(\sum_{l,m=1}^2p_lp_mP Z^{2,2}_{l,m}\right)_x \wedge  \delta_y\right]
\\&= O\left(\sum_{k,l,m=1}^2a_kp_lp_m\mu^7d^{-7}\right).
\end{aligned}
\end{equation*}
And
\begin{equation*}
\begin{aligned}
&-2\int_{\mathcal{D}}\left(\sum_{k=1}^2p_k\varphi_{2,k}\right)\cdot  \left[\delta_x\wedge \left(\sum_{l,m=1}^2a_lp_mP Z^{-1,2}_{l,m}\right)_y+\left(\sum_{l,m=1}^2a_lp_mP Z^{-1,2}_{l,m}\right)_x \wedge  \delta_y\right]
\\&= O\left(\sum_{k,l,m=1}^2a_kp_lp_m\mu^5d^{-5}\right).
\end{aligned}
\end{equation*}
Thus, combining the above estimates along with the fact that $\varepsilon d^{-1}|\xi-\omega|=O\left(\mu^2-\varepsilon\right)$, we conclude that the terms in
$\mathcal{R}^{(3)}_{\mu,\xi,a,p,\varepsilon}$ that contain $a_kp_lp_m$ can be estimated as
\begin{equation*}
\begin{aligned}
O\left(\sum_{k,l,m=1}^2a_kp_lp_m\left(\mu^5d^{-5}+\mu^{11}d^{-13}+\left(\mu^2-\varepsilon\right)\mu d^{-3}\right)\right).
\end{aligned}
\end{equation*}

For the terms that contain $p_kp_lp_m$, $k,l,m=1,2$, there holds
\begin{align*}
&-2\int_{\mathcal{D}}\varphi_1 \cdot \left[\left(\sum_{k=1}^2p_kPZ_{2,k}\right)_x \wedge\left(\sum_{l,m=1}^2p_lp_mP Z^{2,2}_{l,m}\right)_y+\left(\sum_{l,m=1}^2p_lp_mP Z^{2,2}_{l,m}\right)_x\wedge \left(\sum_{k=1}^2p_kPZ_{2,k}\right)_y\right]
\\&=\frac{-\pi^2}{8}p_1^3\mu^4\left[\frac{\pp^2h_1^{(1)}}{\pp x^2}(\xi,\xi)+\frac{\pp^2h_1^{(1)}}{\pp y^2}(\xi,\xi)\right]-\frac{\pi^2}{8}p_1^2p_2\mu^4\left[\frac{\pp^2h_2^{(1)}}{\pp x^2}(\xi,\xi)+\frac{\pp^2h_2^{(1)}}{\pp y^2}(\xi,\xi)\right]
\\&\quad-\frac{\pi^2}{8}p_1p_2^2\mu^4\left[\frac{\pp^2h_1^{(1)}}{\pp x^2}(\xi,\xi)+\frac{\pp^2h_1^{(1)}}{\pp y^2}(\xi,\xi)\right]-\frac{\pi^2}{8}p_2^3\mu^4\left[\frac{\pp^2h_2^{(1)}}{\pp x^2}(\xi,\xi)+\frac{\pp^2h_2^{(1)}}{\pp y^2}(\xi,\xi)\right]
\\&\quad+O\left(\sum_{k,l,m=1}^2p_kp_lp_m\left(\mu^6d^{-6}+\mu^{14}d^{-16}\right)\right)
\\&=O\left(\sum_{k,l,m=1}^2p_kp_lp_m\left(\mu^6d^{-6}+\mu^{14}d^{-16}\right)\right),
\end{align*}
thus
\begin{align*}
&-2\int_{\mathcal{D}}\left(\varphi_1-\varepsilon g\right) \cdot \left[\left(\sum_{k=1}^2p_kPZ_{2,k}\right)_x \wedge\left(\sum_{l,m=1}^2p_lp_mP Z^{2,2}_{l,m}\right)_y+\left(\sum_{l,m=1}^2p_lp_mP Z^{2,2}_{l,m}\right)_x\wedge \left(\sum_{k=1}^2p_kPZ_{2,k}\right)_y\right]
\\&=O\left(\sum_{k,l,m=1}^2p_kp_lp_m\left(\left(\mu^2-\varepsilon\right)\mu^4d^{-6}+\varepsilon\mu^{4}d^{-7}|\xi-\omega|\right)\right).
\end{align*}
Moreover,
\begin{equation*}
\begin{aligned}
-2\int_{\mathcal{D}}\left(\sum_{k=1}^2p_k\varphi_{2,k}\right)\cdot  \left[\delta_x\wedge \left(\sum_{l,m=1}^2p_lp_mP Z^{2,2}_{l,m}\right)_y+\left(\sum_{l,m=1}^2p_lp_mP Z^{2,2}_{l,m}\right)_x \wedge  \delta_y\right]
= O\left(\sum_{k,l,m=1}^2p_kp_lp_m\mu^6d^{-6}\right),
\end{aligned}
\end{equation*}
and
 \begin{equation*}
\begin{aligned}
&\frac{2}{3}\int_{\mathcal{D}}\left( \sum_{l=1}^2p_l PZ_{2,l}\right)\cdot \left[\left(\sum_{l=1}^2p_l PZ_{2,l}\right)_x\wedge \left( \sum_{l=1}^2p_l PZ_{2,l}\right)_y\right]
\\&=\frac{32\pi}{9}p_1^3\mu^6\frac{\pp^2h_2^{(2,1)}}{\pp x\pp y}(\xi,\xi)+\frac{32\pi}{9}p_1p_2^2\mu^6\frac{\pp^2h_2^{(2,1)}}{\pp x\pp y}(\xi,\xi)
\\&\quad+\frac{32\pi}{9}p_2p_1^2\mu^6\frac{\pp^2h_2^{(2,2)}}{\pp x\pp y}(\xi,\xi)+\frac{32\pi}{9}p_2^3\mu^6\frac{\pp^2h_2^{(2,2)}}{\pp x\pp y}(\xi,\xi)
+O\left((p_1+p_2)^3\left(\mu^8d^{-8}+\mu^{12}d^{-14}\right)\right)
\\&=O\left((p_1+p_2)^3\left(\mu^6d^{-6}+\mu^{12}d^{-14}\right)\right).
\end{aligned}
\end{equation*}
Thus, we conclude that the terms in
$\mathcal{R}^{(3)}_{\mu,\xi,a,p,\varepsilon}$ that contain $p_kp_lp_m$ can be estimated as
\begin{equation*}
\begin{aligned}
O\left(\sum_{k,l,m=1}^2p_kp_lp_m\left(\mu^6d^{-6}+\mu^{12}d^{-14}+\left(\mu^2-\varepsilon\right)\mu^4d^{-6}\right)\right).
\end{aligned}
\end{equation*}

For the remaining terms in
$\mathcal{R}^{(3)}_{\mu,\xi,a,p,\varepsilon}$, they can be estimated in a similar way. Omitting extensive and cumbersome calculations, we present the results here. There holds
\begin{align}
\mathcal{R}^{(3)}_{\mu,\xi,a,p,\varepsilon}
=\notag&~O\left(\sum_{k,l,m=1}^2a_ka_la_m\left(\mu^3d^{-3}+\mu^5d^{-7}+\left(\mu^2-\varepsilon\right)\left(\mu d^{-3}+\mu^3d^{-7}+\mu^3\left(\mu^2+\varepsilon\right)d^{-9}\right)\right)\right)
\\ \notag&+O\left(\sum_{k,l,m=1}^2a_ka_lp_m\left(\mu^2d^{-2}+\mu^{14}d^{-16}+\left(\mu^2-\varepsilon\right)\left(\mu^2d^{-4}+\mu^4\left(\mu^2+\varepsilon\right)d^{-10}\right)\right)\right)
\\ \notag&+O\left(\sum_{k,l,m=1}^2a_kp_lp_m\left(\mu^5d^{-5}+\mu^{11}d^{-13}+\left(\mu^2-\varepsilon\right)\mu d^{-3}\right)\right)
\\ \notag&+O\left(\sum_{k,l,m=1}^2p_kp_lp_m\left(\mu^6d^{-6}+\mu^{12}d^{-14}+\left(\mu^2-\varepsilon\right)\mu^4d^{-6}\right)\right)
\\ \label{Jan2-3}&+O\left(\left(a_1+a_2+p_1+p_2\right)^4\left(\mu d^{-1}+(\mu^2-\varepsilon)\mu^2d^{-6}\right)\right).
\end{align}

Now, we analyze the terms in $\mathcal{R}^{(4)}_{\mu,\xi,a,p,\varepsilon}$ from (\ref{Jan7-2}). We begin by estimating the following terms
\begin{equation}\label{Au12-1}
\begin{aligned}
\frac{1}{2}\int_{\mathcal{D}}\nabla \left(P\mathcal{R}_{\mathcal{A}}\right)\cdot\nabla \left(P\mathcal{R}_{\mathcal{A}}\right)+2\int_{\mathcal{D}}P\delta\cdot \left[\left(P\mathcal{R}_{\mathcal{A}}\right)_x
 \wedge \left(P\mathcal{R}_{\mathcal{A}}\right)_y\right].
 \end{aligned}
\end{equation}
Omitting the extensive and cumbersome calculations, we present the estimates for the terms in (\ref{Au12-1}). Specifically, we have
\begin{equation}\label{Au12-5}
\begin{aligned}
&\sum_{j,k=1}^2a_j^2a_k^2\left[\frac{1}{2}\int_{\mathcal{D}}\left|\nabla (PZ^{-1,-1}_{j,k})\right|^2+2
\int_{\mathcal{D}} P\delta
\cdot\left[(PZ^{-1,-1}_{j,k})_x\wedge (PZ^{-1,-1}_{j,k})_y\right]\right]
\\&=\frac{256\pi}{5}a_1^2a_2^2+O\left(\sum_{j,k=1}^2a_j^2a_k^2\left(\mu^4d^{-4}+\mu^4d^{-6}+\mu^8d^{-10}+\mu^{12}d^{-14}\right)\right).
\end{aligned}
\end{equation}
For the term involving $a_j^2p_k^2$, $j,k=1,2$, there holds
\begin{equation}\label{Au10-1}
\begin{aligned}
&\sum_{j,k=1}^2a_j^2p_k^2\left[\frac{1}{2}\int_{\mathcal{D}}\left|\nabla (PZ^{-1,2}_{j,k})\right|^2+2
\int_{\mathcal{D}} P\delta
\cdot\left[(PZ^{-1,2}_{j,k})_x\wedge (PZ^{-1,2}_{j,k})_y\right]\right]
\\&=6\pi^2\left(a_1^2p_1^2+a_1^2p_2^2+a_2^2p_1^2+a_2^2p_2^2\right)+O\left(\sum_{j,k=1}^2a_j^2p_k^2\left(\mu^4d^{-4}+\mu^6d^{-8}+\mu^{10}d^{-12}\right)\right).
\end{aligned}
\end{equation}
We also consider the term with $p_j^2p_k^2$, $j,k=1,2$, we have
\begin{equation}\label{Au12-3}
\begin{aligned}
&\sum_{j,k=1}^2p_j^2p_k^2\left[\frac{1}{2}\int_{\mathcal{D}}\left|\nabla (PZ^{2,2}_{j,k})\right|^2+2
\int_{\mathcal{D}} P\delta
\cdot\left[(PZ^{2,2}_{j,k})_x\wedge (PZ^{2,2}_{j,k})_y\right]\right]
\\&=\frac{64\pi}{5} p_1^2p_2^2+O\left(\sum_{j,k=1}^2p_j^2p_k^2\left(\mu^4d^{-4}+\mu^{12}d^{-14}+\mu^{16}d^{-18}\right)\right).
\end{aligned}
\end{equation}
For $(j,k)\neq (l,m)$ and $(j,k)\neq (m,l)$, one has
\begin{equation}\label{Au12-6}
\begin{aligned}
&\sum_{j,k,l,m=1}^2a_ja_ka_la_m\left[\int_{\mathcal{D}}\nabla (PZ^{-1,-1}_{j,k})\cdot \nabla (PZ^{-1,-1}_{l,m})\right.
\\&\qquad\qquad\qquad\qquad\quad \left.
+2
\int_{\mathcal{D}} P\delta
\cdot\left[(PZ^{-1,-1}_{j,k})_x\wedge (PZ^{-1,-1}_{l,m})_y+(PZ^{-1,-1}_{l,m})_x\wedge (PZ^{-1,-1}_{j,k})_y \right]\right]
\\&=\frac{-256\pi}{5}a_1^2a_2^2+O\left(\sum_{j,k,l,m=1}^2a_ja_ka_la_m\left(\mu^4d^{-4}+\mu^4d^{-6}+\mu^{8}d^{-10}+\mu^{12}d^{-14}\right)\right),
\end{aligned}
\end{equation}

\begin{equation}
\begin{aligned}
&\sum_{j,k,l,m=1}^2a_ja_ka_lp_m\left[\int_{\mathcal{D}}\nabla (PZ^{-1,-1}_{j,k})\cdot \nabla (PZ^{-1,2}_{l,m})\right.
\\&\qquad\qquad\qquad\qquad\quad\left.+2
\int_{\mathcal{D}} P\delta
\cdot\left[(PZ^{-1,-1}_{j,k})_x\wedge (PZ^{-1,2}_{l,m})_y+(PZ^{-1,2}_{l,m})_x\wedge (PZ^{-1,-1}_{j,k})_y \right]\right]
\\&=O\left(\sum_{j,k,l,m=1}^2a_ja_ka_lp_m\left(\mu^3d^{-3}+\mu^{7}d^{-9}+\mu^{11}d^{-13}\right)\right),
\end{aligned}
\end{equation}

\begin{equation}\label{Au10-2}
\begin{aligned}
&\sum_{j,k,m,l=1}^2a_ja_kp_lp_m\left[\int_{\mathcal{D}}\nabla (PZ^{-1,-1}_{j,k})\cdot \nabla (PZ^{2,2}_{l,m})\right.
\\&\qquad\qquad\qquad\qquad\quad \left.
+2
\int_{\mathcal{D}} P\delta
\cdot\left[(PZ^{-1,-1}_{j,k})_x\wedge (PZ^{2,2}_{l,m})_y+(PZ^{2,2}_{l,m})_x\wedge(PZ^{-1,-1}_{j,k})_y\right]\right]
\\&=-6\pi^2\left(a_1^2p_1^2+a_2^2p_2^2+a_1^2p_2^2+a_2^2p_1^2\right)+O\left(\sum_{j,k,m,l=1}^2a_ja_kp_lp_m\left(\mu^4d^{-4}+\mu^{10}d^{-12}+\mu^{12}d^{-14}+\mu^{14}d^{-16}\right)\right),
\end{aligned}
\end{equation}

\begin{equation}
\begin{aligned}
&\sum_{j,k,l,m=1}^2a_jp_kp_lp_m\left[\int_{\mathcal{D}}\nabla (PZ^{-1,2}_{j,k})\cdot \nabla (PZ^{2,2}_{l,m})
+2
\int_{\mathcal{D}} P\delta
\cdot\left[(PZ^{-1,2}_{j,k})_x\wedge (PZ^{2,2}_{l,m})_y+(PZ^{2,2}_{l,m})_x\wedge (PZ^{-1,2}_{j,k})_y \right]\right]
\\&=O\left(\sum_{j,k,l,m=1}^2a_jp_kp_lp_m\left(\mu^3d^{-3}+\mu^{9}d^{-11}+\mu^{13}d^{-15}\right)\right),
\end{aligned}
\end{equation}
and
\begin{equation}\label{Au12-4}
\begin{aligned}
&\sum_{j,k,l,m=1}^2p_jp_kp_lp_m\left[\int_{\mathcal{D}}\nabla (PZ^{2,2}_{j,k})\cdot \nabla (PZ^{2,2}_{l,m})+2
\int_{\mathcal{D}} P\delta
\cdot\left[(PZ^{2,2}_{j,k})_x\wedge (PZ^{2,2}_{l,m})_y+(PZ^{2,2}_{l,m})_x\wedge (PZ^{2,2}_{j,k})_y\right]\right]
\\&=-\frac{64\pi}{5}p_1^2p_2^2+O\left(\sum_{j,k,l,m=1}^2p_jp_kp_lp_m\left(\mu^4d^{-4}+\mu^{12}d^{-14}+\mu^{16}d^{-18}\right)\right).
\end{aligned}
\end{equation}

We observe that the leading-order terms cancel out in the sum of (\ref{Au12-5})-(\ref{Au12-4}).
The remaining terms in (\ref{Au12-1}) can be estimated in a similar manner. We conclude that
\begin{equation*}
\begin{aligned}
&\frac{1}{2}\int_{\mathcal{D}}\nabla \left(P\mathcal{R}_{\mathcal{A}}\right)\cdot\nabla \left(P\mathcal{R}_{\mathcal{A}}\right)+2\int_{\mathcal{D}}P\delta\cdot \left[\left(P\mathcal{R}_{\mathcal{A}}\right)_x
 \wedge \left(P\mathcal{R}_{\mathcal{A}}\right)_y\right]
\\&=O\left(\left(a_1+a_2+p_1+p_2\right)^3\left(a_1+a_2\right)\left(\mu^3d^{-3}+\mu^4d^{-6}\right)+\left(p_1+p_2\right)^4\left(\mu^4d^{-4}+\mu^{12}d^{-14}\right)\right).
 \end{aligned}
\end{equation*}

Next, we estimate $2\int_{\mathcal{D}}\left(P\mathcal{R}_{\mathcal{A}}\right)\cdot \left[\left(P\mathcal{L}_{\mathcal{A}}\right)_x
 \wedge \left(P\mathcal{L}_{\mathcal{A}}\right)_y\right]$. We start with the terms that contain $a_k^2p_l^2$, $k,l=1,2$. Indeed, there holds
\begin{equation*}
\begin{aligned}
&2\int_{\mathcal{D}}\left(\sum_{k,l=1}^2a_ka_lP Z^{-1,-1}_{k,l}\right)\cdot \left[\left( \sum_{l=1}^2p_l PZ_{2,l}\right)_x\wedge \left( \sum_{l=1}^2p_l PZ_{2,l}\right)_y\right]
\\&=6\pi^2\left(a_1^2p_1^2+a_1^2p_2^2+a_2^2p_1^2+a_2^2p_2^2\right)+O\left(\sum_{j,k=1}^2a_j^2p_k^2\left(\mu^4d^{-4}+\mu^{10}d^{-12}+\mu^{14}d^{-16}\right)\right),
\end{aligned}
\end{equation*}

\begin{equation*}
\begin{aligned}
&2\int_{\mathcal{D}}\left(\sum_{k,l=1}^2p_kp_lP Z^{2,2}_{k,l}\right)\cdot\left[\left( \sum_{l=1}^2a_lP Z_{-1,l}\right)_x\wedge \left( \sum_{l=1}^2a_lP Z_{-1,l}\right)_y\right]
\\&=6\pi^2\left(a_1^2p_1^2+a_1^2p_2^2+a_2^2p_1^2+a_2^2p_2^2\right)+O\left(\sum_{j,k=1}^2a_j^2p_k^2\left(\mu^2d^{-2}+\mu^{10}d^{-12}\right)\right),
\end{aligned}
\end{equation*}

\begin{equation*}
\begin{aligned}
&2\int_{\mathcal{D}}\left(\sum_{k,l=1}^2a_kp_lP Z^{-1,2}_{k,l}\right)\cdot \left[\left( \sum_{l=1}^2a_lP Z_{-1,l}\right)_x\wedge \left( \sum_{l=1}^2p_lP Z_{2,l}\right)_y+\left( \sum_{l=1}^2p_lP Z_{2,l}\right)_x\wedge \left( \sum_{l=1}^2a_lP Z_{-1,l}\right)_y\right]
\\&=-12\pi^2\left(a_1^2p_1^2+a_1^2p_2^2+a_2^2p_1^2+a_2^2p_2^2\right)+O\left(\sum_{j,k=1}^2a_j^2p_k^2\left(\mu^2d^{-2}+\mu^{10}d^{-12}\right)\right).
\end{aligned}
\end{equation*}
As for the terms that contain $p_jp_kp_lp_m$, $j,k,l,m=1,2$, there holds
\begin{equation*}
\begin{aligned}
&2\int_{\mathcal{D}}\left(\sum_{k,l=1}^2p_k p_l P Z^{2,2}_{k,l}\right)\cdot \left[\left( \sum_{l=1}^2p_l P Z_{2,l}\right)_x\wedge \left( \sum_{l=1}^2p_l P Z_{2,l}\right)_y\right]
=O\left(\sum_{j,k,l,m=1}^2p_jp_kp_lp_m\left(\mu^5d^{-5}+\mu^{16}d^{-18}\right)\right).
\end{aligned}
\end{equation*}
The remaining terms in $2\int_{\mathcal{D}}\left(P\mathcal{R}_{\mathcal{A}}\right)\cdot \left[\left(P\mathcal{L}_{\mathcal{A}}\right)_x
 \wedge \left(P\mathcal{L}_{\mathcal{A}}\right)_y\right]$ are of higher order. We can conclude that
\begin{equation*}
\begin{aligned}
&2\int_{\mathcal{D}}\left(P\mathcal{R}_{\mathcal{A}}\right)\cdot \left[\left(P\mathcal{L}_{\mathcal{A}}\right)_x
 \wedge \left(P\mathcal{L}_{\mathcal{A}}\right)_y\right]
\\&=O\left(\left(a_1+a_2+p_1+p_2\right)^3\left(a_1+a_2\right)\left(\mu^2d^{-2}+\mu^4d^{-6}\right)+\left(p_1+p_2\right)^4\left(\mu^5d^{-5}+\mu^{16}d^{-18}\right)\right).
\end{aligned}
\end{equation*}
Finally, it is easy to check that
\begin{equation*}
\begin{aligned}
&\frac{2}{3}\int_{\mathcal{D}}\left(P\mathcal{R}_{\mathcal{A}}\right)\cdot \left[\left(P\mathcal{R}_{\mathcal{A}}\right)_x
 \wedge \left(P\mathcal{R}_{\mathcal{A}}\right)_y\right]
+2\int_{\mathcal{D}}\left(P\mathcal{L}_{\mathcal{A}}\right)\cdot \left[\left(P\mathcal{R}_{\mathcal{A}}\right)_x
 \wedge \left(P\mathcal{R}_{\mathcal{A}}\right)_y\right]
\\&=O\left(\left(a_1+a_2+p_1+p_2\right)^5\left(1+\mu^7d^{-9}\right)\right).
\end{aligned}
\end{equation*}
Thus, $\mathcal{R}^{(4)}_{\mu,\xi,a,p,\varepsilon}$ can be estimated as
\begin{equation}\label{Au12-2}
\begin{aligned}
\mathcal{R}^{(4)}_{\mu,\xi,a,p,\varepsilon}
&=O\left(\left(a_1+a_2+p_1+p_2\right)^3\left(a_1+a_2\right)\left(\mu^2d^{-2}+\mu^4d^{-6}\right)+\left(p_1+p_2\right)^4\left(\mu^4d^{-4}+\mu^{12}d^{-14}\right)\right)
\\&\quad+O\left(\left(a_1+a_2+p_1+p_2\right)^5\right).
 \end{aligned}
\end{equation}
The details are omitted here for brevity.

Therefore,  we obtain
\begin{align}
\mathcal{R}_{\mu,\xi,a,p,\varepsilon}
\notag&=\mathcal{R}^{(1)}_{\mu,\xi,a,p,\varepsilon}+\mathcal{R}^{(2)}_{\mu,\xi,a,p,\varepsilon}+\mathcal{R}^{(3)}_{\mu,\xi,a,p,\varepsilon}+\mathcal{R}^{(4)}_{\mu,\xi,a,p,\varepsilon}
\\ \notag&=(\ref{Jan8-4})+(\ref{Jan2-13})+(\ref{Jan2-3})+(\ref{Au12-2})
\\ \label{Jan8-R}&=\tilde{e}^{(2)}(\varepsilon,\mu,a,p),
\end{align}
where $\tilde{e}^{(2)}(\varepsilon,\mu,a,p)$ is given in (\ref{Ju28-error}).

\section{Various derivatives}\label{Appendix-F}
In this appendix, we provide a summary of the key calculations used in Section \ref{expansion-for-one-bubble} and Appendix \ref{Computations of mixed terms}.
Here we list some facts which will be frequently used in the remaining part of the paper. We have
\begin{equation}\label{derivative}
\begin{aligned}
&\delta_x
=\left(\frac{4 x \left(-x^4+2 x^2 y^2+3 y^4+1\right)}{\left(\left(x^2+y^2\right)^2+1\right)^2},\quad
\frac{4 y \left(-3 x^4-2 x^2 y^2+y^4+1\right)}{\left(\left(x^2+y^2\right)^2+1\right)^2},\quad
\frac{8 x
   \left(x^2+y^2\right)}{\left(\left(x^2+y^2\right)^2+1\right)^2}\right),
\\&
\delta_y=\left(\frac{4 y \left(-3 x^4-2 x^2 y^2+y^4-1\right)}{\left(\left(x^2+y^2\right)^2+1\right)^2},\quad
\frac{4 x \left(x^4-2 x^2 y^2-3 y^4+1\right)}{\left(\left(x^2+y^2\right)^2+1\right)^2},\quad
\frac{8 y
   \left(x^2+y^2\right)}{\left(\left(x^2+y^2\right)^2+1\right)^2}\right),
\end{aligned}
\end{equation}

\begin{equation}\label{cross}
\begin{aligned}
\delta_x\wedge \delta_y=\left(-\frac{32\left(x^4-y^4\right)}{\left(1+|z|^4\right)^3},\quad
-\frac{64xy\left|z\right|^2}{\left(1+|z|^4\right)^3},\quad
\frac{16\left(1-|z|^4\right)\left|z\right|^2}{\left(1+|z|^4\right)^3}\right),
\end{aligned}
\end{equation}
\begin{equation}\label{expansion6-1}
(Z_{-1,1})_x\wedge \delta_y + \delta_x\wedge (Z_{-1,1})_y =
\begin{pmatrix}
    \frac{128 x \left(x^2+y^2\right) \left(x^2 \left(\left(x^2+y^2\right)^2-2\right)+3 y^2\right)}{\left(\left(x^2+y^2\right)^2+1\right)^4}\\
    \frac{64 y \left(x^2+y^2\right) \left(3 x^6+7 x^4 y^2+x^2 \left(5
   y^4-9\right)+y^6+y^2\right)}{\left(\left(x^2+y^2\right)^2+1\right)^4}\\
    \frac{32 x \left(x^2+y^2\right) \left(\left(x^2+y^2\right)^4-8
   \left(x^2+y^2\right)^2+3\right)}{\left(\left(x^2+y^2\right)^2+1\right)^4}
\end{pmatrix},
\end{equation}

\begin{equation}\label{J9-a2}
(Z_{-1,2})_x\wedge \delta_y + \delta_x\wedge (Z_{-1,2})_y  =
\begin{pmatrix}
    \frac{-128 y \left(x^2+y^2\right) \left(\left(x^4-2\right) y^2+2 x^2 y^4+3 x^2+y^6\right)}{\left(\left(x^2+y^2\right)^2+1\right)^4}\\
   \frac{64 x \left(x^2+y^2\right) \left(x^6+5 x^4 y^2+x^2 \left(7 y^4+1\right)+3 y^2
   \left(y^4-3\right)\right)}{\left(\left(x^2+y^2\right)^2+1\right)^4}\\
   \frac{32 y \left(x^2+y^2\right) \left(\left(x^2+y^2\right)^4-8 \left(x^2+y^2\right)^2+3\right)}{\left(\left(x^2+y^2\right)^2+1\right)^4}
\end{pmatrix},
\end{equation}

\begin{equation}\label{M27-2}
\begin{aligned}
&(Z_{-1,1})_x\wedge (Z_{-1,1})_y
\\&=\begin{pmatrix}
   -\frac{32 \left(x^2+y^2\right) \left(\left(x^2+y^2\right)^2-3\right) \left(3 x^8+10 x^6 y^2+x^4 \left(12 y^4-5\right)+6 x^2 y^2
   \left(y^4+2\right)+y^8+y^4\right)}{\left(\left(x^2+y^2\right)^2+1\right)^5}\\
   -\frac{64 x y \left(x^2+y^2\right)
   \left(\left(x^2+y^2\right)^2-3\right) \left(x^6+3 x^4 y^2+x^2 \left(3
   y^4-7\right)+y^6+y^2\right)}{\left(\left(x^2+y^2\right)^2+1\right)^5}\\
  -\frac{16 \left(x^2+y^2\right)^2
   \left(\left(x^2+y^2\right)^2-3\right) \left(\left(x^2+y^2\right) \left(x^6+3 x^4 y^2+3 x^2 \left(y^4-4\right)+y^6+4
   y^2\right)+3\right)}{\left(\left(x^2+y^2\right)^2+1\right)^5}
\end{pmatrix},
\end{aligned}
\end{equation}

\begin{equation}\label{M27-3}
\begin{aligned}
&(Z_{-1,2})_x\wedge (Z_{-1,2})_y
\\&=\begin{pmatrix}
  \frac{32 \left(x^2+y^2\right) \left(x^8+\left(12 x^4-5\right) y^4+x^4+10 x^2 y^6+6 \left(x^4+2\right) x^2 y^2+3 y^8\right)
   \left(\left(x^2+y^2\right)^2-3\right)}{\left(\left(x^2+y^2\right)^2+1\right)^5}\\
  -\frac{64 x y \left(x^2+y^2\right) \left(x^6+3 x^4 y^2+3
   x^2 y^4+x^2+y^6-7 y^2\right) \left(\left(x^2+y^2\right)^2-3\right)}{\left(\left(x^2+y^2\right)^2+1\right)^5}\\
  -\frac{16
   \left(x^2+y^2\right)^2 \left(\left(x^2+y^2\right)^2-3\right) \left(\left(x^2+y^2\right) \left(3 \left(x^4-4\right) y^2+3 x^2 y^4+x^2
   \left(x^4+4\right)+y^6\right)+3\right)}{\left(\left(x^2+y^2\right)^2+1\right)^5}
\end{pmatrix},
\end{aligned}
\end{equation}

\begin{equation}\label{M28a1-1}
(Z_{-1,1})_x=
\begin{pmatrix}
\frac{4 (x-y) (x+y) \left(\left(x^2+y^2\right) \left(x^6+3 x^4 y^2+3 x^2 \left(y^4-4\right)+y^6+4
   y^2\right)+3\right)}{\left(\left(x^2+y^2\right)^2+1\right)^3}
\\
\frac{8 x y \left(\left(x^2+y^2\right) \left(x^6+3 x^4 y^2+3 x^2
   \left(y^4-4\right)+y^6+4 y^2\right)+3\right)}{\left(\left(x^2+y^2\right)^2+1\right)^3}
\\
 -\frac{8 \left(x^2+y^2\right) \left(3 x^6+5 x^4
   y^2+x^2 \left(y^4-5\right)-y^2 \left(y^4+1\right)\right)}{\left(\left(x^2+y^2\right)^2+1\right)^3}
   \end{pmatrix},
\end{equation}

\begin{equation}\label{M28a1-2}
(Z_{-1,1})_y=
\begin{pmatrix}
\frac{8 x y \left(\left(x^2+y^2\right) \left(3 \left(x^4+2\right) y^2+3 x^2 y^4+x^2
   \left(x^4-10\right)+y^6\right)-3\right)}{\left(\left(x^2+y^2\right)^2+1\right)^3}
\\
\frac{4 \left(2 \left(x^4-1\right) y^6+3 x^2 y^8-3
   \left(x^8+10 x^4+1\right) y^2-2 x^2 \left(x^4+17\right) y^4+x^2 \left(-x^8+2
   x^4+3\right)+y^{10}\right)}{\left(\left(x^2+y^2\right)^2+1\right)^3}
\\
-\frac{32 x y \left(x^2+y^2\right)
   \left(\left(x^2+y^2\right)^2-1\right)}{\left(\left(x^2+y^2\right)^2+1\right)^3}
   \end{pmatrix},
\end{equation}

\begin{equation}\label{M28a2-1}
(Z_{-1,2})_x=
\begin{pmatrix}
-\frac{8 x y \left(\left(x^2+y^2\right) \left(x^6+3 x^4 y^2+3 x^2 \left(y^4+2\right)+y^6-10
   y^2\right)-3\right)}{\left(\left(x^2+y^2\right)^2+1\right)^3}
\\
\frac{4 \left(x^{10}+3 x^8 y^2+2 x^6 \left(y^4-1\right)-2 x^4 y^2
   \left(y^4+17\right)-3 x^2 \left(y^8+10 y^4+1\right)+y^2 \left(-y^8+2
   y^4+3\right)\right)}{\left(\left(x^2+y^2\right)^2+1\right)^3}
\\
-\frac{32 x y \left(x^2+y^2\right)
   \left(\left(x^2+y^2\right)^2-1\right)}{\left(\left(x^2+y^2\right)^2+1\right)^3}
   \end{pmatrix},
\end{equation}

\begin{equation}\label{M28a2-2}
(Z_{-1,2})_y=
\begin{pmatrix}
\frac{4 (x-y) (x+y) \left(\left(x^2+y^2\right) \left(3 \left(x^4-4\right) y^2+3 x^2 y^4+x^2
   \left(x^4+4\right)+y^6\right)+3\right)}{\left(\left(x^2+y^2\right)^2+1\right)^3}
\\
\frac{8 x y \left(\left(x^2+y^2\right) \left(3
   \left(x^4-4\right) y^2+3 x^2 y^4+x^2 \left(x^4+4\right)+y^6\right)+3\right)}{\left(\left(x^2+y^2\right)^2+1\right)^3}
\\
\frac{8
   \left(x^2+y^2\right) \left(x^6-x^4 y^2+x^2 \left(1-5 y^4\right)-3 y^6+5 y^2\right)}{\left(\left(x^2+y^2\right)^2+1\right)^3}
\end{pmatrix},
\end{equation}

\begin{equation}\label{A26-5}
(Z_{2,1})_x\wedge \delta_y + \delta_x\wedge (Z_{2,1})_y =
\begin{pmatrix}
    \frac{32 \left(x^2+y^2\right) \left(5 x^4-14 x^2 y^2+5 y^4-1\right)}{\left(\left(x^2+y^2\right)^2+1\right)^4}\\
   \frac{384 x y \left(x^4-y^4\right)}{\left(\left(x^2+y^2\right)^2+1\right)^4}\\
   \frac{64 (x-y) (x+y) \left(x^2+y^2\right)
   \left(\left(x^2+y^2\right)^2-2\right)}{\left(\left(x^2+y^2\right)^2+1\right)^4}
\end{pmatrix},
\end{equation}
\begin{equation}\label{A26-6}
(Z_{2,2})_x\wedge \delta_y + \delta_x\wedge (Z_{2,2})_y=
\begin{pmatrix}
 \frac{384 x y \left(x^4-y^4\right)}{\left(\left(x^2+y^2\right)^2+1\right)^4}\\
   -\frac{32 \left(x^2+y^2\right) \left(x^4-22 x^2 y^2+y^4+1\right)}{\left(\left(x^2+y^2\right)^2+1\right)^4}\\
   \frac{128 x y \left(x^2+y^2\right)
   \left(\left(x^2+y^2\right)^2-2\right)}{\left(\left(x^2+y^2\right)^2+1\right)^4}
\end{pmatrix},
\end{equation}

\begin{equation}\label{M30p1-1}
(Z_{2,1})_x=
\begin{pmatrix}
\frac{8 x \left(x^6-9 x^4 y^2-x^2 \left(5 y^4+3\right)+5 y^6+y^2\right)}{\left(\left(x^2+y^2\right)^2+1\right)^3}
\\
\frac{8 y \left(5 x^6-5 x^4 y^2-3 x^2 \left(3 y^4+1\right)+y^6+y^2\right)}{\left(\left(x^2+y^2\right)^2+1\right)^3}
\\
  \frac{8 x \left(-3
   x^4+2 x^2 y^2+5 y^4+1\right)}{\left(\left(x^2+y^2\right)^2+1\right)^3}
   \end{pmatrix},
\end{equation}

\begin{equation}\label{M30p1-2}
(Z_{2,1})_y=
\begin{pmatrix}
\frac{8 y \left(5 x^6-5 x^4 y^2-9 x^2 y^4+x^2+y^6-3 y^2\right)}{\left(\left(x^2+y^2\right)^2+1\right)^3}
\\
-\frac{8 x \left(x^6-9 x^4 y^2-5 x^2 y^4+x^2+5 y^6-3 y^2\right)}{\left(\left(x^2+y^2\right)^2+1\right)^3}
\\
 -\frac{8 y\left(5 x^4 +2 x^2 y^2-3
   y^4+1\right)}{\left(\left(x^2+y^2\right)^2+1\right)^3}
   \end{pmatrix},
\end{equation}

\begin{equation}\label{M30p2-1}
(Z_{2,2})_x=
\begin{pmatrix}
\frac{8 y \left(5 x^6-5 x^4 y^2-3 x^2 \left(3 y^4+1\right)+y^6+y^2\right)}{\left(\left(x^2+y^2\right)^2+1\right)^3}
\\
-\frac{8 x \left(x^6-9 x^4 y^2+x^2 \left(1-5 y^4\right)+5
   \left(y^6+y^2\right)\right)}{\left(\left(x^2+y^2\right)^2+1\right)^3}
\\
 \frac{8 y \left(-7 x^4-6 x^2 y^2+y^4+1\right)}{\left(\left(x^2+y^2\right)^2+1\right)^3}
   \end{pmatrix},
\end{equation}

\begin{equation}\label{M30p2-2}
(Z_{2,2})_y=
\begin{pmatrix}
-\frac{8 x \left(x^6-9 x^4 y^2-5 x^2 y^4+x^2+5 y^6-3 y^2\right)}{\left(\left(x^2+y^2\right)^2+1\right)^3}
\\
-\frac{8 y \left(5 x^6-5 x^4 y^2+x^2 \left(5-9 y^4\right)+y^6+y^2\right)}{\left(\left(x^2+y^2\right)^2+1\right)^3}
\\
\frac{8 x \left(x^4-6 x^2
   y^2-7 y^4+1\right)}{\left(\left(x^2+y^2\right)^2+1\right)^3}
\end{pmatrix},
\end{equation}

\begin{equation}\label{M2-p1p1}
 \begin{aligned}
(Z_{2,1})_x\wedge (Z_{2,1})_y=
\begin{pmatrix}
 -\frac{64 (x-y) (x+y) \left(x^2+y^2\right) \left(3 x^4-10 x^2 y^2+3 y^4-1\right)}{\left(\left(x^2+y^2\right)^2+1\right)^5}\\
-\frac{128
   x y \left(x^2+y^2\right) \left(5 x^4-6 x^2 y^2+5 y^4+1\right)}{\left(\left(x^2+y^2\right)^2+1\right)^5}\\
-\frac{64 \left(x^2+y^2\right)
   \left(x^8+4 x^6 y^2+x^4 \left(6 y^4-3\right)+2 x^2 y^2 \left(2 y^4+5\right)+y^4
   \left(y^4-3\right)\right)}{\left(\left(x^2+y^2\right)^2+1\right)^5}
\end{pmatrix},
\end{aligned}
\end{equation}
\begin{equation}\label{M2-p2p2}
  \begin{aligned}
(Z_{2,2})_x\wedge (Z_{2,2})_y=
\begin{pmatrix}
 -\frac{64 (x-y) (x+y) \left(x^2+y^2\right) \left(x^4+18 x^2 y^2+y^4+1\right)}{\left(\left(x^2+y^2\right)^2+1\right)^5}\\
\frac{128 x y
   \left(x^2+y^2\right) \left(x^4-14 x^2 y^2+y^4+1\right)}{\left(\left(x^2+y^2\right)^2+1\right)^5}\\
-\frac{64 \left(x^2+y^2\right)
   \left(x^8+4 x^6 y^2+x^4 \left(6 y^4+1\right)+2 x^2 y^2 \left(2
   y^4-7\right)+y^8+y^4\right)}{\left(\left(x^2+y^2\right)^2+1\right)^5}
\end{pmatrix},
\end{aligned}
\end{equation}

\begin{equation}\label{Ma30-a1a2}
\begin{aligned}
&
(Z_{-1,1})_x\wedge (Z_{-1,2})_y+(Z_{-1,2})_x\wedge (Z_{-1,1})_y
\\&=
\begin{pmatrix}
\frac{512 x y (x-y) (x+y) \left(x^2+y^2\right)
   \left(\left(x^2+y^2\right)^2-3\right)}{\left(\left(x^2+y^2\right)^2+1\right)^5}\\
-\frac{128 \left(x^2+y^2\right)
   \left(\left(x^2+y^2\right)^2-3\right) \left(x^8+4 x^6 y^2+x^4 \left(6 y^4+1\right)+x^2 \left(4 y^6-6
   y^2\right)+y^8+y^4\right)}{\left(\left(x^2+y^2\right)^2+1\right)^5}\\
\frac{512 x y \left(x^2+y^2\right)^3
   \left(\left(x^2+y^2\right)^2-3\right)}{\left(\left(x^2+y^2\right)^2+1\right)^5}
\end{pmatrix},
\end{aligned}
\end{equation}

\begin{equation}\label{A27-a1p1}
\begin{aligned}
&
(Z_{-1,1})_x\wedge (Z_{2,1})_y+(Z_{2,1})_x\wedge (Z_{-1,1})_y
\\&=
\begin{pmatrix}
-\frac{32 x \left(x^2+y^2\right) \left(9 x^8-8 x^6 y^2-2 x^4 \left(23 y^4+10\right)+4 x^2 y^2 \left(11-8 y^4\right)-3 y^8-32
   y^4+3\right)}{\left(\left(x^2+y^2\right)^2+1\right)^5}\\
\frac{32 y \left(x^2+y^2\right) \left(-23 x^8+2 \left(5 x^4+1\right) y^4+38 x^4+24
   x^2 y^6-8 \left(4 x^4+7\right) x^2 y^2+5 y^8-3\right)}{\left(\left(x^2+y^2\right)^2+1\right)^5}\\
\frac{64 xf_1(x,y)}{\left(\left(x^2+y^2\right)^2+1\right)^5}
\end{pmatrix},
\end{aligned}
\end{equation}
where
$$
\begin{aligned}
f_1(x,y)=& \left[-x^{12}-2 x^{10} y^2+x^8
   \left(5 y^4+9\right)+20 x^6 \left(y^6+y^2\right)+x^4 \left(25 y^8+6 y^4-6\right)
   \right.
   \\&\left.~+2 x^2 y^6 \left(7 y^4-6\right)+y^4 \left(3 y^8-7
   y^4+6\right)\right]
\end{aligned},
$$

\begin{equation}\label{A27-a2p1}
\begin{aligned}
&
(Z_{-1,2})_x\wedge (Z_{2,1})_y+(Z_{2,1})_x\wedge (Z_{-1,2})_y
\\&=
\begin{pmatrix}
\frac{32 y \left(x^2+y^2\right) \left(3 x^8+32 x^6 y^2+x^4 \left(46 y^4+32\right)+x^2 \left(8 y^6-44 y^2\right)-9 y^8+20
   y^4-3\right)}{\left(\left(x^2+y^2\right)^2+1\right)^5}\\
-\frac{32 x \left(x^2+y^2\right) \left(5 x^8+24 x^6 y^2+2 x^4 \left(5
   y^4+1\right)-8 x^2 y^2 \left(4 y^4+7\right)-23 y^8+38 y^4-3\right)}{\left(\left(x^2+y^2\right)^2+1\right)^5}\\
\frac{64 yf_2(x,y)}{\left(\left(x^2+y^2\right)^2+1\right)^5}
\end{pmatrix},
\end{aligned}
\end{equation}
where
$$
\begin{aligned}
f_2(x,y)=& \left[-\left(5
   x^4+9\right) y^8+2 x^2 y^{10}+\left(-25 x^8-6 x^4+6\right) y^4+x^4 \left(-3 x^8+7 x^4-6\right)+2 x^6 \left(6-7 x^4\right) y^2\right.
   \\&~\left.-20 x^2
   \left(x^4+1\right) y^6+y^{12}\right]
\end{aligned}.
$$

\begin{equation}\label{A27-a1p2}
\begin{aligned}
&
(Z_{-1,1})_x\wedge (Z_{2,2})_y+(Z_{2,2})_x\wedge (Z_{-1,1})_y
\\&=
\begin{pmatrix}
\frac{32 y \left(x^2+y^2\right) \left(-27 x^8+2 \left(2-7 x^4\right) y^4+40 x^4+8 x^2 y^6-4 \left(12 x^4+13\right) x^2
   y^2+y^8+3\right)}{\left(\left(x^2+y^2\right)^2+1\right)^5}\\
\frac{32 x \left(x^2+y^2\right) \left(5 x^8-10 \left(7 x^4+1\right) y^4+2
   x^4-48 x^2 y^6+8 \left(11-3 x^4\right) x^2 y^2-7 y^8-3\right)}{\left(\left(x^2+y^2\right)^2+1\right)^5}\\
\frac{64 y \left(x^2+y^2\right)
   \left(-3 x^{10}-11 x^8 y^2+x^6 \left(17-14 y^4\right)+x^4 y^2 \left(35-6 y^4\right)+x^2 \left(y^8+19
   y^4-12\right)+y^{10}+y^6\right)}{\left(\left(x^2+y^2\right)^2+1\right)^5}
\end{pmatrix},
\end{aligned}
\end{equation}

\begin{equation}\label{A27-a2p2}
\begin{aligned}
&
(Z_{-1,2})_x\wedge (Z_{2,2})_y+(Z_{2,2})_x\wedge (Z_{-1,2})_y
\\&=
\begin{pmatrix}
\frac{-32 x \left(x^2+y^2\right) \left(x^8+8 x^6 y^2+x^4 \left(4-14 y^4\right)-4 x^2 y^2 \left(12 y^4+13\right)-27 y^8+40
   y^4+3\right)}{\left(\left(x^2+y^2\right)^2+1\right)^5}\\
\frac{-32 y \left(x^2+y^2\right) \left(7 x^8+48 x^6 y^2+10 x^4 \left(7
   y^4+1\right)+8 x^2 y^2 \left(3 y^4-11\right)-5 y^8-2 y^4+3\right)}{\left(\left(x^2+y^2\right)^2+1\right)^5}\\
\frac{64 x
   \left(x^2+y^2\right) \left(\left(x^2+y^2\right)^2 \left(x^6-x^4 y^2+x^2 \left(1-5 y^4\right)-3 y^6+17 y^2\right)-12
   y^2\right)}{\left(\left(x^2+y^2\right)^2+1\right)^5}
\end{pmatrix},
\end{aligned}
\end{equation}

\begin{equation}\label{M2-p1p2}
\begin{aligned}
(Z_{2,1})_x\wedge (Z_{2,2})_y+(Z_{2,2})_x\wedge (Z_{2,1})_y
=
\begin{pmatrix}
-\frac{256 x y \left(x^2+y^2\right) \left(3 x^4-10 x^2 y^2+3 y^4-1\right)}{\left(\left(x^2+y^2\right)^2+1\right)^5}\\
\frac{128 (x-y)
   (x+y) \left(x^2+y^2\right) \left(x^4-14 x^2 y^2+y^4+1\right)}{\left(\left(x^2+y^2\right)^2+1\right)^5}\\
\frac{1024 x y
   \left(x^4-y^4\right)}{\left(\left(x^2+y^2\right)^2+1\right)^5}
\end{pmatrix},
\end{aligned}
\end{equation}

\begin{equation}\label{D31-1}
\begin{aligned}
(Z_{-1,1})_x\wedge (Z_{-1,1})_y+\delta_x\wedge (Z^{-1,-1}_{1,1})_y+(Z^{-1,-1}_{1,1})_x\wedge \delta_y
=
\begin{pmatrix}
   \frac{-192 \left(x^2+y^2\right)f_3(x,y)}{\left(\left(x^2+y^2\right)^2+1\right)^5}\\
    \frac{-192 x y \left(x^2+y^2\right)f_4(x,y)}{\left(\left(x^2+y^2\right)^2+1\right)^5}\\
 \frac{f_5(x,y)}{\left(\left(x^2+y^2\right)^2+1\right)^5}
\end{pmatrix},
\end{aligned}
\end{equation}
where
$$\begin{aligned}
f_3(x,y)=& x^{12}+5 x^{10} y^2+x^8 \left(10 y^4-9\right)+2 x^6 y^2 \left(5 y^4-7\right)
+x^4 \left(5 y^8+6\right)+x^2 y^2 \left(y^8+6
   y^4-13\right)+y^8+y^4,
\\f_4(x,y)=&x^{10}+5 x^8 y^2+2 x^6 \left(5 y^4-8\right)+2 x^4 y^2 \left(5 y^4-18\right)+x^2
   \left(5 y^8-24 y^4+15\right)+y^2 \left(y^4-5\right) \left(y^4+1\right),
\\    f_5(x,y)=&16 \left[-\left(\left(x^2+y^2\right)^2+1\right)^4+11
   \left(\left(x^2+y^2\right)^2+1\right)^3+2 \left(24 x^2 \left(x^2+y^2\right)-11\right) \left(\left(x^2+y^2\right)^2+1\right)^2\right.
   \\&\qquad\left.-12 \left(18 x^2 \left(x^2+y^2\right)-1\right)
   \left(\left(x^2+y^2\right)^2+1\right)+192 x^2 \left(x^2+y^2\right)\right]
\end{aligned}$$

\begin{equation}\label{D31-2}
\begin{aligned}
(Z_{-1,2})_x\wedge (Z_{-1,2})_y+\delta_x\wedge (Z^{-1,-1}_{2,2})_y+(Z^{-1,-1}_{2,2})_x\wedge \delta_y
=
\begin{pmatrix}
   \frac{192 \left(x^2+y^2\right)f_6(x,y)}{\left(\left(x^2+y^2\right)^2+1\right)^5}\\
   \frac{-192 x y \left(x^2+y^2\right) f_7(x,y)}{\left(\left(x^2+y^2\right)^2+1\right)^5}\\
\frac{-16 \left(x^2+y^2\right)f_8(x,y)}{\left(\left(x^2+y^2\right)^2+1\right)^5}
\end{pmatrix},
\end{aligned}
\end{equation}

where
$$\begin{aligned}
f_6(x,y)=&\left(\left(5 x^8+6\right) y^4+x^8+\left(10 x^4-9\right) y^8+x^4+5 x^2 y^{10}+2 x^2 \left(5 x^4-7\right) y^6+x^2 \left(x^8+6 x^4-13\right)
   y^2+y^{12}\right),
\\f_7(x,y)=&\left(2 \left(5 x^4-8\right) y^6+5 x^2 y^8+\left(5 x^8-24 x^4+15\right) y^2+2 x^2 \left(5
   x^4-18\right) y^4+x^2 \left(x^4-5\right) \left(x^4+1\right)+y^{10}\right),
\\    f_8(x,y)=& \left(21 x^4-55\right) y^{10}+7 x^2
   y^{12}+\left(35 x^8-358 x^4+115\right) y^6+x^2 \left(35 x^4-227\right) y^8
   \\&+\left(7 x^{12}-83 x^8+105 x^4-21\right) y^2+x^2 \left(21 x^8-262 x^4+225\right) y^4+x^2 \left(x^4+1\right)
   \left(x^8-8 x^4+3\right)+y^{14}
\end{aligned}$$

\begin{equation}\label{D31-3}
\begin{aligned}
&(Z_{-1,1})_x\wedge (Z_{-1,2})_y+(Z_{-1,2})_x\wedge (Z_{-1,1})_y +\delta_x\wedge (Z^{-1,-1}_{1,2})_y+(Z^{-1,-1}_{1,2})_x\wedge \delta_y
\\&=
\begin{pmatrix}
   \frac{768 x y (x-y) (x+y) \left(x^2+y^2\right) \left(3 \left(x^2+y^2\right)^2-5\right)}{\left(\left(x^2+y^2\right)^2+1\right)^5}\\
  \frac{-192 \left(x^2+y^2\right) f_9(x,y)}{\left(\left(x^2+y^2\right)^2+1\right)^5}\\
\frac{768 x y \left(x^2+y^2\right) \left(\left(2 \left(x^2+y^2\right)^2-5\right)
   \left(x^2+y^2\right)^2+1\right)}{\left(\left(x^2+y^2\right)^2+1\right)^5}
\end{pmatrix},
\end{aligned}
\end{equation}
where
$$\begin{aligned}
f_9(x,y)=&x^{12}+6 x^{10}
   y^2+x^8 \left(15 y^4-2\right)+4 x^6 y^2 \left(5 y^4-8\right)+3 x^4 \left(5 y^8-20 y^4-1\right)+2 x^2 y^2 \left(3 y^8-16 y^4+17\right)
   \\&+y^4 \left(y^4-3\right)
   \left(y^4+1\right)
\end{aligned}$$

\begin{equation}\label{D27-1}
\begin{aligned}
&(Z_{2,1})_x\wedge (Z_{2,1})_y+\delta_x\wedge (Z^{2,2}_{1,1})_y+(Z^{2,2}_{1,1})_x\wedge \delta_y
\\&=
\begin{pmatrix}
   -\frac{96 (x-y) (x+y) \left(x^2+y^2\right) \left(5 x^4-22 x^2 y^2+5 y^4-3\right)}{\left(\left(x^2+y^2\right)^2+1\right)^5}\\
    -\frac{192 x y \left(x^2+y^2\right) \left(7 x^4-18
   x^2 y^2+7 y^4-1\right)}{\left(\left(x^2+y^2\right)^2+1\right)^5}\\
  -\frac{32 \left(x^2+y^2\right) \left(5 x^8-4 x^6 y^2-x^4 \left(18 y^4+17\right)+x^2 \left(38 y^2-4 y^6\right)+5
   y^8-17 y^4+2\right)}{\left(\left(x^2+y^2\right)^2+1\right)^5}
\end{pmatrix},
\end{aligned}
\end{equation}

\begin{equation}\label{D27-4}
\begin{aligned}
&(Z_{2,2})_x\wedge (Z_{2,2})_y+\delta_x\wedge (Z^{2,2}_{2,2})_y+(Z^{2,2}_{2,2})_x\wedge \delta_y
\\&=
\begin{pmatrix}
 \frac{96 (x-y) (x+y) \left(x^2+y^2\right) \left(x^4-30 x^2 y^2+y^4+1\right)}{\left(\left(x^2+y^2\right)^2+1\right)^5}\\
   \frac{192 x y \left(x^2+y^2\right) \left(3 x^4-26 x^2
   y^2+3 y^4+3\right)}{\left(\left(x^2+y^2\right)^2+1\right)^5}\\
  \frac{32 \left(x^2+y^2\right) \left(x^8-\left(42 x^4+1\right) y^4-x^4-20 x^2 y^6+10 \left(7-2 x^4\right) x^2
   y^2+y^8-2\right)}{\left(\left(x^2+y^2\right)^2+1\right)^5}
\end{pmatrix},
\end{aligned}
\end{equation}

\begin{equation}\label{D27-5}
\begin{aligned}
&(Z_{2,1})_x\wedge (Z_{2,2})_y+(Z_{2,2})_x\wedge (Z_{2,1})_y+\delta_x\wedge (Z^{2,2}_{1,2})_y+(Z^{2,2}_{1,2})_x\wedge \delta_y
\\&=
\begin{pmatrix}
-\frac{384 x y \left(x^2+y^2\right) \left(7 x^4-18 x^2 y^2+7 y^4-1\right)}{\left(\left(x^2+y^2\right)^2+1\right)^5}\\
   \frac{192 (x-y) (x+y) \left(x^2+y^2\right) \left(x^4-30
   x^2 y^2+y^4+1\right)}{\left(\left(x^2+y^2\right)^2+1\right)^5}\\
 -\frac{768 x y (x-y) (x+y) \left(x^2+y^2\right)
   \left(\left(x^2+y^2\right)^2-3\right)}{\left(\left(x^2+y^2\right)^2+1\right)^5}
\end{pmatrix},
\end{aligned}
\end{equation}

\begin{equation}\label{D31-6}
\begin{aligned}
&(Z_{-1,1})_x\wedge (Z_{2,1})_y+(Z_{2,1})_x\wedge (Z_{-1,1})_y+\delta_x\wedge (Z^{-1,2}_{1,2})_y+(Z^{-1,2}_{1,2})_x\wedge \delta_y
\\&=
\begin{pmatrix}
-\frac{192 x \left(x^2+y^2\right) \left(5 x^8+4 x^6 y^2-10 x^4 \left(y^4+1\right)+4 x^2 y^2 \left(7-3 y^4\right)-3 y^4
   \left(y^4+6\right)+1\right)}{\left(\left(x^2+y^2\right)^2+1\right)^5}\\
 \frac{384 y \left(x^2+y^2\right) \left(-5 x^8-10 x^6 y^2+x^4 \left(11-4 y^4\right)+2 x^2 y^2
   \left(y^4-8\right)+y^8+y^4\right)}{\left(\left(x^2+y^2\right)^2+1\right)^5}\\
-\frac{256 x \left(x^2+y^2\right) \left(x^{10}+4 x^8 y^2+x^6 \left(6 y^4-7\right)+4 x^4 y^2 \left(y^4-2\right)+x^2
   \left(y^8+5 y^4+4\right)+6 y^2 \left(y^4-1\right)\right)}{\left(\left(x^2+y^2\right)^2+1\right)^5}
\end{pmatrix}.
\end{aligned}
\end{equation}

\section{Proof of Lemma \ref{evalues-at-xi-2}}\label{Appendix-G}
{\bf$\bullet$ Evaluation of the Robin functions $h^{(-1, 1)}$ and $h^{(-1, 2)}$.}
For simplicity, we denote $h^{(-1, 1)}_1(z,\xi)$ and $h^{(-1, 1)}_2(z,\xi)$ as $h_1(z,\xi)$ and $h_2(z,\xi)$, respectively, and set $h(z,\xi) = h_1(z,\xi)+ih_2(z,\xi)$, where $z = re^{i\theta} = x + i y$. Then, using the Poisson integral formula, we have
\begin{align*}
h(z, \xi)  & = \frac{-1}{\pi}\int_{-\pi}^\pi\frac{1-r^2}{|e^{it}-re^{i\theta}|^2}\frac{e^{it}-\xi}{|e^{it}-\xi|^2}dt\\
& = \frac{-1}{\pi}\int_{-\pi}^\pi\frac{1-r^2}{|e^{it}-re^{i\theta}|^2}\frac{e^{it}-e^{i\theta_0}\xi_0}{|e^{it}-e^{i\theta_0}\xi_0|^2}dt\\
& = \frac{-1}{\pi}\int_{-\pi}^\pi\frac{1-r^2}{|e^{it}-re^{i(\theta-\theta_0)}|^2}\frac{e^{i\theta_0}\left(e^{it}-\xi_0\right)}{|e^{it}-\xi_0|^2}dt\\
& = e^{i\theta_0} v(e^{-i\theta_0}z, e^{-i\theta_0}\xi).
\end{align*}
Here $e^{i\theta_0} = \frac{\xi_1}{\sqrt{\xi_1^2+\xi_2^2}}+i \frac{\xi_2}{\sqrt{\xi_1^2+\xi_2^2}}$, $\xi_0 = (\sqrt{\xi_1^2+\xi_2^2}, 0)$ and the function $v(z, \xi_0)$ is defined as
$$v(z, \xi_0) : = \frac{-1}{\pi}\int_{-\pi}^\pi\frac{1-r^2}{|e^{it}-re^{i\theta}|^2}\frac{e^{it}-\xi_0}{|e^{it}-\xi_0|^2}dt = v_1(z, \xi_0) + i v_2(z, \xi_0).$$ Then we have
\begin{equation*}
\begin{aligned}
h_1(x, y, \xi_1, \xi_2)  & =\frac{\xi_1}{\sqrt{\xi_1^2+\xi_2^2}} v_1\left(\frac{\xi_1}{\sqrt{\xi_1^2+\xi_2^2}}x + \frac{\xi_2}{\sqrt{\xi_1^2+\xi_2^2}}y, \frac{\xi_1}{\sqrt{\xi_1^2+\xi_2^2}}y - \frac{\xi_2}{\sqrt{\xi_1^2+\xi_2^2}}x, \sqrt{x_1^2+\xi_2^2}, 0\right)\\
& \quad -\frac{\xi_2}{\sqrt{\xi_1^2+\xi_2^2}}v_2\left(\frac{\xi_1}{\sqrt{\xi_1^2+\xi_2^2}}x + \frac{\xi_2}{\sqrt{\xi_1^2+\xi_2^2}}y, \frac{\xi_1}{\sqrt{\xi_1^2+\xi_2^2}}y - \frac{\xi_2}{\sqrt{\xi_1^2+\xi_2^2}}x, \sqrt{x_1^2+\xi_2^2}, 0\right)
\end{aligned}
\end{equation*}
and
\begin{equation*}
\begin{aligned}
h_2(x, y, \xi_1, \xi_2)  & =\frac{\xi_1}{\sqrt{\xi_1^2+\xi_2^2}}v_2\left(\frac{\xi_1}{\sqrt{\xi_1^2+\xi_2^2}}x + \frac{\xi_2}{\sqrt{\xi_1^2+\xi_2^2}}y, \frac{\xi_1}{\sqrt{\xi_1^2+\xi_2^2}}y - \frac{\xi_2}{\sqrt{\xi_1^2+\xi_2^2}}x, \sqrt{x_1^2+\xi_2^2}, 0\right)\\
&\quad +\frac{\xi_2}{\sqrt{\xi_1^2+\xi_2^2}}v_1\left(\frac{\xi_1}{\sqrt{\xi_1^2+\xi_2^2}}x + \frac{\xi_2}{\sqrt{\xi_1^2+\xi_2^2}}y, \frac{\xi_1}{\sqrt{\xi_1^2+\xi_2^2}}y - \frac{\xi_2}{\sqrt{\xi_1^2+\xi_2^2}}x, \sqrt{x_1^2+\xi_2^2}, 0\right).
\end{aligned}
\end{equation*}
Therefore, it holds that
\begin{equation}\label{derivatives-of-h0}
\begin{aligned}
\frac{\partial }{\partial x}h_1(\xi, \xi)  & =\frac{\xi_1}{\sqrt{\xi_1^2+\xi_2^2}} \frac{\xi_1}{\sqrt{\xi_1^2+\xi_2^2}}\frac{\partial }{\partial x}v_1(\xi_0, \xi_0) - \frac{\xi_1}{\sqrt{\xi_1^2+\xi_2^2}}\frac{\xi_2}{\sqrt{\xi_1^2+\xi_2^2}}\frac{\partial }{\partial y}v_1(\xi_0, \xi_0)\\
& \quad -\frac{\xi_2}{\sqrt{\xi_1^2+\xi_2^2}}\frac{\xi_1}{\sqrt{\xi_1^2+\xi_2^2}}\frac{\partial }{\partial x}v_2(\xi_0, \xi_0)+\frac{\xi_2}{\sqrt{\xi_1^2+\xi_2^2}}\frac{\xi_2}{\sqrt{\xi_1^2+\xi_2^2}}\frac{\partial }{\partial y}v_2(\xi_0, \xi_0),\\
\frac{\partial }{\partial y}h_1(\xi, \xi)  & =\frac{\xi_1}{\sqrt{\xi_1^2+\xi_2^2}} \frac{\xi_2}{\sqrt{\xi_1^2+\xi_2^2}}\frac{\partial }{\partial x}v_1(\xi_0, \xi_0) + \frac{\xi_1}{\sqrt{\xi_1^2+\xi_2^2}}\frac{\xi_1}{\sqrt{\xi_1^2+\xi_2^2}}\frac{\partial }{\partial y}v_1(\xi_0, \xi_0)\\
& \quad -\frac{\xi_2}{\sqrt{\xi_1^2+\xi_2^2}}\frac{\xi_2}{\sqrt{\xi_1^2+\xi_2^2}}\frac{\partial }{\partial x}v_2(\xi_0, \xi_0)-\frac{\xi_2}{\sqrt{\xi_1^2+\xi_2^2}}\frac{\xi_1}{\sqrt{\xi_1^2+\xi_2^2}}\frac{\partial }{\partial y}v_2(\xi_0, \xi_0),\\
\frac{\partial }{\partial x}h_2(\xi, \xi)  & =\frac{\xi_1}{\sqrt{\xi_1^2+\xi_2^2}} \frac{\xi_1}{\sqrt{\xi_1^2+\xi_2^2}}\frac{\partial }{\partial x}v_2(\xi_0, \xi_0) - \frac{\xi_1}{\sqrt{\xi_1^2+\xi_2^2}}\frac{\xi_2}{\sqrt{\xi_1^2+\xi_2^2}}\frac{\partial }{\partial y}v_2(\xi_0, \xi_0)\\
& \quad +\frac{\xi_2}{\sqrt{\xi_1^2+\xi_2^2}}\frac{\xi_1}{\sqrt{\xi_1^2+\xi_2^2}}\frac{\partial }{\partial x}v_1(\xi_0, \xi_0)-\frac{\xi_2}{\sqrt{\xi_1^2+\xi_2^2}}\frac{\xi_2}{\sqrt{\xi_1^2+\xi_2^2}}\frac{\partial }{\partial y}v_1(\xi_0, \xi_0),\\
\frac{\partial }{\partial y}h_2(\xi, \xi)  & =  \frac{\xi_1}{\sqrt{\xi_1^2+\xi_2^2}}\frac{\xi_2}{\sqrt{\xi_1^2+\xi_2^2}}\frac{\partial }{\partial x}v_2(\xi_0, \xi_0) + \frac{\xi_1}{\sqrt{\xi_1^2+\xi_2^2}}\frac{\xi_1}{\sqrt{\xi_1^2+\xi_2^2}}\frac{\partial }{\partial y}v_2(\xi_0, \xi_0)\\
& \quad +\frac{\xi_2}{\sqrt{\xi_1^2+\xi_2^2}}\frac{\xi_2}{\sqrt{\xi_1^2+\xi_2^2}}\frac{\partial }{\partial x}v_1(\xi_0, \xi_0)+\frac{\xi_2}{\sqrt{\xi_1^2+\xi_2^2}}\frac{\xi_1}{\sqrt{\xi_1^2+\xi_2^2}}\frac{\partial }{\partial y}v_1(\xi_0, \xi_0).
\end{aligned}
\end{equation}
Now differentiating under the integral sign, we can compute $\frac{\partial }{\partial x}v_1(\xi_0, \xi_0)$, $\frac{\partial }{\partial y}v_1(\xi_0, \xi_0)$, $\frac{\partial }{\partial x}v_2(\xi_0, \xi_0)$, $\frac{\partial }{\partial y}v_2(\xi_0, \xi_0)$ directly as follows,
\begin{equation*}
\begin{aligned}
\frac{\partial }{\partial x}v_1(\xi_0, \xi_0)  & = \frac{-1}{\pi}\int_{-\pi}^\pi\frac{\partial}{\partial x}\left(\frac{1-x^2-y^2}{1+x^2+y^2-2x\cos t - 2y\sin t}\right)\frac{\cos t-\sqrt{\xi_1^2+\xi_2^2}}{(\cos t-\sqrt{\xi_1^2+\xi_2^2})^2+(\sin t)^2}dt\Bigg|_{x = \sqrt{\xi_1^2+\xi_2^2}, y = 0}\\
& = \frac{-2}{(1-|\xi|^2)^2},
\end{aligned}
\end{equation*}
\begin{equation*}
\begin{aligned}
\frac{\partial }{\partial y}v_1(\xi_0, \xi_0)  & = \frac{-1}{\pi}\int_{-\pi}^\pi\frac{\partial}{\partial y}\left(\frac{1-x^2-y^2}{1+x^2+y^2-2x\cos t - 2y\sin t}\right)\frac{\cos t-\sqrt{\xi_1^2+\xi_2^2}}{(\cos t-\sqrt{\xi_1^2+\xi_2^2})^2+(\sin t)^2}dt\Bigg|_{x = \sqrt{\xi_1^2+\xi_2^2}, y = 0}\\
& = 0,
\end{aligned}
\end{equation*}
\begin{equation*}
\begin{aligned}
\frac{\partial }{\partial x}v_2(\xi_0, \xi_0)  & = \frac{-1}{\pi}\int_{-\pi}^\pi\frac{\partial}{\partial x}\left(\frac{1-x^2-y^2}{1+x^2+y^2-2x\cos t - 2y\sin t}\right)\frac{\sin t}{(\cos t-\sqrt{\xi_1^2+\xi_2^2})^2+(\sin t)^2}dt\Bigg|_{x = \sqrt{\xi_1^2+\xi_2^2}, y = 0}\\
& = 0,
\end{aligned}
\end{equation*}
\begin{equation*}
\begin{aligned}
\frac{\partial }{\partial y}v_2(\xi_0, \xi_0)  & = \frac{-1}{\pi}\int_{-\pi}^\pi\frac{\partial}{\partial y}\left(\frac{1-x^2-y^2}{1+x^2+y^2-2x\cos t - 2y\sin t}\right)\frac{\sin t}{(\cos t-\sqrt{\xi_1^2+\xi_2^2})^2+(\sin t)^2}dt\Bigg|_{x = \sqrt{\xi_1^2+\xi_2^2}, y = 0}\\
& = \frac{-2}{(1-|\xi|^2)^2}.
\end{aligned}
\end{equation*}
Using (\ref{derivatives-of-h0}), we obtain
\begin{equation*}
\begin{aligned}
&h_1^{(-1,1)}(\xi,\xi)= \frac{-2 \xi _1}{1-|\xi|^2}, \quad h_2^{(-1,1)}(\xi,\xi)=\frac{-2 \xi _2}{1-|\xi|^2},
\\& \frac{\partial h_1^{(-1,1)}}{\partial x}(\xi,\xi)=\frac{\partial h_2^{(-1,1)}}{\partial y}(\xi,\xi)=\frac{-2}{(1-|\xi|^2)^2},\qquad\frac{\partial h_1^{(-1,1)}}{\partial y}(\xi,\xi)=-\frac{\partial h_2^{(-1,1)}}{\partial x}(\xi,\xi)=0,
\\&\frac{\partial^2 h_1^{(-1,1)}}{\partial x^2}(\xi,\xi) = -\frac{\partial^2 h_1^{(-1,1)}}{\partial y^2}(\xi,\xi)=-\frac{4\xi_1}{(1-|\xi|^2)^3},\quad\ \frac{\partial^2 h_2^{(-1,1)}}{\partial x^2}(\xi,\xi) = -\frac{\partial^2 h_2^{(-1,1)}}{\partial y^2}(\xi,\xi) =\frac{4\xi_2}{(1-|\xi|^2)^3},
\end{aligned}
\end{equation*}
Note that $h_1^{(-1,1)}(z,\xi)=h_2^{(-1,2)}(z,\xi)$ and $h_2^{(-1,1)}(z,\xi)=-h_1^{(-1,2)}(z,\xi)$, the corresponding values of $h^{(-1,2)}$ in Lemma \ref{evalues-at-xi-2} can be directly obtained.

{\bf $\bullet$ Evaluation of the Robin function $h^{(1)}$.}
Similar to the computation of $h^{(-1,l)}$, we denote $h_1^{(1)}(z,\xi)$ and $h_2^{(1)}(z,\xi)$ as $h_1(z,\xi)$ and $h_2(z,\xi)$, respectively, and set $h(z,\xi) = h_1(z,\xi)+ih_2(z,\xi)$. By the Poisson integral formula, we have
\begin{equation*}
\begin{aligned}
h(z, \xi)  & = \frac{1}{2\pi}\int_{-\pi}^\pi\frac{1-r^2}{|e^{it}-re^{i\theta}|^2}\frac{\left(e^{it}-\xi\right)^2}{|e^{it}-\xi|^4}dt\\
& = \frac{1}{2\pi}\int_{-\pi}^\pi\frac{1-r^2}{|e^{it}-re^{i\theta}|^2}\frac{\left(e^{it}-e^{i\theta_0}\xi_0\right)^2}{|e^{it}-e^{i\theta_0}\xi_0|^4}dt\\
& = \frac{1}{2\pi}\int_{-\pi}^\pi\frac{1-r^2}{|e^{it}-re^{i(\theta-\theta_0)}|^2}\frac{e^{i2\theta_0}\left(e^{it}-\xi_0\right)^2}{|e^{it}-\xi_0|^4}dt\\
& = e^{i2\theta_0} v(e^{-i\theta_0}z, e^{-i\theta_0}\xi).
\end{aligned}
\end{equation*}
Here $e^{i\theta_0} = \frac{\xi_1}{\sqrt{\xi_1^2+\xi_2^2}}+i \frac{\xi_2}{\sqrt{\xi_1^2+\xi_2^2}}$,
$\xi_0 = (\sqrt{\xi_1^2+\xi_2^2}, 0)$ and the function $v(z, \xi_0)$ is defined as
$$v(z, \xi_0) : = \frac{1}{2\pi}\int_{-\pi}^\pi\frac{1-r^2}{|e^{it}-re^{i\theta}|^2}\frac{\left(e^{it}-\xi_0\right)^2}{|e^{it}-\xi_0|^4}dt = v_1(z, \xi_0) + i v_2(z, \xi_0).$$ Then we have
\begin{equation*}
\begin{aligned}
h_1(x, y, \xi_1, \xi_2)  & =\frac{\xi_1^2-\xi_2^2}{\xi_1^2+\xi_2^2} v_1\left(\frac{\xi_1}{\sqrt{\xi_1^2+\xi_2^2}}x + \frac{\xi_2}{\sqrt{\xi_1^2+\xi_2^2}}y, \frac{\xi_1}{\sqrt{\xi_1^2+\xi_2^2}}y - \frac{\xi_2}{\sqrt{\xi_1^2+\xi_2^2}}x, \sqrt{x_1^2+\xi_2^2}, 0\right)\\
& \quad -\frac{2\xi_1\xi_2}{\xi_1^2+\xi_2^2}v_2\left(\frac{\xi_1}{\sqrt{\xi_1^2+\xi_2^2}}x + \frac{\xi_2}{\sqrt{\xi_1^2+\xi_2^2}}y, \frac{\xi_1}{\sqrt{\xi_1^2+\xi_2^2}}y - \frac{\xi_2}{\sqrt{\xi_1^2+\xi_2^2}}x, \sqrt{x_1^2+\xi_2^2}, 0\right),
\end{aligned}
\end{equation*}
and
\begin{equation*}
\begin{aligned}
h_2(x, y, \xi_1, \xi_2)  & =\frac{\xi_1^2-\xi_2^2}{\xi_1^2+\xi_2^2}v_2\left(\frac{\xi_1}{\sqrt{\xi_1^2+\xi_2^2}}x + \frac{\xi_2}{\sqrt{\xi_1^2+\xi_2^2}}y, \frac{\xi_1}{\sqrt{\xi_1^2+\xi_2^2}}y - \frac{\xi_2}{\sqrt{\xi_1^2+\xi_2^2}}x, \sqrt{x_1^2+\xi_2^2}, 0\right)\\
&\quad +\frac{2\xi_1\xi_2}{\xi_1^2+\xi_2^2}v_1\left(\frac{\xi_1}{\sqrt{\xi_1^2+\xi_2^2}}x + \frac{\xi_2}{\sqrt{\xi_1^2+\xi_2^2}}y, \frac{\xi_1}{\sqrt{\xi_1^2+\xi_2^2}}y - \frac{\xi_2}{\sqrt{\xi_1^2+\xi_2^2}}x, \sqrt{x_1^2+\xi_2^2}, 0\right).
\end{aligned}
\end{equation*}
Thus
\begin{equation}\label{derivatives-of-h1}
\begin{aligned}
\frac{\partial }{\partial x}h_1(\xi, \xi)  & =\frac{\xi_1^2-\xi_2^2}{\xi_1^2+\xi_2^2} \frac{\xi_1}{\sqrt{\xi_1^2+\xi_2^2}}\frac{\partial }{\partial x}v_1(\xi_0, \xi_0) - \frac{\xi_1^2-\xi_2^2}{\xi_1^2+\xi_2^2}\frac{\xi_2}{\sqrt{\xi_1^2+\xi_2^2}}\frac{\partial }{\partial y}v_1(\xi_0, \xi_0)\\
& \quad -\frac{2\xi_1\xi_2}{\xi_1^2+\xi_2^2}\frac{\xi_1}{\sqrt{\xi_1^2+\xi_2^2}}\frac{\partial }{\partial x}v_2(\xi_0, \xi_0)+\frac{2\xi_1\xi_2}{\xi_1^2+\xi_2^2}\frac{\xi_2}{\sqrt{\xi_1^2+\xi_2^2}}\frac{\partial }{\partial y}v_2(\xi_0, \xi_0),\\
\frac{\partial }{\partial y}h_1(\xi, \xi)  & =\frac{\xi_1^2-\xi_2^2}{\xi_1^2+\xi_2^2} \frac{\xi_2}{\sqrt{\xi_1^2+\xi_2^2}}\frac{\partial }{\partial x}v_1(\xi_0, \xi_0) + \frac{\xi_1^2-\xi_2^2}{\xi_1^2+\xi_2^2}\frac{\xi_1}{\sqrt{\xi_1^2+\xi_2^2}}\frac{\partial }{\partial y}v_1(\xi_0, \xi_0)\\
& \quad -\frac{2\xi_1\xi_2}{\xi_1^2+\xi_2^2}\frac{\xi_2}{\sqrt{\xi_1^2+\xi_2^2}}\frac{\partial }{\partial x}v_2(\xi_0, \xi_0)-\frac{2\xi_1\xi_2}{\xi_1^2+\xi_2^2}\frac{\xi_1}{\sqrt{\xi_1^2+\xi_2^2}}\frac{\partial }{\partial y}v_2(\xi_0, \xi_0),\\
\frac{\partial }{\partial x}h_2(\xi, \xi)  & =\frac{\xi_1^2-\xi_2^2}{\xi_1^2+\xi_2^2} \frac{\xi_1}{\sqrt{\xi_1^2+\xi_2^2}}\frac{\partial }{\partial x}v_2(\xi_0, \xi_0) - \frac{\xi_1^2-\xi_2^2}{\xi_1^2+\xi_2^2}\frac{\xi_2}{\sqrt{\xi_1^2+\xi_2^2}}\frac{\partial }{\partial y}v_2(\xi_0, \xi_0)\\
& \quad +\frac{2\xi_1\xi_2}{\xi_1^2+\xi_2^2}\frac{\xi_1}{\sqrt{\xi_1^2+\xi_2^2}}\frac{\partial }{\partial x}v_1(\xi_0, \xi_0)-\frac{2\xi_1\xi_2}{\xi_1^2+\xi_2^2}\frac{\xi_2}{\sqrt{\xi_1^2+\xi_2^2}}\frac{\partial }{\partial y}v_1(\xi_0, \xi_0),\\
\frac{\partial }{\partial y}h_2(\xi, \xi)  & =\frac{\xi_1^2-\xi_2^2}{\xi_1^2+\xi_2^2} \frac{\xi_2}{\sqrt{\xi_1^2+\xi_2^2}}\frac{\partial }{\partial x}v_2(\xi_0, \xi_0) + \frac{\xi_1^2-\xi_2^2}{\xi_1^2+\xi_2^2}\frac{\xi_1}{\sqrt{\xi_1^2+\xi_2^2}}\frac{\partial }{\partial y}v_2(\xi_0, \xi_0)\\
& \quad +\frac{2\xi_1\xi_2}{\xi_1^2+\xi_2^2}\frac{\xi_2}{\sqrt{\xi_1^2+\xi_2^2}}\frac{\partial }{\partial x}v_1(\xi_0, \xi_0)+\frac{2\xi_1\xi_2}{\xi_1^2+\xi_2^2}\frac{\xi_1}{\sqrt{\xi_1^2+\xi_2^2}}\frac{\partial }{\partial y}v_1(\xi_0, \xi_0).
\end{aligned}
\end{equation}
By differentiating under the integral sign, we obtain
\begin{equation*}
\begin{aligned}
\frac{\partial }{\partial x}v_1(\xi_0, \xi_0)  & = \frac{1}{2\pi}\int_{-\pi}^\pi\frac{\partial}{\partial x}\left(\frac{1-x^2-y^2}{1+x^2+y^2-2x\cos t - 2y\sin t}\right)\frac{(\cos t-\sqrt{\xi_1^2+\xi_2^2})^2-(\sin t)^2}{((\cos t-\sqrt{\xi_1^2+\xi_2^2})^2+(\sin t)^2)^2}dt\Bigg|_{x = \sqrt{\xi_1^2+\xi_2^2}, y = 0}\\
& = \frac{2|\xi|}{(1-|\xi|^2)^3},
\end{aligned}
\end{equation*}
\begin{equation*}
\begin{aligned}
\frac{\partial }{\partial y}v_1(\xi_0, \xi_0)  & = \frac{1}{2\pi}\int_{-\pi}^\pi\frac{\partial}{\partial y}\left(\frac{1-x^2-y^2}{1+x^2+y^2-2x\cos t - 2y\sin t}\right)\frac{(\cos t-\sqrt{\xi_1^2+\xi_2^2})^2-(\sin t)^2}{((\cos t-\sqrt{\xi_1^2+\xi_2^2})^2+(\sin t)^2)^2}dt\Bigg|_{x = \sqrt{\xi_1^2+\xi_2^2}, y = 0}\\
& = 0,
\end{aligned}
\end{equation*}
\begin{equation*}
\begin{aligned}
\frac{\partial }{\partial x}v_2(\xi_0, \xi_0)  & = \frac{1}{2\pi}\int_{-\pi}^\pi\frac{\partial}{\partial x}\left(\frac{1-x^2-y^2}{1+x^2+y^2-2x\cos t - 2y\sin t}\right)\frac{2(\cos t-\sqrt{\xi_1^2+\xi_2^2})(\sin t)}{((\cos t-\sqrt{\xi_1^2+\xi_2^2})^2+(\sin t)^2)^2}dt\Bigg|_{x = \sqrt{\xi_1^2+\xi_2^2}, y = 0}\\
& = 0,
\end{aligned}
\end{equation*}
\begin{equation*}
\begin{aligned}
\frac{\partial }{\partial y}v_2(\xi_0, \xi_0)  & = \frac{1}{2\pi}\int_{-\pi}^\pi\frac{\partial}{\partial y}\left(\frac{1-x^2-y^2}{1+x^2+y^2-2x\cos t - 2y\sin t}\right)\frac{2(\cos t-\sqrt{\xi_1^2+\xi_2^2})(\sin t)}{((\cos t-\sqrt{\xi_1^2+\xi_2^2})^2+(\sin t)^2)^2}dt\Bigg|_{x = \sqrt{\xi_1^2+\xi_2^2}, y = 0}\\
& = \frac{2|\xi|}{(1-|\xi|^2)^3}.
\end{aligned}
\end{equation*}
Therefore, using (\ref{derivatives-of-h1}), we obtain the following evaluations at the point $\xi$,
$$\frac{\partial h_1^{(1)}}{\partial x}(\xi,\xi)=\frac{2\xi_1}{(1-|\xi|^2)^3},\quad \frac{\partial h_1^{(1)}}{ \partial y}(\xi,\xi)=-\frac{2\xi_2}{(1-|\xi|^2)^3},$$
$$\frac{\partial h_2^{(1)}}{\partial x}(\xi,\xi)=\frac{2\xi_2}{(1-|\xi|^2)^3},\quad \frac{\partial h_2^{(1)}}{ \partial y}(\xi,\xi)=\frac{2\xi_1}{(1-|\xi|^2)^3}.$$
The computations of the other terms in Lemma \ref{evalues-at-xi-2} are similar, we omit the details here.

{\bf $\bullet$ Evaluation of the Robin functions $h^{(2,l)}$.}
Similarly, we denote $h_1^{(2,1)}(z,\xi)$ and $h_2^{(2,1)}(z,\xi)$ as $h_1(z,\xi)$ and $h_2(z,\xi)$, respectively and set $h (z,\xi)= h_1(z,\xi)+ih_2(z,\xi)$. From the Poisson integral formula, we have
\begin{equation*}
\begin{aligned}
h(z, \xi)  & = -\frac{1}{2\pi}\int_{-\pi}^\pi\frac{1-r^2}{|e^{it}-re^{i\theta}|^2}\frac{\left(e^{it}-\xi\right)^4}{|e^{it}-\xi|^8}dt\\
& = -\frac{1}{2\pi}\int_{-\pi}^\pi\frac{1-r^2}{|e^{it}-re^{i\theta}|^2}\frac{\left(e^{it}-e^{i\theta_0}\xi_0\right)^4}{|e^{it}-e^{i\theta_0}\xi_0|^8}dt\\
& =- \frac{1}{2\pi}\int_{-\pi}^\pi\frac{1-r^2}{|e^{it}-re^{i(\theta-\theta_0)}|^2}\frac{e^{i4\theta_0}\left(e^{it}-\xi_0\right)^4}{|e^{it}-\xi_0|^8}dt\\
& =- e^{i4\theta_0} v(e^{-i\theta_0}z, e^{-i\theta_0}\xi).
\end{aligned}
\end{equation*}
Here $e^{i\theta_0} = \frac{\xi_1}{\sqrt{\xi_1^2+\xi_2^2}}+i \frac{\xi_2}{\sqrt{\xi_1^2+\xi_2^2}}$,
$\xi_0 = (\sqrt{\xi_1^2+\xi_2^2}, 0)$ and the function $v(z, \xi_0)$ is defined as
$$v(z, \xi_0) : = \frac{1}{2\pi}\int_{-\pi}^\pi\frac{1-r^2}{|e^{it}-re^{i\theta}|^2}\frac{\left(e^{it}-\xi_0\right)^4}{|e^{it}-\xi_0|^8}dt = v_1(z, \xi_0) + i v_2(z, \xi_0).$$ Then we have
\begin{equation*}
\begin{aligned}
h_1(x, y, \xi_1, \xi_2) & =-\frac{\xi_1^4-6\xi_1^2\xi_2^2+\xi_2^6}{(\xi_1^2+\xi_2^2)^2} v_1\left(\frac{\xi_1}{\sqrt{\xi_1^2+\xi_2^2}}x + \frac{\xi_2}{\sqrt{\xi_1^2+\xi_2^2}}y, \frac{\xi_1}{\sqrt{\xi_1^2+\xi_2^2}}y - \frac{\xi_2}{\sqrt{\xi_1^2+\xi_2^2}}x, \sqrt{x_1^2+\xi_2^2}, 0\right)\\
& \quad +\frac{4\xi_1\xi_2(\xi_1^2-\xi_2^2)}{(\xi_1^2+\xi_2^2)^2}v_2\left(\frac{\xi_1}{\sqrt{\xi_1^2+\xi_2^2}}x + \frac{\xi_2}{\sqrt{\xi_1^2+\xi_2^2}}y, \frac{\xi_1}{\sqrt{\xi_1^2+\xi_2^2}}y-\frac{\xi_2}{\sqrt{\xi_1^2+\xi_2^2}}x, \sqrt{x_1^2+\xi_2^2}, 0\right),
\end{aligned}
\end{equation*}
and
\begin{equation*}
\begin{aligned}
h_2(x, y, \xi_1, \xi_2)  & =-\frac{\xi_1^4-6\xi_1^2\xi_2^2+\xi_2^6}{(\xi_1^2+\xi_2^2)^2}v_2\left(\frac{\xi_1}{\sqrt{\xi_1^2+\xi_2^2}}x + \frac{\xi_2}{\sqrt{\xi_1^2+\xi_2^2}}y, \frac{\xi_1}{\sqrt{\xi_1^2+\xi_2^2}}y - \frac{\xi_2}{\sqrt{\xi_1^2+\xi_2^2}}x, \sqrt{x_1^2+\xi_2^2}, 0\right)\\
&\quad -\frac{4\xi_1\xi_2(\xi_1^2-\xi_2^2)}{(\xi_1^2+\xi_2^2)^2}v_1\left(\frac{\xi_1}{\sqrt{\xi_1^2+\xi_2^2}}x + \frac{\xi_2}{\sqrt{\xi_1^2+\xi_2^2}}y, \frac{\xi_1}{\sqrt{\xi_1^2+\xi_2^2}}y - \frac{\xi_2}{\sqrt{\xi_1^2+\xi_2^2}}x, \sqrt{x_1^2+\xi_2^2}, 0\right).
\end{aligned}
\end{equation*}
Similar to (\ref{derivatives-of-h0}) and (\ref{derivatives-of-h1}), by direct computation, we get
\begin{equation*}
\begin{aligned}
&\frac{\pp h_1^{(2,1)}}{\pp x}(\xi,\xi)=-\frac{4\xi_1 \left(\xi_1^2-3\xi_2^2\right)}{\left(1-|\xi|^2\right)^5},\qquad \frac{\pp h_1^{(2,1)}}{\pp y}(\xi,\xi)=-\frac{4\xi_2 \left(-3\xi_1^2+\xi_2^2\right)}{\left(1-|\xi|^2\right)^5}
\\&\frac{\pp^4 h_1^{(2,1)}}{\pp x^4}(\xi,\xi)=-\frac{24 \left(35+4 \left(|\xi|^2-1\right)^3+30 \left(|\xi|^2-1\right)^2+60 \left(|\xi|^2-1\right)\right)}{\left(1-|\xi|^2\right)^8}=\frac{\pp h_1^{(2,1)}}{\pp y^4}(\xi,\xi),
\\&\frac{\pp^4 h_2^{(2,1)}}{\pp x^4}(\xi,\xi)=0=\frac{\pp^4 h_2^{(2,1)}}{\pp x^4}(\xi,\xi).
\end{aligned}
\end{equation*}
Since $h_1^{(2,1)}(z,\xi)=-h_2^{(2,2)}(z,\xi)$, $h_2^{(2,1)}(z,\xi)=h_1^{(2,2)}(z,\xi)$, we can obtain the corresponding values of $h_1^{(2,2)}$ in Lemma \ref{evalues-at-xi-2}.

\section*{Acknowledgements}
The authors would like to express their sincere gratitude to Professor Takeshi Isobe for sending \cite{Takeshi2001a} and \cite{Takeshi2001b} to the authors. J.C. Wei is partially supported by National Key R\&D Program of China (No.2022YFA1005602), and Hong Kong General Research Fund "New frontiers in singular limits of nonlinear partial differential equations". Y. Zheng is supported by NSF of China (No.12171355). Y. Zhou is supported in part by the Fundamental Research Funds for the Central Universities.

\end{document}